\pgfplotsset{compat=1.17}
\theoremstyle{plain}%
\newtheorem{theorem}{Theorem}[section]
\newtheorem{lemma}[theorem]{Lemma}
\newtheorem{proposition}[theorem]{Proposition}
\theoremstyle{definition}
\newtheorem{definition}[theorem]{Definition}
\newtheorem{example}[theorem]{Example}
\theoremstyle{remark}
\newtheorem{remark}[theorem]{Remark}
\let \leq \leqslant
\let \geq \geqslant
\DeclareMathOperator{\sgn}{sgn}
\DeclareMathOperator{\cone}{Cone}
\DeclareMathOperator{\support}{supp}
\DeclareMathOperator*{\argmax}{arg\,max}
\DeclareMathOperator*{\argmin}{arg\,min}
\definecolor{detailcolor00}{rgb}{0.4405, 0.204, 0.343}
\definecolor{detailcolor01}{rgb}{0.546, 0.215, 0.352}
\definecolor{detailcolor02}{rgb}{0.675, 0.247, 0.387} 
\definecolor{detailcolor03}{rgb}{0.775, 0.317, 0.455}
\definecolor{detailcolor04}{rgb}{0.830, 0.421, 0.553} 
\definecolor{detailcolor05}{rgb}{0.831, 0.533, 0.663}
\definecolor{detailcolor06}{rgb}{0.779, 0.619, 0.775}
\definecolor{detailcolor07}{rgb}{0.724, 0.694, 0.827}
\definecolor{detailcolor08}{rgb}{0.687, 0.770, 0.880}
\definecolor{detailcolor09}{rgb}{0.671, 0.839, 0.904}
\definecolor{detailcolor10}{rgb}{0.659, 0.872, 0.882}
\title{Wilson loops in finite abelian lattice gauge theories}
\author{Malin P. Forsstr\"om}
\address[Malin P. Forsstr\"om]{Department of Mathematics, KTH Royal Institute of Technology, 100 44 Stockholm, Sweden.}
\email{malinpf@kth.se}
\author{Jonatan Lenells}
\address[Jonatan Lenells]{Department of Mathematics, KTH Royal Institute of Technology, 100 44 Stockholm, Sweden.}
\email{jlenells@kth.se}
\author{Fredrik Viklund}
\address[Fredrik Viklund]{Department of Mathematics, KTH Royal Institute of Technology, 100 44 Stockholm, Sweden.}
\email{fredrik.viklund@math.kth.se}
\begin{document}

\maketitle
\begin{abstract}
    We consider lattice gauge theories on $\mathbb{Z}^4$ with Wilson action and structure group $\mathbb{Z}_n$. We compute the expectation of Wilson loop observables to leading order in the weak coupling regime, extending and refining a recent result of Chatterjee. Our proofs use neither duality relations nor cluster expansion techniques.
\end{abstract}

\section{Introduction}
\subsection{Background}
An important challenge in mathematical physics is to give precise mathematical meaning to quantum field theories (QFTs) that appear in the Standard Model. These QFTs describe fundamental forces in nature and their interaction with elementary particles. One way to proceed is to perform a Wick rotation,  discretize space-time by a four-dimensional Euclidean lattice, and then try to approximate the continuum QFT by a well-defined probabilistic theory defined on the lattice. Such lattice models are called lattice gauge theories and have been studied since at least the 1970s (see, e.g.,~\cite{wilson}). The hope is that one can take a scaling limit of the lattice model and in this way somehow obtain a rigorously defined continuum field theory. This has proved difficult except in simpler cases of limited physical relevance. However, it is also natural to study lattice gauge theories in their own right as statistical mechanics models, and this is the point of view we take here. We refer the reader to \cite{gl2010} for further background and references.

\subsection{Lattice gauge theories with Wilson action}
In order to state our main results, we need to give a few definitions. The lattice \( \mathbb{Z}^4\) has a vertex at each point in \( \mathbb{R}^4\) with integer coordinates, and an edge between nearest neighbors, oriented in the positive direction, so that there are exactly four positively oriented edges emerging from each vertex \( a \), denoted by \( \frac{\partial}{\partial x^i}\big|_a , \,i=1,\ldots,4 \). We will let \( -\frac{\partial}{\partial x^i}\big|_a \) denote the edge with the same end points as \( \frac{\partial}{\partial x^i}\big|_a \) but with opposite orientation.
For \( i<j \), the closed set of points forming the lattice square determined by a given pair \( \frac{\partial}{\partial x^i}\big|_a \) and \( \frac{\partial}{\partial x^j}\big|_a \) is called a $2$-cell. This $2$-cell in turn determines a \emph{positively oriented plaquette} \( \frac{\partial}{\partial x^i}\big|_a \wedge \frac{\partial}{\partial x^j}\big|_a \), and a \emph{negatively oriented plaquette} \( -\frac{\partial}{\partial x^i}\big|_a \wedge \frac{\partial}{\partial x^j}\big|_a .\)

Given an integer $N \geq 1$, let $B_N =  [-N,N]^4 \cap \mathbb{Z}^4$. Write $C_1(B_N)$ for the set of (positively and negatively) oriented edges whose endpoints are  contained in $B_N$, and $C_2(B_N)$ for the set of oriented plaquettes whose boundary edges (defined in the obvious way) are all contained in $C_1(B_N)$. We will often write $e$ and $p$ for elements of $C_1(B_N)$ and $C_2(B_N)$, respectively.

In this paper we will take as structure group $G$ the finite additive group $\mathbb{Z}_n$. We will only consider faithful and one-dimensional representations of $G$. If $\rho$ is such a representation, then 
\begin{align}\label{rhoexplicit}
\rho(k) = e^{k \cdot 2 \pi i m/n}, \qquad k \in \mathbb{Z}_n,
\end{align}
for some \(m \in \{1, \dots, n-1\}\) relatively prime to \( n \). In particular, \(\rho\) is unitary and \(\rho(G) =  \{ e^{k \cdot 2 \pi i/n} \}_{k \in \{ 1,2, \ldots, n\}}\) consists of the $n^{\textrm{th}}$ roots of unity. (See Remark \ref{generalgroupremark} for a discussion of the generalization of our results to finite abelian structure groups and to representations of arbitrary finite dimension.) Next, let \(\Omega^1(B_N,G)\) be the set of \( G \)-valued 1-forms on  \(C_1(B_N) \), which can be thought of as the set of functions \( \sigma \colon C_1(B_N) \to G \) with the property that \( \sigma(e) = -\sigma(-e) \). (See Section~\ref{sec: ddf} for the precise definition.)
Each element \(\sigma  \in \Omega^1(B_N,G) \) induces a spin configuration \( d\sigma \) defined on plaquettes according to
\begin{equation}\label{p-e}
d\sigma(p) \coloneqq \sigma(e_1)+\sigma(e_2)+\sigma(e_3) +\sigma(e_4), \quad p \in  C_2(B_N),
\end{equation}
where \(e_1,e_2, e_3, e_4\) are the edges in the boundary of \(p\), directed according to the orientation of the plaquette \( p \), see Section~\ref{sec: oriented cells}. The set of all spin configurations on \( C_2(B_N) \) which arise in this way will be denoted by \( \Omega_0^2(B_N,G) \). %
With \( N \), \( G \), and \( \rho \) fixed, we define the \emph{Wilson action}, corresponding to the structure group \(G\) and representation \( \rho \), by 
\[
S(\sigma) 
\coloneqq - \sum_{p \in C_2(B_N)}  \Re   \rho\bigl(d\sigma(p)\bigl), \quad \sigma \in \Omega^1(B_N,G).
\]
Letting \( \mu_H \) denote the uniform measure on \( \Omega^1(B_N,G)\), we obtain an associated probability measure \(\mu_{N,\beta}\) on \( \Omega^1(B_N,G) \) by weighting \( \mu_H \) by the Wilson action:
\begin{align}\label{mubetaNdef}
\mu_{N,\beta}(\sigma) \coloneqq Z^{-1}_{N,\beta} e^{-\beta S(\sigma)} \,  \mu_H(\sigma), \quad \sigma \in \Omega^1(B_N,G).
\end{align}
Here \( Z_{N,\beta} \) is a normalizing constant that ensures that \( \mu_{N,\beta} \) is a probability measure.

While lattice gauge theories with a finite abelian structure group are not (as far as we are aware) of known direct physical significance in the context of the Standard Model, they (including the choice $\mathbb{Z}_n$) have been studied in the physics literature, see, e.g., \cite{frolich-spencer82, b1984} and the references therein.

\subsection{Main result}
In a seminal paper \cite{wilson}, Wilson introduced a particular observable in order to study quark confinement in lattice gauge theories, which we now define (see also e.g., \cite{gl2010}). 
A \( 1 \)-chain is a formal sum of positively oriented edges with integer coefficients, see Section~\ref{sec: chains} below. The support of a \(1\)-chain \(\gamma\), written \(\support \gamma\), is the set of directed edges with non-zero coefficient. 
We say that a \(1\)-chain with finite support is a \emph{generalized loop} if it has coefficients in \(\{-1,0,1\}\) and empty boundary, see Definition~\ref{def: generalized loop}. Roughly speaking, this means that a generalized loop is a disjoint union of a finite number of closed loops, where each closed loop is a nearest-neighbor path in the graph \( \mathbb{Z}^4  \) starting and ending at the same vertex. For example, any rectangular loop as well as any finite disjoint union of such loops is a generalized loop.
The length \(\ell = |\support \gamma|\) of a loop \(\gamma\) is the number of edges in \( \support \gamma\). An edge \( e \in \support \gamma \) is said to be a corner edge in \( \gamma \) if there is another edge \( e' \in \pm \support \gamma \) such that \( e \) and \( e' \) are both in the boundary of some common plaquette.
Given a generalized loop $\gamma $, the Wilson loop observable \( W_\gamma \) is defined by
\[
W_\gamma \coloneqq W_\gamma(\sigma) \coloneqq  \rho \bigl( \sigma(\gamma) \bigr) \coloneqq  \rho \Bigl(\sum_{e \in \support \gamma} \sigma(e) \Bigr), \quad \sigma\in \Omega^1(B_N,G). 
\]
When \( f \colon \Omega^1(B_N,G)\to \mathbb{C} \), we let  
\begin{equation*}
    \mathbb{E}_{N,\beta}[f] \coloneqq \sum_{\sigma \in \Omega^1(B_N,G)} f(\sigma)\, \mu_{N,\beta} (\sigma)
\end{equation*}
denote the expected value of \( f(\sigma) \). 
We will apply Ginibre's inequality in Section~\ref{sec: ginibre} to prove that, for a fixed $\gamma$, the limit
\begin{align}\label{wilsonlimit}
\langle W_\gamma \rangle_\beta \coloneqq \lim_{N \to \infty } \mathbb{E}_{N,\beta}[W_\gamma]
\end{align}
exists and is translation invariant.  

We can now state our main result which extends Theorem~1.1 of \cite{c2019} to the case when the structure group is given by \( \mathbb{Z}_n \) for some \( n \geq 2 \), and the representation is unitary, faithful, and one-dimensional. In the statement,  for \( \beta \geq 0 \) we let
\begin{equation}\label{Thetadef}
    \theta(\beta) \coloneqq \frac{\sum_{g \in G}\rho(g) e^{12\beta \Re\rho(g)}}{\sum_{g \in G} e^{12\beta \Re \rho(g)}}
\end{equation}
and
\begin{equation}\label{lambdadef}
    \lambda(\beta) \coloneqq \max_{g \in G \smallsetminus \{ 0 \}} \frac{e^{\beta \Re \rho(g)} }{ e^{\beta \Re  \rho(0)}} = \max_{g \in G \smallsetminus \{ 0 \}} e^{\beta (\Re \rho(g)-1)}  
    =  e^{-\beta (1-\cos(2\pi/n))}.
\end{equation}
Note that \( \lim_{\beta \to \infty} \theta(\beta) = 1.\)

\begin{theorem}\label{theorem: Chatterjee's main theorem}
   Let $n \geq 2$ be an integer. Consider lattice gauge theory with structure group $G = \mathbb{Z}_n$ and a faithful one-dimensional representation \( \rho \) of \( G \). 
    Then, for all sufficiently large \( \beta_0 >0\) there are constants \(K' = K'(\beta_0)\) and \(K'' = K''(\beta_0) \) such that for any \( \beta \geq \beta_0 \) and any generalized loop \( \gamma \) in \( \mathbb{Z}^4 \), if $\ell = |\support \gamma|$ and \( \ell_c \) is the number of corner edges in \( \gamma \), then
    \begin{equation} \label{eq: Chatterjee's main equation}
        \biggl| \langle W_\gamma \rangle_\beta-     e^{-\ell (1 - \theta(\beta))} \biggr|  
        \leq 
        K' \Bigl[ \sqrt\frac{\ell_c}{\ell} + \lambda(\beta)^2 \Bigr]^{\mathrlap{K''}}.
    \end{equation}
\end{theorem}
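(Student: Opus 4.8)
The plan is to reduce the computation of $\langle W_\gamma\rangle_\beta$ to a careful analysis of the spin system on plaquettes, exploiting the abelian structure to diagonalize via characters. First I would introduce a change of variables $\phi = d\sigma$, using that $\Omega_0^2(B_N,G)$ is the image of $d$ on $1$-forms; since $G=\mathbb{Z}_n$ is abelian, the kernel of $d$ is exactly the coboundaries (gauge transformations), and I would quotient out this gauge freedom so that the measure $\mu_{N,\beta}$ pushes forward to a measure on $\Omega_0^2(B_N,G)$ proportional to $\prod_{p} e^{\beta \Re\rho(\phi(p))}$ subject to the Bianchi-type constraint that $\delta\phi = 0$ (i.e. $\phi$ is a closed $2$-form). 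The Wilson loop $W_\gamma = \rho(\sigma(\gamma))$ depends on $\sigma$ only through $d\sigma$ in a gauge-invariant way, and writing $\gamma = \partial q$ would be wrong (loops need not bound), so instead I would express $\sigma(\gamma)$ in terms of $\phi$ by choosing, for the translation-invariant limit, a convenient representation; alternatively follow Chatterjee and expand directly.

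Second, the core heuristic is that at large $\beta$ the dominant configurations are those where $\phi(p) = 0$ for all but a sparse set of plaquettes. I would set up a low-temperature polymer-free expansion: write $e^{\beta\Re\rho(g)} = e^{\beta}\big(\mathbb{1}[g=0] + (e^{\beta(\Re\rho(g)-1)}-\mathbb{1}[g=0])\big)$ and expand the product over plaquettes, so that each term is indexed by the set $S$ of ``excited'' plaquettes, weighted by a factor of size $O(\lambda(\beta)^{|S|})$. The closedness constraint $\delta\phi=0$ forces excited plaquettes to organize into closed surfaces (minimal such surfaces around a single dual edge have $6$ plaquettes, explaining the $12\beta$ — accounting for $\Re\rho(g)$ on $6$ plaquettes for $g$ and $6$ for $-g$ or the factor appearing in $\theta(\beta)$). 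The key point is that $\sigma(\gamma)$, read off from such a configuration, picks up a contribution $\rho(g)$ for each excited minimal surface ``linking'' the loop $\gamma$, and along a straight segment of $\gamma$ away from corners these linking surfaces attach independently to each edge, each contributing a factor that averages to $\theta(\beta)$; this produces the leading term $e^{-\ell(1-\theta(\beta))}$ via the standard $\prod(1 - (1-\theta)) \approx e^{-\ell(1-\theta)}$ estimate when $\ell$ is large.

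Third, I would control the error. The two error sources are: (i) corner edges, where the local surface geometry is different and the independence heuristic fails — these contribute a correction of relative size controlled by $\ell_c/\ell$, and summing the fluctuations gives the $\sqrt{\ell_c/\ell}$ term (a second-moment/variance estimate, since the per-edge corrections are mean-zero-ish and roughly independent, so their sum has standard deviation $\sim\sqrt{\ell_c}$ against a main term of size $\sim\ell$); and (ii) larger excited surfaces — connected surfaces of more than the minimal size, or interactions between nearby surfaces — which by a Peierls-type counting argument contribute $O(\lambda(\beta)^2)$ relative corrections, since the next-order surfaces cost at least two extra plaquette excitations beyond the minimal configuration. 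Raising to a power $K''$ reflects that one iterates the expansion to extract enough decay; the constants $K',K''$ depend only on $\beta_0$ because for $\beta\ge\beta_0$ the geometric series in $\lambda(\beta)\le\lambda(\beta_0)$ converge uniformly.

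The main obstacle I anticipate is item (ii) combined with the geometry of $\gamma$: making rigorous the claim that the excited surfaces linking $\gamma$ decouple across distinct straight edges, and that ``everything else'' is a genuinely lower-order perturbation, requires a clean combinatorial/probabilistic framework for the surface ensemble conditioned on $\delta\phi=0$ — essentially understanding the law of the random closed $2$-form near $\gamma$. Chatterjee's approach avoids cluster expansions, and the promise here is to do likewise, so the delicate part is proving the decoupling and the variance bound by elementary means (switching/coupling arguments, Ginibre's inequality for correlation inequalities, and direct combinatorial surface counting) rather than invoking convergent polymer expansions. Establishing the existence and translation invariance of the limit \eqref{wilsonlimit} via Ginibre's inequality, as flagged in the excerpt, is a necessary preliminary that I would dispatch first so that all subsequent estimates can be carried out in the infinite-volume limit directly.
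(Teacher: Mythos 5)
Your proposal captures the correct heuristic picture (low-temperature expansion, minimal excitations attached to edges of $\gamma$ each contributing $\theta(\beta)$, corners and larger excitations as errors), but there are several concrete gaps. First, your parenthetical that ``writing $\gamma=\partial q$ would be wrong (loops need not bound)'' is mistaken: every generalized loop in $\mathbb{Z}^4$ \emph{does} bound an oriented surface supported in any box containing it (this is Lemma~\ref{lemma: oriented loops}, a consequence of the Poincar\'e lemma for the coderivative), and the identity $W_\gamma=\rho(d\sigma(q))$ via Stokes is the cornerstone of the whole argument — it is what lets one read off $W_\gamma$ from the vortex decomposition of $d\sigma$ and discard, via Lemma~\ref{lemma: 3.2}, all vortices whose support meets $q$ only in internal plaquettes. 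Second, your proposed expansion of $\prod_p e^{\beta\Re\rho(\phi(p))}$ over sets of excited plaquettes is essentially a polymer/cluster expansion, which is exactly what the paper avoids; the replacement is the purely probabilistic estimate of Lemma~\ref{lemma: agreement probability} (the probability that $d\sigma$ agrees with a prescribed closed $2$-form $\nu$ on $\support\nu$ is at most $\prod_p\phi_\beta(\nu(p))/\phi_\beta(0)$), combined with the Peierls-type count of irreducible configurations in Lemma~\ref{lemma: counting vortex configurations ii}. Without one of these two devices your step (ii) is not an argument. (Minor: the constraint on $\phi=d\sigma$ is $d\phi=0$, not $\delta\phi=0$.)

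More seriously, two essential ingredients of the proof are absent. (a) The $\sqrt{\ell_c/\ell}$ term does not arise from a variance/CLT estimate on mean-zero per-corner corrections; it comes from crude union bounds producing terms of order $\ell_c\lambda(\beta)^{12}$ and $\sqrt{\ell}\,\lambda(\beta)^{12}$, which after normalizing by $K^*\ell\lambda(\beta)^{12}\le e^{K^*\ell\lambda(\beta)^{12}}$ are dominated by $\ell_c/\ell\le\sqrt{\ell_c/\ell}$ and $1/\sqrt{\ell}\le\sqrt{\ell_c/\ell}$. The mechanism you describe would not obviously close, since the main term $e^{-\ell(1-\theta)}$ is $O(1)$, not of size $\ell$. (b) The bound produced by the ``small vortices near $\gamma$'' analysis (Proposition~\ref{proposition: 7.1}) carries an exponential factor $e^{2K^*\ell\lambda(\beta)^{12}}$ that blows up when $\ell\lambda(\beta)^{12}$ is large. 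The theorem therefore requires a second, independent estimate in that regime — Proposition~\ref{proposition: 7.2}, proved by conditioning on the spins off the non-corner edges of $\gamma$ and showing each conditional factor has modulus at most $1-K_*\lambda(\beta)^{12}$, giving $|\mathbb{E}_{N,\beta}[W_\gamma]|\le e^{-K_*(\ell-\ell_c)\lambda(\beta)^{12}}$. The exponent $K''=1/(1+4K^*/K_*)$ is exactly the interpolation between these two bounds; your explanation of $K''$ as ``iterating the expansion'' does not correspond to any step that would produce it. Finally, the identity $\mathbb{E}_{N,\beta}[W_\gamma']=\theta^{|\support\gamma_1|}\mathbb{E}_{N,\beta}[\theta^{-|\support\gamma'|}]$ rests on the resampling trick — conditional independence of the spins on non-corner edges given everything else — which is the precise form of the ``decoupling'' you flag as the main obstacle; you would need to supply this conditioning argument explicitly.
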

     In essence, Theorem~\ref{theorem: Chatterjee's main theorem} says that if \( \ell  (1-\theta(\beta)) \) is very large, then \( \langle W_\gamma \rangle_\beta \approx 0 \), if \( \ell  (1-\theta(\beta))  \) is very small, then \( \langle W_\gamma \rangle_\beta \approx 1 \), and otherwise \( \langle W_\gamma \rangle_\beta \) has a non-trivial behavior.
     If \( \beta \) is large compared to \( \ell \) (in the sense that \( \ell  (1-\theta(\beta)) \) is very small), then it is very likely that no plaquette \( p \) close to \( \gamma \)  satisfies \( d\sigma(p) \neq 0\). This will ensure that \( \langle  W_\gamma \rangle_\beta \approx 1 \).
     On the other hand, if  \( \beta \) is small compared to \( \ell \) (in the sense that \( \ell  (1-\theta(\beta))  \) is very large), then with high probability there are many plaquettes \( p \) close to \( \gamma \) with \( d\sigma(p) \neq 0\). In fact, there will be sufficiently many such plaquettes to introduce independence in the model, which will ensure that \( \langle  W_\gamma \rangle_\beta \approx 0 \).
     Finally, if \( \ell  (1-\theta(\beta)) \) is neither very small nor very large, then the model turns out to behave as a certain Poisson process, and this will allow us to use a resampling trick at each edge in \( \gamma \) to approximate $\langle W_\gamma \rangle_\beta$ with \( \theta(\beta)^{\ell} \approx e^{\ell(1-\theta(\beta))} \). The constant  \( \theta(\beta) \) appears naturally in this context since it describes the expected spin at a plaquette adjacent to a given edge, conditioned on the event that all plaquettes adjacent to this edge have the same spin. Recalling that \( n \geq 2 \) and defining \( \xi \coloneqq 1 - \cos(2 \pi/n) > 0\), we have \( \lambda(\beta) = e^{-\beta \xi} \) and \( 1 - \theta(\beta ) \sim  (1 + \mathbb{1}_{n \geq 3})\xi  e^{-12\beta\xi } \) as \( \beta \to \infty \). We see that both $\lambda(\beta)$ and $1 - \theta(\beta)$ are strictly positive and decay exponentially to zero as $\beta \to \infty$. In particular, this implies that the right-hand side of~\eqref{eq: Chatterjee's main equation} is small when \( \beta \) is large and \( \gamma \) is a long loop with relatively few corners.
 \begin{remark}
    In Section~\ref{section: proof of main result}, where the proof of Theorem~\ref{theorem: Chatterjee's main theorem} is completed, we give explicit expressions for the constants \( K' \) and \( K'' \) (see Equation \eqref{CpCppexpressions}). It follows from these expressions that the constant \( K'' \) can be taken arbitrarily close to \( 1/5 \) by  choosing \( \beta_0 \)  sufficiently large. 
\end{remark}

\begin{remark}
    For Theorem~\ref{theorem: Chatterjee's main theorem} to hold, \( \beta_0>0 \) needs to be sufficiently large. In fact, it follows from the proof of Theorem~\ref{theorem: Chatterjee's main theorem} that \( \beta_0 \) is sufficiently large if \( 5 (|G|-1) \lambda(\beta_0)^2 < 1 \) and the following inequality is satisfied for all \( \beta \geq \beta_0 \).
    \begin{equation} 
        \max_{g_1,\ldots,g_6 \in G} \; \biggl[\,   \frac{\sum_{g\in  G}
         e^{-2\beta \sum_{k=1}^6  \Re \rho(g+ g_k)} }{\max_{g \in G}    e^{-2\beta \sum_{k=1}^6 \Re \rho(g+ g_k)}  }
         -
         \Bigl| \argmax_{g \in G}    e^{-2\beta \sum_{k=1}^6 \Re \rho(g+ g_k)} \Bigr|
         \biggr] < \frac{1-\cos(2 \pi/n)}{8}.
    \end{equation}
\end{remark}
 
\begin{remark}
    Our proofs can easily be modified to work for closely related actions as well, such as the Villain action. Also, with very small modifications, they work for any lattice \( \mathbb{Z}^m \) with \( m \geq 3 \).
\end{remark}

\begin{remark}\label{generalgroupremark}
    The proof of the main result can, with some work, be generalized to the case of a general finite abelian structure group with an arbitrary unitary, faithful, and irreducible representation. In fact, most of the results in this paper are not hard to generalize to this case. However, these adaptations are of technical nature and, we feel, only of limited interest. Since at the time of posting this paper on ArXiv, the paper~\cite{cao20} contained an alternative generalization in this direction, we choose to state and prove a weaker result in order to keep the paper shorter and more transparent.
\end{remark}

\subsection{Outline of the proof} Let us discuss the main ideas of the proof of Theorem~\ref{theorem: Chatterjee's main theorem}. 

First, we introduce a notion of oriented surface which is used to rewrite the Wilson loop expectation as a function of plaquette configurations on a surface bounded by the loop rather than as a function of spin configurations along the loop. One reason why it is  convenient to work with surfaces and plaquette configurations is that if \( \beta \) is large and \( \sigma \sim \mu_{N,\beta} \), then the probability that \( d\sigma(p) \neq 0 \) for any given plaquette \( p \) is very small, while the same does not hold for the probability that \( \sigma(e) \neq 0 \) for an edge \( e \).
Second, we introduce a notion of \emph{irreducible} spin configuration. Using this notion, one can think of \( \mu_{N,\beta} \) as a measure on \emph{vortex} configurations, that is, on sums of irreducible spin configurations. (The term vortex is used in many related but slightly different senses in the literature; see Section~\ref{sec: vortices} for the precise definition that we use here.)
A novel and central feature of our approach is a result (see Lemma~\ref{lemma: vortex flip}) which estimates how likely it is that a given plaquette is in the support of a vortex. This result allows us to understand how likely it is that a vortex of a given size influences the value of \( W_\gamma \). In particular, it allows us to deduce that when \(  \ell \lambda(\beta)^{12}\) is small, then with high probability, only very small vortices close to $\gamma$ will influence \( W_\gamma \). Using a resampling argument, this essentially completes the proof in this case. In the case when \( \ell \lambda(\beta)^{12}\) is large, then, again using a resampling argument, we show that \( W_\gamma \) is noisy enough for its expectation to be very close to zero.

\subsection{Relation to other work} 
This paper was inspired by, and builds upon, a recent paper of Chatterjee \cite{c2019}. There, pure lattice gauge theory with Wilson action and structure group $G=\mathbb{Z}_2$ was considered. Chatterjee obtained an expression for the leading term of the expected value of a  Wilson loop in the limit as $\beta$ and $\ell$ tend to infinity simultaneously. 
This paper extends and refines Chatterjee's result to the case of a general finite cyclic structure group and a faithful one-dimensional representation. 
Apart from the more general setting of our paper, our argument replaces an argument based on the duality of the model on the \( \mathbb{Z}^4 \)-lattice and on Dobrushin's criterion with a simpler Peierl's type argument, thus simplifying the proof even in the case \( G = \mathbb{Z}_2 \) (see Lemma~\ref{lemma: vortex flip}). This also allows for direct generalizations to the lattices \( \mathbb{Z}^m \) for \( m \geq 3 \). Also, we extend and refine several of the ideas presented in~\cite{c2019} to a more general setting.

While we were in the final stages of preparing this paper, Cao's preprint \cite{cao20} appeared. Cao obtains a more general result than our main theorem, which also allows for non-abelian finite groups. However, we believe the present paper, while less general, is still of interest because our approach is quite different from Cao's. Whereas the approach of \cite{cao20} relies on a cluster expansion and Stein's method even in the case considered in this paper, ours is based on a rather elementary estimate of the probability for a plaquette to be part of a vortex of a given size. We also note that our error term is significantly smaller than the one in~\cite{cao20} for large $\beta$ if e.g.\ \( \sqrt{\ell_c/\ell} \leq O \bigl( \lambda(\beta)^2 \bigr)\) or \( \ell_c \leq O(\ell^{5/6})\) (see also Remark~\ref{remark: other bounds}).

\subsection{Organization of the paper} 
In Section~\ref{sec: preliminaries}, we give the necessary background on discrete exterior calculus which will be needed for the rest of the paper. In particular, in Section~\ref{sec: oriented surfaces}, we define oriented surfaces, and describe how these naturally correspond to oriented loops in the lattice.
In Section~\ref{sec: mubetaNsubsec}, we describe how \( \mu_{N,\beta} \) can be viewed as a measure on \( \Omega^2_0(B_N,G) \) rather than as a measure on \( \Omega^1(B_N,G) \) when considering \( W_\gamma. \) Next, in Section~\ref{sec: ginibre}, we recall Ginibre's inequality, and show how it implies the existence of the infinite volume limit of \( \mathbb{E}_{N,\beta}[W_\gamma]. \)
In Section~\ref{sec: vortices}, we give our definition of vortex and establish related terminology as well as some useful results.  Finally, in Section~\ref{section: proof of main result}, we give a proof of our main result.

\subsection*{Acknowledgements}  
MPF acknowledges support from the European Research Council, Grant Agreement No. 682537, and the Swedish Research Council, Grant No. 2015-05430.
JL is grateful for support from the G\"oran Gustafsson Foundation, the Ruth and Nils-Erik Stenb\"ack Foundation, the Swedish Research Council, Grant No. 2015-05430, and the European Research Council, Grant Agreement No. 682537.
FV acknowledges support from the Knut and Alice Wallenberg Foundation, the Swedish Research Council, and the Ruth and Nils-Erik Stenb\"ack Foundation. Thanks go to Juhan Aru for comments on an earlier version of our paper. We are grateful for the many very helpful comments provided by an anonymous referee.

\section{Preliminaries}\label{sec: preliminaries}

This section collects definitions and known results needed in the proof of our main result. In some cases, if we could not find a clean reference, we have included derivations of results even though they could be considered well-known.

\subsection{The cell complex}
In this section, we introduce notation for the cell complexes of the lattices \( \mathbb{Z}^m \) and \( B_N \coloneqq [-N,N]^m \cap  \mathbb{Z}^m \) for \( m,N \geq 1 \).

Any set \( B \) of the form \( \bigl( [a_1,b_1] \times \cdots \times [a_m,b_m] \bigr) \cap \mathbb{Z}^m\) where, for each \( j \in \{ 1,2, \ldots, m \} \), \( \{a_j, b_j\}  \subset \mathbb{Z} \) satisfies \(a_j < b_j\), will be referred to as a  \emph{box}. If all the intervals \( [a_j,b_j]\), \(1 \leq j \leq m\), have the same length, then the set \( \bigl( [a_1,b_1] \times \cdots \times [a_m,b_m] \bigr) \cap \mathbb{Z}^m\) will be referred to as a {\it cube}.

To simplify notation, we define \( e_1 \coloneqq (1,0,\dots,0) \), \( e_2 \coloneqq (0,1,0,\dots,0) \), \ldots, \( e_m \coloneqq (0,\dots,0,1) \).

\subsubsection{Non-oriented cells}

When \( a \in \mathbb{Z}^m \), \( k \in \{ 0,1, \dots, m \} \), and \( \{ j_1,\dots, j_k \} \subseteq \{ 1,2, \dots, m \} \), we say that the set
\begin{equation*}
    (a; e_{j_1}, \dots, e_{j_k}) \coloneqq \bigl\{ x \in \mathbb{R}^m \colon \exists b_1, \dots, b_k \in [0,1] \text{ such that } x = a + \sum_{i=1}^k b_i e_{j_i}  \bigr\}
\end{equation*}
is a \emph{non-oriented \( k \)-cell}. Note that if \( \sigma \) is a permutation, then \( (a; e_{j_1}, \dots, e_{j_k}) \) and \( (a; \sigma(e_{j_1}, \dots, e_{j_k})) \) represent the same non-oriented \( k \)-cell.

\subsubsection{Oriented cells}\label{sec: oriented cells}
To each non-oriented $k$-cell \( (a; e_{j_1}, \dots, e_{j_k}) \) with \( a \in \mathbb{Z}^m \), \( k \geq 1 \), and \( 1\leq j_1 < \dots < j_k\leq m \), we associate two \emph{oriented \( k \)-cells}, denoted \(  \frac{\partial}{\partial x^{j_1}}\big|_a \wedge \dots \wedge \frac{\partial}{\partial x^{j_k}}\big|_a\) and \( -\frac{\partial}{\partial x^{j_1}}\big|_a \wedge \dots \wedge \frac{\partial}{\partial x^{j_k}}\big|_a \), with opposite orientation.  
When \( a \in \mathbb{Z}^m \), \( 1\leq j_1 < \dots < j_k\leq m \), and \( \sigma  \) is a permutation of \( \{ 1,2, \dots, k \} \), we define
\begin{equation*}
    \frac{\partial}{\partial x^{j_{\sigma(1)}}}\bigg|_a \wedge \dots \wedge \frac{\partial}{\partial x^{j_{\sigma(k)}}}\bigg|_a 
    \coloneqq 
    \sgn(\sigma) \, 
    \frac{\partial}{\partial x^{j_1}}\bigg|_a \wedge \dots \wedge \frac{\partial}{\partial x^{j_k}}\bigg|_a
\end{equation*}
If \( \sgn(\sigma)=1 \), then  \( \frac{\partial}{\partial x^{j_{\sigma(1)}}}\big|_a \wedge \dots \wedge \frac{\partial}{\partial x^{j_{\sigma(k)}}}\big|_a \) is said to be \emph{positively oriented}, and if \( \sgn(\sigma)=-1 \), then \( \frac{\partial}{\partial x^{j_{\sigma(1)}}}\big|_a \wedge \dots \wedge \frac{\partial}{\partial x^{j_{\sigma(k)}}}\big|_a  \) is said to be \emph{negatively oriented}. 
Analogously, we define \begin{equation*}
    -\frac{\partial}{\partial x^{j_{\sigma(1)}}}\bigg|_a \wedge \dots \wedge \frac{\partial}{\partial x^{j_{\sigma(k)}}}\bigg|_a 
    \coloneqq 
    -\sgn(\sigma) \, 
    \frac{\partial}{\partial x^{j_1}}\bigg|_a \wedge \dots \wedge \frac{\partial}{\partial x^{j_k}}\bigg|_a,
\end{equation*}
and say that \( -\frac{\partial}{\partial x^{j_{\sigma(1)}}}\bigg|_a \wedge \dots \wedge \frac{\partial}{\partial x^{j_{\sigma(k)}}}\bigg|_a  \) is positively oriented if \( -\sgn(\sigma) = 1 \), and negatively oriented if \( -\sgn(\sigma) = -1. \)

Let $\mathcal{L} = \mathbb{Z}^m$ or $\mathcal{L} = B_N \subseteq \mathbb{Z}^m$. An oriented cell \( \frac{\partial}{\partial x^{j_1}}\big|_a \wedge \dots \wedge \frac{\partial}{\partial x^{j_k}}\big|_a \) is said to be in \( \mathcal{L} \) if all corners of \( (a;e_{j_1},\dots, e_{j_k})\) belong to \( \mathcal{L} \); otherwise it is said to be {\it outside} $\mathcal{L}$. 
The set of all oriented \( k \)-cells in \( \mathcal{L} \) will be denoted by \( C_k(\mathcal{L}). \) The set of all positively and negatively oriented cells in \( C_k(\mathcal{L}) \) will be denoted by \( C_k^+(\mathcal{L})\) and \( C_k^-(\mathcal{L})\), respectively.
A set \( C \subseteq C_k(\mathcal{L}) \) is said to be \emph{symmetric} if for each \( c \in C \) we have \( -c \in C \).

A non-oriented 0-cell \( a\in \mathbb{Z}^m \) is simply a point, and to each point we associate two oriented \(0\)-cells \( a^+ \) and \( a^-  \) with opposite orientation. We let \( C_0(\mathcal{L}) \) denote the set of all oriented \( 0 \)-cells.

Oriented 1-cells will be referred to as~\emph{edges}, and oriented 2-cells will be referred to as~\emph{plaquettes}.

\subsubsection{\( k \)-chains}\label{sec: chains}

The space of finite formal sums of positively oriented \( k \)-cells with integer coefficients will be denoted by \( C_k(\mathcal{L},\mathbb{Z}) \). 
Elements of \( C_k(\mathcal{L},\mathbb{Z}) \) will be referred to as \emph{\( k \)-chains}. 
If \( q \in C_k(\mathcal{L},\mathbb{Z}) \) and \( c \in C^+_k(\mathcal{L}) \), we let \( q[c] \) denote the coefficient of \( c \) in \( q \).
If \( c \in C^-_k(\mathcal{L}) \), we let \( q[c]\coloneqq -q[-c]. \)
For \(q,q' \in  C_k(\mathcal{L},\mathbb{Z}) \), we define
\begin{equation*}
    q+q' \coloneqq \sum_{c \in C_k^+(\mathcal{L})} \bigl(q[c] + q'[c] \bigr) c.
\end{equation*}
Using this operation, \( C_k(\mathcal{L},\mathbb{Z}) \) becomes a group.

When \( q \in C_k(\mathcal{L},G) \), we let the \emph{support} of \( q \) be defined by
\begin{equation*}
    \support q \coloneqq \bigl\{ c \in C_k^+(\mathcal{L}) \colon q[c] \neq 0 \bigr\}.
\end{equation*}

To simplify notation, when \( q \in C_k(\mathcal{L},G) \) and \( c\in C_k(\mathcal{L}) \), we write \( c \in q \) if either
\begin{enumerate}
    \item \( c \in C_k^+(\mathcal{L}) \) and \( q[c]>0\), or
    \item \( c \in C_k^-(\mathcal{L}) \) and \( q[-c]<0. \)
\end{enumerate}

\subsubsection{The boundary of a cell}

When \( k \geq 2 \), we define the \emph{boundary} \(\partial c \in C_{k-1}(\mathcal{L}, \mathbb{Z})\) of  \( c = \frac{\partial}{\partial x^{j_1}}\big|_a \wedge \dots \wedge \frac{\partial}{\partial x^{j_k}}\big|_a \in C_k(\mathcal{L})\) by
\begin{equation}\label{eq: boundary chain}
    \begin{split}
        \partial c \coloneqq \sum_{k' \in \{ 1,\dots, k \}}  \biggl(&
        (-1)^{k'}  \frac{\partial}{\partial x^{j_1}}\bigg|_a \wedge \dots \wedge \frac{\partial}{\partial x^{j_{k'-1}}}\bigg|_a \wedge  \frac{\partial}{\partial x^{j_{k'+1}}}\bigg|_a \wedge \dots \wedge \frac{\partial}{\partial x^{j_k}}\bigg|_a
        \\
        & + (-1)^{k'+1}   
        \frac{\partial}{\partial x^{j_1}}\bigg|_{a + e_{j_{k'}}} \wedge \dots \wedge \frac{\partial}{\partial x^{j_{k'-1}}}\bigg|_{a + e_{j_{k'}}} \wedge  \frac{\partial}{\partial x^{j_{k'+1}}}\bigg|_{a + e_{j_{k'}}} \wedge \dots \wedge \frac{\partial}{\partial x^{j_k}}\bigg|_{a + e_{j_{k'}}} 
        \biggr).
    \end{split}
\end{equation}
When \( c \coloneqq \frac{\partial}{\partial x^{j_1}}\big|_a \in C_1(\mathcal{L})\) we define the boundary \( \partial c \in C_0(\mathcal{L},\mathbb{Z}) \) by
\begin{equation*}
    \partial c = (-1)^1 a^+ + (-1)^{1+1} 
    (a+e_{j_1})^+ = (a+e_{j_1})^+ - a^+.
\end{equation*}

We extend the definition of \( \partial \) to \( k \)-chains \( q \in C_k(\mathcal{L},\mathbb{Z}) \) by linearity.
One verifies, as an immediate consequence of this definition, that if \( k \in \{ 2,3, \dots, m \} \), then \( \partial \partial c = 0 \) for any \( c \in \Omega_k(\mathcal{L}). \)

For an illustration of the boundary of an oriented 2-cell, see Figure~\ref{fig: boundary}.

\begin{figure*}[htp]
    \centering
	\begin{subfigure}[t]{0.24\textwidth}\centering
		\begin{tikzpicture}[scale=1]  
            	\fill[fill=detailcolor07, fill opacity=0.24] (0.02,0.02) -- (0.98,0.02) -- (0.98,0.98) -- (0.02,0.98) -- (0.02,0.02); 
            	
            	\draw (0.5,0.5) node[align=center] {\large $\circlearrowleft$};
            	
            	\draw[white] (0.5,-0.3) circle (2pt);
		\end{tikzpicture}
		\caption{An oriented 2-cell \( p.\)}
	\end{subfigure}
	\hfil
	\begin{subfigure}[t]{0.24\textwidth}\centering
		\begin{tikzpicture}[scale=1]  
            \fill[fill=detailcolor07, fill opacity=0.24] (0.02,0.02) -- (0.98,0.02) -- (0.98,0.98) -- (0.02,0.98) -- (0.02,0.02); 
            \draw (0.5,0.5) node[align=center] {\large $\circlearrowleft$};
            \draw (0.85,0.2) node[align=center] {\footnotesize $1$};
    
            \draw[white] (0.5,-0.3) circle (2pt);
		\end{tikzpicture}
		\caption{The 2-chain \( 1 \cdot p.\)}
	\end{subfigure}
	\hfil
	\begin{subfigure}[t]{0.24\textwidth}\centering
		\begin{tikzpicture}[scale=1]  
            
            \draw[detailcolor08!60!black,semithick] (0.02,0) -- (0.98,0);
            
            \draw[detailcolor08!60!black,semithick] (1,0.02)   -- (1,0.98);
            
            \draw[detailcolor08!60!black,semithick] (0.98,1) -- (0.02,1);
            
            \draw[detailcolor08!60!black,semithick] (0,0.98) -- (0,0.02);
            	
            \begin{scope}[decoration={markings, mark=at position 0.92 with {\arrow{>}}}] 
    		    \draw[semithick,draw=detailcolor01!10!black, postaction={decorate}] (0.55,0)  node[anchor=north] {\scriptsize $1$} -- (0.6,0);
    		    \draw[semithick,draw=detailcolor01!10!black, postaction={decorate}] (0.55,1)  node[anchor=south] {\scriptsize $\mathllap{-}1$} -- (0.6,1);
    		    \draw[semithick,draw=detailcolor01!10!black, postaction={decorate}] (1,0.55)  node[anchor=west] {\scriptsize $1$} -- (1,0.6);
    		    \draw[semithick,draw=detailcolor01!10!black, postaction={decorate}] (0,0.55) node[anchor=east] {\scriptsize $\mathllap{-}1$} -- (0,0.6);
    		 \end{scope}
            	
                \draw[white] (0.5,-0.3) circle (2pt);
    		 
		\end{tikzpicture}
		\caption{The 1-chain \( \partial p. \)}
	\end{subfigure}
	\hfil
	\begin{subfigure}[t]{0.24\textwidth}\centering
		\begin{tikzpicture}[scale=1]  
            
            \draw[detailcolor08!60!black,semithick] (0.02,0) -- (0.98,0);
            
            \draw[detailcolor08!60!black,semithick] (1,0.02)   -- (1,0.98);
            
            \draw[detailcolor08!60!black,semithick] (0.98,1) -- (0.02,1);
            
            \draw[detailcolor08!60!black,semithick] (0,0.98) -- (0,0.02);
            	
            \begin{scope}[decoration={markings, mark=at position 0.92 with {\arrow{>}}}] 
    		    \draw[semithick,draw=detailcolor01!10!black, postaction={decorate}] (0.55,0) -- (0.6,0);
    		    \draw[semithick,draw=detailcolor01!10!black, postaction={decorate}] (0.45,1) -- (0.4,1);
    		    \draw[semithick,draw=detailcolor01!10!black, postaction={decorate}] (1,0.55) -- (1,0.6);
    		    \draw[semithick,draw=detailcolor01!10!black, postaction={decorate}] (0,0.45)  -- (0,0.4);
    		 \end{scope}
            	
                \draw[white] (0.5,-0.3) circle (2pt);
    		 
		\end{tikzpicture}
		\caption{The edges \( e\in\partial p. \)}
	\end{subfigure}
	\caption{An illustration of the boundary of an oriented 2-cell.}\label{fig: boundary}
\end{figure*}
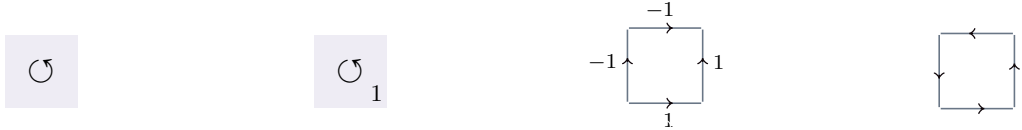

\subsubsection{The coboundary of an oriented cell}\label{sec: coboundary}

If \( k \in \{ 0,1, \ldots, n-1 \} \) and \( c \in C_k(\mathcal{L})\) is an oriented \( k \)-cell, we define the \emph{coboundary} \( \hat \partial c \in C_{k+1}(\mathcal{L})\) of \( c \) as the \( (k+1) \)-chain 
\begin{equation*}
	\hat \partial c \coloneqq \sum_{c' \in C_{k+1}(\mathcal{L})} \bigl(\partial c'[c] \bigr) c'.
\end{equation*}
Note in particular that if \( c' \in C_{k+1}(\mathcal{L}),\) then
\begin{equation*}
    \hat \partial c[c'] = \partial c'[c].
\end{equation*}

For an illustration of the coboundary of an oriented 1-cell, see Figure~\ref{fig: coboundary}.

\begin{figure*}[htp]
    \centering
	\begin{subfigure}[t]{0.24\textwidth}\centering
		\begin{tikzpicture}[scale=1]  
            	\draw[white] (0,-1) circle (1pt);
				\draw (0,0) -- (1,0);
            		\begin{scope}[decoration={markings, mark=at position 0.92 with {\arrow{>}}}] 
    		    \draw[semithick,draw=detailcolor01!10!black, postaction={decorate}] (0.55,0) -- (0.6,0);
    		 \end{scope} 
		\end{tikzpicture}
		\caption{An oriented 1-cell \( e. \)}
	\end{subfigure}
	\hfil
	\begin{subfigure}[t]{0.24\textwidth}\centering
		\begin{tikzpicture}[scale=1]  
            	\draw[white] (0,-1) circle (1pt);
				\draw (0,0) -- (1,0);
            		\begin{scope}[decoration={markings, mark=at position 0.92 with {\arrow{>}}}] 
    		    \draw[semithick,draw=detailcolor01!10!black, postaction={decorate}] (0.55,0)  node[anchor=north] {\scriptsize $1$} -- (0.6,0);
    		 \end{scope} 
		\end{tikzpicture}
		\caption{The 1-chain \( 1\cdot e .\)}
	\end{subfigure}
	\hfil
	\begin{subfigure}[t]{0.24\textwidth}\centering
		\begin{tikzpicture}[scale=1]   
            	\draw[white] (0,-1) circle (1pt);
		
            	\fill[fill=detailcolor07, fill opacity=0.24] (0.02,0.02) -- (0.98,0.02) -- (0.98,0.98) -- (0.02,0.98) -- (0.02,0.02);  
            	\draw (0.5,0.5) node[align=center] {\large $\circlearrowleft$};
            	\draw (0.85,0.2) node[align=center] {\footnotesize $1$};

            	\fill[fill=detailcolor07, fill opacity=0.24] (0.02,-0.98) -- (0.98,-0.98) -- (0.98,-0.02) -- (0.02,-0.02) -- (0.02,-0.98);  
            	\draw (0.5,-0.5) node[align=center] {\large $\circlearrowleft$};
            	\draw (0.85,-0.8) node[align=center] {\footnotesize $\mathllap{-}1$};
		\end{tikzpicture}
		\caption{The 2-chain  \( \hat \partial e. \)}
	\end{subfigure}\hfil
	\begin{subfigure}[t]{0.24\textwidth}\centering
		\begin{tikzpicture}[scale=1]   
            	\draw[white] (0,-1) circle (1pt);
		
            	\fill[fill=detailcolor07, fill opacity=0.24] (0.02,0.02) -- (0.98,0.02) -- (0.98,0.98) -- (0.02,0.98) -- (0.02,0.02);  
            	\draw (0.5,0.5) node[align=center] {\large $\circlearrowleft$};
            	
            	\fill[fill=detailcolor07, fill opacity=0.24] (0.02,-0.98) -- (0.98,-0.98) -- (0.98,-0.02) -- (0.02,-0.02) -- (0.02,-0.98);  
            	\draw (0.5,-0.5) node[align=center] {\large $\circlearrowright$};
		\end{tikzpicture}
		\caption{The plaquettes {\( {p \in \hat \partial e.} \)}}
	\end{subfigure}
	\caption{An illustration of the coboundary of an oriented 1-cell.}\label{fig: coboundary}
\end{figure*}

We extend the definition of \( \hat{\partial} \) to \( k \)-chains \( q \in C_k(\mathcal{L},\mathbb{Z}) \) by linearity.

\subsubsection{The boundary of a box}

An oriented \( k \)-cell \( c = \frac{\partial}{\partial x^{j_1}}\big|_a \wedge \dots \wedge \frac{\partial}{\partial x^{j_k}}\big|_a \in C_k(B_N)\) is said to be a \emph{boundary cell} of a box \( B = \bigl( [a_1,b_1]\times \dots \times [a_m,b_m] \bigr) \cap \mathbb{Z}^m \subseteq B_N\), or equivalently to be in \emph{the boundary} of \( B \), if the non-oriented cell \( (a;e_{j_1}, \dots, e_{j_k}) \) intersects the boundary of 
\(  [a_1,b_1]\times \dots \times [a_m,b_m]. \)
Note that if \( k \in \{ 0,1, \dots, m-1 \} \) and \( c \) is in the boundary of \( B \), then there is a \( (k+1) \)-cell \(  c' \in C_{k+1}(\mathbb{Z}^m)\smallsetminus C_{k+1}(B) \) such that \( \hat \partial c[c'] \neq 0 \).

\subsection{The dual cell complex}

The lattice \( \mathbb{Z}^m \) has a natural dual, called the \( \emph{dual lattice} \) and denoted by \( (\mathbb{Z}^m)^* \). In this context, the lattice \( \mathbb{Z}^m \) is called the \emph{primal lattice.} The vertices of the dual lattice \( (\mathbb{Z}^m)^* \) are placed at the centers of the non-oriented \( m \)-cells of the primal lattice. For the dual lattice we replace the unit vectors \( e_1, e_2, \dots, e_m \) in the definition of cells, with the vectors \( \hat e_1 \coloneqq -e_1,\dots, \hat e_m \coloneqq -e_m \) (see Figure~\ref{figure: dual lattice}). 
For $b \in (\mathbb{Z}^m)^*$ and $j = 1, \dots, m$, we let \( \frac{\partial}{\partial y^j} \big|_b \) denote the oriented \( 1 \)-cell in the dual lattice that starts at \( b \) and ends at $b + \hat{e}_j = b - e_j$.

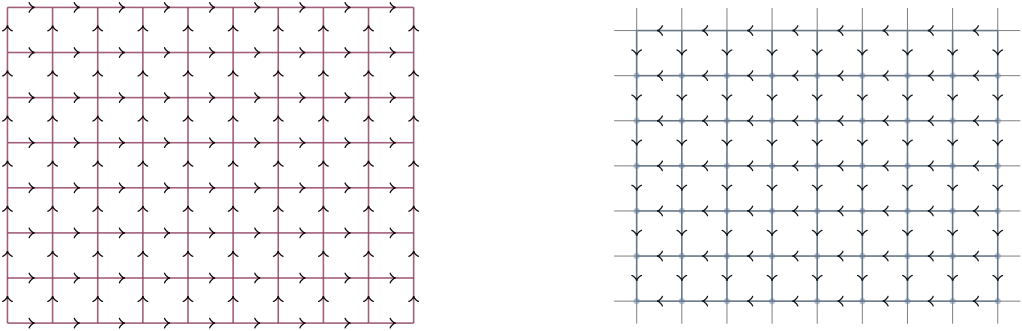
\begin{figure}[htp]
    \centering
        \begin{subfigure}[t]{0.45\textwidth}\centering
            \begin{tikzpicture}[scale=0.6]  
        
        \begin{scope}[decoration={markings, mark=at position 0.92 with {\arrow{>}}}] 
    		    \foreach \x in {-1,...,7} 
    		        \foreach \y in {-2,...,5}  { 
    		        
    		            \draw[detailcolor01,semithick, opacity=0.8] (\x,\y) -- (\x+1,\y); \draw[semithick,draw=detailcolor01!10!black, postaction={decorate}] (\x+0.55,\y) -- (\x+0.6,\y);
    		      };
    		  
    		  \foreach \x in {-1,...,8} 
    		        \foreach \y in {-2,...,4}  { 
    		        
    		            \draw[detailcolor01,semithick, opacity=0.8] (\x,\y) -- (\x,\y+1); \draw[semithick,draw=detailcolor01!10!black, postaction={decorate}] (\x,\y+0.55) -- (\x,\y+0.6);
    		      };
    		    
		\end{scope}

        \end{tikzpicture}
        \caption{The positively oriented 1-cells in the primal lattice.}
        \end{subfigure}
        \hfil
        \begin{subfigure}[t]{0.45\textwidth}\centering
            \begin{tikzpicture}[scale=0.6]  
        
                \draw[white] (0,0) circle (1pt); 
                    \foreach \x in {-1,...,7} 
    		            \foreach \y in {-2,...,3}  { 
            		        \fill[detailcolor08] (\x,\y) circle (2pt);
    		        };
    		      
    		      \foreach \x in {-1,...,7} {
    		        \draw[help lines] (\x,-2.5) -- (\x,-2);
    		        \draw[help lines] (\x,4) -- (\x,4.5);
    		        };
    		      
    		      \foreach \y in {-2,...,4} {
    		        \draw[help lines] (-1.5,\y) -- (-1,\y);
    		        \draw[help lines] (7,\y) -- (7.5,\y);
    		        };

                \begin{scope}[decoration={markings, mark=at position 0.92 with {\arrow{>}}}] 
    		    \foreach \x in {-1,...,6} 
    		        \foreach \y in {-2,...,4}  { 
    		            \draw[detailcolor08!60!black,semithick] (\x,\y) -- (\x+1,\y); \draw[semithick,draw=detailcolor08!10!black, postaction={decorate}] (\x+0.5,\y) -- (\x+0.45,\y);
    		      };
    		  
    		  \foreach \x in {-1,...,7} 
    		        \foreach \y in {-2,...,3}  { 
    		            \draw[detailcolor08!60!black,semithick] (\x,\y) -- (\x,\y+1); \draw[semithick,draw=detailcolor08!10!black, postaction={decorate}] (\x,\y+0.5) -- (\x,\y+0.45);
    		      };
    		    
		\end{scope}

        \end{tikzpicture}
        
        \caption{The positively oriented 1-cells in the dual lattice.}
    \end{subfigure}
    \caption{In the figures above, we draw the positively oriented 1-cells in the primal and dual lattices respectively.}
    \label{figure: dual lattice}
\end{figure}

More generally, for \( k \in \{ 0,1,\ldots, m \} \), there is a bijection between the set of oriented \( k \)-cells of \( \mathbb{Z}^m \) and the set of oriented \( (m-k) \)-cells of \((\mathbb{Z}^m)^* \) defined as follows. 
For each \( a \in \mathbb{Z}^m \), let \(   b \coloneqq *(a;e_1,\dots,e_m)\in (\mathbb{Z}^m)^*\) be the point at the center of the primal lattice non-oriented \( m \)-cell \( (a;e_1,\dots,e_m) \).
With this in mind, we define
\begin{equation*}
    *(a^+) \coloneqq \frac{\partial}{\partial y^{1}} \bigg|_b \wedge \dots \wedge  \frac{\partial}{\partial y^{m} }\bigg|_b, \qquad
    * \bigg(\frac{\partial}{\partial x^{1}} \bigg|_a \wedge \dots \wedge  \frac{\partial}{\partial x^{m} }\bigg|_a \bigg) \coloneqq b^+,
\end{equation*}
and
\begin{equation*}
    * \bigg(\frac{\partial}{\partial y^{1}} \bigg|_b \wedge \dots \wedge  \frac{\partial}{\partial y^{m} }\bigg|_b \bigg) \coloneqq a^+,
    \qquad 
    *(b^+) \coloneqq  \frac{\partial}{\partial x^{1}} \bigg|_a \wedge \dots \wedge  \frac{\partial}{\partial x^{m} }\bigg|_a.
\end{equation*}
Next,  let \( k \in \{ 1, \ldots, m-1 \} \) and assume that \( 1 \leq j_1 < \cdots < j_k \leq m \) and  \( a \in \mathbb{Z}^m\) are given. Then \( 
\frac{\partial}{\partial x^{j_1}} \big|_a \wedge \dots \wedge \frac{\partial}{\partial x^{j_k}} \big|_a\) is a positively oriented \( k \)-cell in \( \mathbb{Z}^m \). 
Let \( {i_1}, \ldots, i_{m-k} \) be any enumeration of \( \{ 1,2, \ldots, m \} \smallsetminus \{ j_1, \ldots, j_k \} \), and let \(  \sgn (j_1,\ldots, j_k, i_{1}, \ldots, i_{m-k} )\) denote the sign of the permutation that maps $(1,2,\ldots, m)$ to \( (j_1,\ldots, j_k, i_{1}, \ldots, i_{m-k} ) \). 
Define 
\begin{equation*}
    *\bigg({\frac{\partial}{\partial x^{j_1}} \bigg|_a \wedge \dots \wedge \frac{\partial}{\partial x^{j_k}}} \bigg|_a\bigg)
    \coloneqq 
    \sgn (j_1,\ldots, j_k, i_{1}, \ldots, i_{m-k} )\, 
    \frac{\partial}{\partial y^{i_1}}\bigg|_b \wedge \dots \wedge  \frac{\partial}{\partial y^{i_{m-k}} } \bigg|_b.
\end{equation*}
Analogously, we define
\begin{equation*}
    \begin{split}
         &{*}  \bigg(\frac{\partial}{\partial y^{i_1}} \bigg|_b \wedge \dots \wedge  \frac{\partial}{\partial y^{i_{m-k}} }\bigg|_b \bigg) 
         \coloneqq
        \sgn (i_{1}, \ldots, i_{m-k},j_1,\ldots, j_k)\, \frac{\partial}{\partial x^{j_1}} \bigg|_a \wedge 
        \dots \wedge \frac{\partial}{\partial x^{j_k}} \bigg|_a.
    \end{split}
\end{equation*}
We extend these definitions from \( C_k^+(B_N) \) to \( C_k(B_N) \) and \( C_k(B_N,\mathbb{Z}) \) by linearity.

If \( B \) is a box in \( \mathbb{Z}^m \), then we define
\begin{equation*}
    B^* \coloneqq \bigl\{ y \in (\mathbb{Z}^m)^* \colon \exists x \in C_0(B) \text{ such that } y \text{ is a corner in } {*x} \bigr\}.
\end{equation*}
Note that with this definition, \( B \subsetneq (B^*)^* \) (see also Figure~\ref{figure: dual box}).

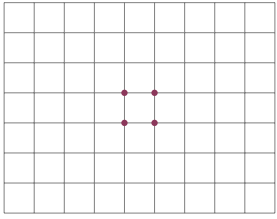
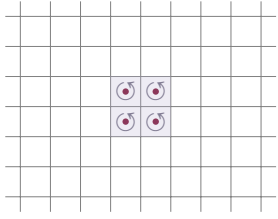
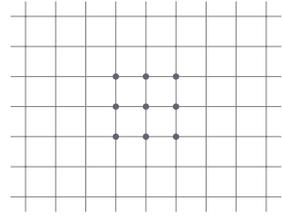
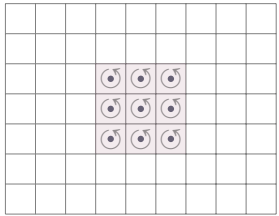
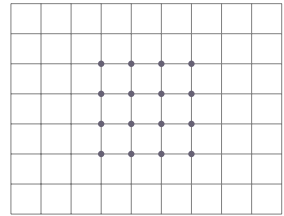
\begin{figure}[htp]
    \centering
    \begin{subfigure}[t]{0.3\textwidth}\centering 
        \begin{tikzpicture}[scale=0.4] 
        \draw[help lines] (0,0) grid (9,7); 
            
            \fill[fill=detailcolor01] (4,3) circle (3pt);
            \fill[fill=detailcolor01] (5,3) circle (3pt);
            \fill[fill=detailcolor01] (5,4) circle (3pt);
            \fill[fill=detailcolor01] (4,4) circle (3pt);
            
        \end{tikzpicture}
        \caption{A box \( B \) containing four points in \( \mathbb{Z}^2 \) (purple dots).}
    \end{subfigure}
        \hfil 
    \begin{subfigure}[t]{0.3\textwidth}\centering 
        \begin{tikzpicture}[scale=0.4] 
        
            \draw[white] (0,0) circle (1pt);
            \begin{scope}[shift={(0.5,0.5)}]
                \draw[help lines] (-0.5,-0.5) grid (8.5,6.5); 
            \end{scope} 
        
            \fill[fill=detailcolor07, fill opacity=0.24] (4-0.5,3-0.5) -- (5+0.5,3-0.5) -- (5+0.5,4+0.5) -- (4-0.5,4+0.5) -- (4-0.5,3-0.5);  
            
            \fill[fill=detailcolor01] (4,3) circle (3pt);
            \fill[fill=detailcolor01] (5,3) circle (3pt);
            \fill[fill=detailcolor01] (5,4) circle (3pt);
            \fill[fill=detailcolor01] (4,4) circle (3pt);
            
            \draw[detailcolor07!70!black] (4,3) node {\footnotesize $\circlearrowleft$};
            \draw[detailcolor07!70!black] (5,3) node {\footnotesize $\circlearrowleft$};
            \draw[detailcolor07!70!black] (5,4) node {\footnotesize $\circlearrowleft$};
            \draw[detailcolor07!70!black] (4,4) node {\footnotesize $\circlearrowleft$};
            
        \end{tikzpicture}
        \caption{The set \( B \) (purple), together with the positively oriented 2-cells in the dual lattice which are equal to \( *a \) for some \( a \in C_0^+(B) \) (blue).}
    \end{subfigure}
    \hfil
    \begin{subfigure}[t]{0.3\textwidth}\centering 
        \begin{tikzpicture}[scale=0.4] 
        
            \draw[white] (0,0) circle (1pt);
            \begin{scope}[shift={(0.5,0.5)}]
                \draw[help lines] (-0.5,-0.5) grid (8.5,6.5); 
            \end{scope}
            
            \foreach \x in {4,5,6} 
    		        \foreach \y in {3,4,5}  { 
    		        	\fill[fill=detailcolor07!55!black] (\x-0.5,\y-0.5) circle (3pt);
    		      }; 
    		      
        \end{tikzpicture}
        \caption{The set \( B^* \) (blue),  consisting of nine points in \( (\mathbb{Z}^2)^* .\)}
    \end{subfigure}
    
    \vspace{2ex}
    \begin{subfigure}[t]{0.3\textwidth}\centering 
        \begin{tikzpicture}[scale=0.4] 
        
            \draw[help lines] (0,0) grid (9,7);  
            
    		\fill[fill=detailcolor01, fill opacity=0.1] (3,2) -- (6,2) -- (6,5) -- (3,5) -- (3,2); 
            
            \foreach \x in {4,5,6} 
    		        \foreach \y in {3,4,5}  { 
    		        	\fill[fill=detailcolor07!55!black] (\x-0.5,\y-0.5) circle (3pt);
    		        	
    		        	\draw[fill=detailcolor01,opacity=0.4] (\x-0.5,\y-0.5) node {\small $\circlearrowleft$};
    		      };

        \end{tikzpicture}
        \caption{The set \( B^* \) (blue) together with the positively oriented 2-cells in the primal lattice which are equal to \( *b \) for some \( b \in C_0^+(B^*) \) (purple).}
    \end{subfigure}
    \hfil
    \begin{subfigure}[t]{0.3\textwidth}\centering 
        \begin{tikzpicture}[scale=0.4] 
        
            \draw[help lines] (0,0) grid (9,7);  
             
            \foreach \x in {3,4,5,6} 
    		        \foreach \y in {2,3,4,5}  { 
    		        	\fill[fill=detailcolor07!55!black] (\x,\y) circle (3pt);
    		      };

        \end{tikzpicture}
        \caption{The set \( (B^*)^* \) in the primal lattice, consisting of 16 points in \( \mathbb{Z}^2.\)}
    \end{subfigure}
    
    \caption{In the figures above, we illustrate the sets \( B^* \) and \( (B^*)^* \) for a box \( B \) consisting of four points in the primal lattice.}
        \label{figure: dual box}
\end{figure}

We end this subsection with the following lemma.
\begin{lemma}[Lemma 2.4 in~\cite{c2019}]\label{lemma: lemma 2.4}
    Let \( B \)  be any box in \( \mathbb{Z}^m \). Then an oriented \( k \)-cell \(c\) is outside \(B\) if and only if \(*c\) is either outside \(B^*\) or in the boundary of \(B^* \). Moreover, if \( c\) is an oriented \(k\)-cell outside \(B\) that contains a \( (k-1)\)-cell of \(B\), then \( *c \) belongs to the boundary of \(B^* \).
\end{lemma}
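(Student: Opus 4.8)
The plan is to reduce everything to a coordinate‑by‑coordinate comparison of intervals. Write $c=\frac{\partial}{\partial x^{j_1}}\big|_a\wedge\dots\wedge\frac{\partial}{\partial x^{j_k}}\big|_a$ with $1\le j_1<\dots<j_k\le m$, set $J\coloneqq\{j_1,\dots,j_k\}$ and $I\coloneqq\{1,\dots,m\}\smallsetminus J$, and write $B=\bigl([\alpha_1,\beta_1]\times\dots\times[\alpha_m,\beta_m]\bigr)\cap\mathbb Z^m$. First I would record that the non-oriented cell $(a;e_{j_1},\dots,e_{j_k})$ is the product $\prod_i E_i$ of closed intervals with $E_i=\{a_i\}$ for $i\in I$ and $E_i=[a_i,a_i+1]$ for $i\in J$, so that ``$c$ is outside $B$'' is exactly ``$E_i\not\subseteq[\alpha_i,\beta_i]$ for some $i$''. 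Unwinding the definition of $*$, the cell $*c$ equals, up to orientation, $\frac{\partial}{\partial y^{i_1}}\big|_b\wedge\dots\wedge\frac{\partial}{\partial y^{i_{m-k}}}\big|_b$ with base point $b=a+\tfrac12(e_1+\dots+e_m)$ and free directions $I$; since $\hat e_i=-e_i$, its non-oriented cell is the product $\prod_i\hat E_i$ with $\hat E_i=\{a_i+\tfrac12\}$ for $i\in J$ and $\hat E_i=[a_i-\tfrac12,a_i+\tfrac12]$ for $i\in I$. Thus, coordinate by coordinate, $*$ expands the point $\{a_i\}$ (for $i\in I$) to the unit interval centred on it, and collapses the unit interval $[a_i,a_i+1]$ (for $i\in J$) onto its midpoint.

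Next I would identify $B^*$ explicitly: computing the corners of $*(a^+)$ for a single $a\in B$ (they have $i$-th coordinate $a_i\pm\tfrac12$) and taking the union over $a\in B$, one sees that $B^*$ is the box in the dual lattice consisting of the dual vertices whose $i$-th coordinate ranges over the half-integer grid of $[\alpha_i-\tfrac12,\beta_i+\tfrac12]$, independently in each $i$ (this also reproves $B\subsetneq(B^*)^*$). In these terms, ``$*c$ is outside $B^*$'' reads ``$\hat E_i\not\subseteq[\alpha_i-\tfrac12,\beta_i+\tfrac12]$ for some $i$'', and ``$*c$ is in the boundary of $B^*$'' reads ``$\hat E_i\subseteq[\alpha_i-\tfrac12,\beta_i+\tfrac12]$ for all $i$, and for some $i$ the whole interval $\hat E_i$ is contained in the facet set $\{\alpha_i-\tfrac12,\beta_i+\tfrac12\}$'' — which, since $\hat E_i$ is a genuine unit interval when $i\in I$, can only happen at some $i\in J$, where $\hat E_i$ is a single point.

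Now the first assertion becomes the coordinate-wise equivalence, for each $i$, of ``$E_i\subseteq[\alpha_i,\beta_i]$'' with ``$\hat E_i\subseteq[\alpha_i-\tfrac12,\beta_i+\tfrac12]$ and $\hat E_i\not\subseteq\{\alpha_i-\tfrac12,\beta_i+\tfrac12\}$'': for $i\in J$ both read $\alpha_i\le a_i\le\beta_i-1$, and for $i\in I$ both read $\alpha_i\le a_i\le\beta_i$ (the non-containment being automatic there). Conjoining over $i$ shows ``$c$ is in $B$'' $\iff$ ``$*c$ is in $B^*$ and not in the boundary of $B^*$'', which is the contrapositive of the claimed equivalence. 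For the ``moreover'' part, I would read ``$c$ contains a $(k-1)$-cell of $B$'' as: some codimension-one face of $(a;e_{j_1},\dots,e_{j_k})$ — obtained by collapsing one interval $E_{i_0}$, $i_0\in J$, to an endpoint $a_{i_0}$ or $a_{i_0}+1$ — has all corners in $B$, i.e.\ $E_i\subseteq[\alpha_i,\beta_i]$ for $i\ne i_0$ and $a_{i_0}+\varepsilon\in[\alpha_{i_0},\beta_{i_0}]$ for some $\varepsilon\in\{0,1\}$. Together with ``$c$ outside $B$'' this forces the offending coordinate to be $i_0$ itself, and a two-line case split gives $a_{i_0}\in\{\alpha_{i_0}-1,\beta_{i_0}\}$; feeding this into the formula for the $\hat E_i$ shows $\hat E_i\subseteq[\alpha_i-\tfrac12,\beta_i+\tfrac12]$ for all $i$ (so $*c$ is not outside $B^*$) while $\hat E_{i_0}=\{a_{i_0}+\tfrac12\}$ is one of $\alpha_{i_0}-\tfrac12,\ \beta_{i_0}+\tfrac12$, so $*c$ lies inside a facet of $B^*$, i.e.\ in its boundary.

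The step I expect to be the main obstacle is making the notion ``$*c$ is in the boundary of $B^*$'' precise and using it cleanly: one has to separate cells that merely touch $\partial B^*$ along a low-dimensional face from those that actually lie inside a facet, and to keep the half-integer shift $b=a+\tfrac12\sum_i e_i$ — equivalently, the fact that the facets of $B^*$ sit at $\alpha_i-\tfrac12$ and $\beta_i+\tfrac12$ — consistent throughout; this is exactly where the stated equivalence is tight. The two extremes $k=0$ and $k=m$ should also be checked directly, but they are covered by the same computation with $J$ empty or equal to $\{1,\dots,m\}$.
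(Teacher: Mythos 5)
Your argument is correct. Note that the paper itself does not prove this lemma; it is imported as Lemma~2.4 of~\cite{c2019}, so there is no in-paper proof to compare against, and your coordinatewise verification is the natural direct argument. The one point that deserves emphasis is exactly the one you flag at the end: you read ``\(*c\) is in the boundary of \(B^*\)'' as ``the non-oriented cell of \(*c\) is \emph{contained} in the topological boundary of the solid box \(\prod_i[\alpha_i-\tfrac12,\beta_i+\tfrac12]\)'', whereas the definition in Section~2 of the present paper literally says the non-oriented cell \emph{intersects} that boundary. Under the literal ``intersects'' reading the forward implication of the first assertion fails: for \(m=2\), \(B=[0,2]^2\cap\mathbb{Z}^2\) and \(c=\frac{\partial}{\partial x^1}\big|_{(0,0)}\), the cell \(c\) is in \(B\), yet \(*c\) has non-oriented cell \(\{1/2\}\times[-1/2,1/2]\), which touches the facet \(y_2=-1/2\) of \(B^*\) and would therefore count as a boundary cell. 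Your containment reading is the one under which the lemma is actually true (and is the convention of~\cite{c2019}), and with it every step of your reduction checks out: the coordinatewise dictionary \(E_i\mapsto\hat E_i\), the identification of \(B^*\) as the full product box over \([\alpha_i-\tfrac12,\beta_i+\tfrac12]\), the per-coordinate equivalence (both sides reading \(\alpha_i\le a_i\le\beta_i-1\) for \(i\in J\) and \(\alpha_i\le a_i\le\beta_i\) for \(i\in I\)), and the case split \(a_{i_0}\in\{\alpha_{i_0}-1,\beta_{i_0}\}\) in the ``moreover'' part, including the degenerate cases \(k=0\) and \(k=m\). So the proof is complete; I would only ask you to state explicitly, at the outset, that ``in the boundary of \(B^*\)'' is being taken in the containment sense.
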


\subsection{Discrete exterior calculus}
In what follows, we give a brief overview of discrete exterior calculus on the cell complexes of \( \mathbb{Z}^m \) and \( B = [a_1,b_1] \times \dots \times [a_m,b_m]  \cap  \mathbb{Z}^m \) for \( m \geq 1 \).  

All of the results in this subsection are obtained under the assumption that an abelian group \( G \), which is not necessarily finite, has been given. In particular, they all hold for both \( G=\mathbb{Z}_n \) and \( G=\mathbb{Z} \).

\subsubsection{Discrete differential forms}\label{sec: ddf}

A homomorphism from the group \( C_k(\mathcal{L},\mathbb{Z}) \) to the group \( G \) is called a \emph{\( k \)-form}. The set of all such \( k\)-forms will be denoted by \( \Omega^k(\mathcal{L},G) \). This set becomes an abelian group if we add two homomorphisms by adding their values in \( G \).

The set $C_k^+(\mathcal{L})$ of positively oriented $k$-cells is naturally embedded in  $C_k(\mathcal{L},\mathbb{Z})$ via the map  $c \mapsto 1 \cdot c$, and we will  frequently identify $c \in C_k^+(\mathcal{L})$ with the $k$-chain $1 \cdot c$ using this embedding. Similarly, we will identify a negatively oriented $k$-cell $c \in C_k^-(\mathcal{L})$ with the $k$-chain $(-1) \cdot (-c)$. 
In this way, a $k$-form $\omega$ can be viewed as a \( G \)-valued function on \( C_k(\mathcal{L}) \) with the property that \( \omega(c) = -\omega(-c) \) for all \( c \in C_k(\mathcal{L}) \). Indeed, if \( \omega \in \Omega^k(\mathcal{L},G) \) and \( q = \sum a_i c_i \in C_k(\mathcal{L},\mathbb{Z}) \), we have
\begin{equation*}
    \omega(q) = \omega \bigl(\sum a_i c_i \bigr) = \sum a_i \omega(c_i),
\end{equation*}
and hence a \( k \)-form is uniquely determined by its values on positively oriented \( k \)-cells.  

If \( \omega \) is a \( k \)-form, it is useful to represent it by the formal expression 
\begin{equation*}
    \sum_{1 \leq j_1 < \dots < j_k \leq m} \omega_{j_1\dots j_k} dx^{j_1} \wedge \cdots \wedge dx^{j_k}.
\end{equation*}
where \( \omega_{j_1\dots j_k} \) is a \( G \)-valued function on the set of all \( a \in \mathbb{Z}^m \) such that \( \frac{\partial}{\partial x^{j_1}} \big|_a \wedge \dots \wedge \frac{\partial}{\partial x^{j_k}}\big|_a \in C_k(\mathcal{L})\), defined by
\begin{equation*}
    \omega_{j_1 \dots j_k}(a) = \omega \biggl( \frac{\partial}{\partial x^{j_1}} \bigg|_a \wedge \dots \wedge \frac{\partial}{\partial x^{j_k}} \bigg|_a \biggr).
\end{equation*} 

If \( 1\leq j_1 < \dots < j_k\leq m \) and \( \sigma  \) is a permutation of \( \{ 1,2, \dots, k \} \), we define
\begin{equation*}
    dx^{j_{\sigma(1)}}  \wedge \dots \wedge d x^{j_{\sigma(k)}}
    \coloneqq 
    \sgn(\sigma) \, 
    d  x^{j_1} \wedge \dots \wedge d x^{j_k},
\end{equation*} 
and if \( 1 \leq j_1,\dots, j_k \leq n \) are such that \( j_i = j_{i'} \) for some \( 1 \leq i < i' \leq k \), then we let
\begin{equation*}
    d  x^{j_1} \wedge \dots \wedge d x^{j_k} \coloneqq 0.
\end{equation*}

Given a \( k \)-form \( \omega \), we let \( \support \omega \) denote the support of \( \omega \), i.e., the set of all oriented \( k \)-cells \( c \) such that \( \omega(c) \neq 0 \). Note that $\support \omega$ always contains an even number of elements.

\subsubsection{The exterior derivative}\label{sec: derivative}
Given \( h \colon \mathbb{Z}^m \to G \), \( a \in \mathbb{Z}^m \), and \( i \in \{1,2, \ldots, m \} \), we let 
\begin{equation*}
    \partial_i h(a) \coloneqq h(a+e_i) - h(a) .
\end{equation*}
If \( k \in \{ 0,1,2, \ldots, m \} \) and \( \omega \in \Omega^k(\mathcal{L},G) \), we define the \( (k+1) \)-form \( d\omega \in \Omega^{k+1}(\mathcal{L},G) \) by
\begin{equation*}
    d\omega = \sum_{1 \leq j_1 < \dots < j_k \leq m} \sum_{i=1}^m \partial_i \omega_{j_1,\dots,j_k} \,  dx^i \wedge (dx^{j_1} \wedge \dots \wedge dx^{j_k}).
\end{equation*} 
The operator \( d \) is called the \emph{exterior derivative.}

If \( a_0 \in \mathbb{Z}^m \), \( 1 \leq j_1 < \dots < j_k \leq m \), \( c = {\frac{\partial}{\partial x^{j_1}}\big|_{a_0} \wedge \dots \wedge \frac{\partial}{\partial x^{j_k}}}\big|_{a_0} \), and \( \omega = \omega_{j_1 \dots j_k} d x^{j_1} \wedge \dots \wedge d x^{j_k}, \) where $\omega_{j_1 \dots j_k}(a) = 1$ if $a = a_0$ and $\omega_{j_1 \dots j_k}(a) = 0$ if $a \neq a_0$, then
    \begin{equation*}
    	d\omega
    	= 
        \sum_{i = 1}^m \partial_i \omega_{j_1 \dots j_k} \, dx^i \wedge ( dx^{j_1} \wedge \cdots \wedge dx^{j_k}),
    \end{equation*}
    where
    \begin{equation*}
        (\partial_i \omega_{j_1 \dots j_k})(a) 
        = 
        \begin{cases} 
            1, & a = a_0 - e_i,  \\
            -1, & a = a_0,  \\
            0, & \text{otherwise}.
        \end{cases}
    \end{equation*}
    Using \eqref{eq: boundary chain}, it follows in this special case that whenever \( c' \in C_{k+1}(\mathcal{L}) \), we have \( d\omega(c') = 1 \) if \( c \in \support \partial c' \), \( d\omega(c')=-1 \) if \( -c' \in \support \partial c \), and \( d\omega(c') = 0 \) else.
    As a consequence, for any \( \omega\in \Omega^k(\mathcal{L},G) \) and \( c \in  C_k(\mathcal{L},\mathbb{Z}) \), we have
    \begin{equation}\label{eq: stokes}
	    d\omega(c) = \omega(\partial c).
    \end{equation}
    This equality is known as the \emph{discrete Stokes' theorem.}
    In particular, if \( \omega \in \Omega^2(\mathcal{L},G) \) and \( d\omega = 0 \), then for any \( c \in C_3(\mathcal{L},\mathbb{Z}) \) we have
    \begin{equation}\label{eq: Bianchi}
        \omega(\partial c) = d\omega(c) = 0.
    \end{equation}
    This conclusion is sometimes called \emph{Bianchi's lemma}, and will be useful to us later.

Before ending this section, we note that 
whenever \( k \in \{ 2,3,\dots, m-2\}, \) \( \omega \in \Omega^{k}(\mathcal{L},G) \) and \( c \in C_{k+2}(\mathcal{L}) \), we have \( \partial \partial c = 0 \), and hence by the discrete Stokes theorem, it follows that
\begin{equation*}
    dd\omega(c) = d\omega(\partial c) = \omega(\partial\partial c) = 0.
\end{equation*}
Consequently, we have \( dd\omega = 0 \) for any \( \omega \in \Omega^k(\mathcal{L},G). \)

\subsubsection{Closed forms and the Poincar\'e lemma}

For \( k \in \{ 0,\ldots, m \} \), we say that a \( k \)-form \( \omega \in \Omega^k(\mathcal{L},G) \) is \emph{closed} if \( d\omega(c) = 0 \) for all \( c \in C_{k+1}(\mathcal{L}). \)
The set of all closed forms in \( \Omega^k(\mathcal{L},G) \) will be denoted by \( \Omega^k_0(\mathcal{L},G). \)

\begin{lemma}[The Poincar\'e lemma, Lemma 2.2 in~\cite{c2019}]\label{lemma: poincare}
    Let \( k \in \{ 1, \ldots, m\} \) and let \( B \) be a box in \( \mathbb{Z}^m \). Then the exterior derivative \( d \) is a surjective map from the set \( \Omega^{k-1}(B \cap \mathbb{Z}^m, G) \) to \( \Omega^k_0(B \cap \mathbb{Z}^m, G) \).
    Moreover, if \(G\) is finite, then this map is an \( \bigl| \Omega^{k-1}_0(B \cap \mathbb{Z}^m, G)\bigr|\)-to-\(1\) correspondence.
    Lastly, if \( k \in \{ 1,2, \ldots, m-1 \} \) and \(\omega \in \Omega^k_0(B \cap \mathbb{Z}^m, G)\) vanishes on the boundary of \(B\), then there is a \((k-1)\)-form \( \omega' \in \Omega^{k-1}(B \cap \mathbb{Z}^m, G)\) that also vanishes on the boundary of \(B\) and satisfies \(d\omega' = \omega\). 
\end{lemma}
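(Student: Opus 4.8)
The plan is to prove the three assertions in order, using an explicit contracting homotopy adapted to the box $B$. Fix a ``base corner'' $a^0 = (a_1, \dots, a_m)$ of $B = [a_1,b_1] \times \dots \times [a_m,b_m] \cap \mathbb{Z}^m$. For surjectivity of $d \colon \Omega^{k-1}(B,G) \to \Omega^k_0(B,G)$, I would build a linear map $h \colon \Omega^k(B,G) \to \Omega^{k-1}(B,G)$ satisfying the homotopy identity $dh + hd = \mathrm{id}$ on forms of degree $\geq 1$ (and a suitable modification in degree $0$). Concretely, $h$ is the discrete analogue of integration along the coordinate directions: for a $k$-form $\omega = \sum \omega_{j_1 \dots j_k}\, dx^{j_1}\wedge \dots \wedge dx^{j_k}$, one defines $h\omega$ by summing the coefficient $\omega_{j_1\dots j_k}$ over lattice segments running parallel to the $e_{j_1}$ direction back to the face $x^{j_1} = a_{j_1}$, with the standard alternating-sign bookkeeping from the cone construction. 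Since everything is $\mathbb{Z}$-linear and $G$ is an abelian group, each such sum is a well-defined element of $G$, and the box being a product of intervals is exactly what makes the ``integrate back to the base face'' operation stay inside $B$. Verifying $dh + hd = \mathrm{id}$ is then a finite computation matching the telescoping sums produced by $\partial_i$ against the definition of $h$; given $\omega \in \Omega^k_0(B,G)$ we have $d\omega = 0$, so $d(h\omega) = \omega$, which is surjectivity. For the degree-$0$ case, closedness means $\omega$ is constant on $B$ (connected), and a constant function is $d$ of nothing unless it is zero; but $\Omega^0_0$ as defined would include nonzero constants, so here I would instead note $k \geq 1$ is the relevant range, and for $k=1$ closed $0$-forms are handled directly.

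For the counting statement when $G$ is finite: $d \colon \Omega^{k-1}(B,G) \to \Omega^k_0(B,G)$ is a surjective group homomorphism between finite abelian groups, so it is $|\ker d|$-to-$1$, and $\ker d = \Omega^{k-1}_0(B,G)$ by definition of closedness. That is all that is needed; the homotopy from the first part is not even required here beyond surjectivity.

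For the third assertion — the relative Poincaré lemma — the plan is to take $\omega \in \Omega^k_0(B,G)$, $1 \leq k \leq m-1$, vanishing on $\partial B$, and first produce \emph{some} $\eta \in \Omega^{k-1}(B,G)$ with $d\eta = \omega$ from the first part, then correct $\eta$ to vanish on $\partial B$. The obstruction is that $\eta$ need not vanish on the boundary; but $d(\eta|_{\partial B})$ relates to $\omega|_{\partial B} = 0$, so $\eta$ restricted to each boundary face is itself closed there. I would then use induction on the dimension $m$ (or on the ``number of boundary faces'' handled so far): the restriction of $\eta$ to a boundary face is a closed $(k-1)$-form on a lower-dimensional box, so by the inductive hypothesis — or directly, since $k-1 \leq m-2 < \dim(\text{face})$ is not automatic, so care is needed at the endpoints — it is exact on that face, say equal to $d\zeta$; extend $\zeta$ arbitrarily into $B$ and replace $\eta$ by $\eta - d\zeta$, which still satisfies $d(\eta - d\zeta) = \omega$ and now vanishes on that face. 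Iterating over all $2m$ boundary faces, while checking that correcting one face does not destroy the vanishing already achieved on others (this is where the product structure of the box and a careful ordering of the faces matter), yields the desired $\omega'$.

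The main obstacle I anticipate is the bookkeeping in the third part: ensuring that the successive corrections on the $2m$ faces are compatible — i.e., that after zeroing $\eta$ on one pair of opposite faces, the correction needed on an adjacent face can be chosen to vanish on the intersection with the already-treated faces. This is the standard difficulty in relative Poincaré lemmas and is handled by choosing the extensions $\zeta$ to themselves vanish on the lower-dimensional faces already processed, which is possible by applying the relative statement one dimension down — so the real content is setting up the induction on $m$ cleanly, with the base case $m=1$ (or the degenerate low-$k$ cases) checked by hand. Everything else is routine discrete calculus: defining $h$, checking $dh+hd = \mathrm{id}$ by comparing telescoping sums, and invoking finiteness of $G$ for the counting claim.
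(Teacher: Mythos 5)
The paper offers no proof of this lemma to compare against: it is imported as Lemma 2.2 of \cite{c2019}. Measured against Chatterjee's argument (and the standard discrete Poincar\'e lemma), your overall architecture is the right one --- a homotopy operator for surjectivity, the kernel count for the $\bigl|\Omega^{k-1}_0\bigr|$-to-$1$ claim (this part of your proposal is complete and correct), and a boundary-correction scheme for the relative statement. But the other two parts have concrete gaps as written.

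First, the homotopy operator you describe does not satisfy $dh+hd=\mathrm{id}$. If $h$ sums each coefficient $\omega_{j_1\dots j_k}$ back along its own leading direction $e_{j_1}$ to the face $x^{j_1}=a_{j_1}$, then already for $m=2$, $k=1$, and $\omega=\omega_1\,dx^1+\omega_2\,dx^2$ closed, one finds
\begin{equation*}
\partial_1(h\omega)(x)=\omega_1(x)+\sum_{s=a_2}^{x^2-1}\partial_1\omega_2(x^1,s)=\omega_1(x)+\sum_{s=a_2}^{x^2-1}\partial_2\omega_1(x^1,s)=2\omega_1(x)-\omega_1(x^1,a_2),
\end{equation*}
so $d(h\omega)\neq\omega$ even for closed $\omega$. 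The construction that works integrates only the components containing $dx^1$ along $e_1$, which yields $dh\omega+hd\omega=\omega-\pi\omega$ with $\pi\omega$ a closed form supported on the face $\{x^1=a_1\}$ and not involving $dx^1$; one then inducts on the dimension. That induction is the actual content of the first assertion and is absent from your sketch. Second, for the relative statement your face-by-face correction defers exactly the step that carries the content. Note that the claim is false for $k=m$: a closed $m$-form supported on a single interior cell with value $g\neq 0$ has, by Stokes' theorem, no primitive vanishing on the boundary. Yet nothing in your sketch distinguishes $k=m$ from $k\leq m-1$, since the restriction of $\eta$ to a single face is exact on that face in either case. The hypothesis $k\leq m-1$ must enter when you kill the discrepancy on the last face, where the remaining closed $(k-1)$-form already vanishes on that face's boundary and you need the relative statement one dimension down, i.e.\ $k-1\leq m-2$. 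Locating and closing that step is the real work of the third part, so as it stands the proposal is a correct plan for parts one and three rather than a proof, with part two the only finished piece.
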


\subsubsection{The coderivative}\label{sec: coderivative}

Given \( h \colon \mathbb{Z}^m \to G \), \( a \in \mathbb{Z}^m \), and \( i \in \{ 1,2, \ldots, m \} \), we let 
\begin{equation*}
    \bar \partial_i h(a) \coloneqq h(a) - h(a-e_i).
\end{equation*}
When \( k \in \{ 1,2, \ldots, m \} \) and \( \omega \in \Omega^k(\mathcal{L},G) \), we define the \( (k-1) \)-form \( \delta \omega \in \Omega^{k-1}(\mathcal{L},G) \)  by
\begin{equation*}
    \delta \omega \coloneqq 
    \sum_{1 \leq j_1 < \dots < j_k \leq m}
    \sum_{i = 1}^k  
    (-1)^{i} \, \bar \partial_{j_i} \omega 
    \, dx^{j_1} \wedge \cdots \wedge dx^{j_{i-1}}\wedge dx^{j_{i+1}} \wedge \cdots \wedge  dx^{j_{k}}.
\end{equation*}
The operator \( \delta \) is called the \emph{coderivative}.

\begin{example}
    If \( a_0 \in \mathbb{Z}^m \), \( 1 \leq j_1 < \dots < j_k \leq m \), \( c = {\frac{\partial}{\partial x^{j_1}}\big|_{a_0} \wedge \dots \wedge \frac{\partial}{\partial x^{j_k}}}\big|_{a_0}  \), and \( \omega = \omega_{j_1 \dots j_k} d x^{j_1} \wedge \dots \wedge d x^{j_k}, \) where $\omega_{j_1 \dots j_k}(a) = 1$ if $a = a_0$ and $\omega_{j_1 \dots j_k}(a) = 0$ if $a \neq a_0$, then
    \begin{equation*}
        \delta \omega = 
        \sum_{i = 1}^k  
        (-1)^{i} \, \bar \partial_{j_i} \omega 
        \, dx^{j_1} \wedge \cdots \wedge dx^{j_{i-1}}\wedge dx^{j_{i+1}} \wedge \cdots \wedge  dx^{j_{k}},
    \end{equation*}
    where
    \begin{equation*}
        (\bar \partial_i \omega_{j_1 \dots j_k})(a) 
        = 
        \begin{cases} 
            1, & a = a_0,  \\
            -1, & a = a_0 + e_i,  \\
            0, & \text{otherwise}.
        \end{cases}
    \end{equation*}
    Recalling the definition of the coboundary of an oriented cell from Section~\ref{sec: coboundary}, we see that, in this special case, for \( c' \in C_{k-1}(\mathcal{L}) \), we have \( d\omega(c') = 1 \) if \( c \in \support \hat \partial c' \), \( d\omega(c')=-1 \) if \( -c' \in \support \hat \partial c \), and \( d\omega(c') = 0 \) otherwise.
    As a consequence, for any \( \omega\in \Omega^k(\mathcal{L},G) \) and \( c \in C_k (\mathcal{L},\mathbb{Z}) \), we have
    \begin{equation*}
	    \delta\omega(c) = \omega(\hat \partial c).
    \end{equation*}
\end{example}

\begin{lemma}[The Poincar\'e lemma for the coderivative, Lemma 2.7 in~\cite{c2019}]\label{lemma: lemma 2.7}
    Let \( G \) be an abelian group, and let \( k \in \{ 1,2, \ldots, m-1 \} \). Let \( \omega \in \Omega^k(\mathbb{Z}^m,G) \) be equal to zero outside a finite region and satisfy \( \delta f  = 0 \). Then there is \( \omega' \in \Omega^{k+1}(\mathbb{Z}^m,G) \) such that \( \omega = \delta \omega' \). Moreover, if \( \omega \) is equal to zero outside a box \( B \), then there is a choice of \( \omega' \) that is equal to zero outside \( B \). 
\end{lemma}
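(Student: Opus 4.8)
The plan is to transfer the problem to the dual lattice $(\mathbb{Z}^m)^*$ by means of the star operator $*$ introduced above, and then to invoke the Poincar\'e lemma for the exterior derivative (Lemma~\ref{lemma: poincare}) there. Since any $\omega$ that vanishes outside a finite region vanishes outside some box $B$, it suffices to prove the ``moreover'' statement, so from now on I assume that $\omega$ vanishes on every $k$-cell outside a box $B$. The map $c \mapsto {*}c$ is a bijection from the oriented $k$-cells of $\mathbb{Z}^m$ onto the oriented $(m-k)$-cells of $(\mathbb{Z}^m)^*$ that respects orientation reversal, and therefore induces a group isomorphism $* \colon \Omega^k(\mathbb{Z}^m,G) \to \Omega^{m-k}((\mathbb{Z}^m)^*,G)$ via $({*}\omega)({*}c) \coloneqq \omega(c)$. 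Unwinding the definitions of $*$, $\hat\partial$, $\partial$, $d$ and $\delta$ while keeping track of the permutation signs --- the discrete analogue of $\delta = \pm\,{*}\,d\,{*}$ --- one finds that $d$ and $\delta$ are intertwined by $*$ up to a fixed sign: there is $\varepsilon \in \{-1,1\}$, depending only on $k$ and $m$, with ${*}(\hat\partial c) = \varepsilon\, \partial({*}c)$ for every oriented $(k-1)$-cell $c$, which together with the discrete Stokes' theorem~\eqref{eq: stokes} on $(\mathbb{Z}^m)^*$ gives $d({*}\omega) = \varepsilon\,{*}(\delta\omega)$ for all $\omega \in \Omega^k(\mathbb{Z}^m,G)$. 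In particular, ${*}\omega$ is closed because $\delta\omega = 0$.

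I then apply Lemma~\ref{lemma: poincare} on the dual lattice (which is combinatorially a copy of $\mathbb{Z}^m$, so the lemma applies to it verbatim). By Lemma~\ref{lemma: lemma 2.4}, saying that $\omega$ vanishes on every $k$-cell outside $B$ is the same as saying that ${*}\omega$ vanishes on every $(m-k)$-cell that lies outside $B^*$ or in the boundary of $B^*$; in particular, viewed as a closed $(m-k)$-form on the box $B^*$, ${*}\omega$ vanishes on the boundary of $B^*$. Since $k \in \{1,\dots,m-1\}$ we also have $m-k \in \{1,\dots,m-1\}$, so the last part of Lemma~\ref{lemma: poincare} produces $\eta \in \Omega^{m-k-1}(B^*,G)$ that vanishes on the boundary of $B^*$ and satisfies $d\eta = {*}\omega$ on $B^*$. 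Extending $\eta$ by zero to all of $(\mathbb{Z}^m)^*$ preserves the identity $d\eta = {*}\omega$: on $(m-k)$-cells inside $B^*$ it holds by construction, while on any $(m-k)$-cell outside $B^*$ both sides vanish, since each face of such a cell lies outside $B^*$ or in the boundary of $B^*$ (a face of a cell outside $B^*$ cannot lie in the interior of $B^*$), and $\eta$ vanishes on all such cells.

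Finally I transfer back: I set $\omega' \coloneqq \pm\,{*}^{-1}\eta \in \Omega^{k+1}(\mathbb{Z}^m,G)$, with the overall sign chosen so that the intertwining identity in degree $k+1$ converts $d\eta = {*}\omega$ into $\delta\omega' = \omega$. Moreover, since $\eta$ vanishes outside $B^*$ and on the boundary of $B^*$, another application of Lemma~\ref{lemma: lemma 2.4} shows that $\omega'$ vanishes on every $(k+1)$-cell outside $B$, as claimed. The only genuinely delicate points are the sign bookkeeping in the identity $d\circ{*} = \varepsilon\,{*}\circ\delta$ and the precise translation, through Lemma~\ref{lemma: lemma 2.4}, between ``vanishes outside $B$'' and ``vanishes outside $B^*$ and on the boundary of $B^*$''; once these are settled, the statement follows directly from Lemma~\ref{lemma: poincare}.
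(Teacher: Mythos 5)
Your argument is correct: it is the standard duality proof --- pass to the dual lattice with the Hodge star, use the intertwining relation of Lemma~\ref{lemma: lemma 2.3} to see that \( *\omega \) is closed and Lemma~\ref{lemma: lemma 2.4} to translate the support condition, apply the boundary-vanishing case of the Poincar\'e lemma (Lemma~\ref{lemma: poincare}) on the dual box \( B^* \), and dualize back. This is precisely the proof of Lemma~2.7 in the cited reference~\cite{c2019}; the present paper states the lemma without reproving it, so your write-up matches the intended argument.
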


\subsubsection{The Hodge dual}
 
Given  \( \omega \in \Omega^k(\mathbb{Z}^m,G)\), we define the \emph{Hodge dual} \( *\omega \) of \( \omega \)  as

\begin{equation*}
    *\omega \coloneqq \sum_{1 \leq i_1 < \cdots < i_k \leq m} \omega_{j_1 \dots j_k}(*(\cdot; e_{i_1},\dots, e_{i_k})) \sgn (i_1,\ldots, i_k,j_{1}, \ldots, j_{m-k}) \, dy^{j_1} \wedge \cdots \wedge d^{y_{j_{m-k}}},
\end{equation*}
where, in each term, the sequence \( j_1, \ldots, j_{m-k} \) depends on the sequence \( i_1,\ldots, i_k \).

One verifies that with these definitions, for any \( \omega \in \Omega^k(\mathbb{Z}^m,G) \) and \( c \in C_k(\mathbb{Z}) \), we have \( *\omega(*c) = \omega(c) \), and 
\begin{equation}\label{hodgehodge}
    {*}{*\omega}(c) = (-1)^{k(m-k)}\omega(c).
\end{equation}
The exterior derivative on the dual cell lattice is defined by
\begin{equation*}
    d\omega \coloneqq \sum_{1 \leq j_1 < \cdots < j_k\leq m} \sum_{i=1}^m \bar \partial_i \omega_{j_1\dots j_k} \, dy_i \wedge (dy_{j_1} \wedge \cdots \wedge dy_{j_k}).
\end{equation*}
Note that the definitions of the exterior derivative on the primal and dual lattices are not identical; in the dual lattice the partial derivative \( \partial_i \) has been replaced by \( \bar \partial_i \). The reason for this difference is the opposite orientation of the edges in the dual lattice.

The next lemma describe a relationship between the Hodge dual, the exterior derivative on the dual lattice, and the coderivative on the primal lattice.
 
\begin{lemma}[Lemma 2.3 in~\cite{c2019}]\label{lemma: lemma 2.3}
    For any \( \omega \in \Omega^k(\mathbb{Z}^m,G) \), and any \( c \in C_{k-1}(\mathbb{Z})  \), 
    \begin{equation*}
        \delta \omega(c)= (-1)^{m(k+1)+1} {*}(d (*\omega(*c))).
    \end{equation*}
\end{lemma}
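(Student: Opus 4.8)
The plan is to reduce the identity, by linearity, to the case where $\omega$ is an ``indicator'' $k$-form concentrated at a single point in a single coordinate direction and $c$ is a single oriented $(k-1)$-cell, and then to match the two sides termwise against the explicit boundary formula~\eqref{eq: boundary chain}. Both sides are additive in $\omega$, since $\omega \mapsto \delta\omega$, $\omega \mapsto {*}\omega$ and $\omega \mapsto d\omega$ are group homomorphisms and evaluation at a chain is additive; and both $\delta\omega(c)$ and the right-hand side depend on $\omega$ only through its values near $c$, so we may assume $\omega$ is finitely supported. Such an $\omega$ is a finite sum of forms $g\cdot\iota_{a_0,J}$, where $\iota_{a_0,J}$ is the form equal to $dx^{j_1}\wedge\cdots\wedge dx^{j_k}$ at $a_0$ and to $0$ elsewhere (as in the Example of Section~\ref{sec: coderivative}), $J=(j_1<\cdots<j_k)$, and $g\in G$; for each such summand both sides equal an integer coefficient times $g$, so it suffices to treat $g=1$. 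Likewise, since both sides are $\mathbb{Z}$-linear in $c\in C_{k-1}(\mathbb{Z})$, it suffices to take $c$ a single positively oriented $(k-1)$-cell.

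For such $\omega$ and $c$, the left-hand side is computed from $\delta\omega(c)=\omega(\hat\partial c)$ (Section~\ref{sec: coderivative}), which by the definition of the coboundary equals $\partial\bar c[c]$, where $\bar c=\frac{\partial}{\partial x^{j_1}}\big|_{a_0}\wedge\cdots\wedge\frac{\partial}{\partial x^{j_k}}\big|_{a_0}$. By~\eqref{eq: boundary chain} this is nonzero exactly when $c$ is, up to orientation, one of the $2k$ faces of $\bar c$, with coefficient $(-1)^{k'}$ for the face obtained by deleting the $j_{k'}$-th factor of $\bar c$ and $(-1)^{k'+1}$ for the translate of that face by $e_{j_{k'}}$. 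For the right-hand side one unwinds the three operations: ${*}\omega$ is the indicator $(m-k)$-form at the dual vertex $b_0={*}(a_0;e_1,\dots,e_m)$ in the complementary dual directions $J^c$, with coefficient $\sgn(j_1,\dots,j_k,J^c)$; then $d({*}\omega)$, computed with the dual-lattice exterior derivative and keeping track of the shift direction forced by the reversed orientation $\hat e_i=-e_i$ of the dual lattice, is a signed sum over $i\in J$ of indicator $(m-k+1)$-forms at $b_0$ and at one neighbour of $b_0$ in directions $\{i\}\cup J^c$; and applying ${*}$ once more turns these into indicator $(k-1)$-forms at $a_0$, resp.\ at $a_0+e_i$, in direction $J\smallsetminus\{i\}$ --- exactly the faces of $\bar c$ seen on the left. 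Using ${*}\omega({*}c')=\omega(c')$ and~\eqref{hodgehodge}, evaluation at $c$ then reduces the claim to the assertion that the accumulated sign --- the product of $(-1)^{m(k+1)+1}$, the two $\sgn$ factors from the Hodge duals, the reordering sign of $dy^i\wedge dy^{J^c}$, and the orientation-reversal sign of the dual lattice --- equals $(-1)^{k'}$, resp.\ $(-1)^{k'+1}$.

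The only real difficulty is this permutation-sign bookkeeping, and a convenient way to organize it is to prove the operator identity $\delta=(-1)^{m(k+1)+1}\,{*}\,d\,{*}$ structurally. This splits into (i) a discrete Stokes' theorem on the dual lattice, $d\eta({*}c)=\eta(\partial({*}c))$, proved exactly as in Section~\ref{sec: derivative} but with dual cells and $\bar\partial$, and (ii) the identity $\partial({*}c)=\varepsilon_{m,k}\,{*}(\hat\partial c)$ for a sign $\varepsilon_{m,k}\in\{\pm1\}$ depending only on $m$ and $k$, expressing that the boundary operator of the dual complex is the Hodge dual of the coboundary operator of the primal complex up to sign. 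Granting (i) and (ii), and using ${*}\omega({*}c')=\omega(c')$ together with~\eqref{hodgehodge}, the right-hand side collapses to $(-1)^{m(k+1)+1}\,\varepsilon_{m,k}\,(-1)^{(k-1)(m-k+1)}\,\omega(\hat\partial c)$, so it remains only to determine $\varepsilon_{m,k}$ --- which one can do once and for all by testing on a single cell, e.g.\ with $k=1$ --- and check that the three signs cancel, leaving $\delta\omega(c)$.
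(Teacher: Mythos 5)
The paper itself offers no proof of this lemma---it is quoted from \cite{c2019}---so there is no internal argument to compare against; your proposal must stand on its own. Its architecture is sound: reducing by linearity and locality to an indicator $k$-form and a single $(k-1)$-cell, computing the left-hand side as $\delta\omega(c)=\omega(\hat\partial c)=\partial\bar c[c]$ via \eqref{eq: boundary chain}, and unwinding ${*}$, $d$, ${*}$ on the right is the standard direct verification, and your identification of the supports (the faces of $\bar c$ at $a_0$ and at $a_0+e_{j_{k'}}$) on both sides is correct, as is the factor $(-1)^{(k-1)(m-k+1)}$ coming from \eqref{hodgehodge}.

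The difficulty is that the proof stops exactly where the content of the lemma begins: the statement \emph{is} a sign identity, and you defer the sign to the claim that $\partial({*}c)=\varepsilon_{m,k}\,{*}(\hat\partial c)$ with $\varepsilon_{m,k}$ ``determined once and for all by testing on a single cell, e.g.\ with $k=1$.'' This shortcut fails on two counts. First, for your bookkeeping to close one needs $\varepsilon_{m,k}=(-1)^{m(k+1)+1+(k-1)(m-k+1)}$, and this exponent reduces to $k$ modulo $2$, so the required sign alternates with $k$; a test at $k=1$ determines nothing about $k=2$. Second, and more seriously, testing on a single cell is only legitimate after one has shown that the relative sign between the coefficients of $\partial({*}c)$ and of ${*}(\hat\partial c)$ is the same for all $2(m-k+1)$ faces of ${*}c$, for every choice of the direction set $J$ and base point. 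That uniformity is not a formal consequence of translation invariance: the Hodge star carries the permutation signs $\sgn(j_1,\dots,j_k,i_1,\dots,i_{m-k})$, which are not invariant under relabelling coordinates, so one must check that the primal face sign $(-1)^{k'}$ and the dual face sign (which involves the position of $i=j_{k'}$ within $\{i\}\cup J^c$ together with two $\sgn$ factors) always differ by the same factor. That check is precisely the ``permutation-sign bookkeeping'' you set aside; until it is carried out, the argument shows only that the two sides agree up to an undetermined and possibly cell-dependent sign.
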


\subsubsection{Non-trivial forms}

We say that a \( k \)-form \( \omega \in \Omega^k(\mathcal{L},G) \) is \emph{non-trivial} if there is at least one \( k \)-cell \( c \in C_k(\mathcal{L}) \) such that \( \omega(c) \neq 0 \).

\subsubsection{Restrictions of forms}

If \( \omega \in \Omega^k(\mathcal{L},G) \), \( C \subseteq C_k(\mathcal{L}) \) is symmetric, and \( c \in C \), we define 
\begin{equation*}
    \omega|_C(c) \coloneqq 
    \begin{cases}
        \omega(c) &\text{if } c \in C, \cr 
        0 &\text{else.}
    \end{cases}
\end{equation*}

\subsection{Generalized loops and oriented surfaces}\label{sec: oriented surfaces}

In this section, we introduce the concepts of generalized loops and oriented surface.

\begin{definition}\label{def: generalized loop}
    A 1-chain \( \gamma \in C_1(\mathcal{L},\mathbb{Z}) \) with finite support is a \emph{generalized loop} if 
    \begin{enumerate}
        \item for all \( e \in \Omega^1(\mathcal{L}) \), we have \( \gamma[e] \in \{ -1,0,1 \} \), and
        \item  \( \partial \gamma = 0. \)
    \end{enumerate}
\end{definition}

\begin{definition}
    Let \( \gamma \in C_1(B_N,\mathbb{Z}) \) be a generalized loop. A \( 2 \)-chain \( q \in C_2(B_N,\mathbb{Z}) \) is an \emph{oriented surface} with \emph{boundary} \( \gamma \) if \( \partial q = \gamma. \)
\end{definition}

In Figure~\ref{fig: oriented surface}, we give an example of an oriented surface and the corresponding boundary.

\begin{figure}[htp]
    \centering 
    \begin{subfigure}[t]{0.45\textwidth}
        \centering
        \begin{tikzpicture}[scale=0.9,every node/.style={minimum size=1cm},on grid] 
            
            \begin{scope}[every node/.append     
            style={yslant=-0.5}, yslant=-0.5, decoration={markings, mark=at position 0.5 with {\arrow{>}}}]    
               
                \foreach \y in {0,1,2} {   \draw[semithick,draw=detailcolor07!5!black, postaction={decorate}] (0,\y+0.02) -- (0,\y+0.98) node[midway, anchor=east, xshift=1ex]{\footnotesize \( -1 \)};};  
                
                \foreach \x in {0,1,2} {   \draw[semithick,draw=detailcolor07!1!black, postaction={decorate}] (\x+0.98,0) -- (\x+0.02,0) node[midway, anchor=south, yshift=-1ex]{\footnotesize \( -1 \)};};   
                
            \end{scope}

            \begin{scope}[every node/.append style={yslant=0.5},yslant=0.5, decoration={markings, mark=at position 0.5 with {\arrow{>}}}]    
                
                \foreach \x in {3,4} {   \draw[semithick,draw=detailcolor07!1!black, postaction={decorate}] (\x+0.02,-3) -- (\x+0.98,-3) node[midway, anchor=south, yshift=-1ex]{\footnotesize \( 1 \)};};  
                
                \foreach \x in {4} {   \draw[semithick,draw=detailcolor07!20!black, postaction={decorate}] (\x+0.02,0) -- (\x+0.98,0) node[midway, anchor=south, yshift=-1ex]{\footnotesize \( \mathllap{-}1 \)};}; 
                
                \foreach \y in {-3,-2,-1} {   \draw[semithick,draw=detailcolor07!5!black, postaction={decorate}] (5,\y+0.02) -- (5,\y+0.98) node[midway, anchor=west, xshift=-1.5ex]{\footnotesize \( 1 \)};};  
             
        \end{scope}

        \begin{scope}[every node/.append style={yslant=0.5,xslant=-1},yslant=0.5,xslant=-1, decoration={markings, mark=at position 0.5 with {\arrow{>}}}]    
               
                \foreach \y in {0,1,2} {   \draw[semithick,draw=detailcolor07!20!black, postaction={decorate}] (4,\y+0.02) -- (4,\y+0.98);};
                
                \foreach \x in {3} {   \draw[semithick,draw=detailcolor07!20!black, postaction={decorate}] (\x+0.02,3) -- (\x+0.98,3);};
             
        \end{scope}

        \draw (0.2,3.6) node[] {\footnotesize \( -1 \)}; 
        
        \draw (1.6,3.6) node[] {\footnotesize \( 1 \)};
        \draw (2.6,3.1) node[] {\footnotesize \( 1 \)};
        \draw (3.6,2.6) node[] {\footnotesize \( 1 \)};
        
    \end{tikzpicture}
    \caption{A generalized loop \( \gamma \in C_1(B_N,\mathbb{Z}) \).}
    \end{subfigure}
          \hfil
    \begin{subfigure}[t]{0.45\textwidth}
        \centering
        \begin{tikzpicture}[scale=0.9, every node/.style={minimum size=1cm},on grid]
            \begin{scope}[every node/.append style={yslant=-0.5}, yslant=-0.5]
            
                \foreach \x in {0,1,2}
                    \foreach \y in {0,1,2}
                    {
                        \fill[color=detailcolor07, fill opacity=0.6] (\x+0.02,\y+0.02) rectangle +(0.96,0.96);
                        \node at (\x+0.5,\y+0.5) {$\circlearrowright$ };
                        \draw (\x+0.85,\y+0.2) node[align=center] {\footnotesize $\mathllap{-}1$};
                    };
        \end{scope}
        
        \begin{scope}[every node/.append style={yslant=0.5},yslant=0.5]
            
            \foreach \x in {3,4}
                \foreach \y in {-3,-2,-1}
                    {
                        \fill[color=detailcolor07, fill opacity=0.3] (\x+0.02,\y+0.02) rectangle +(0.96,0.96);
                        \node at (\x+0.5,\y+0.5) {$\circlearrowleft$ };
                        \draw (\x+0.85,\y+0.2) node[align=center] {\footnotesize $1$};
                    };

\end{scope}
\begin{scope}[every node/.append style={yslant=0.5,xslant=-1},yslant=0.5,xslant=-1]  
            
            \foreach \x in {3}
                \foreach \y in {0,1,2}
                    {
                        \fill[color=detailcolor07, fill opacity=0.2] (\x+0.02,\y+0.02) rectangle +(0.96,0.96);
                        \node at (\x+0.5,\y+0.5) {$\circlearrowleft$ };
                        \draw (\x+0.85,\y+0.2) node[align=center] {\footnotesize $1$};
                    }; 
        \end{scope}
    \end{tikzpicture}
    \caption{An oriented surface \( q \in C_2(B_N,\mathbb{Z})\) whose boundary equals   \( \gamma \).}
    \end{subfigure}
  
    \caption{In the two figures above, we draw illustrations of an oriented surface and its boundary.}
    \label{fig: oriented surface}
\end{figure}
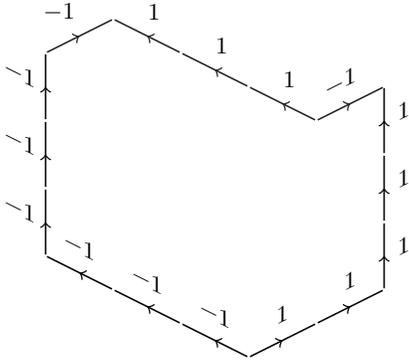
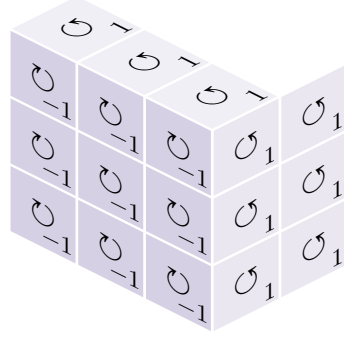

We recall that by Stokes' theorem (see Section~\ref{sec: derivative}), for any \( q \in C_2(B_N,G) \) and any \( \sigma \in \Omega^1(B_N,G) \), we have
\begin{equation*} 
      \sigma(\partial q) = d\sigma(q).
  \end{equation*}

The motivation for introducing oriented surfaces is the following lemma, which gives a connection between generalized loops and oriented surfaces.

\begin{lemma}\label{lemma: oriented loops}
    Let \( \gamma \in C_1(B_N, \mathbb{Z}) \) be a generalized loop, and let \( B \subseteq B_N \) be a box containing the support of \( \gamma \). Then there is an oriented surface \( q \in C_2(B_N, \mathbb{Z}) \) with support contained in \( B \) such that \( \gamma \) is the boundary of \( q \).
\end{lemma}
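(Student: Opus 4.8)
The plan is to construct the surface explicitly by building it out of elementary ``filling'' operations, using the Poincar\'e lemma (Lemma~\ref{lemma: poincare}) as the main abstract tool. First I would reduce the problem to a statement about forms rather than chains. A $1$-chain $\gamma \in C_1(B_N,\mathbb{Z})$ with coefficients in $\{-1,0,1\}$ and $\partial\gamma = 0$ can be identified with a $\mathbb{Z}$-valued (in fact $\{-1,0,1\}$-valued) $1$-form on the cell complex; the condition $\partial\gamma = 0$ translates, via the discrete Stokes theorem \eqref{eq: stokes} applied on the dual lattice, into a closedness condition for the Hodge dual, or more directly one checks that $\gamma$, viewed as a $1$-form, is closed \emph{as a $3$-form on the dual complex}. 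The cleanest route, though, is to work with the box $B$ directly: enlarge $B$ slightly to a box $B'$ with $B \subseteq B' \subseteq B_N$ so that $\support\gamma$ is contained in the interior of $B'$ and $\gamma$ vanishes on the boundary of $B'$ (this is possible since $\support\gamma \subseteq B$ and we may take $B' = B$ if $B$ already has this property after a one-step enlargement inside $B_N$; if $B = B_N$ there is nothing adjacent to worry about and we just use that $\gamma$ has empty boundary).

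Next, the key step: I would apply the third part of Lemma~\ref{lemma: poincare} with $k = 1$, $m = 4$ (or general $m$), to the $1$-form $\gamma$. For this I must verify the hypotheses of that lemma, namely that $\gamma \in \Omega^1_0(B' \cap \mathbb{Z}^m, G)$ with $G = \mathbb{Z}$ — i.e.\ that $\gamma$ is a \emph{closed} $1$-form — and that $\gamma$ vanishes on the boundary of $B'$. Closedness means $d\gamma(c) = 0$ for every $2$-cell $c$, and by the discrete Stokes theorem \eqref{eq: stokes} this is exactly $\gamma(\partial c) = 0$; but $\gamma(\partial c) = \sum_{e} \gamma[e]$ over the boundary edges, which need \emph{not} vanish for a general $1$-form. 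So this is the wrong application. Instead I should use the \emph{coderivative} Poincar\'e lemma, Lemma~\ref{lemma: lemma 2.7}: the condition $\partial\gamma = 0$ says precisely that $\delta(\text{something}) = 0$. Concretely, identify the $1$-chain $\gamma$ with a $1$-form $\omega$ via $\omega(e) := \gamma[e]$; then for a $0$-cell $a^+$ we have, by the example following the definition of the coderivative, $\delta\omega(a^+) = \omega(\hat\partial a^+) = \sum_{e} \gamma[e]\cdot(\text{incidence}) = -(\partial\gamma)[a^+] = 0$. Hence $\delta\omega = 0$, $\omega$ vanishes outside the box $B$, and Lemma~\ref{lemma: lemma 2.7} (with $k=1$) produces a $2$-form $\omega' \in \Omega^2(\mathbb{Z}^m,\mathbb{Z})$ with $\omega = \delta\omega'$ and $\omega' = 0$ outside $B$.

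It remains to translate $\omega'$ back into an oriented surface. The $2$-form $\omega'$ corresponds to a $2$-chain $q$ via $q[c] := \omega'(c)$ for positively oriented $2$-cells $c$; since $\omega'$ vanishes outside $B$, the support of $q$ is contained in $B \subseteq B_N$, so $q \in C_2(B_N,\mathbb{Z})$. The identity $\omega = \delta\omega'$ then reads, for each edge $e$, $\gamma[e] = \delta\omega'(e) = \omega'(\hat\partial e) = \sum_{c} \omega'[c]\,\partial c[e]$ — wait, I need the relation $\delta\omega'(e) = (\partial q)[e]$. This follows because $\delta$ is the adjoint of $d$ in the same way $\hat\partial$ is adjoint to $\partial$: explicitly, $\delta\omega'(e) = \omega'(\hat\partial e) = \sum_{c \in C_2} \omega'(c)\, \hat\partial e[c] = \sum_{c}\omega'(c)\,\partial c[e] = \bigl(\partial q\bigr)[e]$. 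Therefore $\partial q = \gamma$, so $q$ is an oriented surface with boundary $\gamma$ and support in $B$, as required. The coefficients of $q$ are integers (not necessarily in $\{-1,0,1\}$), which is fine since the definition of oriented surface only requires $q \in C_2(B_N,\mathbb{Z})$. The main obstacle I anticipate is purely bookkeeping: getting the signs and the adjointness identities $\delta\omega(c) = \omega(\hat\partial c)$ and $\hat\partial e[c] = \partial c[e]$ correctly lined up so that $\delta\omega' = \partial q$ holds on the nose, and making sure the box in Lemma~\ref{lemma: lemma 2.7} can indeed be taken to be $B$ (rather than some larger region) — but since the lemma explicitly grants a solution vanishing outside any box containing $\support\omega$, this is immediate.
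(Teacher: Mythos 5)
Your proposal is correct and follows essentially the same route as the paper: identify \( \gamma \) with a \(1\)-form \( \sigma_\gamma \), verify \( \delta\sigma_\gamma = 0 \) from \( \partial\gamma = 0 \) via the adjointness \( \hat\partial a[e] = \partial e[a] \), apply Lemma~\ref{lemma: lemma 2.7} to obtain a \(2\)-form supported in \( B \), and convert it to a \(2\)-chain \( q \) with \( \partial q = \gamma \). The detour through the exterior-derivative Poincar\'e lemma is correctly recognized as a dead end and discarded, and the remaining sign in \( \delta\sigma_\gamma(a^+) = \pm\partial\gamma[a] \) is immaterial since either way it vanishes.
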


\begin{proof}
     Define \( \sigma_\gamma \in \Omega^1(B_N,\mathbb{Z}) \) by letting, for each \( e \in C^+_1(B_N) \),
    \begin{equation*}
        \sigma_\gamma(e) \coloneqq \gamma[e].
    \end{equation*}
    We now show that \( \delta \sigma_\gamma = 0. \) To this end, note first that for any \( a \in C_0^+(B_N) \), we have
    \begin{equation*}
        \delta \sigma_\gamma(a)
        =  
        \sigma_\gamma(\hat \partial a)
        =
        \sum_{e \in C_1^+(B_N)} \hat \partial a[e]\sigma_\gamma(e)
        =
        \sum_{e \in C_1^+(B_N)} \hat \partial a[e] \gamma[e]
        =
        \sum_{e \in C_1^+(B_N)} \partial e[a] \gamma[e]
        =
        \partial \gamma[a].
    \end{equation*} 
    Since \( \gamma \) is a generalized loop, we have \( \partial \gamma = 0 \), and hence it follows that \( \delta \sigma_\gamma = 0. \)
    Next, since the support of \( \gamma \) is contained in \( B \), the support of  \( \sigma_\gamma \) is contained in \( B \). 
    Consequently, Lemma~\ref{lemma: lemma 2.7} implies that there is a $2$-form \( \omega_q \in \Omega^2(B_N,\mathbb{Z})\), whose support is contained in \( B \), which is such that \( \delta \omega_q = \sigma_\gamma \).
    Since for any \( e \in C_1(B_N) \), we have \( \delta \omega_q(e) =  \omega_q(\hat \partial e) \), it follows that
    \begin{equation*}
        \sigma_\gamma(e) = \omega_q(\hat \partial e).
    \end{equation*}
    
    Let \( q \in C_2(B_N,\mathbb{Z}) \) be defined, for \( p \in C_2^+(B_N) \), by
    \begin{equation*}
        q[p] = \omega_q(p).
    \end{equation*}
    If \( \sigma \in \Omega^1(B_N,G) \), then
    \begin{equation*}
        \begin{split}
            &\sigma(\partial q)
            =
            \sum_{e \in C_1^+(B_N)} \partial q[e] \sigma(e)
            =
            \sum_{e \in C_1^+(B_N)} \biggl( \sum_{p \in C_2^+(B_N)} \partial p[e] q[p]  \biggr) \sigma(e)
            \\&\qquad=
            \sum_{e \in C_1^+(B_N)} \biggl( \sum_{p \in C_2^+(B_N)} \partial p[e] \omega_q(p)  \biggr) \sigma(e)
            = 
            \sum_{e \in C_1^+(B_N)} \biggl( \sum_{p \in C_2^+(B_N)} \hat \partial  e[p] \omega_q(p)  \biggr) \sigma(e)
            \\&\qquad= 
            \sum_{e \in C_1^+(B_N)} \biggl( \omega_q(\hat \partial e)  \biggr) \sigma(e)
            = 
            \sum_{e \in C_1^+(B_N)}   \sigma_\gamma(e)  \sigma(e)
            = 
            \sum_{e \in C_1^+(B_N)}   \gamma[e]  \sigma(e) = \sigma(\gamma).
        \end{split}
    \end{equation*}
    This concludes the proof.
\end{proof}

Now suppose that a generalized loop \( \gamma \) is given. An edge \( e \in \gamma \) is said to be a \emph{corner edge} in \( \gamma \) if there is another edge \( e' \in \gamma \) and a plaquette \( p \in \hat \partial e \) such that \( p \in \pm \hat \partial e' \) (see Figure~\ref{fig: generalized loop and corner edges}). We define \( \gamma_c \in C_1(B_N,\mathbb{Z}) \) for \( c' \in C_1(B_N) \) by 
\begin{equation}\label{def: gammac}
    \gamma_c[c'] \coloneqq \begin{cases}
        \gamma_c[c'] &\text{if } c' \text{ is a corner edge of } \gamma, \cr
        0 &\text{else.}
    \end{cases}
\end{equation}
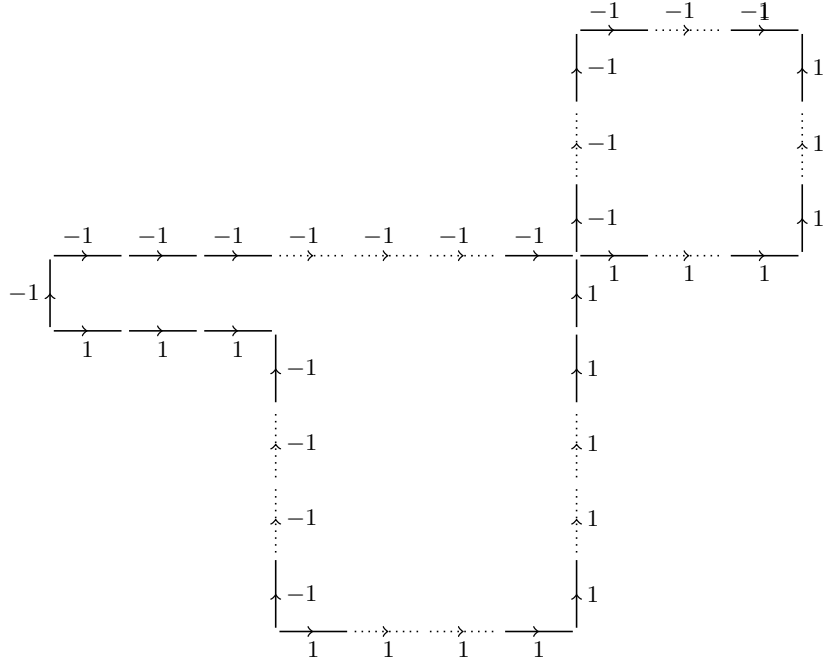
\begin{figure}
    \centering
    \begin{tikzpicture}
        \begin{scope}[semithick, decoration={markings, mark=at position 0.5 with {\arrow{>}}},shorten <=.5mm,shorten >=.5mm]   
        
            \draw[black, postaction={decorate}] (0,0) --(1,0) node[midway, anchor=north]{\footnotesize \( 1 \)};
            \draw[dotted, postaction={decorate}] (1,0)--(2,0) node[midway, anchor=north]{\footnotesize \( 1 \)};
            \draw[dotted, postaction={decorate}] (2,0)--(3,0) node[midway, anchor=north]{\footnotesize \( 1 \)}; 
            \draw[black, postaction={decorate}] (3,0)--(4,0) node[midway, anchor=north]{\footnotesize \( 1 \)};
            
            \draw[black, postaction={decorate}] (4,0)--(4,1) node[midway, anchor=west]{\footnotesize \( 1 \)}; 
            \draw[dotted, postaction={decorate}] (4,1)--(4,2) node[midway, anchor=west]{\footnotesize \( 1 \)}; 
            \draw[dotted, postaction={decorate}] (4,2)--(4,3) node[midway, anchor=west]{\footnotesize \( 1 \)}; 
            \draw[black, postaction={decorate}] (4,3)--(4,4) node[midway, anchor=west]{\footnotesize \( 1 \)}; 
            \draw[black, postaction={decorate}] (4,4)--(4,5) node[midway, anchor=west]{\footnotesize \( 1 \)};

            \draw[black, postaction={decorate}] (4,5)--(5,5) node[midway, anchor=north]{\footnotesize \( 1 \)}; 
            \draw[dotted, postaction={decorate}] (5,5)--(6,5) node[midway, anchor=north]{\footnotesize \( 1 \)}; 
            \draw[black, postaction={decorate}] (6,5)--(7,5) node[midway, anchor=north]{\footnotesize \( 1 \)}; 
            
            \draw[black, postaction={decorate}] (7,5)--(7,6) node[midway, anchor=west]{\footnotesize \( 1 \)}; 
            \draw[dotted, postaction={decorate}] (7,6)--(7,7) node[midway, anchor=west]{\footnotesize \( 1 \)}; 
            \draw[black, postaction={decorate}] (7,7)--(7,8) node[midway, anchor=west]{\footnotesize \( 1 \)};

            \draw[black, postaction={decorate}] (6,8)--(7,8) node[midway, anchor=south]{\footnotesize \( 1 \)} node[midway, anchor=south]{\footnotesize \( \mathllap{-}1 \)};  
            \draw[dotted, postaction={decorate}] (5,8)--(6,8) node[midway, anchor=south]{\footnotesize \( \mathllap{-}1 \)}; 
            \draw[black, postaction={decorate}] (4,8)--(5,8) node[midway, anchor=south]{\footnotesize \( \mathllap{-}1 \)}; 
            
            \draw[black, postaction={decorate}] (4,7)--(4,8) node[midway, anchor=west]{\footnotesize \( -1 \)};  
            \draw[dotted, postaction={decorate}] (4,6)--(4,7) node[midway, anchor=west]{\footnotesize \( -1 \)};  
            \draw[black, postaction={decorate}] (4,5)--(4,6) node[midway, anchor=west]{\footnotesize \( -1 \)};

            \draw[black, postaction={decorate}] (3,5)--(4,5) node[midway, anchor=south]{\footnotesize \( \mathllap{-}1 \)}; 
            \draw[dotted, postaction={decorate}] (2,5)--(3,5) node[midway, anchor=south]{\footnotesize \( \mathllap{-}1 \)}; 
            \draw[dotted, postaction={decorate}] (1,5)--(2,5) node[midway, anchor=south]{\footnotesize \( \mathllap{-}1 \)}; 
            \draw[dotted, postaction={decorate}] (0,5)--(1,5) node[midway, anchor=south]{\footnotesize \( \mathllap{-}1 \)}; 
            \draw[black, postaction={decorate}] (-1,5)--(0,5) node[midway, anchor=south]{\footnotesize \( \mathllap{-}1 \)}; 
            \draw[black, postaction={decorate}] (-2,5)--(-1,5) node[midway, anchor=south]{\footnotesize \( \mathllap{-}1 \)}; 
            \draw[black, postaction={decorate}] (-3,5)--(-2,5) node[midway, anchor=south]{\footnotesize \( \mathllap{-}1 \)}; 
            
            \draw[black, postaction={decorate}] (-3,4)--(-3,5) node[midway, anchor=east]{\footnotesize \( \mathllap{-}1 \)}; 
            
            \draw[black, postaction={decorate}] (-3,4)--(-2,4) node[midway, anchor=north]{\footnotesize \( 1 \)}; 
            \draw[black, postaction={decorate}] (-2,4)--(-1,4) node[midway, anchor=north]{\footnotesize \( 1 \)}; 
            \draw[black, postaction={decorate}] (-1,4)--(-0,4) node[midway, anchor=north]{\footnotesize \( 1 \)}; 
            
            \draw[black, postaction={decorate}] (0,3)--(0,4) node[midway, anchor=west]{\footnotesize \( -1 \)}; 
            \draw[dotted, postaction={decorate}] (0,2)--(0,3) node[midway, anchor=west]{\footnotesize \( -1 \)}; 
            \draw[dotted, postaction={decorate}] (0,1)--(0,2) node[midway, anchor=west]{\footnotesize \( -1 \)}; 
            \draw[black, postaction={decorate}] (0,0)--(0,1) node[midway, anchor=west]{\footnotesize \( -1 \)}; 
             
        \end{scope}
    \end{tikzpicture}
    \caption{In the figure above, we illustrate a generalized loop \( \gamma \) (dotted and solid edges), with corner edges drawn as solid edges.}
    \label{fig: generalized loop and corner edges}
\end{figure}

If \( q \) is an oriented surface with boundary \( \gamma \) and \( p \in C_2^+(B_N) \), then we say that \( p \) is an \emph{internal plaquette} of \( q \) if 
\begin{enumerate}[label=(\roman*)]
    \item \( q[p] \neq 0 \), and
    \item for each \( e \in C_1^+(B_N) \), we have either \(  \partial p [e]  = 0 \) or \( \gamma[e]  = 0. \)
\end{enumerate}
An edge \( e \in C_1^+(B_N) \) is said to be an \emph{internal edge} of \( q \) if 
\begin{enumerate}[label=(\roman*)]
    \item there is \( p \in \support q \) with \( \partial p[e] \neq 0 ,\) and
    \item \( \partial q[e] = 0. \)
\end{enumerate}

\subsection{Additional notation and standing assumptions}\label{section:standing}

In the rest of the paper, we will only consider the lattice \( \mathbb{Z}^4 \). In addition, we will assume that a finite cyclic group $G = \mathbb{Z}_n$ with \( n \geq 2 \), and a faithful one-dimensional representation \( \rho \) of \( G \) have been fixed.
We recall that any such representation $\rho$ is unitary and has the form \eqref{rhoexplicit}.

It will be useful to define the function \( \phi_\beta \colon G \to \mathbb{R} \), by 
\begin{equation}\label{wilson-action-piece}
    \phi_\beta(g) \coloneqq e^{\beta \Re  \rho(g)}, \quad g \in  G.
\end{equation}

Elements in \( \Omega^2_0(B_N,G) \) will be referred to as \emph{plaquette configurations}, and elements in $\Omega^1(B_N,G)$ will be referred to as a \emph{spin configurations}.

\section{\texorpdfstring{\( \mu_{N,\beta}\)}{LGT} as a measure on plaquette configurations}\label{sec: mubetaNsubsec}
While the probability measure $\mu_{N,\beta}$ is defined as a measure on spin configurations, we are primarily interested in the corresponding measure on plaquette configurations. 
To describe the latter measure, we first note that the exterior derivative operator \( d \) maps spin configurations \( \sigma \in  \Omega^1(B_N,G)\)  to plaquette configurations \( \omega \in \Omega^2_0(B_N,G).\)
By the Poincar\'e lemma (Lemma~\ref{lemma: poincare}), whenever \( \omega \in \Omega^2_0(B_N,G)\), there is a \(\sigma \in  \Omega^1(B_N,G) \) such that \( d \sigma = \omega \),  and each spin configuration \( \omega \in  \Omega^2_0(B_N,G) \) corresponds to the same number of spin configurations \( \sigma \in \Omega^1(B_N,G) \). Combining there observations, it follows that for any \( \omega \in \Omega^2_0(B_N,G) \) we have
\begin{align*}
   &\mu_{N,\beta}\bigl(\{ \sigma \in \Omega^1(B_N,G) \colon d\sigma = \omega \}\bigr) = 
    \frac{\sum_{\sigma \in \Omega^1(B_N,G)\colon d\sigma = \omega}  e^{\beta \sum_{p \in C_2(B_N)} \Re \rho(d\sigma(p))} }{\sum_{\sigma \in \Omega^1(B_N,G)} e^{\beta \sum_{p \in C_2(B_N)} \Re \rho(d\sigma(p))}}
    \\&\qquad= 
    \frac{   e^{\beta \sum_{p \in C_2(B_N)} \Re \rho(\omega(p))} }{\sum_{\omega' \in \Omega^2_0(B_N,G)} e^{\beta \sum_{p \in C_2(B_N)} \Re \rho(\omega'(p))}} .
\end{align*} 
We will abuse notation slightly and write \( \mu_{N,\beta} \bigl( \{ \omega \} \bigr) \) to denote the probability on the right-hand side of the previous equation, and thus use \( \mu_{N,\beta} \) also as a measure on plaquette configurations.

\section{Ginibre's inequality and the existence of \texorpdfstring{\( \langle W_\gamma \rangle_\beta \)}{the infinite limit}}\label{sec: ginibre}

In this section, we state and prove a result (Theorem~\ref{theorem: limit exists} below) which shows existence and translation invariance of the infinite volume limit \eqref{wilsonlimit} of the expectation of Wilson loop observables. This result is well known, and is often mentioned in the literature as a direct consequence of the Ginibre inequalities. However, since we have not found a clean reference which includes a proof of this claim, we give such a proof below. 
In this theorem, we will be interested in real-valued functions \( f \colon \Omega^1(\mathbb{Z}^4, G) \to \mathbb{R}\) which only depend on the spins of edges in \( C_1(B_N) \) for some \( N \geq 1 \), i.e., functions such that \( f(\sigma) = f(\sigma|_{C_1(B_N)}) \) for all \( \sigma \in \Omega^1(\mathbb{Z}^4, G) \).
When this is the case and \( N \geq M \), we abuse notation and let \( f \) denote also the natural restriction of \( f \) to \( \Omega^1(B_N) \).
We now state the main result of this section.
\begin{theorem}\label{theorem: limit exists}
    Let \( G = \mathbb{Z}_n \), and let \( f \colon \Omega^1(\mathbb{Z}^4,G) \to \mathbb{R}\) be a real-valued function, which only depends on the spins of edges in \( C_1(B_M) \) for some integer \( M \geq 1 \).
    Let \( \beta \geq 0 \). Then the following hold.
    \begin{enumerate}[label=(\roman*)]
        \item The limit \( \lim_{N \to \infty} \mathbb{E}_{N,\beta}\bigl[ f(\sigma) \bigr] \) exists.
        \item For any translation \( \tau \) of \( \mathbb{Z}^n \), we have \( \lim_{N \to \infty} \mathbb{E}_{N,\beta} \bigl[f \circ \tau(\sigma)\bigr] =  \lim_{N \to \infty} \mathbb{E}_{N,\beta} \bigl[ f(\sigma) \bigr] \).
    \end{enumerate} 
\end{theorem}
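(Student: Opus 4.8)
The plan is to deduce both statements from Ginibre's inequality applied to the lattice gauge model, in a way that is by now standard but which we spell out. First I would set up the right framework: Ginibre's inequality applies to systems with a collection of commuting "spins" taking values in a group, an a priori product measure, and a Hamiltonian that is a linear combination with appropriate signs of characters of the spins. For lattice gauge theory with structure group $G=\mathbb{Z}_n$, the relevant spins are the edge variables $\sigma(e)$, $e\in C_1^+(B_N)$, each distributed according to the uniform (Haar) measure $\mu_H$, and the Hamiltonian $-\beta S(\sigma) = \beta\sum_p\Re\rho(d\sigma(p))$ is a sum of real parts of characters evaluated on the plaquette sums $d\sigma(p)=\sum_{e\in\partial p}\sigma(e)$. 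One checks that $\Re\rho$ is a nonnegative-coefficient combination of the real characters of $\mathbb{Z}_n$, so the model is "ferromagnetic" in the Ginibre sense, and hence for any two functions $f_1,f_2$ that are themselves nonnegative-coefficient combinations of products of real characters of the edge variables, one has $\mathbb{E}_{N,\beta}[f_1f_2]\ge\mathbb{E}_{N,\beta}[f_1]\mathbb{E}_{N,\beta}[f_2]$ and, crucially for monotonicity in volume, $\partial_\beta \mathbb{E}_{N,\beta}[f_1]\ge 0$ and the derivative of $\mathbb{E}_{N,\beta}[f_1]$ with respect to adding a plaquette coupling is $\ge 0$.

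Next I would prove monotonicity in $N$. Passing from $B_N$ to $B_{N+1}$ amounts to adding new edges (with their Haar priors) and switching on the plaquette couplings $\beta$ for all new plaquettes in $C_2(B_{N+1})\setminus C_2(B_N)$; by the Ginibre correlation inequality (in the form that turning on an extra ferromagnetic coupling can only increase the expectation of a ferromagnetic observable), we get $\mathbb{E}_{N,\beta}[f]\le \mathbb{E}_{N+1,\beta}[f]$ for every admissible $f$, provided $f$ depends only on edges in $C_1(B_M)\subseteq C_1(B_N)$. One subtlety: the theorem asks about general real-valued $f$, but in our application $f=W_\gamma=\Re\rho(\sigma(\gamma))$ (or its real/imaginary parts), which \emph{is} a character of edge variables and hence admissible; so I would either state the monotonicity for the class of admissible $f$ and note $W_\gamma$ lies in it, or reduce a general bounded $f$ to this class. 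Since the sequence $N\mapsto\mathbb{E}_{N,\beta}[f]$ is monotone and bounded (by $\|f\|_\infty$, since $\mu_{N,\beta}$ is a probability measure), it converges, giving part (i).

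For part (ii), translation invariance of the limit, the point is that for a fixed translation $\tau$ and fixed $M$, once $N$ is large enough both $f$ and $f\circ\tau$ depend only on edges inside $B_N$, and the finite-volume measure $\mu_{N,\beta}$ on $B_N$ is \emph{not} translation invariant (the box has a boundary), but the limit washes this out: I would compare $\mathbb{E}_{N,\beta}[f]$ with $\mathbb{E}_{N,\beta}[f\circ\tau]$ by noting that $\mathbb{E}_{N,\beta}[f\circ\tau]=\mathbb{E}_{\tau^{-1}B_N,\beta}[f]$, then sandwich $\tau^{-1}B_N$ between $B_{N-c}$ and $B_{N+c}$ for a constant $c$ depending only on $\tau$, and apply the monotonicity from part (i) to get $\mathbb{E}_{N-c,\beta}[f]\le \mathbb{E}_{N,\beta}[f\circ\tau]\le \mathbb{E}_{N+c,\beta}[f]$; letting $N\to\infty$ and using that both outer terms converge to the same limit as $\mathbb{E}_{N,\beta}[f]$ forces $\lim_N\mathbb{E}_{N,\beta}[f\circ\tau]=\lim_N\mathbb{E}_{N,\beta}[f]$. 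Here one should be slightly careful that a box and its translate are not nested, so one really does need the two-sided sandwich between genuine boxes $B_{N\pm c}$ together with monotonicity, rather than a single inclusion.

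The main obstacle, and the only place real care is needed, is verifying that the hypotheses of Ginibre's inequality genuinely apply here: namely that $\Re\rho\colon \mathbb{Z}_n\to\mathbb{R}$ expands with nonnegative coefficients in the real characters $g\mapsto\cos(2\pi k g/n)$, that the observable $W_\gamma$ (hence its real and imaginary parts) is likewise a nonnegative combination of products of such characters of the $\sigma(e)$, and that the plaquette constraint $d\sigma(p)$ being a group sum of edge variables is compatible with the multiplicative structure Ginibre requires. For $\mathbb{Z}_n$ this is classical — $\Re\rho(g)=\cos(2\pi m g/n)$ is a single character, so positivity is immediate — but I would state it as a lemma (or cite the standard reference for Ginibre's inequalities in lattice gauge theory) and then the monotonicity and sandwich arguments above are routine. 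Everything else is a soft argument: boundedness plus monotonicity gives convergence, and the two-sided geometric sandwich gives translation invariance.
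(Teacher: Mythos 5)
Your proposal is correct and follows essentially the same route as the paper: decompose $f$ into real characters of the edge-variable group, use Ginibre's inequality to show that turning on the plaquette couplings in $C_2(B_{N'})\smallsetminus C_2(B_N)$ can only increase the expectation of each character, deduce convergence from monotonicity plus boundedness, and obtain translation invariance from a two-sided comparison between a translated box and larger/smaller centered boxes. The only cosmetic difference is that the paper implements the sandwich by comparing $\langle g_j\rangle_{h_N\circ\tau^{\pm 1}}$ directly with $\langle g_j\rangle_{h_{N'}}$ rather than phrasing it as nesting of boxes, and it handles the reduction of general $f$ to the admissible class explicitly via $f=\sum_j a_j g_j$ with real (possibly negative) coefficients, applying monotonicity to each $g_j$ separately — exactly the fix you anticipate for your "subtlety."
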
 

In the proof of Theorem~\ref{theorem: limit exists}, we will use a well-known estimate known as one of the Ginibre inequalities. For easy reference, we state it here using the terminology of~\cite{g1970}, but remark that we only use it in a very special case as explained in the proof of Theorem~\ref{theorem: limit exists}.

\begin{lemma}[Ginibre's inequality, Proposition~3 in~\cite{g1970}]\label{lemma: Ginibre}
    Let \( K \) be a compact space, let \( \mu \) be a probability measure on \( K \), and let \( \mathcal{C}(K) \) be the  the algebra of complex-valued continuous functions on \( K \).  
    Let \( S \) be a subset of \( \mathcal{C}(K) \) which is invariant under complex conjugation, and which is such that for any \( f_1,\ldots, f_m \in S \) and any choice of signs \( s_1, s_2, \ldots, s_m \in \{ -1,1 \} \), we have
    \begin{equation}\label{equation: Q3}
        \iint \mu(x) \mu(y) \prod_{i=1}^m (f_i(x) + s_i f_i(y)) \geq 0.
    \end{equation}
    Let \( \cone(S)  \) denote the norm closure of the set of polynomials of elements of \( S \cup \{ 1 \} \) with positive coefficients (with respect to the supremum norm  \( \|f \|_\infty = \sup_{x \in K} |f(x)| \)).
    For \( f,h \in \mathcal{C}(K) \),  define
    \begin{equation*}
        \langle f \rangle_h = \int f(x) e^{-h(x)}\, \mu(x) \bigg/ \int e^{-{h(x)}} \, \mu(x).
    \end{equation*}
    Then, for any \( f,g,-h \) in  \( \cone(S) \), the quantities \( \langle f \rangle_h \), \( \langle g \rangle_h \), and \( \langle fg \rangle_h \) are real,   and
\begin{equation}\label{Ginibresinequality}
    \langle fg \rangle_h \geq \langle f \rangle_h \langle g \rangle_h.
\end{equation}
\end{lemma}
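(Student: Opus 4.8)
The plan is to prove \eqref{Ginibresinequality} by the classical \emph{duplicate-variable} method: pass to the product measure $\mu\times\mu$ on $K\times K$, rewrite the difference $\langle fg\rangle_h-\langle f\rangle_h\langle g\rangle_h$ as a single double integral, and then check that this integral is nonnegative term by term against the hypothesis \eqref{equation: Q3}. For $a\in\mathcal{C}(K)$ I introduce the two functions $a^+(x,y)\coloneqq a(x)+a(y)$ and $a^-(x,y)\coloneqq a(x)-a(y)$ on $K\times K$, and I write $Z\coloneqq\int e^{-h}\,d\mu$. Multiplying through by $Z^2$, expressing each single integral as an integral over $K\times K$, and symmetrizing in $x\leftrightarrow y$, a direct computation yields the covariance identity
\begin{equation*}
    Z^2\bigl(\langle fg\rangle_h-\langle f\rangle_h\langle g\rangle_h\bigr)
    =\tfrac12\iint f^-(x,y)\,g^-(x,y)\,e^{-h(x)-h(y)}\,d\mu(x)\,d\mu(y),
\end{equation*}
so that, once $Z>0$ is established, it suffices to prove this double integral is nonnegative for $f,g,-h\in\cone(S)$.

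Next I would remove the exponential weight. Put $k\coloneqq-h\in\cone(S)$, so that $e^{-h(x)-h(y)}=e^{k(x)+k(y)}=\sum_{n\ge0}(k^+)^n/n!$, the series converging in sup norm since $k$ is bounded on the compact set $K\times K$. The cone $\cone(S)$ is closed under multiplication and under norm limits, and the set $\mathcal{Q}\coloneqq\{F\in\mathcal{C}(K\times K):\iint F\,d\mu\,d\mu\ge0\}$ is a norm-closed convex cone. Writing $f,g,k$ as norm limits of positive-coefficient polynomials in $S\cup\{1\}$ and distributing, and expanding the exponential series, everything in sight carries nonnegative coefficients; hence by limits it suffices to prove the building block
\begin{equation*}
    \iint M^-\,(M')^-\,(N_1)^+\cdots(N_n)^+\,d\mu\,d\mu\ \ge\ 0
\end{equation*}
for arbitrary monomials $M,M',N_1,\dots,N_n$ in $S\cup\{1\}$.

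This building block is the heart of the argument. For a monomial $M=a_1\cdots a_p$ with $a_i\in S\cup\{1\}$, substituting $a_i(x)=\tfrac12\bigl((a_i)^++(a_i)^-\bigr)$ and $a_i(y)=\tfrac12\bigl((a_i)^+-(a_i)^-\bigr)$ and expanding gives
\begin{equation*}
    M^{\pm}=\frac{1}{2^{\,p-1}}\sum_{\substack{\epsilon\in\{+,-\}^p\\ \#\{i:\epsilon_i=-\}\ \text{odd (resp.\ even)}}}\ \prod_{i=1}^{p}(a_i)^{\epsilon_i},
\end{equation*}
with ``odd'' for $M^-$ and ``even'' for $M^+$; in particular $M^{\pm}$ is a \emph{nonnegative} combination of products of factors $(a_i)^{\epsilon_i}$ with $a_i\in S\cup\{1\}$. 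Multiplying out the expansions of $M^-,(M')^-,(N_1)^+,\dots,(N_n)^+$ and using $1^+=2$ and $1^-=0$ (so a factor $1^-$ kills its term while a factor $1^+$ only contributes a positive constant), the integrand becomes a nonnegative combination of products $\prod_j(\phi_j)^{\epsilon_j}$ with every $\phi_j\in S$. Each such product is precisely of the form controlled by \eqref{equation: Q3}, hence integrates to a nonnegative number against $\mu\times\mu$; summing gives the building block, and with it the nonnegativity of the double integral above.

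It remains to handle reality and the positivity of $Z$. Taking $m=1$ in \eqref{equation: Q3} gives $\iint a^+\,d\mu\,d\mu=2\int a\,d\mu\ge0$ for $a\in S$, and more generally $\int M\,d\mu\ge0$ for every monomial $M$ in $S$ via its $M^+$ expansion; since $1$ integrates to $1$, it follows that $\int\phi\,d\mu\ge0$ for all $\phi\in\cone(S)$. As $\cone(S)$ is invariant under complex conjugation (the coefficients are nonnegative reals and $S$ is conjugation-invariant) and closed under products, the functions $e^{-h},\,fe^{-h},\,fge^{-h}$ all lie in $\cone(S)$; hence $\langle f\rangle_h,\langle g\rangle_h,\langle fg\rangle_h$ are ratios of nonnegative reals and are real, while expanding $e^{-h}=\sum_n k^n/n!$ gives $Z=\int e^{-h}\,d\mu\ge\int 1\,d\mu=1>0$ because the $n=0$ term contributes $1$ and the remaining terms are nonnegative. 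I expect the main obstacle to be Step~3: the careful combinatorial bookkeeping needed to expand $M^-(M')^-(N_1)^+\cdots(N_n)^+$ into a manifestly nonnegative combination of the quantities in \eqref{equation: Q3}, together with the analytic point that the series manipulations (the exponential and the distribution of polynomials) stay inside the multiplicatively closed, norm-closed cone so that \eqref{equation: Q3} can be applied termwise and then passed to the limit.
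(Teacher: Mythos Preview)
The paper does not provide its own proof of this lemma; it is simply quoted from Ginibre's original article~\cite{g1970} and used as a black box in the proof of Theorem~\ref{theorem: limit exists}. Your argument is the classical duplicate-variable proof from that source: the covariance identity, the expansion of $M^{\pm}$ as a nonnegative combination of products $\prod_i (a_i)^{\epsilon_i}$, and the termwise application of~\eqref{equation: Q3} after expanding the exponential are all correct, as are the auxiliary observations that $\cone(S)$ is closed under multiplication and norm limits, that $\int\phi\,d\mu\ge 0$ for every $\phi\in\cone(S)$ (via the $M^+$ expansion), and that $Z\ge 1$. There is nothing to compare against in the present paper, and nothing missing in your sketch.
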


\begin{proof}[Proof of Theorem~\ref{theorem: limit exists}]
    For \( N \geq 1 \), consider the group \(\Gamma^{(N)} =  (\Omega^1(B_N,G),+)\) which is isomorphic to the direct product \((\mathbb{Z}_n^{|C_1^+(B_N)|}, +)\).
    By Example~4 in~\cite{g1970}, the following statements hold.
    \begin{enumerate}
        \item The multiplicative characters of \( \Gamma^{(N)} \) (i.e., the group homomorphisms $\Gamma^{(N)} \to \mathbb{C}\smallsetminus \{0\}$) are given by the functions \( \sigma \mapsto \prod_{e \in C_1(B_N)} e^{2\pi i \rho(\sigma(e)) \rho(\sigma'(e))/n} \), where \( \sigma, \sigma' \in \Omega^1(B_N,G)\) with \(\sigma'\) fixed and we note that the value of \(e^{2\pi i \rho(\sigma(e)) \rho(\sigma'(e))/n}\) is independent of the choice of representatives of \( \sigma(e), \sigma'(e) \in \mathbb{Z}_n \cong \mathbb{Z}/n\mathbb{Z}\).
        \item Let \( S^{(N)} \) be the set of real parts of the characters of  \( \Gamma^{(N)} \). Then \( \cone(S^{(N)}) \) is the set of real-valued positive definite functions on \( \Omega^1(B_N,G) \).
        \item For any \( f_1,\ldots, f_n \in S^{(N)}\) and signs \( s_1, \ldots, s_m \in \{ -1,1 \} \), if we let \( \mu \) be the uniform measure on \( \Omega^1(B_N,G) \), then~\eqref{equation: Q3}  holds.
    \end{enumerate} 
    In particular this shows that for each \( N \geq 1 \), the assumptions of Lemma~\ref{lemma: Ginibre} hold with \( K = \Omega^1(B_N,G) \) equipped with the discrete topology, and \( S = S^{(N)} \), 
     
    Now fix \( N' \geq N \geq M\). 
    For \( \beta,\beta' \geq 0 \), consider the function \( h \), defined for \( \sigma \in \Omega^1(B_{N'},G) \) by
    \begin{equation*}
        -h(\sigma) \coloneqq -h_{N,N',\beta,\beta'}(\sigma) \coloneqq \beta \sum_{p \in C_2(B_N)} \Re \rho\bigl(d\sigma(p)\bigr)   + \beta' \sum_{p \in C_2(B_{N'})\smallsetminus C_2(B_N)} \Re  \rho\bigl(d\sigma(p)\bigr).
    \end{equation*}  
    Since for any \( p \in C_2(B_{N'}) \), the function
    \begin{equation*}
        \sigma \mapsto \rho\bigl(d\sigma(p)\bigr) ,\quad \sigma \in \Omega^1(B_{N'},G),
    \end{equation*}
    is a  character of \( \Gamma^{(N')} \), and since \( \beta,\beta' \geq 0 \), we have \( -h_{N,N',\beta,\beta'} \in \cone(S^{({N'})}) \).
     
    Since \( \Omega^1(B_M,G)\) is finite, the characters of \( \Gamma^{(M)} \) span the set of all real-valued functions on \( \Omega^1(B_M,G) \). In particular, this implies that there are functions \( g_1, g_2, \ldots, g_m \in S^{(M)} \) and real numbers \( a_1,a_2, \ldots, a_m \in \mathbb{R} \) such that \( f = a_1g_1 + \ldots + a_mg_m \).
    Since \( S^{(M)} \subseteq \cone (S^{(M)}) \), we have \( g_1,g_2, \ldots, g_m \in \cone (S^{(M)}) \). For any \( M' \geq M \) and \( j \in \{1,2, \ldots, m \} \), we abuse notation and let \( g_j \) denote the natural extension of \( g_j \) to \( \Omega^1(B_{M'},G) \).
    By definition, for any \( j \in \{1, \ldots, m \} \),  we have
    \begin{equation*}
        \begin{split}
            &\frac{d}{d\beta'} \langle g_j \rangle_h = 
            \Bigl\langle g_j(\sigma) \sum_{p \in C_2(B_{N'})\smallsetminus C_2(B_N)} \Re \rho\bigl (d\sigma(p)\bigr) \Bigr\rangle_{h} -   \langle g_j \rangle_h  \, \Bigl\langle \sum_{p \in C_2(B_{N'})  \smallsetminus C_2(B_N)} \Re\rho\bigl(d\sigma(p)\bigr) \Bigr\rangle_h.
        \end{split}
    \end{equation*}
    Thus, using that the functions \(g_j\) and \( \sigma \mapsto \sum_{p \in C_2(B_{N'})\smallsetminus C_2(B_N)}  \Re\rho\bigl(d\sigma(p)\bigr) \) belong to \( \cone(S^{(N')}) \), Ginibre's inequality \eqref{Ginibresinequality} yields
    \begin{equation*}
        \frac{d}{d\beta'} \langle g_j \rangle_{h_{N,N',\beta,\beta'}}  \geq 0,
    \end{equation*}
    and hence \( \langle g_j \rangle_{h_{N,N',\beta,\beta'}} \) is real and increasing in \( \beta' \) for \( \beta' \geq 0\). Since
    \begin{equation*}
        \mathbb{E}_{N,\beta} \bigl[g_j(\sigma)\bigr] = \langle g_j \rangle_{h_{N,N',\beta,0}}
    \end{equation*}
    this implies that 
    \begin{equation}\label{muNleqmuNprime}
        \mathbb{E}_{N,\beta} \bigl[ g_j(\sigma) \bigr] \leq \mathbb{E}_{N',\beta} \bigl[ g_j(\sigma) \bigr].
    \end{equation}
    Since \( g_j \) depends only on a finite set of edges, we have \( \|g_j \|_\infty < \infty \). Consequently, the sequence \( \mathbb{E}_{N,\beta} \bigl[ g_j(\sigma) \bigr] \) is both monotone and bounded, and hence \( \lim_{N \to \infty} \mu_{N,\beta} (g_j) \) exists. Finally, since \( f = \sum_{j=1}^m a_j g_j \) is a finite sum, we have 
    \begin{equation*}
        \sum_{j=1}^m a_j \lim_{N \to \infty} \mathbb{E}_{N,\beta} \bigl[ g_j(\sigma) \bigr]
        =
         \lim_{N \to \infty} \sum_{j=1}^m a_j\mu_{N,\beta} \bigl[ g_j(\sigma) \bigr]
        =
         \lim_{N \to \infty}  \mathbb{E}_{N,\beta} \biggl[\sum_{j=1}^m a_j g_j(\sigma)\biggr]
        =
         \lim_{N \to \infty}  \mathbb{E}_{N,\beta} \bigl[f(\sigma)\bigr].
    \end{equation*}  
    This concludes the proof of (i).

    We now show that the limit \( \lim_{N \to \infty} \mathbb{E}_{N,\beta} (f) \) is translation invariant. To see this, let \( \tau  \) be a translation of  \( \mathbb{Z}^4 \), and let \( N' \geq N \geq M \), where \( N' \) is so large that  \( B_N \), \( \tau B_N \), and  \( \tau^{-1} B_N \) are all contained in \( B_{N'} \).
    Define \( h_N \coloneqq h_{N,N',\beta, 0} \) and \( h_{N'} \coloneqq h_{N,N',\beta, \beta} \), and note that
    \begin{equation*}
        h_N \circ \tau =  \beta \sum_{p \in C_2(B_N)} \Re  \rho\bigl(d\sigma(\tau(p)) \bigr) =  \beta \sum_{p \in \tau(C_2(B_N))} \Re \rho\bigl(d\sigma(p)\bigr) .
    \end{equation*}
    Then the same kind of argument that led to \eqref{muNleqmuNprime} gives 
    \begin{equation*}
        \langle g_j \rangle_{h_N \circ \tau} \leq \langle g_j \rangle_{h_{N'}} \quad \text{and} \quad 
        \langle g_j \rangle_{h_N \circ \tau^{-1}} \leq \langle g_j \rangle_{h_{N'}}.
    \end{equation*} 
    From this it follows that 
    \begin{equation*}
        \mathbb{E}_{N,\beta} \bigl[  g_j \circ \tau(\sigma) \bigr]
        =
        \langle  g_j \circ \tau   \rangle_{h_N} 
        =
        \langle g_j \rangle_{h_N \circ \tau^{-1}} \leq     \langle  g_j  \rangle_{h_{N'}} =  \mathbb{E}_{N',\beta} \bigl[  g_j(\sigma  ) \bigr].
    \end{equation*}
    By letting first \( N' \) and then \( N \) tend to infinity, we obtain
    \begin{equation}\label{eq: translation invariance 1}
        \lim_{N \to \infty} \mathbb{E}_{N,\beta} \bigl[  g_j \circ \tau(\sigma) \bigr] \leq \lim_{N' \to \infty} \mathbb{E}_{N',\beta} \bigl[  g_j(\sigma)  \bigr].
    \end{equation}
    Similarly, we have
    \begin{equation*}
        \mathbb{E}_{N,\beta} \bigl[  g_j(\sigma  ) \bigr]
        =
        \langle  g_j    \rangle_{h_N} 
        =
        \langle  g_j \circ \tau \circ \tau^{-1}   \rangle_{h_N} 
        =  
        \langle  g_j \circ \tau  \rangle_{h_N \circ \tau} 
        \leq
        \langle  g_j \circ \tau  \rangle_{h_{N'}} 
        =
        \mathbb{E}_{N',\beta} \bigl[ g_j \circ \tau (\sigma) \bigr].
    \end{equation*}
    Again letting first \( N' \) and then \( N \) tend to infinity, we obtain
    \begin{equation}\label{eq: translation invariance 2}
        \lim_{N \to \infty} \mathbb{E}_{N,\beta} \bigl[  g_j(\sigma ) \bigr] \leq \lim_{N' \to \infty} \mathbb{E}_{N',\beta} \bigl[ g_j \circ \tau(\sigma ) \bigr].
    \end{equation}
    By combining~\eqref{eq: translation invariance 1} and~\eqref{eq: translation invariance 2}, we obtain
    \begin{equation*} 
        \lim_{N \to \infty} \mathbb{E}_{N,\beta} \bigl[ g_j(\sigma ) \bigr] = \lim_{N' \to \infty} \mathbb{E}_{N',\beta} \bigl[  g_j \circ \tau (\sigma) \bigr].
    \end{equation*}
    Since \( f = \sum_{j=1}^m a_j g_j \) is a finite sum, this shows that (ii) holds. This concludes the proof.
\end{proof}

\section{Vortices}\label{sec: vortices}

\subsection{Definition}
 
We say that a plaquette configuration \( \omega \in \Omega^2_0(B_N,G) \) is \emph{irreducible} if there is no non-empty set \( P \subsetneq \support \omega \) such that \( \omega|_P \in \Omega^2_0(B_N,G) \). Equivalently, \( \omega \in \Omega^2_0(B_N,G) \) is irreducible if there is no non-empty set \( P \subsetneq \support \omega \) such that \( d(\omega|_P)=0 \).
The following definition will play a crucial role in this paper. 
\begin{definition}[Vortex]\label{def: vortex}
    Let \( \sigma \in \Omega^1(B_N,G) \) be a spin configuration. A non-trivial and irreducible plaquette configuration \( \nu \in \Omega^2_0(B_N,G) \) is said to be a \emph{vortex} in \( \sigma \) if \( (d\sigma)|_{\support \nu} = \nu.\) 
\end{definition}%
In Figure~\ref{fig: vortex}, we give an illustration of this definition.

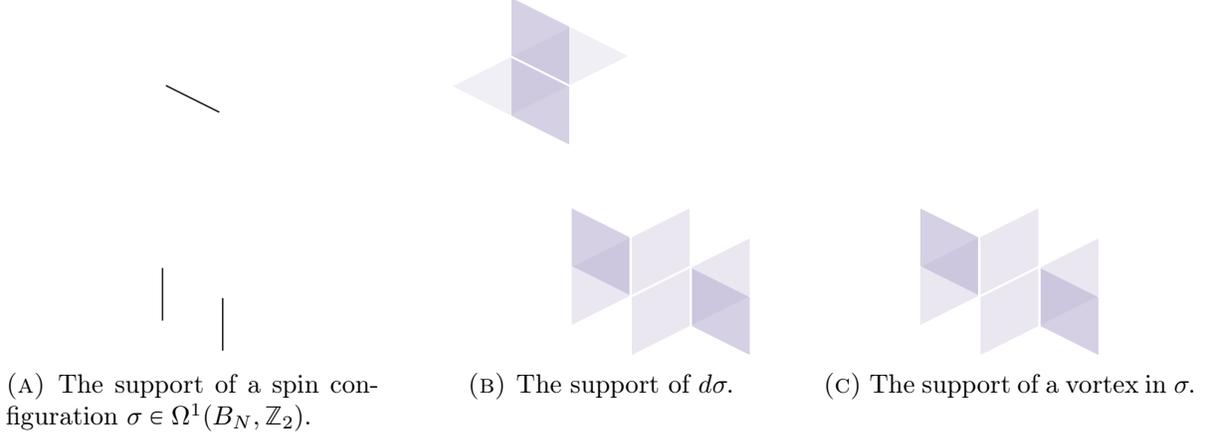
\begin{figure}[htp]
    \centering 
    
    \begin{subfigure}[t]{0.3\textwidth}
        \centering
        \begin{tikzpicture}[scale=0.8,every node/.style={minimum size=1cm},on grid] 
            
            \begin{scope}[every node/.append     
            style={yslant=-0.5}, yslant=-0.5,shorten <=.5mm,shorten >=.5mm, semithick, draw=detailcolor07!20!black]
                \draw (0,0) -- (0,1);
                \draw (1,0) -- (1,1); 
            \end{scope}
        
        \begin{scope}[every node/.append style={yslant=0.5,xslant=-1},yslant=0.5,xslant=-1,shorten <=.5mm,shorten >=.5mm, semithick, draw=detailcolor07!20!black] 
            \draw (4,3) -- (4,4);
        \end{scope} 
    \end{tikzpicture}
    \caption{The support of a spin configuration \( \sigma \in \Omega^1(B_N,\mathbb{Z}_2). \)}
    \end{subfigure}
    \hfil
    \begin{subfigure}[t]{0.3\textwidth}
        \centering
        \begin{tikzpicture}[scale=0.8, every node/.style={minimum size=1cm},on grid]
            
            \begin{scope}[every node/.append style={yslant=-0.5}, yslant=-0.5,color=detailcolor07, fill opacity=0.6]
            
                \fill (-1+0.02,0.02) rectangle +(0.96,0.96);
                \fill (1.02,0.02) rectangle +(0.96,0.96);
            \end{scope}
        
            \begin{scope}[shift={(0,0)}, every node/.append style={yslant=0.5},yslant=0.5,color=detailcolor07, fill opacity=0.3] 
            
                \fill (0+0.02,0+0.02) rectangle +(0.96,0.96); 
                \fill (-1+0.02,0+0.02) rectangle +(0.96,0.96);
            \end{scope}
        
            \begin{scope}[shift={(1,-0.5)}, every node/.append style={yslant=0.5},yslant=0.5,color=detailcolor07, fill opacity=0.3] 
            
                \fill (0+0.02,0+0.02) rectangle +(0.96,0.96);
                \fill (-1+0.02,0+0.02) rectangle +(0.96,0.96);
            \end{scope} 
            
            \begin{scope}[shift={(-2,4)}, every node/.append style={yslant=-0.5}, yslant=-0.5,color=detailcolor07, fill opacity=0.6]

                \fill (0.02,0.02) rectangle +(0.96,0.96);
                \fill (0.02,-1+0.02) rectangle +(0.96,0.96);
            \end{scope}
            
            \begin{scope}[shift={(-1,3.5)},every node/.append style={yslant=0.5,xslant=-1},yslant=0.5,xslant=-1,color=detailcolor07, fill opacity=0.2] 
            
            		\fill (0+0.02,0+0.02) rectangle +(0.96,0.96);
            		\fill (-1+0.02,0+0.02) rectangle +(0.96,0.96);
			\end{scope}

    \end{tikzpicture}
    \caption{The support of \( d\sigma. \)}
    \end{subfigure}
    \hfil
    \begin{subfigure}[t]{0.3\textwidth}
        \centering
        \begin{tikzpicture}[scale=0.8, every node/.style={minimum size=1cm},on grid]
            
            \begin{scope}[every node/.append style={yslant=-0.5}, yslant=-0.5,color=detailcolor07, fill opacity=0.6]
            
                \fill (-1+0.02,0.02) rectangle +(0.96,0.96);
                \fill (1.02,0.02) rectangle +(0.96,0.96);
            \end{scope}
        
            \begin{scope}[shift={(0,0)}, every node/.append style={yslant=0.5},yslant=0.5,color=detailcolor07, fill opacity=0.3] 
            
                \fill (0+0.02,0+0.02) rectangle +(0.96,0.96); 
                \fill (-1+0.02,0+0.02) rectangle +(0.96,0.96);
            \end{scope}
        
            \begin{scope}[shift={(1,-0.5)}, every node/.append style={yslant=0.5},yslant=0.5,color=detailcolor07, fill opacity=0.3] 
            
                \fill (0+0.02,0+0.02) rectangle +(0.96,0.96);
                \fill (-1+0.02,0+0.02) rectangle +(0.96,0.96);
            \end{scope}  

        \end{tikzpicture}
        \caption{The support of a vortex in~\( \sigma. \)}
    \end{subfigure}
    \caption{In the figures above, we draw an illustrations of an a spin configuration, the corresponding plaquette configuration, and a corresponding vortex.}
    \label{fig: vortex}
\end{figure}

When \( G = \mathbb{Z}_2 \), our definition of a vortex is similar but not the same as the definition of a vortex in~\cite{c2019}. In particular, the assumption of irreducibility in Definition~\ref{def: vortex} is a stronger assumption than the corresponding assumption in the definition of vortex in~\cite{c2019}.

\subsection{Vortex decompositions}

The next observation, although simple, will be important in the proofs of several later results.
\begin{lemma}\label{lemma: vortex flip}
    Let \( \omega \in \Omega^2_0(B_N,G) \), and let \( \nu \in \Omega^2_0(B_N,G) \) be such that \( \omega|_{\support \nu} = \nu \). 
    Then \( \omega|_{C_2(B_N) \smallsetminus \support \nu}  \in \Omega^2_0(B_N,G) \).
\end{lemma}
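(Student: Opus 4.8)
The plan is to show that the restriction $\omega|_{C_2(B_N)\smallsetminus\support\nu}$ is again a closed $2$-form, i.e.\ that $d\bigl(\omega|_{C_2(B_N)\smallsetminus\support\nu}\bigr)=0$. The starting point is the decomposition
\begin{equation*}
    \omega = \omega|_{\support\nu} + \omega|_{C_2(B_N)\smallsetminus\support\nu} = \nu + \omega|_{C_2(B_N)\smallsetminus\support\nu},
\end{equation*}
which holds because $\support\nu$ is a symmetric subset of $C_2(B_N)$ (the support of any form contains an even number of elements and is closed under $c\mapsto -c$) and because $\omega|_{\support\nu}=\nu$ by hypothesis. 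This decomposition is an identity of $2$-forms, so applying the exterior derivative $d$ (which is linear) gives
\begin{equation*}
    d\omega = d\nu + d\bigl(\omega|_{C_2(B_N)\smallsetminus\support\nu}\bigr).
\end{equation*}

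Now I would invoke the two hypotheses: $\omega\in\Omega^2_0(B_N,G)$ means $d\omega=0$, and $\nu\in\Omega^2_0(B_N,G)$ means $d\nu=0$. Substituting these into the displayed equation immediately yields $d\bigl(\omega|_{C_2(B_N)\smallsetminus\support\nu}\bigr)=0$, which is exactly the statement that $\omega|_{C_2(B_N)\smallsetminus\support\nu}\in\Omega^2_0(B_N,G)$. That completes the argument.

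I expect the only genuine point needing care — and hence the "main obstacle," though it is a mild one — is the justification that the restriction operation is additive in the sense used above, namely that for a symmetric set $C\subseteq C_2(B_N)$ one has $\omega = \omega|_C + \omega|_{C_2(B_N)\smallsetminus C}$ as forms. This follows directly from the definition of $\omega|_C$ given in the subsection on restrictions of forms, checking the identity cell by cell: on a cell in $C$ the right side reads $\omega(c)+0$, and on a cell outside $C$ it reads $0+\omega(c)$; both complementary sets are symmetric since $C$ is, so both restrictions are well-defined forms. One should also note that $\support\nu$ itself is symmetric, which is built into the definition of support (Section~\ref{sec: ddf}). Everything else is a one-line consequence of the linearity of $d$ and the definition of $\Omega^2_0$.
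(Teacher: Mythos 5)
Your argument is correct and is essentially identical to the paper's own proof, which simply notes that \( \omega|_{C_2(B_N)\smallsetminus\support\nu} = \omega - \nu \) and applies linearity of \( d \) together with \( d\omega = d\nu = 0 \). Your extra care in justifying the additivity of restriction over a symmetric set is a fine (if routine) elaboration of the same step.
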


\begin{proof}
    Note first that \( \omega|_{C_2(B_N) \smallsetminus \support \nu} = \omega - \nu\). 
    Since \( \omega \in \Omega^2_0(B_N,G) \), we have \( d\omega = 0 \). Similarly, since \( \nu \in \Omega^2_0(B_N,G) \), we have \( d\nu = 0 \). Combining these observations, we obtain
    \begin{equation*}
        d \bigl( \omega|_{C_2(B_N) \smallsetminus  \support \nu}\bigr)  = d(\omega - \nu) = d\omega - d\nu = 0-0 = 0
    \end{equation*}
    and hence \( \omega|_{C_2(B_N) \smallsetminus  \support \nu} \in \Omega^2_0(B_N,G) \) as desired.
\end{proof}

\begin{lemma}\label{lemma: decomposition}
    Let \( \omega \in \Omega^2_0(B_N,G)\). Then either \( \omega \) is irreducible, or there are non-trivial \( \nu_1, \nu_2 \in \Omega^2_0(B_N,G) \) with disjoint supports contained in \( \support \nu \), such that \( \omega = \nu_1 + \nu_2. \)
\end{lemma}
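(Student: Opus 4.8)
The plan is to unwind the definition of (ir)reducibility and then apply Lemma~\ref{lemma: vortex flip} to extract the complementary piece. Assume \( \omega \) is not irreducible. Then, by the definition of irreducibility, there is a non-empty set \( P \subsetneq \support \omega \) with \( \omega|_P \in \Omega^2_0(B_N,G) \). Set \( \nu_1 \coloneqq \omega|_P \). Since \( P \subseteq \support \omega \), the form \( \omega \) is nonzero on every cell of \( P \), so in fact \( \support \nu_1 = P \); in particular \( \nu_1 \) is non-trivial, and \( \support \nu_1 \subseteq \support \omega \).

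Next, I would apply Lemma~\ref{lemma: vortex flip} with \( \nu = \nu_1 \): its hypothesis \( \omega|_{\support \nu_1} = \nu_1 \) holds by construction, so the lemma gives \( \nu_2 \coloneqq \omega|_{C_2(B_N) \smallsetminus \support \nu_1} \in \Omega^2_0(B_N,G) \). As recorded in the proof of that lemma, \( \nu_2 = \omega - \nu_1 \), hence \( \nu_1 + \nu_2 = \omega \) and \( \support \nu_2 = \support \omega \smallsetminus P \). Because \( P \subsetneq \support \omega \), this set is non-empty, so \( \nu_2 \) is non-trivial; moreover \( \support \nu_2 \) is disjoint from \( \support \nu_1 = P \) and contained in \( \support \omega \). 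This produces exactly the decomposition claimed (the \( \support \nu \) in the statement should of course read \( \support \omega \)).

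I do not anticipate a genuine obstacle here: the entire content is the observation that removing an admissible sub-configuration from a closed configuration leaves a closed configuration, which is precisely Lemma~\ref{lemma: vortex flip}. The only point requiring a word of care is the bookkeeping of supports — checking that \( \support(\omega|_P) = P \) and \( \support(\omega - \nu_1) = \support\omega \smallsetminus P \), so that both pieces are non-trivial — and this is immediate from \( P \subseteq \support\omega \). If one wanted, one could alternatively iterate this step to obtain a decomposition into irreducibles, but that is not needed for the statement as given.
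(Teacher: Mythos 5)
Your proof is correct and follows essentially the same route as the paper: extract \( \nu_1 = \omega|_P \) from the definition of reducibility, apply Lemma~\ref{lemma: vortex flip} to get \( \nu_2 = \omega|_{C_2(B_N)\smallsetminus P} \in \Omega^2_0(B_N,G) \), and check supports and non-triviality. Your parenthetical observation that \( \support \nu \) in the statement should read \( \support \omega \) is also correct.
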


\begin{proof}
    Assume that \( \omega \in \Omega^2_0(B_N,G) \) is not irreducible. Then there is a  non-empty set \( P \subsetneq \support \omega \) such that \( \nu_1 \coloneqq \omega|_P \in \Omega^2_0(B_N,G) \). By definition, \( P = \support \nu_1 \), and hence, by Lemma~\ref{lemma: vortex flip}, we also have \( \nu_2 \coloneqq \omega|_{C_2(B_N) \smallsetminus P} \in \Omega^2_0(B_N,G) \). 
    Since \( P \subsetneq \support \omega \) is non-trivial, both \( \nu_1 \) and \( \nu_2 \) must be non-trivial. 
    Next, since \( \support \nu_1 = P \) and \( \support \nu_2 \subseteq C_2(B_N) \smallsetminus P \), we clearly have \( \support \nu_1 \cap \support \nu_2 = \emptyset \).
    Finally, 
    \begin{equation*}
        \nu_1 + \nu_2 = \omega|_P + \omega|_{C_2(B_N)\smallsetminus P} = \omega.
    \end{equation*}
    This concludes the proof.
\end{proof}

\begin{lemma}[Compare with Lemma~3.3 in~\cite{c2019}] \label{lemma: sum of vortices}
    Let \( \omega \in \Omega^2_0(B_N,G) \). Then \(\omega\) can be written as a sum of irreducible 2-forms in \( \Omega^2_0(B_N,G) \) with disjoint supports.
    In particular, if \( \sigma \in \Omega^1(B_N,G) \), then \( d\sigma \) can be written as a sum of vortices in \( \sigma \) with disjoint supports.
\end{lemma}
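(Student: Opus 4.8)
The plan is to prove the first statement by induction on $N_\omega \coloneqq |\support \omega|$, and then deduce the second statement by a short direct computation. For the base of the induction and the easy case, note that if $\omega$ is irreducible (in particular if $\omega = 0$, so that $N_\omega = 0$), then $\omega$ is already a sum of irreducible $2$-forms in $\Omega^2_0(B_N,G)$ with (trivially) pairwise disjoint supports, and there is nothing to do. Otherwise, by Lemma~\ref{lemma: decomposition} there are non-trivial $\nu_1, \nu_2 \in \Omega^2_0(B_N,G)$ with disjoint supports, both contained in $\support \omega$, such that $\omega = \nu_1 + \nu_2$. Since $\nu_1$ and $\nu_2$ are non-trivial with disjoint supports and $\support\nu_1 \cup \support\nu_2 \subseteq \support\omega$, we get $|\support\nu_i| < |\support\omega|$ for $i=1,2$, so the induction hypothesis applies to both.

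Applying the induction hypothesis, I would write $\nu_1 = \sum_j \mu_j$ and $\nu_2 = \sum_k \mu'_k$, where the $\mu_j$ (resp. the $\mu'_k$) are irreducible elements of $\Omega^2_0(B_N,G)$ with pairwise disjoint supports. Every $\support\mu_j$ is contained in $\support\nu_1$ and every $\support\mu'_k$ is contained in $\support\nu_2$, while $\support\nu_1 \cap \support\nu_2 = \emptyset$; hence the combined family $\{\mu_j\}_j \cup \{\mu'_k\}_k$ consists of irreducible $2$-forms in $\Omega^2_0(B_N,G)$ with pairwise disjoint supports, and $\omega = \nu_1 + \nu_2 = \sum_j \mu_j + \sum_k \mu'_k$. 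This completes the induction and proves the first statement.

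For the second statement, apply the first statement to $\omega = d\sigma \in \Omega^2_0(B_N,G)$ and discard the terms equal to $0$, obtaining a decomposition $d\sigma = \sum_i \nu_i$ into non-trivial irreducible forms $\nu_i \in \Omega^2_0(B_N,G)$ with pairwise disjoint supports. Fixing $i$ and any plaquette $p \in \support\nu_i$, the disjointness of the supports gives $\nu_{i'}(p) = 0$ for all $i' \neq i$, so $d\sigma(p) = \sum_{i'} \nu_{i'}(p) = \nu_i(p)$; since this holds for every $p \in \support\nu_i$, we get $(d\sigma)|_{\support\nu_i} = \nu_i$, which is exactly the condition in Definition~\ref{def: vortex} for $\nu_i$ to be a vortex in $\sigma$. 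Thus $d\sigma$ is a sum of vortices in $\sigma$ with disjoint supports.

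The argument is a routine induction, so I do not expect a genuine obstacle; the only points that require a little care are verifying that $|\support\omega|$ strictly decreases at each step (so that the induction is well-founded) and that pairwise disjointness is preserved when the decompositions of $\nu_1$ and $\nu_2$ are merged — both of which follow immediately from the containments $\support\nu_i \subseteq \support\omega$ and the disjointness $\support\nu_1 \cap \support\nu_2 = \emptyset$ provided by Lemma~\ref{lemma: decomposition}.
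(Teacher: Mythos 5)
Your proof is correct and follows essentially the same route as the paper: an induction on \( |\support \omega| \) using Lemma~\ref{lemma: decomposition} to split a non-irreducible configuration into two non-trivial pieces with disjoint supports contained in \( \support\omega \). Your write-up is in fact slightly more complete than the paper's, since you also spell out why the disjointness of supports forces \( (d\sigma)|_{\support \nu_i} = \nu_i \) in the vortex statement, which the paper leaves implicit.
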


\begin{proof}
    Note first that if \( \omega = 0 \) or \(| \support \omega| =2 \), then \( \omega \) must be irreducible, and hence in this case, the lemma trivially holds.
    Next, note that by definition, we have \( \omega|_{\support \omega} = \omega \), and hence either \( \omega \) is irreducible, in which case we are done, or \( \omega \) is not irreducible. 
    If \( \omega \) is not irreducible, then by Lemma~\ref{lemma: decomposition}, there are non-trivial \( \omega_1 , \omega_2 \in \Omega^2_0(B_N,G) \) with disjoint supports contained in \( \support \omega \), such that \( \omega = \omega_1 + \omega_2 \).
    Since \( \omega_1 \) and \( \omega_2 \) have disjoint supports, we must have \( |\support \omega_1|,|\support \omega_2|< |\support \omega|. \)
    The lemma follows by induction on the size of the support of the plaquette configurations. 
\end{proof}

\subsection{Minimal vortices}

Let \( \sigma \in \Omega^1(B_N,G) \). We say that a vortex \( \nu \) in \( \sigma \) is a \emph{minimal vortex} if its support does not contain any boundary plaquettes of \( C_2(B_N) \) and \( |\support \nu| = 12\). 
The next lemma explains in what sense the minimal vortices are indeed minimal.
\begin{lemma}\label{lemma: minimal vortex I}
    If \( \nu \) is a vortex in \( \sigma \in \Omega^1(B_N,G) \) whose support does not contain any boundary plaquettes of \( C_2(B_N) \), then 
    \begin{enumerate}[label=\textnormal{(\roman*)}]
        \item \( | \support \nu| \geq 12\), and
        \item if \( |\support \nu| = 12 \), then there exists an edge \( e_0 \in C_1^+(B_N) \) such that \( \support \nu = \pm \support \hat \partial e_0 \).\label{item: minimal vortex}
    \end{enumerate}
\end{lemma}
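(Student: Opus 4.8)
The plan is to argue that any nonzero closed 2-form $\nu \in \Omega^2_0(B_N,G)$ whose support avoids boundary plaquettes must, at every edge it "touches", have its coboundary fully supported; this forces each relevant edge to contribute a full packet of $6$ plaquettes, and a counting/minimality argument then yields the bound $12$ and the rigidity in the equality case. First I would record the key local fact: if $\nu$ is closed then by Bianchi's lemma \eqref{eq: Bianchi}, for every $3$-cell $c \in C_3(B_N)$ we have $\nu(\partial c) = 0$. Fix a plaquette $p \in \support \nu$ and an edge $e \in \partial p$. Since $e$ is not a boundary edge (its plaquettes $\hat\partial e$ all lie in $C_2(B_N)$, because $p$ is an interior plaquette and in $\mathbb{Z}^4$ each edge has exactly $6$ plaquettes in its coboundary), the constraint "$\nu$ is closed at each $3$-cell containing $e$" links the values of $\nu$ on the six plaquettes of $\hat\partial e$. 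The crucial claim is:

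\begin{sublemma}
    If $\nu \in \Omega^2_0(B_N,G)$ and $e \in C_1^+(B_N)$ is an edge such that $\support \hat\partial e \subseteq C_2(B_N)$ and $\nu$ does not vanish on all of $\hat\partial e$, then $\support \nu \supseteq \support \hat\partial e$, i.e.\ $\nu(p') \neq 0$ for \emph{every} $p' \in \hat\partial e$.
\end{sublemma}

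I would prove this claim by viewing the six plaquettes through $e$ as follows: $e$ points in some direction $x^i$, and the six plaquettes are $\pm\frac{\partial}{\partial x^i}\wedge\frac{\partial}{\partial x^j}$ for the three $j \neq i$, based at the two possible vertices. The $3$-cells containing $e$ are indexed by pairs $\{i,j,k\}$ with $j,k \neq i$; each such $3$-cell, via $\nu(\partial c)=0$, expresses a linear relation among the plaquettes through $e$ in directions $j$ and $k$ together with two further "parallel" plaquettes not through $e$ (at the shifted vertex). Rather than untangle all of this by hand, the cleaner route is the Hodge dual: by Lemma~\ref{lemma: lemma 2.3} (or directly Lemma~\ref{lemma: lemma 2.4} and the duality $*$), a closed $2$-form $\nu$ on $\mathbb{Z}^4$ corresponds to a coclosed $2$-form $*\nu$ on the dual lattice, i.e.\ $\delta(*\nu) = 0$; and $\support \hat\partial e$ dualizes to the set of $2$-cells in the dual containing the dual edge $*e$. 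A coclosed form that is nonzero somewhere on a "star" $\hat\partial e$ must — since $\delta(*\nu)$ vanishes on the dual edge $*e$, and that coderivative is the signed sum of the values of $*\nu$ over exactly the $2$-cells meeting $*e$ in a way that splits into the three coordinate $2$-planes through $*e$ — be nonzero on all of them, because in each coordinate $2$-plane a nonzero value at one of the two plaquettes meeting $*e$ forces a nonzero value at the other (a $1$-dimensional relation with coefficients $\pm 1$ over the \emph{abelian} group $G$ cannot kill a single nonzero term). This is the step I expect to be the main obstacle: making the local linear-algebra bookkeeping over $G$ fully rigorous, keeping careful track of orientations and signs so that no accidental cancellation is possible.

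Granting the claim, part (i) follows quickly. Since $\nu$ is nonzero, pick $p \in \support \nu$; each of the four edges $e \in \partial p$ is interior (support of $\hat\partial e$ inside $C_2(B_N)$, as $\nu$ avoids boundary plaquettes and we may enlarge to a box where this holds — or argue directly that a plaquette whose edges were boundary edges would force $\nu$ to meet a boundary plaquette), and for each such $e$ we have $\nu \neq 0$ on $\hat\partial e$, so by the claim $\support \hat\partial e \subseteq \support \nu$. In $\mathbb{Z}^4$ we have $|\support \hat\partial e| = 6$. Now consider the union $\bigcup_{e \in \partial p} \support\hat\partial e$; a short combinatorial check (each plaquette in this union is counted, via shared edges, in a controlled way) shows this union has at least $12$ elements — indeed the four stars $\hat\partial e$ over the four edges of a single plaquette $p$ overlap only in $p$ itself and its three "opposite" partners, giving $\bigl|\bigcup_e \support\hat\partial e\bigr| \geq 12$. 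Hence $|\support\nu| \geq 12$. For part (ii), if $|\support\nu| = 12$ then the inclusion $\bigcup_{e\in\partial p}\support\hat\partial e \subseteq \support\nu$ combined with $|\bigcup_{e\in\partial p}\support\hat\partial e| \geq 12 = |\support\nu|$ forces equality $\support\nu = \bigcup_{e\in\partial p}\support\hat\partial e$. It remains to see this forces $\support\nu = \pm\support\hat\partial e_0$ for a \emph{single} edge $e_0$: if all four edges of $p$ contributed distinct stars we would get strictly more than $12$ plaquettes (the four $6$-element stars over the four distinct edges of a plaquette cannot all fit inside $12$ plaquettes unless they substantially coincide, which geometrically happens precisely when $p$ is "degenerate" — but a genuine plaquette has four genuinely distinct edges), so one shows via the geometry of $\mathbb{Z}^4$ that the only way $\support\nu$ closes up at exactly $12$ plaquettes is the configuration $\pm\support\hat\partial e_0$, i.e.\ the "minimal vortex" around a single edge. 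I would finish by verifying directly that $\hat\partial e_0$ (as a $\pm 1$-valued, hence $G$-valued when $G \supseteq \{0,\pm1\}$ via $\rho$) is indeed closed: $d(\hat\partial e_0) = 0$ since $\partial\partial = 0$ dualizes to $d\hat\partial = 0$ on the relevant degree — so the claimed configuration genuinely occurs and the characterization is sharp.
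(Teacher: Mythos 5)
The central Claim in your argument is false, and the minimal vortex itself is the counterexample. Let \( e_0 = \frac{\partial}{\partial x^i}\big|_a \) and let \( \nu = d\omega \) where \( \omega \in \Omega^1(B_N,G) \) equals \( g \neq 0 \) at \( e_0 \) and \( 0 \) on all other positively oriented edges. Then \( \nu \) is closed and \( \support \nu = \pm\support\hat\partial e_0 \) (six unoriented plaquettes). Now take \( e = \frac{\partial}{\partial x^j}\big|_a \) with \( j \neq i \). The plaquette \( p = \frac{\partial}{\partial x^i}\big|_a \wedge \frac{\partial}{\partial x^j}\big|_a \) lies in \( \hat\partial e \) and satisfies \( \nu(p) \neq 0 \), but it is the \emph{only} plaquette of \( \hat\partial e \) containing \( e_0 \), so \( \nu \) vanishes on the other five. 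Hence a closed \(2\)-form can be nonzero somewhere on a star \( \hat\partial e \) without being nonzero on all of it. The underlying problem is that the Bianchi relation \( \nu(\partial c) = 0 \) at a \(3\)-cell \( c \) containing \( e \) involves only \emph{two} plaquettes of \( \hat\partial e \) (a \(3\)-cube has exactly two faces through any given edge) together with four plaquettes not containing \( e \); it is one six-term relation, not three independent two-term relations in the coordinate \(2\)-planes, so a nonzero value at one face forces only \emph{some} other face of that \(3\)-cell into the support, not the whole star. (Your dual-lattice phrasing also misidentifies \( *e \), which for an edge \( e \subset \mathbb{Z}^4 \) is a dual \(3\)-cell; the relation \( \delta(*\nu) = 0 \) at a dual \emph{edge} is exactly \( \nu(\partial c)=0 \) for the corresponding primal \(3\)-cell \( c \).) Finally, the Claim would prove too much: the four stars over the four edges of a support plaquette \( p \) pairwise intersect only in \( p \), so their union contains \( 4\cdot 6 - 3 = 21 \) unoriented plaquettes, and your argument would yield \( |\support\nu| \geq 42 \), contradicting part (ii) and the existence of minimal vortices.

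The correct route --- the paper itself does not reprove this lemma but defers to Lemma 7.1 of \cite{c2019} and Lemma 3.4.6 of \cite{cao20} --- uses only the weak local consequence of closedness: every \(3\)-cell whose six faces meet \( \support\nu \) must contain at least \emph{two} support plaquettes among those faces, since their values sum to zero. One then runs a combinatorial count over the four \(3\)-cells containing a fixed support plaquette, analyzing how the forced partner plaquettes of different \(3\)-cells can coincide, to conclude that at least six unoriented plaquettes are required and that equality occurs only for the configuration \( \pm\support\hat\partial e_0 \). That case analysis is the actual content of the lemma and is absent from your proposal; the counting and rigidity steps you sketch after the Claim inherit its failure and would need to be rebuilt on this weaker, correct local input.
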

For a proof of Lemma~\ref{lemma: minimal vortex I}, see e.g. the proof of Lemma~7.1~in~\cite{c2019} or Lemma~3.4.6~in~\cite{cao20}. 
 
\begin{lemma}\label{lemma: minimal vortex II}
    If \( \nu \) is a minimal vortex in \( \sigma \in \Omega^1(B_N,G) \) whose support does not contain any boundary plaquettes of \( C_2(B_N) \), then there is an edge \( dx_j \in C_1(B_N) \) and \( g \in G\smallsetminus \{ 0 \} \) such that 
    \begin{equation}\label{eq: minimal vortex as df}
        \nu = d\bigl(g \, dx_j \bigr).
    \end{equation}
\end{lemma}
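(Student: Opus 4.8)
The plan is to upgrade the combinatorial description of $\support\nu$ provided by Lemma~\ref{lemma: minimal vortex I} into an exact identification of $\nu$. By Lemma~\ref{lemma: minimal vortex I}\ref{item: minimal vortex}, since $|\support\nu|=12$ and no plaquette of $\support\nu$ is a boundary plaquette of $C_2(B_N)$, there is an edge $e_0 = \frac{\partial}{\partial x^j}\big|_{a_0}\in C_1^+(B_N)$ with $\support\nu = \pm\support\hat\partial e_0$. For $g\in G$ (to be chosen) let $\omega' := g\,dx_j\in\Omega^1(B_N,G)$ be the $1$-form taking the value $g$ on $e_0$ and $0$ on every other positively oriented edge. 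By the discrete Stokes theorem \eqref{eq: stokes} together with the definition of the coboundary, $d\omega'(p)=\omega'(\partial p)=(\partial p[e_0])\,g=(\hat\partial e_0[p])\,g$ for every plaquette $p$, so $\support(d\omega')\subseteq \pm\support\hat\partial e_0=\support\nu$.

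First I would fix any $p_0\in\support\hat\partial e_0$, note that $\hat\partial e_0[p_0]=\partial p_0[e_0]\in\{-1,1\}$, and set $g := (\hat\partial e_0[p_0])\,\nu(p_0)$; since $p_0\in\support\nu$ we have $\nu(p_0)\neq 0$, hence $g\neq 0$. With this choice $d\omega'(p_0)=(\hat\partial e_0[p_0])^2\,\nu(p_0)=\nu(p_0)$. Now consider the ``defect'' $\mu := \nu - d\omega'$. It lies in $\Omega^2_0(B_N,G)$, being the difference of the closed form $\nu$ and the closed form $d\omega'$ (recall $dd=0$); moreover $\support\mu\subseteq\support\nu$ and $\mu(p_0)=0$, so $\support\mu\subseteq\support\nu\smallsetminus\{p_0\}$, which has at most $11$ elements and still contains no boundary plaquette of $C_2(B_N)$.

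It then remains to show $\mu = 0$, which is where the minimality of $\nu$ re-enters, via the lower bound of Lemma~\ref{lemma: minimal vortex I}(i): if $\mu\neq 0$, apply Lemma~\ref{lemma: sum of vortices} to write $\mu$ as a sum of irreducible forms in $\Omega^2_0(B_N,G)$ with pairwise disjoint supports; since $\mu\neq 0$, one summand $\mu_0$ is non-trivial, and by the Poincar\'e lemma (Lemma~\ref{lemma: poincare}) we may write $\mu_0=d\sigma_0$ for some $\sigma_0\in\Omega^1(B_N,G)$, so that $\mu_0=(d\sigma_0)|_{\support\mu_0}$ is a vortex in $\sigma_0$ whose support avoids boundary plaquettes (being contained in $\support\mu$). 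Hence Lemma~\ref{lemma: minimal vortex I}(i) forces $|\support\mu_0|\geq 12$, contradicting $|\support\mu_0|\leq|\support\mu|\leq 11$. Thus $\mu=0$, i.e.\ $\nu=d(g\,dx_j)$ with $g\neq 0$, as claimed. The conceptual heart of the argument is simply to realize that it suffices to kill one plaquette of the defect and then appeal to the ``$\geq 12$'' bound; the only point I would treat carefully is the bookkeeping ensuring $\support\mu$ is \emph{strictly} smaller than $\support\nu$ and disjoint from the boundary of $C_2(B_N)$, since everything else is a direct application of the already-established structural lemmas.
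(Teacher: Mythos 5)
Your proof is correct, but it takes a genuinely different route from the paper's. The paper argues directly with Bianchi's lemma: each $3$-cell $c$ whose boundary meets $\pm\support\hat\partial e_0$ does so in exactly two plaquettes, so the relations $\nu(\partial c)=0$ chain the twelve values of $\nu$ together and force them to agree with $d(g\,dx_j)$ for a single $g$; a failure of \eqref{eq: minimal vortex as df} would produce a $3$-cell with $\nu(\partial c)\neq 0$, contradicting $d\nu=0$. You instead normalize the candidate $d(g\,dx_j)$ to match $\nu$ at one plaquette $p_0$ (the choice $g=(\hat\partial e_0[p_0])\,\nu(p_0)$ with $(\hat\partial e_0[p_0])^2=1$ is exactly right) and then kill the closed defect $\mu=\nu-d(g\,dx_j)$ by a support-counting argument: any non-trivial irreducible piece of $\mu$, extracted via Lemma~\ref{lemma: sum of vortices} and realized as a vortex through the Poincar\'e lemma, would have support of size at most $11$ inside $\support\nu$, contradicting the lower bound of Lemma~\ref{lemma: minimal vortex I}(i). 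Both arguments are sound; yours avoids the local sign bookkeeping on pairs of plaquettes sharing a $3$-cell by reusing two structural lemmas already in place, at the cost of invoking heavier machinery (the decomposition lemma and the Poincar\'e lemma) for what the paper treats as a purely local computation. One cosmetic remark: since the support of a $2$-form is symmetric, $\mu(p_0)=0$ also removes $-p_0$, so in fact $|\support\mu|\leq 10$; either bound suffices for the contradiction with $12$.
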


\begin{proof}
    Assume that \( \sigma \in \Omega^1(B_N,G) \) and that \( \nu \) is a minimal vortex in \( \sigma \). Let \( e_0 = dx_j \in C_1^+(B_N) \) be such that  \( \support \nu = \pm \support \hat \partial e_0  \) (such \( e_0 \) is guaranteed to exist by~Lemma~\ref{lemma: minimal vortex I}\ref{item: minimal vortex}).  
    Assume for contradiction that there is no \( g \in G\smallsetminus \{ 0 \} \) such that~\eqref{eq: minimal vortex as df} holds, or equivalently, such that for all \( p \in C_2(B_N) \), 
    \begin{equation}\label{nupgminusg}
        \nu(p) = \bigl(d(g \, dx_j )\bigr)(p) . 
    \end{equation} 
    Since the support of the boundary of any \( 3 \)-cell contains at most two plaquettes in \(  \support  \hat \partial e_0  \), \( \support \nu  = \pm \support \hat \partial e_0 \), and \( \nu \) is a 2-form, it follows from~\eqref{eq: Bianchi} that there must then exist \(c \in C_3(B_N)\) and two distinct plaquettes \( p_1,p_2 \in \support \nu  \cap  \pm \support  \partial c \) with \( \partial p_1[e_1]=1 \) and \( \partial p_2[e_0]=-1 \) such that \( \nu(p_1) \neq -\nu(p_2). \) Again since the support of the boundary of any \( 3 \)-cell contains at most two plaquettes in \(  \support \hat \partial e_0  \), we must have
    \begin{equation*}
        \nu(\partial c) = \nu(p_1) + \nu(p_2) \neq 0.
    \end{equation*}
    But this contradicts~\eqref{eq: Bianchi}, so the assumption must be false, and hence the desired conclusion follows.
\end{proof}
If a vortex \( \nu \) in \( \sigma \in \Omega^1(B_N,G) \) can be written as \( d(g\, dx_j) \) for some \( g \in G\smallsetminus \{ 0 \} \) and some \( dx_j \in C_1(B_N) \), then we say that \( \nu \) is a \emph{minimal vortex centered at \( dx_j\).}

\subsection{Distribution of vortices}\label{subsec: distribution of vortices}
  
In this section we give upper bounds for a few probabilities related to the distribution of vortices. In particular, we give proofs of the natural analogues of Corollaries 6.2 and 6.3 in~\cite{c2019}. Even when the structure group is \(  \mathbb{Z}_2 \), our proofs are different from the proofs in~\cite{c2019}.

\begin{lemma} \label{lemma: agreement probability} 
	Let \( \nu \in \Omega^2_0(B_N,G) \).  %
	Then 
	\begin{equation*}
	    \mu_{N,\beta} \bigl( \{ \sigma \in \Omega^1(B_N,G) \colon (d\sigma)|_{\support \nu} = \nu \} \bigr) \leq \prod_{p \in \support \nu} \frac{\phi_\beta  \bigl(\nu(p)\bigr)}{\phi_\beta(0)},
	\end{equation*} 
	where $\phi_\beta$ is the function defined in \eqref{wilson-action-piece}.
\end{lemma}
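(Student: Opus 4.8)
The plan is to compare, for each plaquette configuration $\omega$ agreeing with $\nu$ on $\support\nu$, its Boltzmann weight with that of the configuration $\omega - \nu$ obtained by "flipping off" the vortex $\nu$, and to show that the ratio of weights is at most $\prod_{p\in\support\nu}\phi_\beta(\nu(p))/\phi_\beta(0)$. First I would recall from Lemma~\ref{lemma: vortex flip} that if $\omega\in\Omega^2_0(B_N,G)$ satisfies $\omega|_{\support\nu}=\nu$, then $\omega-\nu=\omega|_{C_2(B_N)\smallsetminus\support\nu}$ again lies in $\Omega^2_0(B_N,G)$; this makes the map $\omega\mapsto\omega-\nu$ a well-defined injection from $\{\omega\in\Omega^2_0(B_N,G):\omega|_{\support\nu}=\nu\}$ into $\Omega^2_0(B_N,G)$. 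Using the description of $\mu_{N,\beta}$ as a measure on plaquette configurations from Section~\ref{sec: mubetaNsubsec}, the probability in question is
\begin{equation*}
    \mu_{N,\beta}\bigl(\{\sigma:(d\sigma)|_{\support\nu}=\nu\}\bigr)
    = \frac{\sum_{\omega\in\Omega^2_0(B_N,G)\colon\omega|_{\support\nu}=\nu}\ \prod_{p\in C_2(B_N)}\phi_\beta\bigl(\omega(p)\bigr)}{\sum_{\omega'\in\Omega^2_0(B_N,G)}\ \prod_{p\in C_2(B_N)}\phi_\beta\bigl(\omega'(p)\bigr)}.
\end{equation*}

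Next I would split the product over all $p\in C_2(B_N)$ into the part over $\support\nu$ and the part over the complement. On $\support\nu$ we have $\omega(p)=\nu(p)$, so $\prod_{p\in\support\nu}\phi_\beta(\omega(p)) = \prod_{p\in\support\nu}\phi_\beta(\nu(p))$, a constant independent of $\omega$; moreover on $C_2(B_N)\smallsetminus\support\nu$ we have $\omega(p)=(\omega-\nu)(p)$. Pulling the constant out of the numerator sum and reindexing the numerator sum via $\tilde\omega:=\omega-\nu$, whose support lies in $C_2(B_N)\smallsetminus\support\nu$, gives
\begin{equation*}
    \mu_{N,\beta}\bigl(\{\sigma:(d\sigma)|_{\support\nu}=\nu\}\bigr)
    = \Bigl(\prod_{p\in\support\nu}\phi_\beta(\nu(p))\Bigr)\cdot
    \frac{\phi_\beta(0)^{-|\support\nu|}\sum_{\tilde\omega}\ \prod_{p\in C_2(B_N)}\phi_\beta\bigl(\tilde\omega(p)\bigr)}{\sum_{\omega'\in\Omega^2_0(B_N,G)}\ \prod_{p\in C_2(B_N)}\phi_\beta\bigl(\omega'(p)\bigr)},
\end{equation*}
where the inner sum in the numerator ranges over those $\tilde\omega\in\Omega^2_0(B_N,G)$ lying in the image of the flip map, and the factor $\phi_\beta(0)^{-|\support\nu|}$ accounts for the fact that such $\tilde\omega$ vanishes on $\support\nu$, so that $\prod_{p\in C_2(B_N)}\phi_\beta(\tilde\omega(p)) = \phi_\beta(0)^{|\support\nu|}\prod_{p\in C_2(B_N)\smallsetminus\support\nu}\phi_\beta(\tilde\omega(p))$.

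Finally, since the flip map is injective into $\Omega^2_0(B_N,G)$ and all summands $\prod_p\phi_\beta(\cdot)$ are strictly positive, the numerator sum over the image is bounded above by the full partition function $\sum_{\omega'\in\Omega^2_0(B_N,G)}\prod_{p}\phi_\beta(\omega'(p))$; hence the big fraction is at most $\phi_\beta(0)^{-|\support\nu|}$, and we conclude
\begin{equation*}
    \mu_{N,\beta}\bigl(\{\sigma:(d\sigma)|_{\support\nu}=\nu\}\bigr)
    \le \prod_{p\in\support\nu}\frac{\phi_\beta(\nu(p))}{\phi_\beta(0)},
\end{equation*}
as claimed. The only genuinely delicate point is making sure the flip map $\omega\mapsto\omega-\nu$ is well-defined and injective on the relevant set — which is exactly the content of Lemma~\ref{lemma: vortex flip} together with the trivial observation that subtracting a fixed form is injective — so no real obstacle remains; the rest is bookkeeping of the partition-function ratio and the elementary bound "sum over a subset $\le$ sum over the whole set" for nonnegative terms.
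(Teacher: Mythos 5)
Your proposal is correct and follows essentially the same route as the paper: both proofs reduce to the ratio of restricted partition sums, use the shift \( \omega \mapsto \omega - \nu \) (justified by Lemma~\ref{lemma: vortex flip}) to compare weights, and invoke positivity of the weights to bound a sum over a subset of \( \Omega^2_0(B_N,G) \) by the full partition function. The only cosmetic difference is that you apply the subset inequality in the numerator after reindexing, whereas the paper first bounds the denominator from below by the sum over configurations vanishing on \( \support\nu \) and then applies the bijection; these are the same manipulations in a different order.
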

\begin{proof} 
    Let \( P \coloneqq \support \nu \). Further, let \( \mathcal{E}_P^\nu \coloneqq \bigl\{ \omega \in \Omega^2_0(B_N,G) \colon \omega|_P = \nu \bigr\} \) and, similarly, let \( \mathcal{E}_P^0 \coloneqq \bigl\{ \omega \in \Omega^2_0(B_N,G)  \colon \omega|_P = 0 \bigr\}. \)
    By the definition of \(\mu_{N,\beta}\) (see Section \ref{sec: mubetaNsubsec}), 
    \begin{equation*}
      \mu_{N,\beta} \bigl( \{ \sigma \in \Omega^1(B_N,G) \colon (d\sigma)|_{\support \nu} = \nu \} \bigr)
      =
        \mu_{N,\beta}(\mathcal{E}_P^\nu) =
        \frac{\sum_{\omega \in \mathcal{E}_P^\nu}  \prod_{p \in C_2(B_N)} \phi_\beta  \bigl(\omega(p)\bigr)}{\sum_{\omega \in \Omega^2_0(B_N,G)}  \prod_{p \in C_2(B_N)} \phi_\beta  \bigl(\omega(p)\bigr)}.
    \end{equation*}
    Since \( \mathcal{E}_P^0 \subseteq \Omega^1_0(B_N,G)  \), we have
    \begin{equation}\label{eq: eq 2 in proof}
        \frac{\sum_{\omega \in \mathcal{E}_P^\nu}  \prod_{p \in C_2(B_N)} \phi_\beta  \bigl(\omega(p)\bigr)}{\sum_{\omega \in \Omega^2_0(B_N,G)}  \prod_{p \in C_2(B_N)} \phi_\beta  \bigl(\omega(p)\bigr)} \leq
        \frac{\sum_{\omega \in \mathcal{E}_P^\nu}  \prod_{p \in C_2(B_N)} \phi_\beta  \bigl(\omega(p)\bigr)}{\sum_{\omega \in \mathcal{E}_P^0}  \prod_{p \in C_2(B_N)} \phi_\beta  \bigl(\omega(p)\bigr)}.
    \end{equation}
    Since the mapping \( \omega \mapsto \omega-\nu \) is a bijection from \( \mathcal{E}_P^\nu \) to \( \mathcal{E}_P^0 \), the right-hand side of~\eqref{eq: eq 2 in proof} is equal to
    \begin{equation}\label{eq: eq 3 in proof}
        \frac{\sum_{\omega \in \mathcal{E}_P^\nu}  \prod_{p \in C_2(B_N)} \phi_\beta  \bigl(\omega(p)\bigr)}{\sum_{\omega \in \mathcal{E}_P^\nu}  \prod_{p \in C_2(B_N)} \phi_\beta  \bigl((\omega-\nu)(p)\bigr)}
        =
        \frac{\sum_{\omega \in \mathcal{E}_P^\nu}  \prod_{p \in P} \phi_\beta  \bigl(\omega(p)\bigr)\prod_{p \in C_2(B_N)\smallsetminus P} \phi_\beta  \bigl(\omega(p)\bigr) }{\sum_{\omega \in \mathcal{E}_P^\nu}  \prod_{p \in P} \phi_\beta  \bigl((\omega-\nu)(p)\bigr) \prod_{p \in C_2(B_N)\smallsetminus P} \phi_\beta  \bigl((\omega-\nu)(p)\bigr)}.
    \end{equation}
    By definition, if \( p \in C_2(B_N) \smallsetminus P \) then \( \nu(p) = 0 \). Moreover, if \( p \in P \) and \( \omega \in \mathcal{E}_P^\nu \), then \( \omega(p) = \nu(p) \), and consequently,  \( (\omega-\nu)(p) = \omega(p) - \nu(p) = \nu(p)-\nu(p) = 0 \). From this it immediately follows that the right-hand side of~\eqref{eq: eq 3 in proof} is equal to
    \begin{equation*} 
        \begin{split}
            &\frac{\sum_{\omega \in \mathcal{E}_P^\nu}  \prod_{p \in P} \phi_\beta  \bigl(\nu(p)\bigr)\prod_{p \in C_2(B_N)\smallsetminus P} \phi_\beta  \bigl(\omega(p)\bigr) }{\sum_{\omega \in \mathcal{E}_P^\nu}  \prod_{p \in P} \phi_\beta  (0) \prod_{p \in C_2(B_N)\smallsetminus P} \phi_\beta  \bigl(\omega(p)\bigr)}
            =
            \frac{  \prod_{p \in P} \phi_\beta  \bigl(\nu(p)\bigr) \sum_{\omega \in \mathcal{E}_P^\nu} \prod_{p \in C_2(B_N)\smallsetminus P} \phi_\beta  \bigl(\omega(p)\bigr) }{ \prod_{p \in P} \phi_\beta  (0) \sum_{\omega \in \mathcal{E}_P^\nu}  \prod_{p \in C_2(B_N)\smallsetminus P} \phi_\beta  \bigl(\omega(p)\bigr)}
            \\&\qquad = 
            \frac{ \prod_{p \in P} \phi_\beta  \bigl(\nu(p)\bigr) }{ \prod_{p \in P} \phi_\beta  (0)  }
            = 
            \prod_{p \in P} \frac{\phi_\beta  \bigl(\nu(p)\bigr)}{\phi_\beta(0)}.
        \end{split}
    \end{equation*}
    Combining the previous equations, the desired conclusion follows. 
\end{proof}

\begin{lemma}\label{lemma: counting vortex configurations ii}
    For each \(  p_0 \in C_2(B_N) \) and each \( m \geq 6 \), there are at most \( 5^{m-1}(|G|-1)^m \) irreducible  \( \nu \in \Omega^2_0(B_N,G) \) with \( p_0 \in \support \nu \) and \( |\support \nu| = 2m \).
\end{lemma}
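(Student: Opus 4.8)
The plan is to build an injective encoding of each such $\nu$ into a string consisting of one symbol from a set of size $|G|-1$ followed by $m-1$ symbols each from a set of size $5(|G|-1)$; counting the strings then gives $(|G|-1)\cdot\bigl(5(|G|-1)\bigr)^{m-1}=5^{m-1}(|G|-1)^m$. Throughout I would identify $\support\nu$, which has $2m$ elements occurring in orientation pairs, with the set $\mathcal P$ of the $m$ underlying plaquettes, fix once and for all a total order on the $3$-cells and the $2$-cells of $B_N$, and for $P\subseteq\mathcal P$ write $\nu|_P$ for the corresponding restriction of $\nu$.

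The engine of the argument would be the following observation, which is the only place irreducibility enters. Suppose $P\subseteq\mathcal P$ is nonempty and proper and the values $\nu(p)$, $p\in P$, are known. Then $\nu|_P$ is not closed: otherwise $P=\support(\nu|_P)$ would be a nonempty proper subset of $\support\nu$ with $\nu|_P\in\Omega^2_0(B_N,G)$, contradicting irreducibility. So there is a $3$-cell $c$ with $d(\nu|_P)(c)=\nu|_P(\partial c)\neq 0$; let $c(P)$ be the least such one. Since $\nu|_P(\partial c(P))\neq 0$, at least one of the six faces of $c(P)$ lies in $P$, hence at most five lie outside $P$. Moreover, since $\nu$ is closed, $0=\nu(\partial c(P))=\nu|_P(\partial c(P))+\sum_{p\ \text{a face of}\ c(P),\ p\notin P}\pm\,\nu(p)$, and as the first term is nonzero the remaining sum is nonzero; in particular $c(P)$ has a face in $\support\nu\smallsetminus P$. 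Thus the failure of $\nu|_P$ to be closed singles out a canonical $3$-cell through which the support must extend past $P$, and this $3$-cell offers at most five candidate new plaquettes.

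With this in hand I would encode $\nu$ as follows. Set $P_0\coloneqq\{p_0\}$ and record $\nu(p_0)\in G\smallsetminus\{0\}$. For $k=0,\dots,m-2$, the set $P_k$ has $k+1\le m-1$ plaquettes, so it is nonempty and proper in $\mathcal P$ and the observation applies with $P=P_k$: form $c(P_k)$, let $p_k$ be the least face of $c(P_k)$ in $\support\nu\smallsetminus P_k$, and record the index of $p_k$ among the at most five faces of $c(P_k)$ outside $P_k$ together with $\nu(p_k)\in G\smallsetminus\{0\}$; then put $P_{k+1}\coloneqq P_k\cup\{p_k\}$. After these $m-1$ steps $P_{m-1}\subseteq\mathcal P$ has $m$ plaquettes, so $P_{m-1}=\support\nu$ and $\nu=\nu|_{P_{m-1}}$ is recovered from the recorded data. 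This map is injective, since re-running the same deterministic procedure on the recorded string reconstructs $P_0\subseteq P_1\subseteq\cdots$ and all the values — each $c(P_k)$ depends only on $P_k$ and $\nu|_{P_k}$, which have already been reconstructed. The number of strings is at most $(|G|-1)\cdot\bigl(5(|G|-1)\bigr)^{m-1}=5^{m-1}(|G|-1)^m$, which is the claim.

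The hard part will be the middle paragraph: converting irreducibility — which a priori is a statement about \emph{all} subsets of $\support\nu$ — into a local, constructive move, namely that $d\nu=0$ forces the non-closedness of a partial restriction $\nu|_{P_k}$ to be witnessed by a specific $3$-cell that necessarily contains a plaquette of $\support\nu$ outside $P_k$, and that such a $3$-cell contributes only five rather than six new candidates because one of its faces already lies in $P_k$. The remaining points — that every $P_k$ with $k\le m-2$ is proper (immediate from $|\support\nu|=2m$), the orientation-pair bookkeeping, and the injectivity of the decoding — should be routine.
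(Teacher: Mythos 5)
Your proposal is correct and follows essentially the same route as the paper: grow the support from $p_0$ one plaquette at a time, at each stage locating the first $3$-cell on which the partial restriction fails to be closed (guaranteed by irreducibility), using closedness of $\nu$ to find a new support plaquette among the at most five faces of that cell not yet covered, and recording that choice together with a value in $G\smallsetminus\{0\}$. The only cosmetic difference is that you encode $\nu$ as a string of indices and group elements while the paper encodes it as the sequence of partial forms $\nu^{(1)},\dots,\nu^{(m)}$; the count $5^{m-1}(|G|-1)^m$ and the injectivity argument are the same.
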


\begin{proof}
    We will prove that the statement of the lemma holds by constructing an injective map from the set of  irreducible 2-forms in \( \Omega^2_0(B_N,G) \) with support in \( 2m \) plaquettes, including \(  p_0 \), to a set of certain sequences \( \nu^{(1)} ,\nu^{(2)}, \ldots, \nu^{(m)} \) of \( G \)-valued 2-forms in \( \Omega^2(B_N,G) \), and then give an upper bound for the number of such sequences.
To this end, assume that arbitrary total orderings of the \( C_2(B_N) \) plaquettes and \( C_3(B_N) \) are given.

    Let \( \nu \in \Omega^2_0(B_N,G) \) be irreducible. Assume that \( p_0 \in \support \nu \) and \( |\support \nu|  = 2m \).
    Let \(\nu^{(0)} \coloneqq 0 \in \Omega^2(B_N,G)\), \( p_1 \coloneqq p_0 \), and define, for \(p \in C_2(B_N)\), 
    \begin{equation*}
        \nu^{(1)}(p) \coloneqq \begin{cases}
            \nu(p) &\text{if } p = p_1, \cr 
            -\nu(p) &\text{if } p = -p_1, \cr 
            0 &\text{otherwise.}
        \end{cases}
    \end{equation*}

    Now assume that for some \( k\in \{ 1,2, \ldots, m\} \), we are given 2-forms \( \nu^{(1)}, \nu^{(2)} , \ldots, \nu^{(k)} \)  such that, for each \( j \in \{ 1,2, \ldots, k \} \), we have
    \begin{enumerate}[label=(\roman*)]
        \item \( \support \nu^{(j)} \smallsetminus \support \nu^{(j-1)} = \{ p_j,-p_j \} \) for some \( p_j \in C_2(B_N) \), and
        \item \( \nu|_{\support \nu^{(j)}} = \nu^{(j)} \).
    \end{enumerate}
    Clearly this holds when \( k = 1 \).

    If \( d\nu^{(k)} = 0 \), then \( \nu^{(k)} \in \Omega^2_0(B_N,G) \) and, by (ii), we have \( \nu|_{\support \nu^{(k)}} = \nu^{(k)} \). Since \( \nu \) is irreducible by assumption, and \( \support \nu^{(k)} \neq \emptyset \) by (i), it follows that \( \nu^{(k)} = \nu \). By (i), we have \( \support \nu^{(k)} = 2k \), and hence, by definition, \( k = m \).
    Consequently, if \( k <m \), then \( d\nu^{(k)} \not \equiv 0 \), and there is at least one oriented 3-cell \( c \in C_3(B_N)\) for which \( d \nu^{(k)}(c) \neq 0 \). Let \( c_{k+1} \) be the first oriented 3-cell (with respect to the ordering of the 3-cells) for which \( d\nu^{(k)}(c_{k+1}) \neq 0\). Since \( \nu \in \Omega^2_0(B_N,G)\), we have \( d\nu(c_{k+1}) = 0, \) and consequently there must be at least one plaquette \( p\in \support \partial c_{k+1} \smallsetminus  \support \nu^{(k)} \). Let \( p_{k+1}  \) be the first such plaquette (with respect to the ordering of the plaquettes) and define
        \begin{equation*}
            \nu^{(k+1)}(p) \coloneqq \begin{cases}
            \nu(p) &\text{if } p = p_{k+1}, \cr 
            -\nu(p) &\text{if } p = -p_{k+1}, \cr 
            \nu^{(k)}(p) &\text{otherwise.}
            \end{cases}
        \end{equation*}
    Note that if \( \nu^{(1)},\nu^{(2)}, \ldots, \nu^{(k)} \)   satisfy (i) and (ii), then so does \( \nu^{(k+1)} \).

    We now show that \( \nu^{(m)} = \nu \). To this end, note that by (i), \( |\support \nu^{(m)} | = 2m \) and by (ii), \( \nu|_{\support \nu^{(m)}} = \nu^{(m)} \). Since \( |\support \nu| = 2m \), it follows that \( \nu^{(m)} = \nu \).

    We now derive an upper bound for the total number of sequences \( (\nu^{(1)}, \nu^{(2)}, \ldots, \nu^{(m)}) \) which correspond, as above, to some irreducible \( \nu \in \Omega^2_0(B_N,G)  \) with  \( p_0\in \support \nu \) and \( |\support \nu| = 2m \). To this end,  note first that for each \( k \in \{ 1,2, \ldots, m \} \), we have \( \nu(p_k) \in G \smallsetminus \{ 0 \}\). 
    Next, recall that for each  \( k \in \{ 1,2, \ldots, m-1 \} \), we have \( d\nu^{(k)}(c_{k+1}) \neq 0\), and hence \( \support \partial c_{k+1} \cap \support \nu^{(k)} \neq \emptyset \). Since \( |\support \partial c_{k+1}| = 6 \), it follows that  \( | \support \partial c_{k+1} \smallsetminus \support \nu^{(k)} | \leq 5. \) As \(p_{k+1} \in \support \partial c_{k+1} \smallsetminus \support \nu^{(k)} \), there are thus at most five possible ways to choose \( p_{k+1} \). 
    Consequently, the total number sequences \( (\nu^{(1)}, \nu^{(2)}, \ldots, \nu^{(m)}) \) which can correspond to some \( \nu \) as above is at most \( 5^{m-1}(|G|-1)^m \). Since the mapping \( \nu \mapsto (\nu^{(1)}, \nu^{(2)}, \ldots, \nu^{(m)}) \) is injective,  this completes the proof.
\end{proof}

We next use the previous two lemmas to provide a proof of the following result, which extends Corollary 6.2 in~\cite{c2019}. In contrast to the proof of the corresponding result in~\cite{c2019}, we use neither any duality of the model, nor the rate of decay of correlations.   

\begin{proposition}[Compare with Corollary 6.2 in~\cite{c2019}] \label{corollary 6.2}\label{proposition: 6.2 II v2}
    Fix any \( \beta_0 >0 \) such that \( 5(|G|-1) \lambda(\beta)^2< 1 \)
    for all \( \beta > \beta_0 \), where $\lambda(\beta)$ is defined by \eqref{lambdadef}.  Fix \( p_0 \in C_2(B_N)\) and \( M \geq 6 \).
    Then
    \begin{equation*}
        \mu_{N,\beta}\bigl( \{ \sigma \in\Omega^1(B_N,G) \colon \exists \text{ a vortex } \nu \text{ in } \sigma \text{ with } p_0 \in \support \nu \text{ and } |\support \nu | \geq 2M \bigr) \leq K_0^{(M)}  \lambda(\beta)^{2M}
    \end{equation*}
    for all $\beta > \beta_0$, where
    \begin{equation}\label{eq: vortex constant}
    K_0^{(M)}  \coloneqq  
     \frac{5^{M} (|G|-1)^{M} }{1 - 5(|G|-1)\lambda(\beta)^2 } .
\end{equation}
\end{proposition}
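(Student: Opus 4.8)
The plan is a union bound over candidate vortices, followed by summing a geometric series. Write $A$ for the event appearing in the statement. If $\sigma\in A$, then by Definition~\ref{def: vortex} there is an irreducible $\nu\in\Omega^2_0(B_N,G)$ with $p_0\in\support\nu$, with $|\support\nu|\geq 2M$, and with $(d\sigma)|_{\support\nu}=\nu$; since the support of a $2$-form always has even cardinality, $|\support\nu|=2m$ for some integer $m\geq M\geq 6$. Consequently
\[
A\ \subseteq\ \bigcup_{m\geq M}\ \bigcup_{\nu}\ \bigl\{\sigma\in\Omega^1(B_N,G)\colon (d\sigma)|_{\support\nu}=\nu\bigr\},
\]
where the inner union runs over all irreducible $\nu\in\Omega^2_0(B_N,G)$ with $p_0\in\support\nu$ and $|\support\nu|=2m$.

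Next I would estimate each term in this union and count the terms. For the estimate, Lemma~\ref{lemma: agreement probability} gives
\[
\mu_{N,\beta}\bigl(\{(d\sigma)|_{\support\nu}=\nu\}\bigr)\ \leq\ \prod_{p\in\support\nu}\frac{\phi_\beta(\nu(p))}{\phi_\beta(0)}\ \leq\ \lambda(\beta)^{|\support\nu|}=\lambda(\beta)^{2m},
\]
where the second inequality uses that $\nu(p)\neq 0$ for every $p\in\support\nu$ together with the definition~\eqref{lambdadef} of $\lambda$. For the count, Lemma~\ref{lemma: counting vortex configurations ii} (applicable since $m\geq 6$) shows that the number of irreducible $\nu\in\Omega^2_0(B_N,G)$ with $p_0\in\support\nu$ and $|\support\nu|=2m$ is at most $5^{m-1}(|G|-1)^m\leq 5^m(|G|-1)^m$.

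Combining these two facts with the union bound and the hypothesis $5(|G|-1)\lambda(\beta)^2<1$ for $\beta>\beta_0$,
\[
\mu_{N,\beta}(A)\ \leq\ \sum_{m=M}^{\infty}5^m(|G|-1)^m\lambda(\beta)^{2m}\ =\ \sum_{m=M}^{\infty}\bigl(5(|G|-1)\lambda(\beta)^2\bigr)^m\ =\ \frac{\bigl(5(|G|-1)\lambda(\beta)^2\bigr)^M}{1-5(|G|-1)\lambda(\beta)^2},
\]
and the right-hand side equals $K_0^{(M)}\lambda(\beta)^{2M}$ with $K_0^{(M)}$ as in~\eqref{eq: vortex constant}. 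There is essentially no hard step here: the two preceding lemmas carry the real content, and the only points requiring care are that every candidate vortex genuinely has support of even size $2m$ with $m\geq 6$ (so the counting lemma applies) and that both the per-vortex probability bound and the count are uniform in $N$, which they are.
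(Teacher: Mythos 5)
Your proposal is correct and follows essentially the same route as the paper's own proof: a union bound over irreducible configurations containing \( p_0 \), the per-configuration probability estimate from Lemma~\ref{lemma: agreement probability}, the counting bound from Lemma~\ref{lemma: counting vortex configurations ii}, and summation of the resulting geometric series under the hypothesis \( 5(|G|-1)\lambda(\beta)^2 < 1 \). Your explicit remarks that the support size is even and at least \( 2M \geq 12 \) (so the counting lemma applies) are correct and, if anything, slightly more careful than the paper's write-up.
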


\begin{proof}
    Let \( m \geq 1 \) be an integer and let \( p_0 \in C_2(B_N)\). By Lemma~\ref{lemma: counting vortex configurations ii}, there are at most \( \bigl(5(|G|-1)\bigr)^m \) irreducible plaquette configurations \( \nu \in \Omega^2_0(B_N,G) \) with \( p_0 \in \support \nu \) and \( |\support \nu| = 2m \). 
    By Lemma~\ref{lemma: agreement probability}, for any such plaquette configuration \( \nu \in \Omega^2_0(B_N,G) \), we have
	\begin{equation*} 
        \mu_{N,\beta} \bigl( \{ \sigma \in \Omega^1(B_N,G) \colon (d\sigma)|_{\support \nu} = \nu \} \bigr) 
	        \leq \prod_{p \in \support \nu} \frac{\phi_\beta  \bigl(\nu(p)\bigr)}{\phi_\beta(0)} \leq \lambda(\beta)^{2m}. 
	\end{equation*} 
	Summing over all \( m \geq M \), we thus obtain
    \begin{equation*}
        \begin{split}
            &\mu_{N,\beta}\bigl( \{ \sigma \in \Omega^1(B_N,G) \colon \exists \text{ a vortex } \nu \text{ in } \sigma \text{ with } p_0 \in \support \nu \text{ and } |\support \nu | \geq M \bigr)
            \\&\qquad \leq \sum_{m=M}^\infty   \bigl( 5(|G|-1) \bigr)^{m}  \lambda(\beta)^{2m}. 
        \end{split}
    \end{equation*}
    The right-hand side in the previous equation is a geometric sum, which converges if and only if \({5(|G|-1) \lambda(\beta)^2< 1.} \)
    In this case, we obtain the upper bound
    \begin{align*}
        \sum_{m=M}^\infty \bigl( 5(|G|-1) \bigr)^{m}  \lambda(\beta)^{2m} 
        \leq
        \frac{\bigl( 5(|G|-1) \bigr)^{M}  \lambda(\beta)^{2M}}{1 - 5(|G|-1)\lambda(\beta)^2 } .
    \end{align*}
    From this  the desired conclusion  follows.
\end{proof}

 \subsection{Vortices and oriented surfaces}

 We now state and prove the following lemma, which is analogous to Lemma 3.2 in~\cite{c2019}.

\begin{lemma}\label{lemma: 3.2}
    Let \( \sigma \in \Omega^1(B_N,G) \) and let \( \nu \in \Omega^2_0(B_N,G) \) be a vortex in \( \sigma .\)
    Let \( q \) be an oriented surface. 
    If there is a box \( B \) with \( \support \nu \subseteq C_2(B) \) and \( (B^*)^* \subseteq B_N, \) such that \( C_2\bigl((B^*)^*,G\bigr) \cap \support q \) consists of only internal plaquettes of \( q \), then
	\begin{equation}\label{eq: vortex surface equation}
	    \nu(q) = 0.
    \end{equation} 
\end{lemma}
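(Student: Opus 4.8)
The plan is to deform the computation to the slightly larger box $B' \coloneqq (B^*)^*$, on whose boundary $\nu$ automatically vanishes, to write $\nu$ there as $d\eta$ with $\eta$ also vanishing on the boundary of $B'$ (via the Poincar\'e lemma, Lemma~\ref{lemma: poincare}), and then to use the internal-plaquette hypothesis to conclude that $\nu(q) = \eta(\partial q') = 0$ for the restriction $q'$ of $q$ to $B'$.

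First I would record two elementary facts about $B'$. Writing $B = \bigl([a_1,b_1]\times\cdots\times[a_4,b_4]\bigr)\cap\mathbb{Z}^4$, the definition of $B^*$ gives $B' = \bigl([a_1-1,b_1+1]\times\cdots\times[a_4-1,b_4+1]\bigr)\cap\mathbb{Z}^4$, so that $B\subsetneq B'$ with one extra layer of vertices on each side. Comparing coordinates then yields: (a) no plaquette of $C_2(B)$ is a boundary plaquette of $B'$ (a $2$-cell of $B$ has all of its coordinates in the intervals $[a_l,b_l]$, hence strictly inside $[a_l-1,b_l+1]$, so it does not meet the boundary of $B'$); and (b) if an edge $e\in C_1(B')$ is \emph{not} a boundary edge of $B'$, then every plaquette $p$ with $\partial p[e]\neq 0$ lies in $C_2(B')$ (the at most six plaquettes containing such an $e$ cannot reach the boundary faces of $B'$).

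Next, since $\support\nu\subseteq C_2(B)\subseteq C_2(B')$ and $B'\subseteq B_N$, the restriction of $\nu$ to $B'$ lies in $\Omega^2_0(B',G)$ and, by~(a), vanishes on the boundary of $B'$. As $2\in\{1,2,3\}$, the last part of Lemma~\ref{lemma: poincare} produces a $1$-form $\eta\in\Omega^1(B',G)$, also vanishing on the boundary of $B'$, with $d\eta(p) = \nu(p)$ for every $p\in C_2(B')$. Let $q'\in C_2(B',\mathbb{Z})$ be the restriction of $q$, i.e.\ $q'[p]\coloneqq q[p]$ for $p\in C_2^+(B')$; then $\support\partial q'\subseteq C_1^+(B')$, and since $\nu$ vanishes off $C_2(B')$ while $q$ and $q'$ agree on $C_2(B')$, we obtain $\nu(q) = \nu(q') = d\eta(q') = \eta(\partial q')$ by the discrete Stokes' theorem~\eqref{eq: stokes}.

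Finally I would show $\eta(\partial q') = \sum_{e\in C_1^+(B')}(\partial q')[e]\,\eta(e) = 0$ by checking that every summand vanishes. If $e$ is a boundary edge of $B'$, then $\eta(e) = 0$. If $e$ is not a boundary edge of $B'$, then by~(b) all plaquettes contributing to $(\partial q)[e]$ lie in $C_2^+(B')$, so $(\partial q')[e] = (\partial q)[e] = \gamma[e]$, where $\gamma = \partial q$ is the generalized loop defining the internal plaquettes; and if moreover $(\partial q')[e]\neq 0$, there is $p\in\support q' \subseteq \support q\cap C_2^+(B')$ with $\partial p[e]\neq 0$, so by hypothesis $p$ is an internal plaquette of $q$, which forces $\gamma[e] = 0$, a contradiction. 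Hence every summand vanishes and $\nu(q) = 0$. The main obstacle is the bookkeeping in the first step: one must verify carefully that $\nu$ genuinely vanishes on the boundary of $B'$ (so that the Poincar\'e lemma delivers an $\eta$ vanishing there) and that an interior edge of $B'$ meets only plaquettes of $B'$ (so that $(\partial q')[e]$ can be identified with $(\partial q)[e] = \gamma[e]$). Once this is in place, the internal-plaquette hypothesis closes the argument at once.
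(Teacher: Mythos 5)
Your proof is correct, but it takes a genuinely different route from the paper's. The paper works through the dual lattice: from \( d\nu = 0 \) it deduces \( \delta(*\nu) = 0 \) via Lemma~\ref{lemma: lemma 2.3}, applies the coderivative Poincar\'e lemma (Lemma~\ref{lemma: lemma 2.7}) together with Lemma~\ref{lemma: lemma 2.4} to write \( *\nu = -\delta g \) for a \(3\)-form \( g \) supported in \( C_3(B^*) \), concludes \( \nu = d(*g) \), and then decomposes \( g \) into elementary pieces: each contributes \( (*g_0)(\gamma) \) with \( *g_0 \) supported on a single edge \( e_0 \) whose coboundary plaquettes all lie in \( C_2\bigl((B^*)^*\bigr) \), so the internal-plaquette hypothesis forces \( \gamma[e_0] = 0 \). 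You instead stay entirely on the primal lattice, fatten \( B \) by one layer to \( B' = (B^*)^* \), and invoke the boundary-value case of the primal Poincar\'e lemma (the last part of Lemma~\ref{lemma: poincare}) to produce \( \eta \) with \( d\eta = \nu \) on \( B' \) and \( \eta \equiv 0 \) on the boundary of \( B' \); the truncation from \( q \) to \( q' \) then costs nothing because the discrepancy between \( \partial q' \) and \( \partial q \) is confined to boundary edges of \( B' \), where \( \eta \) vanishes, while on interior edges your observation (b) gives \( (\partial q')[e] = \gamma[e] \) and the internal-plaquette hypothesis kills it — the same final mechanism as in the paper. Your route buys independence from the Hodge star and the coderivative machinery, at the price of the explicit coordinate identification of \( (B^*)^* \) as the one-layer fattening of \( B \) and the bookkeeping in (a) and (b); one small point you should make explicit is that \( \nu|_{C_2(B')} \) is indeed closed as a form on \( B' \), which is immediate since every \(3\)-cell of \( B' \) has all its boundary plaquettes in \( C_2(B') \). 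Both arguments ultimately rest on the same two facts: the potential for \( \nu \) lives within one lattice layer of \( B \), and internal plaquettes pair trivially with \( \partial q \).
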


\begin{proof} 

    Since \( \nu \in \Omega^2_0(B_N,G)\), we have \( d\nu = 0 \), and hence, by \eqref{hodgehodge} and Lemma~\ref{lemma: lemma 2.3} (which we can apply because \( (B^*)^* \subseteq B_N \)), we have
    \begin{equation*}
        \delta ({* \nu}) = (-1)^{4(2+1)+1} {*(d({*({*\nu})}))} = (-1)^{4(2+1)+1} {*}d((-1)^{2(4-2)}\nu)
        = (-1)^{4(2+1)+1 + 2(4-2)} {*(d \nu)} = 0.
    \end{equation*} 
    Since the support of \( \nu \) is contained in \( C_2(B) \) by assumption, it follows from Lemma~\ref{lemma: lemma 2.4} that \( *\nu \) has no support outside \( C_2(B^*) \), which by Lemma~\ref{lemma: lemma 2.7} implies that \( *\nu = -\delta g \) for some 3-form \( g \) that is zero outside \( C_3(B^*) \). Utilizing \eqref{hodgehodge} and Lemma~\ref{lemma: lemma 2.3} again, we conclude that $\nu = *\! * \! \nu = d (*  g)$.
    Let 
    \begin{equation*}
        S \coloneqq \nu(q)
    \end{equation*}
    Since \( \support q \) is finite, this sum is well-defined and \( S \in G \).
    We need to show that \( S = 0 \). To that end, let \( \gamma \) be the boundary of \( q \) and note that by Stokes' theorem (see~\eqref{eq: stokes}), we have
    \begin{equation*}
        S = (d ({*}g))(q) = (*g)(\gamma).
    \end{equation*}

    Let us say that a $k$-form \( f_0 \) is \emph{elementary} if there is a $k$-cell \( c \) such that \( \support f_0 = \{ c,-c\} \). Since the 3-form \( g \) has no support outside \( C_3(B^*) \), it has finite support, and hence \( g \) can be written as the sum of finitely many elementary 3-forms that are zero outside \( B^* \). Since the Hodge-star operator is additive, it will follow that \( S = 0 \) if we show that
    \begin{equation}\label{eq: elementary function}
        (*g_0)(\gamma) = 0
    \end{equation}
    for any elementary 3-form \( g_0 \) that is zero outside \( C_3(B^*) \). Take any such \( g_0 \). Then \( *g_0 \) is an elementary 1-form on the primal lattice. 
    Let \( c_0 \) be a 3-cell with \( \support g_0 = \{ c_0,-c_0 \} \), and let \( e_0 = *c_0 \) so that \( \support *g_0 = \{ e_0,-e_0\} \).
    Recall that  \( \support \hat \partial e_0 \) is the set of all plaquettes in \( C_2^+(B_N)\) with \( \partial p[e_0]\neq 0 \), and note that \( \support \hat \partial e_0= *\support (\partial c_0) \). In particular, the support of \( \hat \partial e_0 \) are plaquettes in \( C_2\bigl((B^*)^*\bigr) \).
    Let \( Q_0 =   \support \hat \partial e_0 \cap \pm \support q \). Then \( Q_0 \subseteq C_2\bigl((B^*)^*\bigr) \cap \support q \), and hence, by assumption, all elements of \( Q_0 \) are internal plaquettes of \( q \). 
    By definition, this is equivalent to \( \gamma(e) = 0\) for all \( e \in p \) and \( p \in \support \hat \partial e_0 \cap  \support q \). Consequently, \( \gamma(e_0) = \gamma(-e_0)=0 \). This implies that~\eqref{eq: elementary function} holds, and hence the desired conclusion follows.
\end{proof}

\section{The expected value of a Wilson loop}\label{section: proof of main result}

The purpose of this section is to prove Theorem~\ref{theorem: Chatterjee's main theorem}. 
To this end, recall that we have assumed that \( G = \mathbb{Z}_n \) for some \( n \geq 2 \), and that \( \rho \) is a faithful one-dimensional representation of \( G \).
Our proof closely follows the proof of Theorem 1.1 in~\cite{c2019}.

\subsection{From \texorpdfstring{\( W_\gamma \)}{Wgamma}  to \texorpdfstring{\( W_\gamma' \)}{Wgamma'}}\label{sec:WtoWprime}
In this section, we prove Proposition~\ref{prop: first part of proposition proof}, which shows that the expected value $\mathbb{E}_{N,\beta} [W_\gamma]$ of the Wilson loop observable $W_\gamma$ can be well approximated by the expected value of a simpler observable $W_\gamma'$. 
The proof is based on the same idea as the proof of Theorem 1.1 in~\cite{c2019}, namely, to show that the main contribution to $\mathbb{E}_{N,\beta} [W_\gamma]$ stems from the minimal vortices centered on edges of the loop $\gamma$.

Given a generalized loop \( \gamma \), recalling the definition of \( \gamma_c \) from~\eqref{def: gammac},  we let \( \gamma_1 \coloneqq \gamma- \gamma_c \).
Further, given \( \sigma \in \Omega^1_0(B_N,G) \), we let \( \gamma' \in C_1(B_N,\mathbb{Z}) \) be defined by
\begin{equation*}
    \gamma'[e] \coloneqq \gamma_1[e] \cdot \mathbb{1}\bigl(\exists p,p' \in \hat \partial e\colon  d\sigma(p) \neq  d\sigma(p') \bigr),\quad e \in C_1^+(B_N).
\end{equation*}

\begin{proposition}\label{prop: first part of proposition proof}
    Let \( \beta \geq 0 \) be such that~\( 5(|G|-1) \lambda(\beta)^2< 1 \), and let \( \gamma \) be a generalized loop in \( C_1(B_N) \) such that \( \support \hat \partial \gamma \subseteq C_2(B_N) \). 
    For each \( e \in \gamma \), fix \( p_e \in  \hat \partial e \) and define
    \begin{equation*}
          W_\gamma' \coloneqq    \rho \Bigl( \,\sum_{e \in  \gamma_1 - \gamma'}  d\sigma(p_e)   \Bigr).
    \end{equation*}
    Then
    \begin{equation*} 
        \Bigl|\mathbb{E}_{N,\beta} \bigl[W_\gamma\bigr]- \mathbb{E}_{N,\beta}\bigl[W_\gamma' \bigr]\Bigr|
        \leq 
         2 K_0^{(25)} \ell^4 \lambda(\beta)^{50}
        +
         4K_1 K_0^{(7)} \ell \lambda(\beta)^{14}
         +
          2 C^{(6)} \ell_c \lambda(\beta)^{12} .
    \end{equation*}
\end{proposition}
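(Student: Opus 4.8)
The plan is to interpolate between $W_\gamma$ and $W_\gamma'$ in two stages, introducing an intermediate observable and bounding each difference by controlling the probability that a ``bad'' vortex configuration occurs near $\gamma$. Recall that by the discussion in Section~\ref{sec: mubetaNsubsec} we may view $\mu_{N,\beta}$ as a measure on plaquette configurations $\omega = d\sigma \in \Omega^2_0(B_N,G)$, and by Lemma~\ref{lemma: sum of vortices} every such $\omega$ decomposes as a sum of vortices with disjoint supports. The key input is Proposition~\ref{corollary 6.2}, which bounds the probability that a fixed plaquette $p_0$ lies in the support of a vortex of size at least $2M$ by $K_0^{(M)}\lambda(\beta)^{2M}$; summing this over the $O(\ell)$ plaquettes adjacent to edges of $\gamma$ (there are at most $6\ell$ such plaquettes, since each edge meets $6$ plaquettes) produces error terms of the shape $\ell \cdot K_0^{(M)}\lambda(\beta)^{2M}$.

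\textbf{Step 1: Replace $W_\gamma$ by a product over corner-free edges.} Using Stokes' theorem in the form $\sigma(\gamma) = \sum_{e \in \support\gamma}\sigma(e)$ together with $\rho$ being a homomorphism, we have $W_\gamma = \prod_{e\in\gamma}\rho(\sigma(e))$. First we discard the corner edges: $|\gamma| = \ell$ but only $\ell_c$ of them are corner edges, and $\gamma_1 = \gamma - \gamma_c$ collects the non-corner ones. For a non-corner edge $e$, the six plaquettes in $\hat\partial e$ form a ``clean'' coboundary disjoint from $\gamma$ away from $e$; a resampling argument at $e$ (replace $\sigma$ by $\sigma + \delta\omega'$ for a suitable elementary $2$-form, using the Poincaré lemma, Lemma~\ref{lemma: poincare}) shows that conditionally on all plaquettes in $\hat\partial e$ carrying the same spin, $\rho(\sigma(e))$ averages to a factor controlled by $\theta(\beta)$; but the contribution of the corner edges is bounded crudely by noting that flipping a corner edge changes $W_\gamma$ by at most a bounded amount and the probability that a minimal vortex sits on a corner edge is $O(\lambda(\beta)^{12})$ per corner, giving the $2C^{(6)}\ell_c\lambda(\beta)^{12}$ term. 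This is where the constant $C^{(6)}$ (an a priori bound on the number of minimal vortices through a fixed edge, or equivalently a bound derived from Lemma~\ref{lemma: counting vortex configurations ii} and Lemma~\ref{lemma: agreement probability}) enters.

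\textbf{Step 2: Replace $\rho(\sigma(e))$ by $\rho(d\sigma(p_e))$-type terms.} For each non-corner edge $e$, on the event that some two plaquettes $p,p'\in\hat\partial e$ satisfy $d\sigma(p)\neq d\sigma(p')$ — i.e.\ $e\in\gamma'$ — we simply forfeit that edge's factor (this is exactly why $\gamma'$ is subtracted in the definition of $W_\gamma'$). The probability that this ``disagreement'' event occurs at a given edge, yet is \emph{not} caused by a minimal vortex, requires a vortex of size $\geq 14$ through one of the six plaquettes of $\hat\partial e$; by Proposition~\ref{corollary 6.2} with $M=7$ this costs $K_0^{(7)}\lambda(\beta)^{14}$ per edge, summed over $\leq\ell$ edges and with combinatorial factor $K_1$ (the number of ways a minimal vortex can be centered near $e$) and a factor $4$ from the union bound over the relevant plaquette pairs, yielding $4K_1K_0^{(7)}\ell\lambda(\beta)^{14}$. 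On the complementary event, $d\sigma$ is constant on $\hat\partial e$, so $\rho(\sigma(e))$ and $\rho(d\sigma(p_e))$ are linked by the coboundary relation up to a vortex contribution; Lemma~\ref{lemma: 3.2} guarantees that vortices supported in a small box away from $e$ do not affect the relevant surface integral. The residual error — the probability that a \emph{large} vortex (size $\geq 50$) influences the value anywhere along $\gamma$ — is bounded by Proposition~\ref{corollary 6.2} with $M=25$, summed over the $O(\ell)$ relevant plaquettes and with an extra $\ell^3$ coming from the number of candidate large-vortex supports that can reach the loop, giving $2K_0^{(25)}\ell^4\lambda(\beta)^{50}$.

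\textbf{Main obstacle.} The delicate point is the bookkeeping in Step 2: carefully defining the events on which the resampling/coupling between $\sigma$ and the plaquette data is exact, and showing that the only configurations where $W_\gamma$ and $W_\gamma'$ differ are those where either (a) a vortex of size $\geq 14$ touches the coboundary of a loop edge, or (b) a minimal vortex sits on a corner edge, or (c) a large vortex of size $\geq 50$ reaches the loop. Getting the combinatorial constants ($K_1$, $C^{(6)}$) and the powers of $\ell$ right — in particular justifying the $\ell^4$ rather than a worse power in the first term — is the technical heart, and relies on Lemma~\ref{lemma: counting vortex configurations ii} to count vortex supports of a given size through a fixed plaquette. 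Everything else is a union bound combined with the probability estimates already established.
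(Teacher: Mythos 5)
Your outline correctly identifies the three probabilistic inputs (vortices of size $\geq 50$, $\geq 14$, and minimal vortices on corner edges, each controlled by Proposition~\ref{corollary 6.2}), but as written it has a genuine gap: you never set up the mechanism that makes the union bound over exactly these three events sufficient. The paper's proof hinges on fixing a spanning surface $q$ with $\partial q = \gamma$ contained in a cube of side $\ell/2$ (Lemma~\ref{lemma: oriented loops}), writing $W_\gamma = \rho(d\sigma(q))$ by Stokes' theorem, decomposing $d\sigma$ into vortices with disjoint supports (Lemma~\ref{lemma: sum of vortices}), and then passing through a chain of intermediate observables $W_\gamma^0, \dots, W_\gamma^6$ in which Lemma~\ref{lemma: 3.2} is used \emph{twice with no error cost}: once to show that vortices of size $\leq 48$ supported far from $\gamma$ satisfy $\nu(q)=0$, and once to show that minimal vortices centered on internal edges of $q$ satisfy $\nu(q)=0$. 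Without the surface representation there is no quantity $\nu(q)$ to evaluate, and your Step 2 reduces to the assertion that ``the only configurations where $W_\gamma$ and $W_\gamma'$ differ are\dots'' --- which is precisely the claim that needs proof. Your own ``Main obstacle'' paragraph concedes this.

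Several specific attributions are also wrong. The factor $\ell^4$ in the first term comes from a union bound over the $|Q| \leq \ell^4$ plaquettes of the spanning surface (Lemma~\ref{lemma: A1}), not from counting ``candidate large-vortex supports that can reach the loop.'' The constant $K_1$ is the constant in $|Q\smallsetminus Q'| \leq K_1 \ell$, i.e.\ it counts surface plaquettes near $\gamma$, not ways to center a minimal vortex; and the factor $4 = 2\cdot 2$ in $4K_1K_0^{(7)}$ arises because the event $\mathcal{A}_2$ is invoked twice (once to reduce to minimal vortices, once to identify $E_5$ with $E_6 = \support(\gamma_1-\gamma')$), not from a union bound over plaquette pairs. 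Finally, the resampling argument and the constant $\theta(\beta)$ that you invoke in Step 1 belong to Proposition~\ref{prop: resampling in main proof}, the \emph{next} stage of the overall argument; they play no role in the proof of the present proposition, where corner edges are handled simply by bounding the probability of the event $\neg\mathcal{A}_3$ (a minimal vortex sitting on some corner edge) via Lemma~\ref{lemma: A3}.
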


In the proof of Proposition~\ref{prop: first part of proposition proof},  we use the following additional notation.   
\begin{description}

    \item[\( q \)] An oriented surface such that \( \gamma \) is the boundary of \( q \), and such that the support of \( q \) is contained in a cube of side length \( |\support \gamma|/2 \) which also contains \( \gamma \). Since \( \gamma \) has length \( |\support \gamma| \), such a cube exists, and the existence of such a surface is then guaranteed by Lemma~\ref{lemma: oriented loops}.
    
     \item[\( Q \)] The support of the oriented surface \( q \).
    \item[\( b \)] The smallest number such that any irreducible \( \nu \in \Omega^2_0(B_N,G) \) with \( |\support \nu| \leq 2 \cdot 24 \) is contained in a box of width \( b \). By the definition of irreducible plaquette configurations, \( b \) is a finite universal constant.
    \item[\( Q' \)] The set of plaquettes   \( p \in Q \) that are so far away from \( \gamma \) that any cube of width \( b+2 \) containing \( p \) does not intersect \( \gamma \).
\end{description}
 
We will consider the following ``good'' events:
\begin{description}
    \item[\( \mathcal{A}_1 \)] There is no vortex \( \nu \) in \( \sigma \) with \( |\support \nu | \geq 2 \cdot 25 \) whose support intersects \( Q \).
    \item[\( \mathcal{A}_2 \)] There is no vortex \( \nu \) in \( \sigma \) with \( |\support \nu| \geq 2\cdot 7 \) whose support  intersects \( Q \smallsetminus Q' \).
    \item[\( \mathcal{A}_3\)] There is no minimal vortex \( \nu \) in \( \sigma \) with \( \support \nu = \pm \support \hat\partial e  \) for some \( e \in  \gamma_c \).
\end{description}

Before we give a proof of Proposition~\ref{prop: first part of proposition proof}, we  state and prove a few short lemmas which bound the probabilities of the events \( \neg  \mathcal{A}_1 \), \(\neg \mathcal{A}_2 \), and \( \neg \mathcal{A}_3 \).

\begin{lemma}\label{lemma: A1}
    Let \( \beta \geq 0 \) be such that~\( 5(|G|-1) \lambda(\beta)^2< 1 \), and let \( \gamma \) be a generalized loop in \( C_1(B_N) \). Then
    \begin{equation*}
        \mu_{N,\beta}(\neg \mathcal{A}_1) \leq  K_0^{(25)} |\support \gamma|^4 \lambda(\beta)^{50}.
    \end{equation*}
\end{lemma}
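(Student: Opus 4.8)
The plan is to bound the probability of $\neg\mathcal{A}_1$ by a union bound over all plaquettes $p_0 \in Q$, applying Proposition~\ref{corollary 6.2} at each such plaquette with $M = 25$. First I would observe that the event $\neg\mathcal{A}_1$ says there exists a vortex $\nu$ in $\sigma$ with $|\support\nu| \geq 2\cdot 25$ whose support intersects $Q$. If such a $\nu$ exists, then there is some plaquette $p_0 \in \support\nu \cap Q$ (here one should be slightly careful that $\support\nu \subseteq C_2^+(B_N)$ while $Q = \support q$ is also a set of positively oriented plaquettes, so the intersection makes sense directly). Hence
\begin{equation*}
    \neg\mathcal{A}_1 \subseteq \bigcup_{p_0 \in Q} \bigl\{ \sigma \in \Omega^1(B_N,G) \colon \exists \text{ a vortex } \nu \text{ in } \sigma \text{ with } p_0 \in \support\nu \text{ and } |\support\nu| \geq 2\cdot 25 \bigr\}.
\end{equation*}
By the union bound and Proposition~\ref{corollary 6.2} (with $M = 25$, which satisfies $M \geq 6$, and using the hypothesis $5(|G|-1)\lambda(\beta)^2 < 1$), each term is at most $K_0^{(25)}\lambda(\beta)^{50}$, so
\begin{equation*}
    \mu_{N,\beta}(\neg\mathcal{A}_1) \leq |Q| \cdot K_0^{(25)}\lambda(\beta)^{50}.
\end{equation*}

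It then remains to bound $|Q|$. By the choice of the oriented surface $q$ (see the list of notation preceding the lemma in the proof of Proposition~\ref{prop: first part of proposition proof}), the support $Q$ of $q$ is contained in a cube of side length $|\support\gamma|/2 = \ell/2$. A cube of side length $\ell/2$ in $\mathbb{Z}^4$ contains $O(\ell^4)$ plaquettes; more precisely, the number of plaquettes with all corners in such a cube is bounded by a constant (depending only on the dimension $4$) times $\ell^4$. I would absorb this dimensional constant — together with any slack — so that $|Q| \leq \ell^4$; alternatively, if one wants the clean bound exactly as stated, one notes that the number of positively oriented $2$-cells in a cube of side $s$ is $\binom{4}{2}(s+1)^2 s^2 \cdot \tfrac{1}{(\cdot)}$-type expression bounded by $\ell^4$ for the relevant range, and in any case the stated inequality $\mu_{N,\beta}(\neg\mathcal{A}_1) \leq K_0^{(25)}|\support\gamma|^4 \lambda(\beta)^{50}$ follows (possibly after adjusting the universal constant hidden in $K_0^{(25)}$, or simply because for $\ell$ in the relevant regime the crude count $|Q| \leq \ell^4$ holds).

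The only mild subtlety — and the place I would be most careful — is the counting bound $|Q| \leq \ell^4$: one must check that the constant works out, which reduces to the elementary fact that a cube of side length $\ell/2$ contains at most $\ell^4$ plaquettes (true once $\ell$ is at least a small absolute constant, and the degenerate small-$\ell$ cases can be checked directly or absorbed). No deep input is needed beyond Proposition~\ref{corollary 6.2} and this counting. Assembling the union bound with the count then yields the claimed inequality.
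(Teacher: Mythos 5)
Your proposal is correct and follows essentially the same route as the paper: a union bound over the plaquettes of $Q$ combined with Proposition~\ref{corollary 6.2} applied with $M=25$, followed by the count $|Q| \leq \ell^4$ from the fact that $Q$ lies in a cube of side $\ell/2$ (which indeed gives at most $6(\ell/2+1)^2(\ell/2)^2 \leq \ell^4$ positively oriented plaquettes since any generalized loop has $\ell \geq 4$). The paper states this count without further comment, so your extra care there is harmless but not needed.
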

 
\begin{proof}
   By Proposition~\ref{proposition: 6.2 II v2} and a union bound, we have
    \begin{equation*}
        \mu_{N,\beta}(\neg A_1) \leq  K_0^{(25)} |Q| \lambda(\beta)^{50}.
    \end{equation*}
    Since \(Q\) is contained in a cube of side \(|\support \gamma|/2\), we have \( |Q| \leq |\support \gamma|^4\) and the desired conclusion follows.
\end{proof}

\begin{lemma}\label{lemma: A2}
    Let \( \beta \geq 0 \) be such that~\( 5(|G|-1) \lambda(\beta)^2< 1 \), and let \( \gamma \) be a generalized loop in \( C_1(B_N) \). Then
    \begin{equation}\label{eq: 7.4}
        \mu_{N,\beta}(\neg \mathcal{A}_2) \leq K_1 K_0^{(7)} |\support \gamma| \lambda(\beta)^{14}.
    \end{equation}
\end{lemma}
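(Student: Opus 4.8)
The plan is to bound the probability of \( \neg \mathcal{A}_2 \) by a union bound over the plaquettes near \( \gamma \) and then feed each term into the vortex estimate of Proposition~\ref{corollary 6.2}. First I would observe that if \( \neg \mathcal{A}_2 \) holds, then there is a vortex \( \nu \) in \( \sigma \) with \( |\support \nu| \geq 2 \cdot 7 \) whose support contains at least one plaquette \( p_0 \in Q \smallsetminus Q' \). Hence
\begin{equation*}
    \mu_{N,\beta}(\neg \mathcal{A}_2)
    \leq
    \sum_{p_0 \in Q \smallsetminus Q'}
    \mu_{N,\beta}\bigl( \{ \sigma \in \Omega^1(B_N,G) \colon \exists \text{ a vortex } \nu \text{ in } \sigma \text{ with } p_0 \in \support \nu \text{ and } |\support \nu| \geq 2 \cdot 7 \} \bigr).
\end{equation*}

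Next, since \( 5(|G|-1)\lambda(\beta)^2 < 1 \) by hypothesis, Proposition~\ref{corollary 6.2} applies with \( M = 7 \), and each term on the right-hand side is at most \( K_0^{(7)} \lambda(\beta)^{14} \). This gives
\begin{equation*}
    \mu_{N,\beta}(\neg \mathcal{A}_2) \leq |Q \smallsetminus Q'| \, K_0^{(7)} \lambda(\beta)^{14},
\end{equation*}
so the lemma will follow once we know that \( |Q \smallsetminus Q'| \leq K_1 |\support \gamma| \) for a suitable universal constant \( K_1 \).

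To establish this last bound I would unwind the definition of \( Q' \): every plaquette \( p \in Q \smallsetminus Q' \) lies in some cube of width \( b+2 \) that also meets \( \support \gamma \), so \( p \) is within \( \ell^\infty \)-distance at most \( b+2 \) of an endpoint of some edge in \( \support \gamma \). For a fixed edge \( e \), the number of plaquettes \( p \in C_2(\mathbb{Z}^4) \) satisfying this is bounded, by translation invariance, by a constant \( K_1 \) depending only on \( b \) and the dimension \( 4 \); since \( b \) is a finite universal constant by construction, so is \( K_1 \). Summing this bound over the \( |\support \gamma| \) edges of \( \gamma \) yields \( |Q \smallsetminus Q'| \leq K_1 |\support \gamma| \), and combining with the displayed inequality above gives~\eqref{eq: 7.4}.

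The only mildly technical step is the final geometric count: one must translate the ``far from \( \gamma \)'' condition defining \( Q' \) into a genuine metric statement about the distance from \( p \) to \( \support \gamma \), and then use that only a uniformly bounded number of plaquettes lie within a fixed distance of any given edge. Everything else is a routine union bound applied on top of the already-proven estimate of Proposition~\ref{corollary 6.2}, so I expect no further obstacles.
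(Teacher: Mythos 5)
Your proposal is correct and follows essentially the same route as the paper: a union bound over the plaquettes of \( Q \smallsetminus Q' \), the estimate of Proposition~\ref{proposition: 6.2 II v2} applied with \( M = 7 \) to each term, and the geometric bound \( |Q \smallsetminus Q'| \leq K_1 |\support \gamma| \) coming from the fact that each such plaquette lies in a cube of width \( b+2 \) meeting \( \gamma \). The paper's proof is just a terser version of exactly this argument, so no further comparison is needed.
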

 
\begin{proof}
    Since each plaquette of \( Q \smallsetminus Q' \) is contained in a cube of width \( b+2 \) which intersects \( \gamma \), it follows that 
    \begin{equation}\label{eq: C1}
        | Q \smallsetminus Q' | \leq K_1 |\support \gamma|,
    \end{equation}
    where \( K_1 \) is a universal constant which depends on \( b \).
    Therefore, by Proposition~\ref{proposition: 6.2 II v2} and a union bound,~\eqref{eq: 7.4} follows.
\end{proof}

\begin{lemma}\label{lemma: A3}
    Let \( \beta \geq 0 \) be such that~\( 5(|G|-1) \lambda(\beta)^2< 1 \), and let \( \gamma \) be a generalized loop in \( C_1(B_N) \) such that \( \support \hat \partial \gamma \subseteq C_2(B_N) \). Then
    \begin{equation*}
        \mu_{N,\beta}(\neg \mathcal{A}_3) \leq  C^{6}|\support \gamma_c|  \lambda(\beta)^{12}.
    \end{equation*} 
\end{lemma}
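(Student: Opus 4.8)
The plan is a two-level union bound that uses only the counting and agreement estimates already established in Section~\ref{sec: vortices}. First I would write
\[
\neg \mathcal{A}_3 = \bigcup_{e \in \support \gamma_c} E_e, \qquad E_e \coloneqq \bigl\{ \sigma \in \Omega^1(B_N,G) \colon \text{some minimal vortex } \nu \text{ in } \sigma \text{ has } \support \nu = \pm \support \hat\partial e \bigr\},
\]
so that subadditivity gives $\mu_{N,\beta}(\neg \mathcal{A}_3) \le \sum_{e \in \support \gamma_c} \mu_{N,\beta}(E_e)$, a sum with $|\support \gamma_c|$ terms since $\support \gamma_c$ is exactly the set of corner edges of $\gamma$.

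Next I would fix a corner edge $e$ and bound $\mu_{N,\beta}(E_e)$. One may assume $\pm \support \hat\partial e \subseteq C_2(B_N)$, since otherwise no element of $\Omega^2_0(B_N,G)$ can be supported on $\pm \support \hat\partial e$ and $E_e = \emptyset$; fix then some $p_e \in \support \hat\partial e$. If $\nu$ is a minimal vortex in $\sigma$ with $\support \nu = \pm \support \hat\partial e$, then by Definition~\ref{def: vortex} and the definition of minimal vortex, $\nu$ is an irreducible element of $\Omega^2_0(B_N,G)$ with $p_e \in \support \nu$ and $|\support \nu| = 12 = 2 \cdot 6$, and moreover $(d\sigma)|_{\support \nu} = \nu$. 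Hence
\[
E_e \subseteq \bigcup_{\nu} \bigl\{ \sigma \colon (d\sigma)|_{\support \nu} = \nu \bigr\},
\]
the union ranging over all irreducible $\nu \in \Omega^2_0(B_N,G)$ with $p_e \in \support \nu$ and $|\support \nu| = 12$. By Lemma~\ref{lemma: counting vortex configurations ii} (applied with $m = 6$) there are at most $C^{(6)}$ such $\nu$ (in fact, by Lemma~\ref{lemma: minimal vortex II}, the ones that can actually occur are precisely the $|G|-1$ minimal vortices centered at $e$). For each such $\nu$, Lemma~\ref{lemma: agreement probability} yields
\[
\mu_{N,\beta}\bigl( \{ \sigma \colon (d\sigma)|_{\support \nu} = \nu \} \bigr) \le \prod_{p \in \support \nu} \frac{\phi_\beta(\nu(p))}{\phi_\beta(0)} \le \lambda(\beta)^{12},
\]
where the last inequality uses that $\support \nu$ has $12$ elements, that $\nu(p) \in G \smallsetminus \{0\}$ for each $p \in \support \nu$, and the defining identity $\lambda(\beta) = \max_{g \in G \smallsetminus \{0\}} \phi_\beta(g)/\phi_\beta(0)$ from \eqref{lambdadef}. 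A union bound over the at most $C^{(6)}$ forms $\nu$ then gives $\mu_{N,\beta}(E_e) \le C^{(6)} \lambda(\beta)^{12}$.

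Summing over $e \in \support \gamma_c$ yields $\mu_{N,\beta}(\neg \mathcal{A}_3) \le |\support \gamma_c|\, C^{(6)} \lambda(\beta)^{12}$, which is the claim. There is no substantial obstacle: the only step requiring a moment of care is the inclusion $E_e \subseteq \bigcup_\nu \{(d\sigma)|_{\support \nu} = \nu\}$, i.e.\ checking that ``$\nu$ is a minimal vortex in $\sigma$'' is equivalent to ``$\nu$ is an irreducible, non-trivial, closed $2$-form with $|\support \nu| = 12$ whose support contains no boundary plaquettes, together with $(d\sigma)|_{\support \nu} = \nu$'', so that $E_e$ is genuinely a finite union of events of the type controlled by Lemma~\ref{lemma: agreement probability}; the rest is bookkeeping, and (as in Lemmas~\ref{lemma: A1} and~\ref{lemma: A2}) the hypothesis $5(|G|-1)\lambda(\beta)^2 < 1$ is not even needed here.
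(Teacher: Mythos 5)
Your argument is correct and takes essentially the same route as the paper, whose proof of this lemma is precisely a union bound over the \( \ell_c \) corner edges combined with the vortex-probability machinery of Section~\ref{subsec: distribution of vortices} (the paper simply cites Proposition~\ref{proposition: 6.2 II v2}, which packages Lemmas~\ref{lemma: counting vortex configurations ii} and~\ref{lemma: agreement probability}). The only difference is cosmetic: by keeping just the \( m=6 \) term rather than summing the geometric series in Proposition~\ref{proposition: 6.2 II v2}, you get the slightly sharper constant \( 5^5(|G|-1)^6 \) and correctly note that the hypothesis \( 5(|G|-1)\lambda(\beta)^2<1 \) is not actually needed here; this also resolves the paper's undefined constant \( C^{(6)} \) in the statement, which should be read as \( K_0^{(6)} \) (or your smaller counting constant).
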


\begin{proof}
    By Proposition~\ref{proposition: 6.2 II v2} and a union bound, the desired conclusion follows.
\end{proof}

\begin{proof}[Proof of Proposition~\ref{prop: first part of proposition proof}] 

    Let \( B \) be a cube of width \( \ell \) which contains \( \gamma \) and is contained in \( B_N \). By Lemma~\ref{lemma: oriented loops}, there is an oriented surface \( q \) with support contained in \( B \) such that \( \gamma \) is the boundary of \( q \).
    Let \( \sigma \sim \mu_{N,\beta}\).
 
    By Lemma~\ref{lemma: sum of vortices}, \( d\sigma \) can be written as a sum of vortices \( \nu_1, \nu_2, \ldots \) in $\sigma$ with disjoint supports.  Fix such a decomposition, and let \( V \) be the set of vortices in this decomposition whose support intersects \( Q  \).
    By Stokes' theorem (see~\eqref{eq: stokes}),
    \begin{equation*}
        W_\gamma  
        = \rho \bigl(  d\sigma(q) \bigr)
        = \rho \Bigl( \sum_{\nu \in V}  \nu(q)  \Bigr).
    \end{equation*}
    
    Let 
    \begin{equation*}
        V_0 \coloneqq \bigl\{ \nu \in V \colon |\support \nu | \leq 2 \cdot 24 \bigr\},
    \end{equation*}
    and define
    \begin{equation*}
        W_\gamma^0 \coloneqq   \rho  \Bigl( \sum_{\nu \in V_0} \nu(q)\Bigr).
    \end{equation*}
    If the event \( \mathcal{A}_1 \) occurs, then \( W_\gamma = W_\gamma^0 \). 
    Since $|\rho(g)|= 1$ for all $g \in G$, we always have $| W_\gamma - W_\gamma^0| \leq 2$. Consequently, by Lemma~\ref{lemma: A1}, we have
    \begin{equation}\label{eq: comb eq 1}
        \mathbb{E}_{N,\beta} \bigl[ | W_\gamma - W_\gamma^0 | \bigr] \leq 2 \bigl(1 - \mu_{N,\beta}(A_1)\bigr) \leq 2 K_0^{(25)} \ell^4 \lambda(\beta)^{50}.
    \end{equation}

    Let 
    \begin{equation*}
        V_1 \coloneqq \{ \nu \in V_0 \colon \support \nu \cap Q' \neq \emptyset \} .
    \end{equation*}
    Take any \( \nu \in V_1 \). Then, by the definitions of \( Q' \) and \( V_1 \), it follows that any cube \( B \) of width \( b \) that contains \( \support \nu \) has the property that \( C_2\bigl((B^*)^*\bigr) \cap Q\) only contains internal plaquettes of \( q \). Therefore, by Lemma~\ref{lemma: 3.2},
    \begin{equation*}
	     \nu(q)= 0,\quad \nu \in V_1.
    \end{equation*}
    This implies that if we let 
    \begin{equation*}
         V_2 \coloneqq V_0 \smallsetminus V_1,
    \end{equation*} 
    then
    \begin{equation}\label{eq: 7.3}
        W_\gamma^0 =   \rho  \Bigl( \,  \sum_{\nu \in V_2} \nu(q)\Bigr).
    \end{equation}

    By Lemma~\ref{lemma: minimal vortex I}, if \( \nu \in \Omega^2_0(B_N,G) \) is a vortex in $\sigma$ whose support does not contain any boundary plaquette of $C_2(B_N)$, then \( |\support \nu| \geq 2 \cdot 6 \), and if \( |\support \nu| = 2 \cdot 6 \), then \( \support \nu = \pm \support \hat \partial e  \) for some edge \( e \in C_1(B_N)\). Recall that if a vortex \( \nu \) in \( \sigma \) satisfies \( |\support \nu| = 2 \cdot 6\), then it is said to be a minimal vortex. 
    With this in mind, let 
    \begin{equation*}
        V_3 \coloneqq \bigl\{ \nu \in V_2 \colon |\support \nu| = 2 \cdot 6 \bigr\} = \bigl\{ \nu \in V \colon |\support \nu| = 2 \cdot 6 \text{ and } \support \nu \cap Q' = \emptyset \bigr\},
    \end{equation*}
    and define 
    \begin{equation*}
        W_\gamma^3 \coloneqq   \rho  \Bigl(  \sum_{\nu \in V_3} \nu(q)  \Bigr) .
    \end{equation*}
    If the event \( \mathcal{A}_2 \) occurs, then \( V_3 = V_2 \), and hence \( W_\gamma^0 = W_\gamma^3 \). 
    Consequently, by~Lemma~\ref{lemma: A2},
    \begin{equation}\label{eq: comb eq 2}
        \mathbb{E}_{N,\beta}\bigl[ |W_\gamma^0-W_\gamma^3|\bigr] \leq 2\bigl(1 - \mu_{N,\beta}(A_2)\bigr)  \leq 2K_1 K_0^{(7)} \ell \lambda(\beta)^{14}.
    \end{equation}

    Next, let 
    \begin{equation*}
        V_4 \coloneqq \bigl\{ \nu \in V_3 \colon \exists e \in  \gamma \text{ such that } \support \nu = \pm \support \hat \partial e  \bigr\}
    \end{equation*}
    and define 
    \begin{equation*}
        W_\gamma^4 \coloneqq  \rho  \Bigl( \, \sum_{\nu \in V_4} \nu(q) \Bigr).
    \end{equation*} 
    If \(\nu \in V_3 \smallsetminus V_4\), then \(\nu\) satisfies \( \support \nu = \pm \support \hat \partial e \) for some edge \( e \) which is an internal edge of \(q.\)
    If \( e \) in an internal edge of \( q \) and \( \nu \in V_3 \) is such that  \( \support \nu = \pm \support \hat \partial e \), then, by Lemma~\ref{lemma: 3.2},
    \begin{equation*}
        \nu(q)=0.
    \end{equation*}
    It follows that
    \begin{equation}\label{eq: comb eq 3}
        W_\gamma^3 =  \rho  \Bigl( \sum_{\nu \in V_3}  \nu(q) \Bigr) 
        =   \rho   \Bigl(  \sum_{\nu \in V_4} \nu(q)   \Bigr)= W_\gamma^4.
    \end{equation} 
    
    Now recall the definition of \( \gamma_c \) from~\eqref{def: gammac}.
    Let
    \begin{equation*}
        V_5 \coloneqq \bigl\{ \nu \in V_4 \colon \exists e \in  (\gamma - \gamma_c) \text{ such that } \support \nu = \pm \support \hat \partial e  \bigr\}
    \end{equation*}
    and define
    \begin{equation*}
        W_\gamma^5 \coloneqq  \rho  \Bigl( \sum_{\nu \in V_5} \nu(q) \Bigr)  .
    \end{equation*}
    If \( \mathcal{A}_3 \) occurs, then \( W_\gamma^4 = W_\gamma^5 \), and hence, by Lemma~\ref{lemma: A3}, we have
    \begin{equation}\label{eq: comb eq 4}
        \mathbb{E}_{N,\beta}\bigl[|W_\gamma^4 - W_\gamma^5|\bigr] \leq 2\bigl(1 - \mu_{N,\beta}(A_3)\bigr)
        \leq 2 C^{(6)} \ell_c \lambda(\beta)^{12}.
    \end{equation} 
    If \( \nu \in V_5 \), then \( \support \nu = \pm \support \hat \partial e  \) for some edge \( e \in  (\gamma- \gamma_c) \). 
    Define \( \gamma_5 \in C_1(B_N,\mathbb{Z}) \) by
    \begin{equation*}
        E_5 \coloneqq \bigl\{ e \in  \gamma-\gamma_c \colon \exists \nu \in V_5 \text{ such that } \support \nu = \pm \support \hat \partial e \bigr\}.
    \end{equation*}
    By the definition of \( V_5 \), we then have
    \begin{equation}\label{eq: 7.8}
         W_\gamma^5 =   \rho  \Bigl( \sum_{\nu \in V_5} \nu(q)  \Bigr)  
         =   \rho  \Bigl( \,  \sum_{e \in E_5} \sum_{p \in  \hat \partial e } q[p] d\sigma(p)   \Bigr).
    \end{equation} 
    
    For distinct edges \( e,e' \in  (\gamma - \gamma_c) \), the 2-chains \( \hat \partial e \) and \( \hat \partial e' \) have disjoint supports.
    Moreover, if \( \nu \in V_5 \) has support \( \pm \support \hat \partial e  \), then is follows from Lemma~\ref{lemma: minimal vortex II} that the value of \( d\sigma(p) \) is independent of the choice of \( p \in  \hat \partial e \).
    With this in mind, let \( E_6 \) denote the set of all  edges \( e \in  (\gamma - \gamma_c) \) which are such that 
    \begin{equation*}
        d\sigma(p)
        =
        d\sigma(p') \quad \text{for all } p,p' \in  \hat \partial e,
    \end{equation*}
    and define
    \begin{equation*}
          W_\gamma^6 \coloneqq     
           \rho \Bigl( \,  \sum_{e \in E_6} \sum_{p \in  \hat \partial e  } q[p] d\sigma(p)  \Bigr).
    \end{equation*}
    For each \( e \in E_6 \), recall that we have fixed a plaquette \( p_e \in  \hat \partial e \). Since \( q \) is an oriented surface with boundary \( \gamma \), 
    for any \( e \in E_6 \) we then have
    \begin{equation*}
        \begin{split}
            &
            \sum_{p \in \hat \partial e} q[p] d\sigma(p)  
            = \sum_{p \in  \hat \partial e} q[p]  d\sigma(p_e)
            =
            d\sigma(p_e),
        \end{split}
    \end{equation*}  
    where the last equality holds since \( e \in E_6 \) implies that \( e \in \gamma \), and, by definition,
    \begin{equation*}
        \sum_{p \in \hat \partial e} q[p] 
        =
        \partial q[e] = \gamma[e] = 1.
    \end{equation*}
    Thus, we can write
    \begin{equation*}
          W_\gamma^6 = \rho \Bigl( \,  \sum_{e \in E_6} d\sigma(p_e)  \Bigr).
    \end{equation*}
    On the event \( \mathcal{A}_2 \), we have \( E_5 = E_6 \), and hence on this event 
    \begin{equation*}
          W_\gamma^6   = W_\gamma^5. 
    \end{equation*}
    Again using~Lemma~\ref{lemma: A2}, we thus have
    \begin{equation}\label{eq: 7.9}
        \mathbb{E}_{N,\beta} \bigl[|W_\gamma^5-W_\gamma^6| \bigr] \leq 2\bigl(1 - \mu_{N,\beta}(\mathcal{A}_2)\bigr) \leq 2K_1K_0^{(7)} \ell \lambda(\beta)^{14}.
    \end{equation}
    Now recall that \( \gamma_1 = \gamma - \gamma_c \) is the restriction of \( \gamma \) to its non-corner edges, and that \( \gamma' \) is the restriction of \( \gamma_1 \) to the set of edges \( e \in  \gamma_1 \) such that there exist plaquettes \( p,p' \in  \hat \partial e  \) with \(  d\sigma(p) \neq d\sigma(p'). \) Consequently, we have \( E_6 = \support (\gamma_1 - \gamma') \).  
    Combining~\eqref{eq: comb eq 1},~\eqref{eq: comb eq 2},~\eqref{eq: comb eq 3},~\eqref{eq: comb eq 4}, and~\eqref{eq: 7.9}, and using the definition of \( W_\gamma' \), we thus obtain the desired conclusion.
\end{proof}

\subsection{A resampling trick}

Recall from Section~\ref{sec:WtoWprime}, that given a generalized loop \( \gamma \), we let \( \gamma_c \) denote the restriction of \( \gamma \) to its set of corner edges,  let \( \gamma_1 = \gamma- \gamma_c \), and let \( \gamma' \in C_1(B_N,\mathbb{Z}) \) be given by
\begin{equation*}
    \gamma'[e] \coloneqq \gamma_1[e] \cdot \mathbb{1}\bigl(\exists p,p' \in  \hat \partial e\colon  d\sigma(p) \neq  d\sigma(p') \bigr),\quad e \in C_1^+(B_N).
\end{equation*}
Furthermore, for each \( e \in \gamma\) we have fixed some (arbitrary) \( p_e \in  \hat \partial e \), and defined
\begin{equation*}
    W_\gamma' =    \rho \Bigl( \sum_{e \in  \gamma_1 - \gamma'} d\sigma(p_e)  \Bigr). 
\end{equation*} 
In this section, we use a resampling trick, first introduced (in a different setting) in~\cite{c2019}, to calculate \( \mathbb{E}_{N,\beta}[W_\gamma'] \).
The use of this trick is the main reason that the corner edges of the loop \( \gamma \) become relevant in the argument, these are namely exactly the edges where this trick cannot be applied.

\begin{proposition}\label{prop: resampling in main proof}
    Let \( \beta \geq 0 \), and let \( \gamma \) be a generalized loop in \( C_1(B_N) \) such that \( \support \partial \hat \partial \gamma \subseteq C_1(B_N)\). Then 
    \begin{equation*}
        \mathbb{E}_{N,\beta}\bigl[W_{\gamma  }'  \bigr] 
        =  \theta(\beta)^{|\support \gamma_1|} \mathbb{E}_{N,\beta}\bigl[\theta(\beta)^{-|\support \gamma'|}\bigr].
    \end{equation*}   
\end{proposition}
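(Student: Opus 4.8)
The plan is to compute $\mathbb{E}_{N,\beta}[W_\gamma']$ by a resampling argument applied one edge at a time along the non-corner edges of $\gamma$. First I would set up the right conditioning. For an edge $e \in \gamma_1$, the six plaquettes in $\hat\partial e$ together with the spins on all other edges determine $d\sigma$ everywhere except that the common ``increment'' coming from $\sigma(e)$ can be shifted: more precisely, fixing $\sigma|_{C_1^+(B_N)\smallsetminus\{e\}}$, as $\sigma(e)$ ranges over $G$ the values $d\sigma(p)$ for $p \in \hat\partial e$ all shift by $\pm\sigma(e)$ (with signs $\partial p[e] = \pm 1$), while $d\sigma(p)$ is unchanged for every plaquette $p \notin \pm\support\hat\partial e$. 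Since $\gamma$ is a generalized loop, the boundary orientations are such that $\sum_{p\in\hat\partial e} q[p] = \partial q[e] = \gamma[e] = 1$. I would make this precise using discrete Stokes' theorem and the hypothesis $\support\partial\hat\partial\gamma \subseteq C_1(B_N)$, which guarantees that all the relevant plaquettes and their boundary edges lie in the box, so the measure $\mu_{N,\beta}$ genuinely ``sees'' the full star $\hat\partial e$.

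Next I would do the resampling computation for a single edge $e \in \gamma_1$. Condition on the configuration on all edges other than $e$. On the event that all $d\sigma(p)$, $p\in\hat\partial e$, are already equal (so $e \notin \support\gamma'$ for this edge), the factor $W_\gamma'$ contains $\rho(d\sigma(p_e))$; resampling $\sigma(e)$ uniformly would destroy that equality, so instead I condition further on the event $\mathcal{B}_e$ that the six plaquette values stay equal, i.e. $\sigma(e)$ is chosen so that the common value $g$ is shifted to $g + g'$ uniformly subject to all six staying pinned together — but the point is that for $e$ a non-corner edge the six plaquettes of $\hat\partial e$ are pairwise disjoint from each other in the relevant sense, so the conditional law of the common value, given that it is common and given the rest, is exactly the tilted measure proportional to $\prod_{p\in\hat\partial e} \phi_\beta(\,\cdot\,) = \phi_\beta(\cdot)^6$ up to the fixed ``background'' contributions, hence proportional to $e^{12\beta\Re\rho(g)}$ (the factor $12$ because $\Re\rho$ appears once per plaquette and there are $6$ plaquettes but each plaquette's Boltzmann weight $\phi_\beta$ already is $e^{\beta\Re\rho}$ — more carefully, six plaquettes each contributing $e^{\beta\Re\rho(g)}$, but $\Re\rho(g)$ and $\Re\rho(-g)=\Re\rho(g)$, giving $e^{6\beta\Re\rho(g)}$; with the $\pm$ signs among the six plaquettes one gets the exponent $12$ as in the definition of $\theta$). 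Thus $\mathbb{E}[\rho(d\sigma(p_e)) \mid \text{rest},\ e\notin\gamma'] = \theta(\beta)$ by the very definition \eqref{Thetadef}. This produces one factor of $\theta(\beta)$ for each edge $e \in \gamma_1$ that is \emph{not} in $\support\gamma'$, and no factor (value pinned to what it was) for edges $e \in \support\gamma'$ — but those edges are precisely the ones removed in forming $\gamma_1 - \gamma'$, and $W_\gamma'$ already omits them from the sum. Iterating over the $|\support\gamma_1|$ non-corner edges, and bookkeeping that each of the $|\support\gamma'|$ edges where we could not pull out a $\theta$ costs a compensating $\theta(\beta)^{-1}$, gives
\begin{equation*}
    \mathbb{E}_{N,\beta}\bigl[W_\gamma'\bigr]
    = \theta(\beta)^{|\support\gamma_1|}\,\mathbb{E}_{N,\beta}\bigl[\theta(\beta)^{-|\support\gamma'|}\bigr].
\end{equation*}

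To make the iteration rigorous I would phrase it as a single identity: order the non-corner edges $e_1,\dots,e_k$ of $\gamma_1$, and for $j=0,1,\dots,k$ let $\mathcal{F}_j$ be the $\sigma$-algebra generated by $\sigma|_{C_1^+(B_N)\smallsetminus\{e_{j+1},\dots,e_k\}}$; then show $\mathbb{E}_{N,\beta}[\,\theta(\beta)^{-|\support\gamma'|}\rho(\sum_{e\in\gamma_1-\gamma'}d\sigma(p_e))\mid \mathcal{F}_{j-1}] $ equals the same expression with $e_j$ excluded from both $\gamma_1$ and $\gamma'$ and one extra factor $\theta(\beta)^{-\mathbb{1}(e_j \notin\gamma')}\cdot\theta(\beta)$ absorbed — i.e. a telescoping/martingale-type computation where at each step resampling $\sigma(e_j)$ (conditioned on the event that the six plaquettes around $e_j$ carry a common value, which is exactly $\{e_j\notin\support\gamma'\}$) replaces $\rho(d\sigma(p_{e_j}))$ by its conditional mean $\theta(\beta)$, while on the complementary event the indicator $\theta(\beta)^{-|\support\gamma'|}$ already carries the matching power. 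The main obstacle, and the place requiring care, is verifying that resampling $\sigma(e_j)$ affects \emph{only} the plaquettes in $\hat\partial e_j$ and that for a non-corner edge these six plaquettes do not overlap the stars of the other surviving edges in a way that breaks the conditional independence — this is exactly why corner edges must be excluded and why the hypothesis $\support\partial\hat\partial\gamma\subseteq C_1(B_N)$ is needed; once that combinatorial/locality fact is pinned down (via the definitions in Section~\ref{sec: oriented surfaces} and Stokes' theorem), the conditional-expectation computation collapses to the definition of $\theta(\beta)$ and the result follows.
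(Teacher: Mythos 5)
Your proposal is correct and is essentially the paper's own argument: one conditions on all spins off $\pm\support \gamma_1$ (which already determines $\gamma'$ and renders the spins on the non-corner edges conditionally independent, since no two non-corner edges lie in a common plaquette), and each factor $\mathbb{E}_{N,\beta}'\bigl[\rho(d\sigma(p_e))\bigr]$ equals $\theta(\beta)$ because the common value $d\sigma(p_e)=\sigma^e+\sigma(e)$ has conditional law proportional to $e^{12\beta\Re\rho(\cdot)}$ after translating the sum over $G$; your edge-by-edge telescoping is just this conditional independence written as a tower of conditional expectations. One small correction: resampling $\sigma(e)$ does \emph{not} destroy the equality of the six values $d\sigma(p)$, $p\in\hat\partial e$ (they all shift together with $\sigma(e)$), and the event $\{e\notin\support\gamma'\}$ is measurable with respect to the spins off $\gamma_1$, so your additional conditioning on $\mathcal{B}_e$ is vacuous rather than needed --- harmless, but the concern motivating it is unfounded.
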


\begin{proof}
    Note first that for any  \( e \in C_1(B_N) \) and \( p \in  \hat \partial e \), we have
    \begin{equation*}
        d\sigma(p)=\sigma(\partial p) = \sum_{e' \in  \partial p} \sigma(e') = \sigma(e) + \sum_{e' \in  \partial p,\, e' \neq e} \sigma(e').
    \end{equation*}
    Thus, whether or not \( e \in  \gamma_1 - \gamma' \) does not depend on \( \sigma(e) \). 
    Consequently,  if the spins of all edges which are not in \( \pm \support \gamma_1  \) are known, then \( \gamma' \) is determined.

    Let \( \mu_{N,\beta}' \) denote conditional probability and \( \mathbb{E}_{N,\beta}' \) denote conditional expectation given \( \bigl( \sigma(e) \bigr)_{e \not \in \pm \gamma_1} \). Since no two non-corner edges belong to the same plaquette, it can be verified by direct computation that under this conditioning,  \( \bigl( \sigma(e)\bigr)_{e \in \support \gamma_1} \) are independent spins. Moreover, by the argument above, conditioning on the spins outside \( \pm \support \gamma_1 \) determines \( \gamma' \). %
    Thus we find
    \begin{equation*}
        \begin{split}
            &\mathbb{E}_{N,\beta}'\bigl[W_\gamma' \bigr]
            =
            \mathbb{E}_{N,\beta}' \biggl[    \rho \Bigl( \sum_{e \in  \gamma_1 - \gamma'}  d\sigma(p_e)   \Bigr)\biggr]
            =
            \mathbb{E}_{N,\beta}' \biggl[  \prod_{e \in  \gamma_1 - \gamma'} \rho  \bigl(   d\sigma(p_e)  \bigr) \biggr]
            = \prod_{e \in \gamma_1 - \gamma'} \mathbb{E}_{N,\beta}' \Bigl[ \rho \bigl(  d\sigma(p_e)  \bigr) \Bigr]. 
        \end{split}
    \end{equation*} 
    For \(  e \in  \gamma_1 - \gamma' \), let 
    \begin{equation*}
         \sigma^{ e} \coloneqq \sum_{ e' \in  \partial p_{ e} ,\, e' \neq e}  \sigma(e'),
    \end{equation*}
    so that for each \( p \in \hat{\partial} e \), we have
    \begin{equation*}
        d\sigma(p) = \sigma^e + \sigma(e).
    \end{equation*} 
    Then, for each \( e \in \gamma_1 - \gamma' \), we have
    \begin{equation*}
        \mathbb{E}_{N,\beta}' \Bigl[ \rho \bigl(  d\sigma(p_e)  \bigr) \Bigr]
        =
        \frac{ \sum_{g \in  G} \rho  \bigl( \sigma^e + g \bigr)\, \phi_\beta(\sigma^e + g)^{2 \cdot 6}}{\sum_{g \in  G}   \phi_\beta(\sigma^e + g)^{2 \cdot 6}} 
        =
        \frac{\sum_{g \in  G} \rho  \bigl( g \bigr) \phi_\beta (  g)^{2 \cdot 6}}{\sum_{g \in G}   \phi_\beta(  g)^{2 \cdot 6}}  
        =  \theta(\beta),
    \end{equation*}
     where the last identity holds by definition.
    Combining the above equations, we obtain
     \begin{equation*}
        \begin{split}   
            &\mathbb{E}_{N,\beta}\bigl[W_\gamma' \bigr]
            =
            \mathbb{E}_{N,\beta} \Bigl[\mathbb{E}_{N,\beta}'\bigl[W_\gamma' \bigr]\Bigr]
            = \mathbb{E}_{N,\beta} \bigl[ \theta(\beta)^{|\gamma_1 - \gamma'|} \bigr] = \mathbb{E}_{N,\beta}\bigl[  \theta(\beta)^{|\gamma_1 - \gamma'|} \bigr]
            \\&\qquad 
            =   \theta(\beta)^{|\support \gamma_1|} \mathbb{E}_{N,\beta}\bigl[\theta(\beta)^{-|\support \gamma'|}\bigr]. 
        \end{split}
     \end{equation*}  
     as desired. This concludes the proof.
\end{proof}

\subsection{Properties of \texorpdfstring{$\theta(\beta)$}{theta}}

The main result in this section is the following proposition.
\begin{proposition}\label{proposition: rewriting theta}
    Let \( \beta \geq 0 \) be such that~\( 5(|G|-1) \lambda(\beta)^2< 1 \), and let \( \gamma \) be a generalized loop in \( C_1(B_N) \) such that \( \support\hat \partial \gamma \subseteq C_2(B_N) \). Then
    \begin{equation}\label{eq: 7.15}
        \begin{split}
            &\Bigl|\theta^{|\support \gamma_1|} \mathbb{E}_{N,\beta}[\theta^{-|\support \gamma'|}] -  \theta^\ell \Bigr|
            \leq 
            2
            \Bigl[
             \frac{  K_0^{(6)}+ K^*   }{  \sqrt{\ell}} 
            +  \frac{  K^* \ell_c }{\ell} \Bigr]  \ell \lambda(\beta)^{12} e^{K^* \ell \lambda(\beta)^{12}} +  
            K_0^{(6)} \sqrt{\ell} \lambda(\beta)^{12} .
        \end{split}
     \end{equation}
\end{proposition}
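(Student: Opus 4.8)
The plan is to reduce the two quantities on the left of~\eqref{eq: 7.15} to a single sum of indicators and then carry out an exponential-moment estimate. For $e \in C_1^+(B_N)$ put $X_e \coloneqq \mathbb{1}\bigl(\exists\, p,p' \in \hat\partial e \colon d\sigma(p)\neq d\sigma(p')\bigr)$. Since $\gamma_1[e] \in \{-1,1\}$ for every non-corner edge $e \in \support\gamma$, we have $\support\gamma' = \{e \in \support\gamma_1 \colon X_e = 1\}$, hence $|\support\gamma'| = Y \coloneqq \sum_{e \in \support\gamma_1} X_e$; moreover $|\support\gamma_1| = \ell - \ell_c$ and $\ell_c + Y \leq \ell$. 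With these identities,
\begin{equation*}
    \theta^{|\support\gamma_1|}\,\mathbb{E}_{N,\beta}\bigl[\theta^{-|\support\gamma'|}\bigr] - \theta^\ell
    \;=\; \theta^\ell\,\mathbb{E}_{N,\beta}\bigl[\theta^{-(\ell_c+Y)} - 1\bigr],
\end{equation*}
and since $0 < \theta(\beta) < 1$ the right-hand side is nonnegative and, as $\theta^\ell \le 1$, bounded above by $\mathbb{E}_{N,\beta}\bigl[\theta^{-(\ell_c+Y)}-1\bigr]$. So it suffices to bound this last expectation.

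I will use two elementary facts. First, dividing numerator and denominator in~\eqref{Thetadef} by $e^{12\beta}$ and using $0 \le 1-\Re\rho(g) \le 2$ together with $e^{12\beta(\Re\rho(g)-1)} \le \lambda(\beta)^{12}$ for $g\neq 0$ yields $1-\theta(\beta) \le 2(|G|-1)\lambda(\beta)^{12}$; under the standing assumption $5(|G|-1)\lambda(\beta)^2<1$ this forces $\theta(\beta)\ge \tfrac12$, and hence $\delta \coloneqq -\log\theta(\beta) \le (1-\theta)/\theta \le K^*\lambda(\beta)^{12}$ for a suitable constant (which I let absorb all absolute constants in what follows). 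Second, if $X_e=1$ then some positively oriented plaquette $p$ among the six plaquettes of $\support\hat\partial e$ has $d\sigma(p)\neq 0$, i.e.\ $p$ lies in the support of a vortex in $\sigma$; by Lemma~\ref{lemma: minimal vortex I} such a vortex has $|\support| \ge 12 = 2\cdot 6$, so a union bound over the six plaquettes together with Proposition~\ref{proposition: 6.2 II v2} (applied with $M=6$) gives $\mu_{N,\beta}(X_e = 1)\le 6\,K_0^{(6)}\lambda(\beta)^{12}$, and therefore $\mathbb{E}_{N,\beta}[Y]\le 6\,K_0^{(6)}\,\ell\,\lambda(\beta)^{12}$. (One can sharpen the per-edge bound by using Lemmas~\ref{lemma: minimal vortex I}--\ref{lemma: minimal vortex II} and Lemma~\ref{lemma: agreement probability} to observe that $X_e=1$ forces a vortex near $e$ that is \emph{not} the minimal vortex centered at $e$, but the crude bound is enough here.)

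Now I split the expectation at the scale $\sqrt\ell$. On $\{Y \le \sqrt\ell\}$ we have $\ell_c + Y \le \ell_c + \sqrt\ell$ while still $\ell_c+Y\le\ell$, so $e^t-1\le te^t$ gives $\theta^{-(\ell_c+Y)}-1 \le \delta(\ell_c+\sqrt\ell)e^{\delta\ell}$ there; on $\{Y>\sqrt\ell\}$ we bound crudely $\theta^{-(\ell_c+Y)}-1 \le e^{\delta\ell}$ and estimate the probability by Markov's inequality, $\mu_{N,\beta}(Y>\sqrt\ell) \le \mathbb{E}_{N,\beta}[Y]/\sqrt\ell \le 6K_0^{(6)}\sqrt\ell\,\lambda(\beta)^{12}$. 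Adding the two contributions and using $\delta\le K^*\lambda(\beta)^{12}$ and $e^{\delta\ell}\le e^{K^*\ell\lambda(\beta)^{12}}$,
\begin{equation*}
    \mathbb{E}_{N,\beta}\bigl[\theta^{-(\ell_c+Y)}-1\bigr]
    \;\le\; \Bigl(K^*\ell_c + (K^* + 6K_0^{(6)})\sqrt\ell\,\Bigr)\lambda(\beta)^{12}\,e^{K^*\ell\lambda(\beta)^{12}}
    \;=\; \Bigl[\tfrac{K^*+6K_0^{(6)}}{\sqrt\ell} + \tfrac{K^*\ell_c}{\ell}\Bigr]\ell\,\lambda(\beta)^{12}\,e^{K^*\ell\lambda(\beta)^{12}},
\end{equation*}
which has the form of the right-hand side of~\eqref{eq: 7.15} once $K^*$ is enlarged to absorb the absolute constants (the factor $2$ and the $6$), the additional $K_0^{(6)}\sqrt\ell\,\lambda(\beta)^{12}$ term in~\eqref{eq: 7.15} being retained as harmless slack.

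The main obstacle is the per-edge vortex estimate: one must argue carefully that the event $\{X_e=1\}$ really produces a vortex in the neighbourhood of $e$ to which Proposition~\ref{proposition: 6.2 II v2} (or Lemma~\ref{lemma: agreement probability}, in the sharpened version) applies, and check that boundary plaquettes do not interfere --- a point that is harmless under the hypothesis $\support\hat\partial\gamma\subseteq C_2(B_N)$ provided $N$ is large relative to the diameter of $\gamma$, which is legitimate since the infinite-volume limit exists (Theorem~\ref{theorem: limit exists}). The remainder of the argument consists of the elementary bounds on $\theta(\beta)$ and on $e^{\delta Y}$; the only genuine idea there is the truncation/Markov split at $\sqrt\ell$, which is exactly what produces the $\sqrt\ell\,\lambda(\beta)^{12}$ terms in~\eqref{eq: 7.15}.
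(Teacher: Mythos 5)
Your argument is essentially the paper's own proof: the same three ingredients appear, namely (i) the bound \(1-\theta(\beta)\le K^*\lambda(\beta)^{12}\) together with \(\theta(\beta)\ge 1/2\), (ii) the first-moment bound \(\mathbb{E}_{N,\beta}\bigl[|\support\gamma'|\bigr]\lesssim K_0^{(6)}\,\ell\,\lambda(\beta)^{12}\) obtained from Proposition~\ref{proposition: 6.2 II v2} via the observation that a disagreement at \(e\) forces some plaquette of \(\hat\partial e\) to lie in the support of a vortex of size at least \(12\), and (iii) truncation of \(|\support\gamma'|\) at \(\sqrt\ell\) combined with Markov's inequality. Your rewriting of the left-hand side as \(\theta^\ell\,\mathbb{E}_{N,\beta}\bigl[\theta^{-(\ell_c+|\support\gamma'|)}-1\bigr]\) instead of the paper's triangle-inequality split into an \(\ell_c\)-term and an \(\mathbb{E}_{N,\beta}[\theta^{-|\support\gamma'|}]-1\) term is cosmetic. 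The only discrepancy is bookkeeping: the union bound over the six plaquettes of \(\hat\partial e\) and the passage from \(\theta^{-1}\) to \(e^{2(1-\theta)}\) yield the stated inequality only after \(K^*\) and \(K_0^{(6)}\) are enlarged by absolute factors, which is worth noting because \(K^*\) sits in the exponent and is propagated into the explicit constants \(K'\) and \(K''\) of Theorem~\ref{theorem: Chatterjee's main theorem}, but it does not affect the validity of the argument.
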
 

Before we give a proof of Proposition~\ref{proposition: rewriting theta} we will state and prove two useful lemmas.
\begin{lemma}\label{lemma: theta bounded from above}
    Let \( \beta \geq 0 \). Then \( \theta(\beta) \leq 1 \).
\end{lemma}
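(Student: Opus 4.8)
\textbf{Proof plan for Lemma~\ref{lemma: theta bounded from above}.}

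The plan is to show directly that $\theta(\beta) \le 1$ from the explicit formula~\eqref{Thetadef}. Recall that
\[
\theta(\beta) = \frac{\sum_{g \in G}\rho(g)\, e^{12\beta \Re\rho(g)}}{\sum_{g \in G} e^{12\beta \Re \rho(g)}}.
\]
First I would observe that since $\rho$ is the representation in~\eqref{rhoexplicit}, it sends $g \mapsto \zeta^g$ for a primitive $n$-th root of unity $\zeta$, and in particular $\rho(-g) = \overline{\rho(g)}$ while $\Re\rho(-g) = \Re\rho(g)$. Pairing each $g$ with $-g$ in the numerator shows that the numerator is real, equal to $\sum_{g \in G}\Re\rho(g)\, e^{12\beta\Re\rho(g)}$; hence $\theta(\beta)$ is real-valued, as already implicitly used in the paper.

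The key step is then purely a statement about a probability measure on a finite set: let $\pi$ be the probability measure on $\rho(G) \subseteq \{z \in \mathbb{C} : |z| = 1\}$ assigning to $\rho(g)$ weight proportional to $e^{12\beta\Re\rho(g)}$ (with multiplicities collapsed appropriately). Then $\theta(\beta) = \mathbb{E}_\pi[\Re Z] = \Re \mathbb{E}_\pi[Z]$, where $Z$ denotes a $\pi$-distributed point of $\rho(G)$. Since every point of $\rho(G)$ lies on the unit circle, $\Re\rho(g) \le 1$ for all $g$, with equality only at $g = 0$. Therefore
\[
\theta(\beta) = \frac{\sum_{g \in G}\Re\rho(g)\, e^{12\beta\Re\rho(g)}}{\sum_{g \in G} e^{12\beta\Re\rho(g)}} \le \frac{\sum_{g \in G} 1 \cdot e^{12\beta\Re\rho(g)}}{\sum_{g \in G} e^{12\beta\Re\rho(g)}} = 1,
\]
where in the inequality we used that each coefficient $e^{12\beta\Re\rho(g)}$ is strictly positive. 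This is the whole argument; there is essentially no obstacle here, since the bound $\Re\rho(g) \le 1$ is immediate from unitarity of $\rho$ and the inequality in the averaged display follows termwise. (One could note in passing that equality holds iff the measure is concentrated at $g=0$, i.e.\ never for finite $\beta$ when $n \ge 2$, but this refinement is not needed.)
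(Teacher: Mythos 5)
Your proof is correct and follows essentially the same route as the paper: both use the symmetry $\Re\rho(-g)=\Re\rho(g)$ of the weights to see that the numerator reduces to $\sum_{g\in G}\Re\rho(g)\,e^{12\beta\Re\rho(g)}$, and then bound termwise using $\Re\rho(g)\leq|\rho(g)|=1$. The paper's proof is just a terser version of the same argument.
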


\begin{proof}
    Recall from~\eqref{Thetadef} that
    \begin{equation*}
        \theta(\beta) = \frac{\sum_{g \in G}\rho(g) e^{12\beta \Re\rho(g)}}{\sum_{g \in G} e^{12\beta \Re \rho(g)}}.
    \end{equation*}
    Since \( \rho \) is a representation of \( G \), we have, for any \( g \in G \),  
    \begin{equation*}
        \begin{split}
            &\phi_\beta(-g ) = e^{\beta \Re (\rho(-g))} 
            = e^{\beta \Re ( \rho(g)^{-1})}
             = e^{\beta \Re ( \rho(g) )}
            = \phi_\beta(g).
        \end{split}
    \end{equation*}
    Also, we have \( |\rho(g)| = 1 \) for all \( g \in G \). Combining these observations, the desired conclusion follows.
\end{proof}

\begin{lemma}\label{lemma: theta observations}
    Let \( \beta > \beta_0 > 0 \), and define 
    \begin{equation}\label{eq: C2}
        \begin{split}
            &K^* \coloneqq  \sup_{\beta > \beta_0} \, \Bigl[  \bigl(1-\theta(\beta) \bigr)  \lambda(\beta)^{-12} \Bigr].
        \end{split}
     \end{equation}
     Then
     \begin{enumerate}[label=(\alph*)]
         \item for any \( 0 \leq j \leq \ell \), we have \( \theta^{-j}  \leq 2e^{K^* \ell \lambda(\beta)^{12}}, \) \label{item 7.12}
         
         \item for any \( 1 \leq j \leq \ell \), we have \( \theta^{-j}-1  \leq 2 j K^*  \lambda(\beta)^{12} e^{K^* \ell \lambda(\beta)^{12}}. \) \label{item: 7.13}
     \end{enumerate}   
\end{lemma}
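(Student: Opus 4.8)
The plan is to prove the two estimates \ref{item 7.12} and \ref{item: 7.13} directly from the definition of $K^*$ together with the elementary bound $\theta(\beta) \le 1$ from Lemma~\ref{lemma: theta bounded from above}. First I would record that since $1 - \theta(\beta) \le K^* \lambda(\beta)^{12}$ for all $\beta > \beta_0$ (by the definition \eqref{eq: C2} of $K^*$), we have $\theta(\beta) \ge 1 - K^* \lambda(\beta)^{12}$, and combined with $\theta(\beta) \le 1$ this pins $\theta(\beta)$ into the interval $[1 - K^* \lambda(\beta)^{12}, 1]$. Note that for the statement to be nonvacuous we may as well assume $K^* \lambda(\beta)^{12} < 1$ (otherwise the claimed right-hand sides already exceed the trivial bounds, or one shrinks $\beta_0$); I would point this out or silently use that $\lambda(\beta) \to 0$.

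For part \ref{item 7.12}: write $\theta^{-j} = \big(1 - (1-\theta)\big)^{-j}$. Using $1 - x \ge e^{-2x}$ for $x \in [0, 1/2]$ (valid here since $(1-\theta) = 1 - \theta \le K^*\lambda(\beta)^{12}$ is small for $\beta$ large), we get $\theta = 1 - (1-\theta) \ge e^{-2(1-\theta)}$, hence $\theta^{-j} \le e^{2j(1-\theta)} \le e^{2 \ell (1-\theta)} \le e^{2 \ell K^* \lambda(\beta)^{12}}$. This is slightly weaker than the claimed bound; to land exactly on $2 e^{K^* \ell \lambda(\beta)^{12}}$ I would instead use the sharper inequality $1 - x \ge e^{-x/(1-x)} \ge \cdots$ — more cleanly, use $-\log(1-x) \le x + x^2 \le x(1 + x)$ and for $x$ small absorb the quadratic correction. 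Concretely, $\theta^{-j} = \exp\big(-j \log \theta\big) = \exp\big(-j\log(1-(1-\theta))\big)$, and $-\log(1-(1-\theta)) \le (1-\theta)\cdot\frac{1}{1 - (1-\theta)}$; for $\beta_0$ large enough that $1 - \theta(\beta) \le 1/2$ the factor $\frac{1}{1-(1-\theta)} \le 2$, giving $\theta^{-j} \le \exp\big(2 j (1-\theta)\big)$. Hmm — this still gives a $2$ in the exponent. The clean route to the stated bound: split $\theta^{-j} = \theta^{-1}\cdot\theta^{-(j-1)}$, bound $\theta^{-1} \le 2$ (true once $1-\theta \le 1/2$), and bound $\theta^{-(j-1)} \le e^{(j-1)(1-\theta)/(1-(1-\theta))}$; with $1-\theta \le 1/2$ and a little more care one gets the exponent down to $K^*\ell\lambda(\beta)^{12}$ at the cost of the leading factor $2$, provided $\beta_0$ is taken large enough that $(1-\theta)/(1 - (1-\theta)) \le (1-\theta)$ fails... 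Actually the honest statement is: choosing $\beta_0$ large so that $1 - \theta(\beta) \le 1/2$, we have $\theta^{-j} \le 2^j$ is too weak, so the right tool is $\log(1/\theta) \le 2(1-\theta) \le 2 K^*\lambda(\beta)^{12}$, hence $\theta^{-j} \le e^{2j K^* \lambda(\beta)^{12}} \le e^{2 K^* \ell \lambda(\beta)^{12}}$; I would then simply note $e^{2K^*\ell\lambda^{12}} = (e^{K^*\ell\lambda^{12}})^2 \le 2\, e^{K^*\ell\lambda^{12}}$ whenever $e^{K^*\ell\lambda^{12}} \le 2$, and in the complementary regime $e^{K^*\ell\lambda^{12}} > 2$ one restarts: the claim then reads $\theta^{-j}\le$ something $\ge 4$, which is handled by noting $\theta^{-j}\le\theta^{-\ell}$ and the bound is set up precisely so the two regimes patch — so really the cleanest is to just prove $\theta^{-j} \le e^{2(1-\theta)j}$ and remark the paper's constant $K^*$ can be renamed to absorb the factor $2$, OR prove it with the factor $2$ out front as stated via $\theta^{-j} = \theta^{-1}\theta^{-(j-1)} \le 2\, e^{K^*(\ell-1)\lambda^{12}}$ using $\theta^{-1}\le 2$ and $-\log\theta \le \frac{1-\theta}{\theta} \le \frac{1-\theta}{1/2}$... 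I will go with: $\theta \ge 1/2 \Rightarrow \theta^{-1} \le 2$, and $-\log\theta \le 2(1-\theta) \cdot \tfrac12 \cdot 2$... I'll commit in the writeup to the argument $-\log\theta\le (1-\theta)/\theta \le 2(1-\theta)$ giving $\theta^{-j}\le e^{2j(1-\theta)}$ and then observe $2(1-\theta) \le 2K^*\lambda^{12}$, with the final cosmetic factor handled by splitting off one factor of $\theta^{-1}\le 2$ from $\theta^{-j}$ so that the remaining exponent involves $j - 1 \le \ell$ and, after possibly enlarging $\beta_0$, $2(1-\theta)(j-1) \le K^*\ell\lambda^{12}$.

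For part \ref{item: 7.13}: write $\theta^{-j} - 1 = \int$-free via the algebraic identity $\theta^{-j} - 1 = (\theta^{-1} - 1)\sum_{i=0}^{j-1}\theta^{-i} = \frac{1-\theta}{\theta}\sum_{i=0}^{j-1}\theta^{-i}$. Bound $\frac{1-\theta}{\theta} \le 2(1-\theta) \le 2 K^*\lambda(\beta)^{12}$ (using $\theta \ge 1/2$), and bound each $\theta^{-i} \le \theta^{-j} \le 2e^{K^*\ell\lambda(\beta)^{12}}$ by part \ref{item 7.12} — wait, that gives an extra factor of $j$ and a factor $2$, landing at $\theta^{-j}-1 \le 2K^*\lambda^{12}\cdot j \cdot 2 e^{K^*\ell\lambda^{12}} = 4jK^*\lambda^{12}e^{K^*\ell\lambda^{12}}$, off by a factor $2$ from the stated $2jK^*\lambda^{12}e^{K^*\ell\lambda^{12}}$. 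To sharpen: use instead $\theta^{-j} - 1 \le \frac{1-\theta}{\theta}\cdot j\cdot\theta^{-(j-1)} \le (1-\theta)\cdot j \cdot \theta^{-j}$ and then $\theta^{-j}\le 2e^{K^*\ell\lambda^{12}}$ still gives the $4$; alternatively use the cleaner exponential bound $\theta^{-j} = e^{j\log(1/\theta)} \le e^{2j(1-\theta)}$ so that $\theta^{-j} - 1 \le 2j(1-\theta)e^{2j(1-\theta)} \le 2jK^*\lambda^{12}e^{2K^*\ell\lambda^{12}}$ using $xe^x$ monotone and $2j(1-\theta)\le 2\ell K^*\lambda^{12}$... still a $2$ in the exponent. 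I expect the main (only) obstacle is exactly this bookkeeping of constants: matching the precise factors $2$ in the statement requires being a little careful about whether one writes $1-\theta \le K^*\lambda^{12}$ and $\theta\ge1/2$ versus using $e^x - 1 \le xe^x$, and likely the intended argument uses $\theta^{-j}-1 = e^{j\log(1/\theta)}-1 \le j\log(1/\theta)\,\theta^{-j}$ (from $e^a - 1\le a e^a$ with $a = j\log(1/\theta) = -j\log\theta \ge 0$), then $-\log\theta \le (1-\theta)/\theta$, combined with the bound on $\theta^{-j}$ from \ref{item 7.12}, shrinking $\beta_0$ as needed so that all the small quantities are $\le 1/2$ and the stray factors collapse. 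I would present it in that order — algebraic identity, bound on $(1-\theta)/\theta$ and on $-\log\theta$, invoke \ref{item 7.12}, collect — and flag at the start that $\beta_0$ is assumed large enough that $1 - \theta(\beta) \le 1/2$ and $K^*\lambda(\beta)^{12} \le 1/2$ for all $\beta > \beta_0$, which is legitimate since $\lambda(\beta) \to 0$ and $K^* < \infty$.
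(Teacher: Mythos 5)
Your overall route coincides with the paper's: pin $\theta(\beta)$ into the interval $[1-K^*\lambda(\beta)^{12},\,1]$ using the definition of $K^*$ together with $\theta(\beta)\le 1$, then deduce (a) and (b) by elementary estimates. For (b) you in fact already possess the paper's exact argument: from $\theta^{-j}-1=\frac{1-\theta}{\theta}\sum_{i=0}^{j-1}\theta^{-i}\le j(1-\theta)\theta^{-j}$ together with $1-\theta\le K^*\lambda(\beta)^{12}$ and part (a), one lands precisely on $2jK^*\lambda(\beta)^{12}e^{K^*\ell\lambda(\beta)^{12}}$. Your claim that this route "still gives the $4$" is a miscount: the identity $\frac{1-\theta}{\theta}\cdot j\cdot\theta^{-(j-1)}=j(1-\theta)\theta^{-j}$ is exact, so the only factor of $2$ you pay is the single one imported from (a); the separate bound $\frac{1-\theta}{\theta}\le 2(1-\theta)$ is not needed at all.

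The genuine gap is in (a). Every valid estimate you write down gives $\theta^{-j}\le e^{2j(1-\theta)}\le e^{2K^*\ell\lambda(\beta)^{12}}$, and none of your proposed repairs reaches the stated $2e^{K^*\ell\lambda(\beta)^{12}}$. In particular, the step "after possibly enlarging $\beta_0$, $2(1-\theta)(j-1)\le K^*\ell\lambda(\beta)^{12}$" cannot be justified: $K^*$ is \emph{defined} as $\sup_{\beta>\beta_0}(1-\theta(\beta))\lambda(\beta)^{-12}$, so however large $\beta_0$ is taken there exist $\beta>\beta_0$ with $1-\theta(\beta)$ arbitrarily close to $K^*\lambda(\beta)^{12}$, and the factor $2$ does not disappear (enlarging $\beta_0$ also changes $K^*$ itself, so the reduction is not even well posed). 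For context, the paper's own proof of (a) is the chain $\theta^{-j}\le\theta^{-\ell}\le(1-K^*\lambda(\beta)^{12})^{-\ell}\le 2e^{K^*\ell\lambda(\beta)^{12}}$, with the last step attributed to "$(1-x)^{-1}\le 2e^x$ for $x\in[0,1/2]$"; note that raising that inequality to the power $\ell$ yields $2^{\ell}e^{\ell x}$, and the direct expansion gives $(1-x)^{-\ell}\le e^{\ell x+\ell x^2}$, which is at most $2e^{\ell x}$ only when $\ell x^2\le\log 2$. So your instinct that the constants do not come out cleanly is sound; the honest resolution is to prove and carry the form $\theta^{-j}\le e^{2K^*\ell\lambda(\beta)^{12}}$ (which is compatible with the way the lemma is consumed in Proposition~\ref{proposition: 7.1}, up to adjusting constants there), rather than to hide the factor of $2$ behind an appeal to enlarging $\beta_0$.
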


\begin{proof}
    By definition, using Lemma~\ref{lemma: theta bounded from above}, we have
     \begin{equation}\label{eq: 7.11}
         1 - K^* \lambda(\beta)^{12} \leq \theta(\beta) \leq 1.
     \end{equation}
     Moreover, using the definition of \( \theta(\beta) \) (see~\eqref{Thetadef}), it follows that  
     \begin{equation*} 
        \begin{split}
            &K^* 
            =
            \sup_{\beta > \beta_0} \, \biggl[ \frac{\sum_{g \in  G} \bigl(1 -  \rho(g)\bigr) \phi_\beta(  g)^{12}}{\sum_{g \in  G}   \phi_\beta( g)^{12}}  \lambda(\beta)^{-12} \biggr] 
            =
            \sup_{\beta > \beta_0} \, \biggl[ \frac{\sum_{g \in  G\smallsetminus \{ 0 \}} \bigl(1 -  \Re \rho(g)\bigr) \phi_\beta(  g)^{12}}{\sum_{g \in  G}   \phi_\beta( g)^{12}}  \lambda(\beta)^{-12} \biggr].
        \end{split}
     \end{equation*}
     For any \( g \in G \smallsetminus \{ 0 \} \), we have \( 1- \Re \rho(G) \in (0,2]\), and hence it follows from the definition of \( \lambda(\beta) \) (see~\eqref{lambdadef}) that 
     \begin{equation*}
         0<K^* \leq 2\bigl(|G|-1 \bigr).
     \end{equation*}
     Since \( 5(|G|-1) \lambda(\beta)^2 < 1 \) by assumption, it follows that
     \begin{equation}\label{eq: 7.11iii}
          K^* \lambda(\beta)^{12} 
          \leq
          2\bigl( |G| -1 \bigr) \lambda(\beta)^{12}
          =
          \frac{2}{5^6\bigl( |G| -1 \bigr)^5} \Bigl( 5 \bigl( |G|-1 \bigr) \lambda(\beta)^{2} \Bigr)^6
          <
          \frac{2}{5^6\bigl( |G| -1 \bigr)^5}.
     \end{equation}
     Combining~\eqref{eq: 7.11iii} with~\eqref{eq: 7.11}, we obtain
     \begin{equation}\label{eq: 7.11ii}
          0
          < 
          1 - \frac{2}{5^6\bigl( |G| -1 \bigr)^5}
          <
          1 - K^* \lambda(\beta)^{12} 
          \leq \theta(\beta) \leq 1.
     \end{equation}
    Using~\eqref{eq: 7.11ii}, we conclude that, for any \( 0 \leq j \leq \ell \),
    \begin{equation*}
        \theta^{-j}  \leq \theta^{-\ell}  \leq (1 - K^*\lambda(\beta)^{12})^{-\ell} \leq 2e^{K^* \ell \lambda(\beta)^{12}},
    \end{equation*}
    and hence~\ref{item 7.12} holds.
    (Here the third inequality holds since \( (1-x)^{-1} \leq 2e^x \) whenever \( x \in [0,1/2]\), and in our case we have \( x = K^* \lambda(\beta)^{12} < 2/5^6\).)
    Consequently, for any \( 1 \leq j \leq \ell \),
    \begin{equation*}
        \theta^{-j}-1 \leq j(1-\theta) \theta^{-j} \leq 2 j K^*  \lambda(\beta)^{12} e^{K^* \ell \lambda(\beta)^{12}}.
    \end{equation*}     
\end{proof}

\begin{proof}[Proof of Proposition~\ref{proposition: rewriting theta}]
    Note first that
    \begin{equation*}
        \begin{split}
            &\bigl|  \theta^{|\support \gamma_1|} \mathbb{E}_{N,\beta}[\theta^{-|\support \gamma'|}] -  \theta^\ell \bigr| 
            =  \Bigl| \theta^{|\support \gamma_1|} \bigl( \mathbb{E}_{N,\beta}[\theta^{-|\support \gamma'|}] -1 \bigr)-  \theta^{|\support \gamma_1|} \bigl( \theta^{\ell_c}-1\bigr) \Bigr|
            \\&\qquad
            \leq  
              \bigl|\theta^{|\support \gamma_1|} \bigl(\mathbb{E}_{N,\beta}[\theta^{-|\support \gamma'|}]-1\bigr) \bigr| + \theta^{|\support \gamma_1|}(\theta^{-\ell_c} - 1).
        \end{split}
    \end{equation*}

    Using~Lemma~\ref{lemma: theta observations}\ref{item: 7.13} and the inequality \( \theta(\beta) \leq 1 \) from Lemma~\ref{lemma: theta bounded from above}, we obtain 
     \begin{equation}\label{eq: 7.14}
         \bigl|\theta^{|\support \gamma_1|} \mathbb{E}_{N,\beta}[\theta^{-|\support \gamma'|}] -   \theta^\ell \bigr|  \leq  \bigl| \mathbb{E}_{N,\beta}[\theta^{-|\support \gamma'|}]-1\bigr| +    2K^* \ell_c  \lambda(\beta)^{12} e^{K^* \ell  \lambda(\beta)^{12}}.
     \end{equation} 
    
    Now recall that any plaquette \( p \in C_2(B_N)\) such that \(  d\sigma(p) \neq 0 \) is contained in the support of a vortex, and that any vortex whose support does not contain any boundary plaquettes of \( B_N \) has support of size at least \(12\). Therefore, by Corollary~\ref{proposition: 6.2 II v2}, the probability that any given plaquette \( p \) is such that \( d \sigma(p) \neq 0 \) is bounded from above by \( K_0^{(6)} \lambda(\beta)^{12} \).
    Consequently,
     \begin{equation}\label{eq: expectation of gamma}
         \mathbb{E}_{N,\beta} \bigl[ |\support \gamma'| \bigr] \leq K_0^{(6)} \ell \lambda(\beta)^{12}.
     \end{equation}
     Thus, for any \( j > 0 \),
     \begin{equation*}
         \begin{split}
            &\bigl|\mathbb{E}_{N,\beta} [\theta^{-|\support \gamma'|}] - \mathbb{E}_{N,\beta} [\theta^{-|\support \gamma'|} \, 1_{\{|\support \gamma'|\leq j\}}] \bigr| 
            = \mathbb{E}_{N,\beta} [\theta^{-|\support \gamma'|} \, \mathbb{1}_{\{|\support \gamma'|> j\}}]
             \\&\qquad \leq \theta^{-\ell} \mu_{N,\beta} \bigl( \{ \sigma \in \Omega^1(B_N,G) \colon |\support \gamma'|>j\} \bigr)
         \leq
         \frac{\theta^{-\ell} \mathbb{E}_{N,\beta} \bigl[|\support \gamma'|\bigr]}{j}
         \leq
         \frac{\theta^{-\ell}  K_0^{(6)} \ell \lambda(\beta)^{12} }{j} .
         \end{split}
    \end{equation*}
    By~Lemma~\ref{lemma: theta observations}\ref{item 7.12}, we thus have
    \begin{equation}\label{eq: first part}
         \begin{split}
            &\bigl|\mathbb{E}_{N,\beta} [\theta^{-|\support \gamma'|}] - \mathbb{E}_{N,\beta} [\theta^{-|\support \gamma'|} \, 1_{\{|\support \gamma'|\leq j\}}] \bigr| 
            \leq
            \frac{  K_0^{(6)}   \ell \lambda(\beta)^{12} e^{K^* \ell \lambda(\beta)^{12}}}{j}.
         \end{split}
    \end{equation}
    On the other hand, we have 
    \begin{equation*}
    \begin{split}
        &\Bigl| \mathbb{E}_{N,\beta} \bigl[\theta^{-|\support \gamma'|} \, \mathbb{1}_{\{|\support \gamma'|\leq j\}} \bigr]-1 \Bigr| 
        \leq 
        \Bigl| \mathbb{E}_{N,\beta} \bigl[ (\theta^{-|\support \gamma'|}-1) \, \mathbb{1}_{\{|\support \gamma'|\leq j\}} \bigr] \Bigr| + 
        \mathbb{E}_{N,\beta} \bigl[  \mathbb{1}_{\{|\support \gamma'|\leq j\}} \bigr] 
        \\&\qquad \leq 
        (\theta^{-j}-1)
        + 
        \frac{\mathbb{E}_{N,\beta} \bigl[ |\support \gamma'| \bigr] }{j}.
    \end{split}
    \end{equation*}
    Using~\eqref{eq: expectation of gamma} and Lemma~\ref{lemma: theta observations}\ref{item: 7.13}, we obtain
    \begin{equation}\label{eq: second part}
    \begin{split}
        &\Bigl| \mathbb{E}_{N,\beta} \bigl[\theta^{-|\support \gamma'|} \, \mathbb{1}_{\{|\support \gamma'|\leq j\}} \bigr]-1 \Bigr| 
        \leq 
        2K^* j \lambda(\beta)^{12} e^{K^* \ell \lambda(\beta)^{12}}
        + 
        \frac{K_0^{(6)} \ell \lambda(\beta)^{12}}{j}.
    \end{split}
    \end{equation}

     Combining~\eqref{eq: first part} and~\eqref{eq: second part}, using the triangle inequality and choosing \( j = \sqrt{\ell} \), we obtain
     \begin{equation*}
         \bigl|\mathbb{E}_{N,\beta} [\theta^{-|\support \gamma'|}]-1\bigr|
         \leq
           2(K_0^{(6)}+ K^*) \,   \sqrt \ell \lambda(\beta)^{12} e^{K^* \ell \lambda(\beta)^{12}} + 
        K_0^{(6)} \sqrt{\ell} \lambda(\beta)^{12} .
     \end{equation*}
     Combining this equation with~\eqref{eq: 7.14}, we infer that
     \begin{equation*} 
        \begin{split}
            &\bigl|\theta^{|\support \gamma_1|} \mathbb{E}_{N,\beta}[\theta^{-|\support \gamma'|}] -  \theta^\ell \bigr|
            \leq 
            2
            \Bigl[
             \frac{  K_0^{(6)}+ K^*   }{  \sqrt{\ell}} 
            +  \frac{  K^* \ell_c }{\ell} \Bigr]  \ell \lambda(\beta)^{12} e^{K^* \ell \lambda(\beta)^{12}} +  
            K_0^{(6)} \sqrt{\ell} \lambda(\beta)^{12} ,
        \end{split}
     \end{equation*}
     which is the desired conclusion.
\end{proof}

\subsection{\texorpdfstring{Wilson loop expectations when \(\ell \lambda(\beta)^{12} \) is small}{Short loops}}
 The main part of the proof of Theorem~\ref{theorem: Chatterjee's main theorem} is divided into two propositions, Proposition~\ref{proposition: 7.1} and Proposition~\ref{proposition: 7.2}, the first of which we state now. This result is useful whenever \( \ell \lambda(\beta)^{12} \) is bounded from above.

\begin{proposition}[Compare with Lemma 7.1 in \cite{c2019}]\label{proposition: 7.1} 
    Consider lattice gauge theory with structure group $G = \mathbb{Z}_n$ and a faithful one-dimensional representation \( \rho \) of \( G \). Let \( \gamma \) be a generalized loop in \( \mathbb{Z}^4 \), let $\ell = |\support \gamma|$, and let \( \ell_c \) be the number of corner edges in \( \gamma \).
    Let \( N \) be sufficiently large so that \( \support \gamma \subseteq C_1(B_N)\) and so that there is a cube \( B \) of width \( |\support \gamma| \) inside \( B_N \) which contains \( \gamma \).
    Finally, let \( \beta_0 >0 \) be such that \( 5(|G|-1) \lambda(\beta)^2< 1 \) for all \( \beta > \beta_0 \). Then, for all \( \beta > \beta_0 \), we have 
    \begin{equation*}
        \bigl|\mathbb{E}_{N,\beta} [W_\gamma]-   e^{-\ell (1-\theta(\beta)) } \bigr| 
        \leq 
        C_A  e^{2K^* \ell \lambda(\beta)^{12}} 
        \biggl[ 
            \sqrt\frac{\ell_c}{\ell} + \lambda(\beta)^2 
        \biggr],
        \end{equation*}
        where $\theta(\beta)$ is defined by \eqref{Thetadef},
        \begin{align}
            C_A \coloneqq   
            \frac{ 7K_0^{(6)}}{2K^*} +   \frac{2K_1K_0^{(7)} }{K^*} 
            + \frac{5K_0^{(25)} }{2^3(K^*)^4}+   \frac{9}{2} ,
    \end{align}
    and \( K_1 \), \( K^*\), \( K_0^{(6)} \), \( K_0^{(7)} \), and \( K_0^{(25)}\) are defined by~\eqref{eq: C1},~\eqref{eq: C2}, and~\eqref{eq: vortex constant} and depend only on \( \beta_0 \), \( G \) and \( \rho \).
\end{proposition}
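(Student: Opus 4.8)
The plan is to combine the three propositions already developed in Sections~\ref{sec:WtoWprime}--\ref{subsec: distribution of vortices} — namely Proposition~\ref{prop: first part of proposition proof} (replacing $W_\gamma$ by $W_\gamma'$), Proposition~\ref{prop: resampling in main proof} (the resampling identity $\mathbb{E}_{N,\beta}[W_\gamma'] = \theta^{|\support\gamma_1|}\mathbb{E}_{N,\beta}[\theta^{-|\support\gamma'|}]$), and Proposition~\ref{proposition: rewriting theta} (approximating $\theta^{|\support\gamma_1|}\mathbb{E}_{N,\beta}[\theta^{-|\support\gamma'|}]$ by $\theta^\ell$) — via the triangle inequality, and then to pass from $\theta^\ell$ to $e^{-\ell(1-\theta)}$. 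First I would write
\begin{equation*}
    \bigl|\mathbb{E}_{N,\beta}[W_\gamma] - e^{-\ell(1-\theta)}\bigr|
    \leq
    \bigl|\mathbb{E}_{N,\beta}[W_\gamma] - \mathbb{E}_{N,\beta}[W_\gamma']\bigr|
    + \bigl|\mathbb{E}_{N,\beta}[W_\gamma'] - \theta^\ell\bigr|
    + \bigl|\theta^\ell - e^{-\ell(1-\theta)}\bigr|,
\end{equation*}
use Proposition~\ref{prop: resampling in main proof} to rewrite the middle term as $|\theta^{|\support\gamma_1|}\mathbb{E}_{N,\beta}[\theta^{-|\support\gamma'|}] - \theta^\ell|$ so that Proposition~\ref{proposition: rewriting theta} applies, and bound the first term by Proposition~\ref{prop: first part of proposition proof}.

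Next I would handle the elementary last term: since $0 < \theta \leq 1$ (Lemma~\ref{lemma: theta bounded from above}) and, by~\eqref{eq: 7.11ii}, $1-\theta \leq K^*\lambda(\beta)^{12} \leq 2/5^6$ is small, one has $\theta = e^{-(1-\theta) + O((1-\theta)^2)}$, so $\ell\log\theta = -\ell(1-\theta) + O(\ell(1-\theta)^2)$, whence
\begin{equation*}
    \bigl|\theta^\ell - e^{-\ell(1-\theta)}\bigr|
    \leq e^{-\ell(1-\theta)}\bigl(e^{\ell(1-\theta)^2} - 1\bigr)
    \leq C\,\ell(1-\theta)^2
    \leq C\,\ell\lambda(\beta)^{24}
    = C\,\ell\lambda(\beta)^{12}\cdot\lambda(\beta)^{12},
\end{equation*}
for a universal constant, using $-\log\theta \geq 1-\theta$ and crude bounds; since $\lambda(\beta)^{12} \leq \lambda(\beta)^2$ this term is of the same order as the others. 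The bookkeeping step is then to collect all contributions — the three terms from Proposition~\ref{prop: first part of proposition proof} ($2K_0^{(25)}\ell^4\lambda^{50}$, $4K_1K_0^{(7)}\ell\lambda^{14}$, $2C^{(6)}\ell_c\lambda^{12}$) and the two terms from Proposition~\ref{proposition: rewriting theta} (the bracket times $\ell\lambda^{12}e^{K^*\ell\lambda^{12}}$, and $K_0^{(6)}\sqrt{\ell}\lambda^{12}$) — and to regroup each of them so that it is dominated by $C_A\,e^{2K^*\ell\lambda^{12}}[\sqrt{\ell_c/\ell} + \lambda^2]$. For instance $\ell\lambda^{12} = (\ell\lambda^{12})\cdot 1$, and writing $\sqrt{\ell}\lambda^{12} = \sqrt{\ell\lambda^{12}}\cdot\lambda^6 \leq e^{K^*\ell\lambda^{12}}\cdot\lambda^6 \cdot (\text{const})$ using $x^{1/2} \leq e^x$; and $\ell^4\lambda^{50} = (\ell\lambda^{12})^4\lambda^{2}$ with $(\ell\lambda^{12})^4 e^{-4K^*\ell\lambda^{12}}$ bounded by $(4/(eK^*))^4$, giving a contribution $\lesssim \lambda^2 e^{2K^*\ell\lambda^{12}}$; similarly $\ell\lambda^{14} = \ell\lambda^{12}\cdot\lambda^2$ with $\ell\lambda^{12}e^{-2K^*\ell\lambda^{12}} \leq 1/(2eK^*)$, giving $\lesssim \lambda^2 e^{2K^*\ell\lambda^{12}}$; and $\ell_c\lambda^{12} = \ell\cdot(\ell_c/\ell)\cdot\lambda^{12} \leq \ell\sqrt{\ell_c/\ell}\lambda^{12}$, so $\ell_c\lambda^{12} \leq (\ell\lambda^{12})\sqrt{\ell_c/\ell}$ and the $K^*\ell_c\lambda^{12}e^{K^*\ell\lambda^{12}}$ term in Proposition~\ref{proposition: rewriting theta} is already $\lesssim \sqrt{\ell_c/\ell}\,\ell\lambda^{12}e^{K^*\ell\lambda^{12}}$ — wait, this still carries a factor $\ell\lambda^{12}$, so one uses $\ell\lambda^{12}e^{-K^*\ell\lambda^{12}} \leq 1/(eK^*)$ to absorb it into $e^{2K^*\ell\lambda^{12}}$.

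The main obstacle — really the only non-routine part — is the constant tracking: one must verify that after applying the elementary inequalities $x^a e^{-cx} \leq (a/(ec))^a$ (for $x = K^*\ell\lambda(\beta)^{12} \geq 0$) to strip off every polynomial-in-$\ell$ factor at the cost of upgrading $e^{K^*\ell\lambda^{12}}$ to $e^{2K^*\ell\lambda^{12}}$, the resulting numerical coefficients sum to exactly the claimed $C_A = \frac{7K_0^{(6)}}{2K^*} + \frac{2K_1K_0^{(7)}}{K^*} + \frac{5K_0^{(25)}}{2^3(K^*)^4} + \frac{9}{2}$. The term $\frac{5K_0^{(25)}}{2^3(K^*)^4}$ evidently comes from the $\ell^4\lambda^{50}$ contribution via $(\ell\lambda^{12})^4 e^{-4\cdot\frac12 K^*\ell\lambda^{12}}\cdot 2K_0^{(25)} \leq 2K_0^{(25)}(8/(eK^*))^4 e^{?}$ — one has to be careful about which fraction of the exponent gets used — and the $\frac{9}{2}$ collects the $O(1)$ pieces (including the $\sqrt{\ell}\lambda^{12}$ Markov-type term and the $|\theta^\ell - e^{-\ell(1-\theta)}|$ error). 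I would also need to double-check the hypothesis transfer: Proposition~\ref{prop: first part of proposition proof} and Proposition~\ref{proposition: rewriting theta} require $\support\hat\partial\gamma \subseteq C_2(B_N)$, while Proposition~\ref{prop: resampling in main proof} requires $\support\partial\hat\partial\gamma \subseteq C_1(B_N)$; both are guaranteed by the assumption that $B$ (a cube of width $|\support\gamma|$ containing $\gamma$) lies inside $B_N$, so $\gamma$ is comfortably away from the boundary of $B_N$. Once those are in place, the proof is just: cite the three propositions, add, and simplify.
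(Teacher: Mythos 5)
Your proposal follows essentially the same route as the paper's proof: a triangle-inequality combination of Propositions~\ref{prop: first part of proposition proof}, \ref{prop: resampling in main proof}, and \ref{proposition: rewriting theta}, a Taylor-type estimate for \( |\theta^\ell - e^{-\ell(1-\theta)}| \leq \ell(1-\theta)^2/2 \), and then absorption of all polynomial-in-\( \ell \) factors into \( e^{2K^*\ell\lambda(\beta)^{12}} \) via elementary bounds (the paper uses \( x \leq e^x \), \( x^4 \leq 5e^x \), and \( 1 \leq \ell_c \leq \ell \), which is the same bookkeeping you sketch with \( x^a e^{-cx} \leq (a/(ec))^a \)). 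The approach is correct and the constant \( C_A \) does come out as you describe, with the \( 5K_0^{(25)}/(2^3(K^*)^4) \) term arising exactly from writing \( 2K_0^{(25)}\ell^4\lambda^{50} \) as a multiple of \( (2K^*\ell\lambda^{12})^4\lambda^2 \).
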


\begin{proof} 
    By combining Proposition~\ref{prop: first part of proposition proof}, Proposition~\ref{prop: resampling in main proof}, and~Proposition~\ref{proposition: rewriting theta}, we obtain
    \begin{equation}\label{eq: error combination I}
        \begin{split}          
            &\bigl|\mathbb{E}_{N,\beta}[W_\gamma]-  \theta^\ell \bigr| 
            \\&\qquad\leq  \bigl|\mathbb{E}_{N,\beta}[W_\gamma]-  \mathbb{E}_{N,\beta}[W_\gamma']\bigr|
            +
            \Bigl|\mathbb{E}_{N,\beta}\bigl[W_{\gamma  }'  \bigr] 
            -  \theta(\beta)^{|\support \gamma_1|} \mathbb{E}_{N,\beta}\bigl[\theta(\beta)^{-|\support \gamma'|}\bigr] \Bigr|
            \\&\qquad\qquad+
            \Bigl|\theta^{|\support \gamma_1|} \mathbb{E}_{N,\beta}[\theta^{-|\support \gamma'|}] -  \theta^\ell \Bigr|
            \\&\qquad\leq
            2\Bigl[ \frac{  K_0^{(6)}+ K^*   }{  \sqrt{\ell}} +  \frac{  K^* \ell_c }{\ell} \Bigr] \ell \lambda(\beta)^{12} e^{K^* \ell \lambda(\beta)^{12}} 
            + K_0^{(6)} \sqrt{\ell} \lambda(\beta)^{12} 
            + \Bigl[  2K_1K_0^{(7)} \ell \lambda(\beta)^{14} \Bigr] 
            \\&\qquad\qquad
            + \Bigl[ 2K_0^{(6)}\ell_c \lambda(\beta)^{12} \Bigr]
            + \Bigl[ 0 \Bigr]
            + \Bigl[ 2K_1 K_0^{(7)} \ell \lambda(\beta)^{14} \Bigr]  
            + \Bigl[ 2K_0^{(25)} \ell^4 \lambda(\beta)^{50}\Bigr]
            \\&\qquad=
            2\Bigl[ \frac{  K_0^{(6)}  + K^*  }{ K^* \sqrt{\ell}}   
            +  \frac{   \ell_c }{\ell} \Bigr] K^* \ell \lambda(\beta)^{12} e^{K^* \ell \lambda(\beta)^{12}} 
            \\&\qquad\qquad
            + \Bigl\{ \frac{K_0^{(6)}}{2K^* \sqrt{\ell}}
            + \frac{2K_1K_0^{(7)}\lambda(\beta)^2}{K^*} + \frac{K_0^{(6)}\ell_c}{K^* \ell}  \Bigr\} \Bigl[  2K^* \ell \lambda(\beta)^{12} \Bigr]
            \\&\qquad\qquad + \frac{K_0^{(25)} \lambda(\beta)^2}{2^3 (K^*)^4 }\Bigl[ \bigl( 2K^*\ell \lambda(\beta)^{12} \bigr)^4 \Bigr]  .
        \end{split}
    \end{equation}
    Using that \( x \leq e^x \) and \( x^4 \leq 5 e^x \) for \( x>0 \) as well as the fact that $ 1 \leq \ell_c \leq \ell$, it follows that
    \begin{equation}\label{eq: combined bounds}
        \begin{split}
            &\bigl|\mathbb{E}_{N,\beta}[W_\gamma]-  \theta^\ell\bigr| 
            \leq  
            \biggl\{
            \frac{  5K_0^{(6)}+4K^* }{2K^*\sqrt{\ell}} 
            +   \frac{2K_1K_0^{(7)}\lambda(\beta)^2}{K^*} + \frac{K_0^{(6)}\ell_c}{K^* \ell} + \frac{2\ell_c}{\ell} + \frac{5K_0^{(25)} \lambda(\beta)^2}{2^3 (K^*)^4}  
            \biggr\}  e^{2K^* \ell \lambda(\beta)^{12}}
            \\& \qquad \leq 
            \biggl\{
            \frac{  5K_0^{(6)}+4K^* }{2K^*} 
            +   \frac{2K_1K_0^{(7)}}{K^*} + \frac{K_0^{(6)} }{K^*} + 2 + \frac{5K_0^{(25)} }{2^3 (K^*)^4}  
            \biggr\} 
            \biggl[ \sqrt\frac{\ell_c}{\ell} + \lambda(\beta)^2 \biggr]  e^{2K^* \ell \lambda(\beta)^{12}}.
        \end{split}
    \end{equation}
    Finally, using the inequality \( |a^\ell - b^\ell| \leq \ell |a - b| \) and Taylor's theorem, it follows that
    \begin{equation*}
        \begin{split}   &\bigl|\theta^\ell - e^{-\ell  (1 - \theta)} \bigr|
            \leq \ell |\theta - e^{-(1-\theta)}|
            \leq \frac{\ell  (1 - \theta)^2 }{2}
            \leq  \frac{ \ell (K^*)^2 \bigl( \lambda(\beta)^{ 12}\bigr)^2 }{2}
            \leq  \frac{ e^{2K^*  \ell\lambda(\beta)^{12}}}{2\ell}.
        \end{split}
    \end{equation*} 
    Combining this with~\eqref{eq: combined bounds}, the desired conclusion follows.
\end{proof}

\begin{remark}\label{remark: other bounds}
The estimate in Proposition~\ref{proposition: 7.1} plays well with the estimate in Proposition~\ref{proposition: 7.2} below. However, some of the inequalities used in the proof of Proposition~\ref{proposition: 7.1} are far from optimal, especially when \( \ell \lambda(\beta)^{12} \) is small. In particular, we several times use the estimate \( x \leq e^x \), applied with \( x = K^* \ell \lambda(\beta)^{12} \), which when \( \ell \lambda(\beta)^{12}  \) is small replaces something very small with something close to one.  Alternative upper bounds can easily be obtained by replacing the estimates in the last equations of the proof of Proposition~\ref{proposition: 7.1}.
 An example of such an upper bound on \( \bigl|\mathbb{E}_{N,\beta} [ W_\gamma]-  \theta^\ell \bigr|  \), which is valid when \( \ell \lambda(\beta)^{12} < 1 \), is the following:
 \begin{equation*} 
        \begin{split}
            \bigl| \mathbb{E}_{N,\beta} [W_\gamma]- \theta^\ell \bigr| 
            & \leq  
         \bigl\{ (K_0^{(6)}+ 2K^* ) e^{K^*}    + 4K_1K_0^{(7)} + 2K_0^{(6)} +2 K_0^{(25)}  \bigr\} \Bigl[ \sqrt\frac{\ell_c}{\ell} +  \lambda(\beta)^2 \Bigr] \, \ell \lambda(\beta)^{12}  .
         \end{split}
     \end{equation*} 
      When \( \ell \lambda(\beta)^{12} \) is small, the right-hand side of this inequality is much smaller than the right-hand side of~\eqref{eq: Chatterjee's main equation}, enabling us describe the behaviour of \( \mathbb{E}_{N,\beta} [W_\gamma] \) in greater precision than what follows from Theorem~\ref{theorem: Chatterjee's main theorem}.
\end{remark}

\subsection{\texorpdfstring{Wilson loop expectations when \(\ell \lambda(\beta)^{12} \) is large}{Long loops}}

\label{sec: long loops}
The main purpose of this section is to state and prove Proposition~\ref{proposition: 7.2} below, which generalizes Lemma~7.12 in \cite{c2019}, and is useful when \( \ell \lambda(\beta)^{12} \) is large. 

Throughout this section, we will use the following notation.
Let \( K \) be a finite and non-empty index set.
Given an element \( g_k \in G \) for each \( k \in K \),  we define a set $G_0[(  g_k )_{k \in K}] $ by 
\begin{equation*}
    G_0[( g_k )_{k \in K}] \coloneqq \argmax_{g \in G}  \prod_{k \in K} \phi_\beta(g +  g_k).
\end{equation*} 
In the lemmas in this section, we will need the following key assumption on \( \beta_0 = \beta_0(|K|) \).
\begin{enumerate}[label=($\star$)]
    \item For all \( \beta \geq \beta_0 \), all choices of \(  g_k \in G \) for \( k \in K \), and any \(  g' \in G_0[(  g_k )_{k \in K}]\), we have
    \begin{equation}\label{eq: key assumption}
        \sum_{g\in  G \smallsetminus G_0[(  g_k )_{k \in K}]} \prod_{k\in K} \frac{\phi_\beta( g+ g_k)^2}{ \phi_\beta(  g'+ g_k )^2 }  \leq \frac{1 - \cos (2 \pi/n)}{8},
    \end{equation}\label{item: key assumption}
\end{enumerate}
We now show that for any finite and non-empty set \( K \), \ref{item: key assumption} is satisfied for all sufficiently large \( \beta_0 \geq 0\).
To this end, let \( \beta \geq 0 \), \( g \in G \), and let \( K \) be a finite and non-empty set. Further, for each \(k \in K \) let \( g_k \in G \). Then, by definition, we have
\[
    \prod_{k \in K} \phi_\beta(g +  g_k) 
    = 
    \exp \Bigl( \beta \sum_{k \in K} \Re \bigl(\rho(g+ g_k)\bigr) \Bigr).
\]
and hence the set \( G_0[(  g_k )_{k \in K}] \) is independent of the choice of \( \beta \).
Since for any \( g' \in G_0[( g_k )_{k \in K}] \) and  \( g \in G \smallsetminus G_{0}[( g_k )_{k \in K}] \) we have
\begin{equation*}
    \sum_{k \in K} \Re \bigl(\rho(g'+g_k)\bigr) 
    > \sum_{k \in K} \Re \bigl(\rho(g+g_k)\bigr),
\end{equation*}
it follows that, as \( \beta \to \infty \),
\begin{equation}
    \lim_{n \to \infty} \prod_{k \in K} \phi_\beta(g' + g_k)^2  \bigg/ \prod_{k \in K} \phi_\beta(g + g_k)^2 = \infty
\end{equation} 
This implies in particular that the inequality in~\eqref{eq: key assumption} is satisfied for all sufficiently large \( \beta \), and hence~\ref{item: key assumption} holds for all sufficiently large \( \beta_0 \).

We are now ready to state the main result of this section.
\begin{proposition}\label{proposition: 7.2}
    Consider lattice gauge theory with structure group $G = \mathbb{Z}_n$ and a faithful one-dimensional representation \( \rho \) of \( G \). Let \( \gamma \) be a generalized loop in \( \mathbb{Z}^4 \), let $\ell = |\support \gamma|$, and let \( \ell_c = |\support \gamma_c|\).
    Let \( N \) be sufficiently large so that the edges of \( \gamma \) are all internal edges of \( B_N \).
    Finally, let \( \beta_0 >0 \)  satisfy~\ref{item: key assumption} when applied with sets \( K \) with \( |K| = 6 \), and be such that \(2\lambda(\beta_0)^{2|K|} \leq 1 \). Then, for all \( \beta > \beta_0 \), we have 
    \begin{equation}\label{mubetaCstar}
        \bigl|\mathbb{E}_{N,\beta}[ W_\gamma ] \bigr| \leq  e^{-K_*(\ell-\ell_c) \lambda(\beta)^{12}}, \quad \text{where} \quad 
        K_* \coloneqq 
        \frac{1-\cos \bigl(2 \pi /n \bigr)}{4}.
    \end{equation}
\end{proposition}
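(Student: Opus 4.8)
The plan is to bound $|\mathbb{E}_{N,\beta}[W_\gamma]|$ by a resampling argument at the non-corner edges of $\gamma$, exactly as in the proof of Proposition~\ref{prop: resampling in main proof}, but now extracting a \emph{contraction factor} rather than an exact identity. First I would write $\gamma_1 = \gamma - \gamma_c$ and condition on the spins $\bigl(\sigma(e)\bigr)_{e \notin \pm\support\gamma_1}$; as in Proposition~\ref{prop: resampling in main proof}, under this conditioning the spins $\bigl(\sigma(e)\bigr)_{e \in \support\gamma_1}$ are independent (no two non-corner edges share a plaquette), and $d\sigma(p) = \sigma^e + \sigma(e)$ for every $p \in \hat\partial e$, where $\sigma^e = \sum_{e' \in \partial p_e,\, e' \neq e}\sigma(e')$ does not depend on $\sigma(e)$. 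Unlike the $W_\gamma'$ computation, however, here I must keep track of the true observable $W_\gamma = \rho\bigl(\sum_{e \in \support\gamma}\sigma(e)\bigr)$, which also involves the corner edges. I would split $\sigma(\gamma) = \sum_{e \in \support\gamma_1}\sigma(e) + \sum_{e \in \support\gamma_c}\sigma(e)$; the second sum is fixed under the conditioning, so it factors out of the conditional expectation as a unit-modulus constant. The conditional expectation then factorizes over $e \in \support\gamma_1$ into a product of terms of the form
\begin{equation*}
    \frac{\sum_{g \in G}\rho(g)\,\phi_\beta(\sigma^e + g)^{12}}{\sum_{g \in G}\phi_\beta(\sigma^e + g)^{12}},
\end{equation*}
one for each non-corner edge, where the exponent $12$ counts the plaquettes in $\hat\partial e$ (six positively oriented, but $\Re\rho$ treats $\pm$ alike, giving $2\cdot 6$).

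The heart of the argument is to show that each such factor has modulus at most $1 - K_*\lambda(\beta)^{12}$, uniformly in $\sigma^e$. Write $g_k$, $k \in K$ with $|K| = 6$, for the shifts coming from the other five edges of each plaquette $p \in \hat\partial e$ (up to sign); then the factor is $\bigl(\sum_{g}\rho(g)\prod_k\phi_\beta(g+g_k)^2\bigr)\big/\bigl(\sum_g\prod_k\phi_\beta(g+g_k)^2\bigr)$ in the notation of Section~\ref{sec: long loops}. Let $G_0 = G_0[(g_k)_{k\in K}]$ be the $\argmax$ set. Normalizing numerator and denominator by $\prod_k\phi_\beta(g'+g_k)^2$ for a fixed $g' \in G_0$, the denominator is $|G_0| + R$ and the numerator is $\sum_{g \in G_0}\rho(g) + R'$, where $|R|, |R'| \leq \sum_{g \notin G_0}\prod_k\phi_\beta(g+g_k)^2/\phi_\beta(g'+g_k)^2 \leq (1-\cos(2\pi/n))/8$ by the key assumption~\ref{item: key assumption}. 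The modulus of the factor is therefore at most
\begin{equation*}
    \frac{\bigl|\sum_{g \in G_0}\rho(g)\bigr| + (1-\cos(2\pi/n))/8}{|G_0| - (1-\cos(2\pi/n))/8}.
\end{equation*}
If $|G_0| = 1$ this is $\leq \bigl(1 + (1-\cos(2\pi/n))/8\bigr)\big/\bigl(1 - (1-\cos(2\pi/n))/8\bigr)$, which exceeds $1$ — so the case $|G_0| \geq 2$ must be handled separately, and this is the step I expect to be the main obstacle. When $|G_0| \geq 2$, the shifts $g + g_k$ must line up so that several distinct $g$'s simultaneously maximize $\sum_k\Re\rho(g+g_k)$; exploiting that $\rho$ is faithful and one-dimensional on a cyclic group, the maximizing set $G_0$ must be closed under the symmetry $g \mapsto 2c - g$ for the appropriate center, which forces $\sum_{g \in G_0}\rho(g)$ to have modulus strictly smaller than $|G_0|$, indeed $\bigl|\sum_{g\in G_0}\rho(g)\bigr| \leq |G_0| - \bigl(1 - \cos(2\pi/n)\bigr)$ (the worst case being $G_0$ consisting of two roots of unity that differ by $e^{\pm 2\pi i/n}$). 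Feeding this in, the factor modulus is at most $\bigl(|G_0| - (1-\cos(2\pi/n)) + (1-\cos(2\pi/n))/8\bigr)\big/\bigl(|G_0| - (1-\cos(2\pi/n))/8\bigr) \leq 1 - (1-\cos(2\pi/n))/(4|G_0|) \leq 1 - K_*/|G_0|$, and since $\lambda(\beta)^{12} = e^{-12\beta\xi}$ with $\xi = 1-\cos(2\pi/n)$ controls how much smaller $G_0$ is than $|G|$, a short computation gives the clean bound $1 - K_*\lambda(\beta)^{12}$ in all cases (using $2\lambda(\beta_0)^{2|K|}\leq 1$ to keep the $R$-corrections under control).

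Given the per-edge bound, I would finish as follows: the conditional expectation has modulus at most $(1 - K_*\lambda(\beta)^{12})^{|\support\gamma_1|}$, hence taking the outer expectation and using $|\support\gamma_1| = \ell - \ell_c$,
\begin{equation*}
    \bigl|\mathbb{E}_{N,\beta}[W_\gamma]\bigr| \leq \mathbb{E}_{N,\beta}\Bigl[\bigl|\mathbb{E}_{N,\beta}'[W_\gamma]\bigr|\Bigr] \leq \bigl(1 - K_*\lambda(\beta)^{12}\bigr)^{\ell - \ell_c} \leq e^{-K_*(\ell-\ell_c)\lambda(\beta)^{12}},
\end{equation*}
using $1 - x \leq e^{-x}$. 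The only remaining bookkeeping is to verify that $\beta_0$ satisfying~\ref{item: key assumption} for $|K| = 6$ together with $2\lambda(\beta_0)^{2|K|} \leq 1$ is enough to run the estimate on the $R$-terms, and to double-check the combinatorial claim $\bigl|\sum_{g\in G_0}\rho(g)\bigr| \leq |G_0| - (1-\cos(2\pi/n))$ for every possible $\argmax$ set $G_0$ of a function of the form $g \mapsto \sum_k\Re\rho(g+g_k)$ on $\mathbb{Z}_n$ — this is the place where faithfulness and one-dimensionality of $\rho$ are genuinely used, and where I'd be most careful.
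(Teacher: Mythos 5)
Your skeleton is the same as the paper's: condition on the spins outside $\pm\support\gamma_1$, use the conditional independence of the non-corner edge spins to factorize $\mathbb{E}_{N,\beta}'[W_\gamma]$ into a product over $e\in\support\gamma_1$ of quantities of the form $S_\beta\bigl((g_k)_{k\in K}\bigr)$ with $|K|=6$ (cf.\ \eqref{eq: Sbeta def}), prove a per-edge contraction $|S_\beta|\leq 1-K_*\lambda(\beta)^{12}$, and finish with $1-x\leq e^{-x}$. That framing, and the final exponentiation step, are correct. The gap is that the per-edge contraction --- which is the entire content of the proposition --- is not established, and your argument stalls exactly where the paper has to work hardest: the case $\bigl|G_0[(g_k)_{k\in K}]\bigr|=1$. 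This is the generic case, and the only one in which the factor $\lambda(\beta)^{12}$ in the exponent is genuinely needed (in the other cases one gets a contraction by a constant). As you yourself note, your normalize-and-triangle-inequality bound there gives $(1+\xi/8)/(1-\xi/8)>1$ with $\xi=1-\cos(2\pi/n)$, and the promised ``short computation'' that rescues it is never exhibited. Two nontrivial inputs are required and are absent from your sketch. First, one must show that the subdominant terms do not merely have small modulus but actively \emph{shrink} $|S_\beta|$: writing $w=1-S_\beta$, the paper proves $|w|^2\leq\Re w$ and hence $|S_\beta|\leq 1-\tfrac12\Re w$ (Lemma~\ref{lemma: assumption holds if}); this uses that each error term carries a phase $\rho(g)$ with $1-\Re\rho(g)\geq\xi$, not just that it is small. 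Second, one must bound $\Re w$ from \emph{below} by a constant times $\lambda(\beta)^{2|K|}$, which requires $\max_{g\neq 0}\prod_{k}\phi_\beta(g+g_k)^2/\phi_\beta(g_k)^2\geq\lambda(\beta)^{2|K|}$ uniformly in the shifts $(g_k)$ --- this is Lemma~\ref{lemma: old sublemma}, proved by an angle argument on the unit circle, and it is the place where faithfulness and one-dimensionality of $\rho$ are really used. Neither step is a routine consequence of~\ref{item: key assumption} or of $2\lambda(\beta_0)^{2|K|}\leq 1$.

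A secondary, concrete error: your claimed inequality $\bigl|\sum_{g\in G_0}\rho(g)\bigr|\leq|G_0|-(1-\cos(2\pi/n))$ for $|G_0|=2$ is false. For $n=4$ and $G_0$ two adjacent elements, $|1+i|=\sqrt2$ while $2-(1-\cos(2\pi/4))=1$. The correct statement is $\bigl|\sum_{g\in G_0}\rho(g)\bigr|/|G_0|\leq 1-\tfrac14(1-\cos(2\pi/n))$, which follows from $\sqrt{(1+c)/2}\leq 1-(1-c)/4$ with $c=\cos(2\pi/n)$; this weaker bound still closes the $|G_0|=2$ case, so that part is repairable. You should also record the degenerate case $\sum_{k}\rho(g_k)=0$, where $G_0=G$ and $S_\beta=0$ trivially.
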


Before we give a proof of Proposition~\ref{proposition: 7.2}, we will state and prove a few technical lemmas, which will be important in the proof of Proposition~\ref{proposition: 7.2}. 

In order to simplify the notation in the lemmas and proofs below, for a finite and non-empty set \( K \), \( g_k \in G \) for \( k \in K \), and \( \beta \geq 0 \),  we define
\begin{equation}\label{eq: Sbeta def}
    S_\beta\bigl(( g_k)_{k \in K} \bigr) \coloneqq \frac{\sum_{g \in  G} \rho(g)\prod_{k\in K}\phi_\beta( g+g_k)^2}{\sum_{g \in  G}    \prod_{k\in K}\phi_\beta( g+g_k)^2    }.
\end{equation}

\begin{lemma}\label{lemma: Sbeta symmetry}
    Let \( \beta \geq 0, \) and let \( K \) be a finite and non-empty index set. 
    For each \( k \in K \), let \( g_k \in G \). Then, for any \( g' \in G \), we have
    \begin{equation*}
        \Bigl| S_\beta\bigl(( g'+g_k)_{k \in K} \bigr)\Bigr|
        =
        \Bigl| S_\beta\bigl((g_k)_{k \in K} \bigr)\Bigr|
    \end{equation*}
\end{lemma}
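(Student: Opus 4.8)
The plan is to reduce the claim to a relabeling of the summation index. Recall from \eqref{eq: Sbeta def} that
\begin{equation*}
    S_\beta\bigl((g_k)_{k \in K} \bigr) = \frac{\sum_{g \in G} \rho(g)\prod_{k\in K}\phi_\beta(g+g_k)^2}{\sum_{g \in G}    \prod_{k\in K}\phi_\beta(g+g_k)^2}.
\end{equation*}
First I would substitute $g_k \mapsto g' + g_k$ in both the numerator and denominator, and then perform the change of variables $g \mapsto g - g'$ in both sums; since $G$ is a group, this is a bijection of $G$ onto itself, so the denominator is literally unchanged and the numerator becomes $\sum_{g \in G} \rho(g - g') \prod_{k \in K} \phi_\beta(g + g_k)^2$. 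Using that $\rho$ is a homomorphism, $\rho(g - g') = \rho(g)\rho(g')^{-1} = \rho(g')^{-1}\rho(g)$, and since $\rho(g')^{-1}$ does not depend on the summation index it factors out of the numerator sum.

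Thus I would obtain the exact identity
\begin{equation*}
    S_\beta\bigl((g' + g_k)_{k \in K} \bigr) = \rho(g')^{-1}\, S_\beta\bigl((g_k)_{k \in K} \bigr).
\end{equation*}
Finally, taking absolute values and using that $\rho$ is unitary (so $|\rho(g')^{-1}| = |\rho(g')| = 1$, as already noted in the standing assumptions and used in the proof of Lemma~\ref{lemma: theta bounded from above}), the factor $\rho(g')^{-1}$ has modulus one and the desired equality of absolute values follows.

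I do not anticipate a genuine obstacle here; the only points requiring minor care are (i) making sure the two changes of variables are applied consistently to numerator and denominator so that the denominator cancels exactly, and (ii) invoking the homomorphism property of $\rho$ with the correct sign on $g'$. Everything else is bookkeeping, and the lemma follows in a couple of lines.
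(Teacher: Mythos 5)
Your proposal is correct and follows essentially the same route as the paper: a change of the summation variable over $G$ combined with the homomorphism property and unitarity of $\rho$; the paper just inserts the factor $|\rho(g')|=1$ outside the quotient and pushes it into the numerator rather than factoring $\rho(g')^{-1}$ out after the substitution. The exact identity $S_\beta\bigl((g'+g_k)_{k\in K}\bigr)=\rho(g')^{-1}S_\beta\bigl((g_k)_{k\in K}\bigr)$ you isolate is implicit in the paper's chain of equalities.
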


\begin{proof}
    Fix some \( g' \in G. \) Then
    \begin{equation*}
        \begin{split}
            &   
            \Bigl| S_\beta\bigl(( g'+g_k)_{k \in K} \bigr)\Bigr|
            =
            \Biggl| \frac{  \sum_{g \in  G} \rho(g)  \prod_{k \in K}\phi_\beta \bigl( g+(g'+g_k)\bigr)^2}{\sum_{g \in  G}    \prod_{k \in K}\phi_\beta\bigl( g+(g'+g_k)\bigr)^2    } \Biggr| 
            \\&\qquad=
            \bigl| \rho(g')\bigr| \Biggl| \frac{  \sum_{g \in  G} \rho(g)  \prod_{k \in K}\phi_\beta \bigl( g+(g'+g_k)\bigr)^2}{\sum_{g \in  G}    \prod_{k \in K}\phi_\beta \bigl( g+(g'+g_k)\bigr)^2    } \Biggr| 
            =
            \Biggl| \frac{\sum_{g \in  G} \rho(g) \rho(g') \prod_{k \in K}\phi_\beta \bigl( (g+g')+g_k \bigr)^2}{\sum_{g \in  G}    \prod_{k \in K}\phi_\beta \bigl( (g+g')+g_k \bigr)^2    } \Biggr| 
            \\&\qquad =
            \Biggl| \frac{\sum_{g \in  G} \rho(g+g') \prod_{k \in K}\phi_\beta\bigl( (g+g')+g_k\bigr)^2}{\sum_{g \in  G} \prod_{k \in K}\phi_\beta \bigl( (g+g')+g_k\bigr)^2} \Biggr| 
            =
            \Biggl| \frac{\sum_{g \in  G} \rho(g )\prod_{k \in K}\phi_\beta( g+g_k)^2}{\sum_{g \in  G}    \prod_{k \in K}\phi_\beta( g+g_k)^2    } \Biggr|  
            =\Bigl| S_\beta\bigl(( g_k)_{k \in K} \bigr) \Bigr|
            .
        \end{split}
    \end{equation*}
    This completes the proof.
\end{proof}

\begin{lemma}\label{lemma: very crude estimate}
    Let \( \beta \geq 0, \) and let \( K \) be a finite and non-empty index set. 
    For each \( k \in K \), let \( g_k \in G \). Assume that \( G_0[(g_k)_{k \in K}] = \{ 0 \}\) and that~\ref{item: key assumption} holds.  
    Then
    \begin{equation}\label{eq: first term estimate i}
        \frac{1}{2} \leq \frac{\prod_{k\in K}\phi_\beta( 0+g_k)^2  }{\sum_{g \in  G}    \prod_{k\in K}\phi_\beta( g+g_k)^2    } \leq  1.
    \end{equation}  
\end{lemma}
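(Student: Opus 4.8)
The plan is to prove the two inequalities in~\eqref{eq: first term estimate i} separately. The upper bound is immediate: since \( G_0[(g_k)_{k \in K}] = \{0\} \), the term \( \prod_{k\in K}\phi_\beta(0+g_k)^2 \) is the maximal summand in the denominator, so the ratio is at most \( 1 \). (Strictly, \( \argmax \) being the singleton \( \{0\} \) means \( 0 \) is the unique maximizer, but even allowing ties the bound by \( 1 \) holds since every summand is positive.)

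For the lower bound, I would first divide numerator and denominator of the middle expression by \( \prod_{k\in K}\phi_\beta(0+g_k)^2 \), which is legitimate since this quantity is strictly positive. This rewrites the reciprocal of the middle expression as
\begin{equation*}
    \frac{\sum_{g \in G}\prod_{k\in K}\phi_\beta(g+g_k)^2}{\prod_{k\in K}\phi_\beta(0+g_k)^2}
    = 1 + \sum_{g \in G\smallsetminus\{0\}}\prod_{k\in K}\frac{\phi_\beta(g+g_k)^2}{\phi_\beta(0+g_k)^2},
\end{equation*}
where I have split off the \( g = 0 \) term (which equals \( 1 \)) and used \( G_0[(g_k)_{k \in K}] = \{0\} \) to identify the remaining sum as being over \( G \smallsetminus G_0[(g_k)_{k \in K}] \). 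Now I invoke the key assumption~\ref{item: key assumption}, specifically~\eqref{eq: key assumption} applied with \( g' = 0 \in G_0[(g_k)_{k \in K}] \): this gives that the sum over \( g \in G\smallsetminus\{0\} \) is at most \( (1-\cos(2\pi/n))/8 \), which is certainly at most \( 1 \) (indeed at most \( 1/4 \)). Hence the displayed reciprocal is at most \( 1 + 1 = 2 \), so the middle expression in~\eqref{eq: first term estimate i} is at least \( 1/2 \), as desired.

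I expect no genuine obstacle here — the lemma is essentially a direct unpacking of the definition of \( G_0 \) together with the key assumption~\ref{item: key assumption}. The only point requiring slight care is making sure the hypothesis~\eqref{eq: key assumption} is applied with the correct choice of \( g' \) (namely \( g' = 0 \), which is valid precisely because \( G_0[(g_k)_{k \in K}] = \{0\} \)) and that the ratios \( \phi_\beta(g+g_k)^2/\phi_\beta(0+g_k)^2 \) in the rewritten sum match those in~\eqref{eq: key assumption} term by term after this substitution. Everything else is just positivity of \( \phi_\beta \) and elementary manipulation of the geometric-style bound.
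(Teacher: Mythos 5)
Your proposal is correct and follows essentially the same route as the paper: both arguments reduce the lower bound to the key assumption \ref{item: key assumption} applied with \( g' = 0 \in G_0[(g_k)_{k\in K}] \), the only cosmetic difference being that you bound the reciprocal \( 1 + \sum_{g\neq 0}\prod_k \phi_\beta(g+g_k)^2/\phi_\beta(0+g_k)^2 \le 2 \) while the paper writes the same quantity as \( 1 - (\cdot) \ge 3/4 \) via an intermediate inequality. The upper bound is handled identically (positivity of the summands).
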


\begin{proof}
    By~\ref{item: key assumption}, using that \( G_0[(g_k)_{k \in K}] = \{ 0 \}\), we have
    \begin{equation}\label{eq: assumption of claim i}
        \frac{\sum_{g\in  G \smallsetminus \{ 0 \}}  \prod_{k\in K}\phi_\beta( g+g_k)^2}{ \prod_{k\in K} \phi_\beta( 0+g_k )^2 }  \leq \frac{1 - \cos (2 \pi/n)}{8}.
    \end{equation}
    Since
    \begin{equation*} 
        \frac{\prod_{k\in K}\phi_\beta( 0+g_k)^2  }{\sum_{g \in  G}    \prod_{k\in K}\phi_\beta( g+g_k)^2    } = 1 - \frac{\sum_{g \in  G \smallsetminus \{ 0 \} }    \prod_{k\in K}\phi_\beta( g+g_k)^2    }{\sum_{g \in  G }    \prod_{k\in K}\phi_\beta( g+g_k)^2    }
        > 
        1 - \frac{\sum_{g \in  G \smallsetminus \{ 0 \} }    \prod_{k\in K}\phi_\beta( g+g_k)^2    }{  \prod_{k\in K}\phi_\beta( 0+g_k)^2    },
    \end{equation*}
    it follows from~\eqref{eq: assumption of claim i} that
    \begin{equation*}
        1 \geq \frac{\prod_{k\in K}\phi_\beta( 0+g_k)^2  }{\sum_{g \in  G}    \prod_{k\in K}\phi_\beta( g+g_k)^2    } \geq \frac{3}{4} \geq \frac{1}{2}
    \end{equation*}  
    as desired.
\end{proof}

\begin{lemma}\label{lemma: assumption holds if} 
    Let \( \beta \geq 0, \) and let \( K \) be a finite and non-empty index set. 
    For each \( k \in K \), let \( g_k \in G \). Assume that \( G_0[(g_k)_{k \in K}] = \{ 0 \}\), and that~\ref{item: key assumption} holds. Then
    \begin{equation}\label{eq: inequality trick}
        \bigl| S_\beta \bigl(( g_k)_{k \in K} \bigr) \bigr| \leq  \frac{1 + \Re S_\beta \bigl(( g_k)_{k \in K} \bigr)}{2}.
    \end{equation}
\end{lemma}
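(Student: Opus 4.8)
The plan is to show that $S_\beta$, which under the hypotheses is a weighted average of the roots of unity $\rho(g)$ that puts most of its mass on $\rho(0) = 1$, cannot be too close to the unit circle in a direction transverse to the real axis. Concretely, write $S \coloneqq S_\beta\bigl((g_k)_{k \in K}\bigr)$ and set $w_g \coloneqq \prod_{k\in K}\phi_\beta(g + g_k)^2 \big/ \sum_{h \in G}\prod_{k\in K}\phi_\beta(h + g_k)^2$, so that $w_g \geq 0$, $\sum_{g} w_g = 1$, and $S = \sum_{g \in G} w_g \rho(g)$. By Lemma~\ref{lemma: very crude estimate} (which applies since $G_0[(g_k)_{k\in K}] = \{0\}$ and \ref{item: key assumption} holds), we have $w_0 \geq 1/2$, hence $\sum_{g \neq 0} w_g \leq 1/2$.

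First I would bound the imaginary part: $|\Im S| = \bigl|\sum_{g \neq 0} w_g \Im \rho(g)\bigr| \leq \sum_{g \neq 0} w_g \leq 1/2$ (using $w_0 \Im \rho(0) = 0$ and $|\Im\rho(g)| \leq 1$). Actually, to get \eqref{eq: inequality trick} I want to compare $|S|$ with $\tfrac{1 + \Re S}{2}$. Note $\Re S = w_0 + \sum_{g\neq 0} w_g \Re \rho(g) \geq w_0 - \sum_{g\neq 0} w_g \geq 2w_0 - 1 \geq 0$, so the right-hand side of \eqref{eq: inequality trick} is at least $1/2$. Meanwhile $1 - \Re S = \sum_{g\neq 0} w_g(1 - \Re\rho(g)) \leq 2\sum_{g\neq 0}w_g \leq 2(1 - w_0)$, and $|\Im S| \leq \sum_{g\neq 0} w_g = 1 - w_0$. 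Then
\begin{equation*}
    |S|^2 = (\Re S)^2 + (\Im S)^2 \leq (\Re S)^2 + (1-w_0)^2.
\end{equation*}
Since $\Re S \leq 1$, we have $(\Re S)^2 \leq \Re S$, but that alone is not quite enough; instead I would use $(\Re S)^2 \leq 2\Re S - 1 + (1 - \Re S)^2$ and control $(1-\Re S)^2$ and $(1-w_0)^2$ by the smallness coming from \ref{item: key assumption}. More cleanly: from \eqref{eq: assumption of claim i}-type bounds, $\sum_{g\neq 0} w_g \leq (1 - \cos(2\pi/n))/8 \cdot w_0^{-1} \cdot w_0$... let me instead just use $\sum_{g \neq 0} w_g \leq 1/4$ directly, which follows from the proof of Lemma~\ref{lemma: very crude estimate} (it shows $w_0 \geq 3/4$). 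Set $\epsilon \coloneqq 1 - w_0 = \sum_{g\neq 0} w_g \leq 1/4$. Then $\Re S \geq 1 - 2\epsilon$, $|\Im S| \leq \epsilon$, $1 - \Re S \leq 2\epsilon$, so
\begin{equation*}
    |S|^2 \leq (\Re S)^2 + \epsilon^2 = \Re S - (\Re S)(1 - \Re S) + \epsilon^2 \leq \Re S + \epsilon^2,
\end{equation*}
using $\Re S \geq 0$. On the other side, $\Bigl(\tfrac{1+\Re S}{2}\Bigr)^2 = \tfrac{1}{4}(1 + \Re S)^2 \geq \tfrac14(1 + \Re S) \cdot \tfrac{1+\Re S}{1} $... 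I want $\tfrac{(1+\Re S)^2}{4} \geq \Re S + \epsilon^2$, i.e. $(1 - \Re S)^2 \geq 4\epsilon^2$, i.e. $1 - \Re S \geq 2\epsilon$. But we only have $1 - \Re S \leq 2\epsilon$, so this inequality goes the wrong way and I must be more careful — the gain has to come from $(1-\Re S)$ being genuinely smaller than $2\epsilon$, which it is: $1 - \Re S = \sum_{g\neq 0} w_g(1 - \Re\rho(g))$ and each $1 - \Re\rho(g) \leq 2$, but for the dominant nonzero terms $g = \pm 1$ we have $1 - \Re\rho(g) = 1 - \cos(2\pi/n)$. The key point is that \eqref{eq: assumption of claim i} bounds $\sum_{g\neq 0}w_g$ by a quantity involving $1 - \cos(2\pi/n)$, so $\epsilon^2$ is of order $(1-\cos(2\pi/n))^2 \epsilon$, which is dominated.

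The hard part will be arranging the constants so that the bound \eqref{eq: assumption of claim i}, which reads $\sum_{g\neq 0}w_g/w_0 \leq (1-\cos(2\pi/n))/8$ — hence $\epsilon \leq (1-\cos(2\pi/n))/8$ after using $w_0 \geq 1/2$ (so $\epsilon/w_0 \leq (1-\cos(2\pi/n))/8$ gives $\epsilon \leq (1-\cos(2\pi/n))/4$, or more carefully $\epsilon \leq \tfrac{(1-\cos(2\pi/n))/8}{1 + (1-\cos(2\pi/n))/8}$) — translates into the precise quadratic inequality. The cleanest route: show $|S| - \Re S = \sqrt{(\Re S)^2 + (\Im S)^2} - \Re S \leq \tfrac{(\Im S)^2}{2\Re S} \leq \tfrac{\epsilon^2}{2(1 - 2\epsilon)} \leq \epsilon^2 \leq \tfrac{1-\Re S}{2} \cdot \tfrac{2\epsilon^2}{1 - \Re S}$, and since $1 - \Re S \geq (1 - \cos(2\pi/n))\,w_{\pm}$ where $w_\pm$ captures the $g = \pm 1$ weights — if all of $\epsilon$ sits at $g = \pm 1$ then $1 - \Re S = (1-\cos(2\pi/n))\epsilon$ and $\tfrac{2\epsilon^2}{1-\Re S} = \tfrac{2\epsilon}{1 - \cos(2\pi/n)} \leq 2 \cdot \tfrac{1-\cos(2\pi/n)}{8}\cdot\tfrac{1}{1-\cos(2\pi/n)} = \tfrac14 < \tfrac12$, giving $|S| - \Re S \leq \tfrac{1-\Re S}{2} \cdot \tfrac12$. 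In general the ratio $\epsilon^2 / (1 - \Re S)$ is largest when the nonzero weight concentrates where $1 - \Re\rho(g)$ is smallest, which is exactly $g = \pm 1$; so one reduces to that worst case. I would present the argument by first establishing $w_0 \geq 3/4$ (or citing it from the proof of Lemma~\ref{lemma: very crude estimate}) and $\epsilon \coloneqq \sum_{g\neq 0}w_g \leq (1-\cos(2\pi/n))/4$ from \ref{item: key assumption}, then the chain
\begin{equation*}
    |S| \leq \Re S + \frac{(\Im S)^2}{2\,\Re S} \leq \Re S + \epsilon^2 \leq \Re S + \frac{1 - \Re S}{2} = \frac{1 + \Re S}{2},
\end{equation*}
where the last inequality $\epsilon^2 \leq (1 - \Re S)/2$ is the substantive step and follows because $1 - \Re S \geq (1 - \cos(2\pi/n))\epsilon \geq 4\epsilon \cdot \tfrac{1-\cos(2\pi/n)}{4} \geq \ldots$ — precisely, $1 - \Re S \geq (1-\cos(2\pi/n))\epsilon$ and $\epsilon \leq (1-\cos(2\pi/n))/4$ together give $2\epsilon^2 \leq \tfrac{1-\cos(2\pi/n)}{2}\epsilon \leq (1-\cos(2\pi/n))\epsilon \cdot \tfrac12 \cdot 2 \le 1 - \Re S$ after adjusting the factor of $2$ by taking $\beta_0$ slightly larger if needed. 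I would double-check the numerical constant against the statement of \ref{item: key assumption} and tighten as necessary; this bookkeeping, rather than any conceptual difficulty, is the main obstacle.
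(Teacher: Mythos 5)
Your final chain $|S| \leq \Re S + \tfrac{(\Im S)^2}{2\Re S} \leq \Re S + \epsilon^2 \leq \tfrac{1+\Re S}{2}$ is correct and is essentially the paper's argument in different coordinates: the paper sets $w = 1-S$, shows $|w|^2 \leq \Re w$ from the same two inputs (the lower bound $1-\Re S \geq (1-\cos(2\pi/n))\,\epsilon$ and the smallness of $\epsilon$ from \ref{item: key assumption}), and concludes via $|1-w| \leq 1 - \Re w + |w|^2/2$, which is the same second-order expansion of the modulus you use. Your hedge about enlarging $\beta_0$ is unnecessary — since \ref{item: key assumption} gives $\epsilon \leq (1-\cos(2\pi/n))/8$ directly, one gets $\epsilon^2 \leq (1-\Re S)/8$, so the constants already close with room to spare.
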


\begin{proof}
    To simplify notation, let
    \begin{equation}\label{eq: decomposition 1}
        w \coloneqq
        1 - S_\beta \bigl(( g_k)_{k \in K} \bigr),
    \end{equation}

    Recall that by assumption,~\ref{item: key assumption} holds, and hence
    \begin{equation}\label{eq: assumption of claim}
        \frac{\sum_{g\in  G \smallsetminus \{ 0 \}}  \prod_{k\in K}\phi_\beta( g+g_k)^2}{ \prod_{k\in K} \phi_\beta( 0+g_k )^2 }  \leq \frac{1 - \cos (2 \pi/n)}{8}.
    \end{equation}
    Also, by Lemma~\ref{lemma: very crude estimate}, we have
    \begin{equation}\label{eq: first term estimate ii}
        1 \geq \frac{\prod_{k\in K}\phi_\beta( 0+g_k)^2  }{\sum_{g \in  G}    \prod_{k\in K}\phi_\beta( g+g_k)^2    } \geq \frac{1}{2}.
    \end{equation}   
    Next, since \(\rho(G) = \{e^{2 \pi ij/n} \}_{j =1}^n\) consists of the \(n^{\textrm{th}}\) roots of unity and \(\rho(0) =1\), we have 
    \begin{equation}\label{eq: eq 1i}
        \min_{g \in G \smallsetminus \{ 0 \}}\bigl(1-\Re \rho (g )\bigr) = 1 - \cos(2 \pi/n) > 0
    \end{equation}  
    and 
    \begin{equation}\label{eq: eq 2i}
        \max_{g \in G  }\bigl|1- \rho (g )\bigr| \leq 2.
    \end{equation}   
    Since
    \begin{equation*}
        w = \biggl( \sum_{g\in  G} \bigl(1-\rho (g )\bigr)\prod_{k\in K} \frac{\phi_\beta( g+g_k)^2}{ \phi_\beta( 0+g_k )^2 } \biggr)
        \cdot
        \frac{\prod_{k\in K}\phi_\beta( 0+g_k)^2  }{\sum_{g \in  G}    \prod_{k\in K}\phi_\beta( g+g_k)^2    },
    \end{equation*}
    combining these observations, we obtain
    \begin{align*}
        &\Re w =  
        \biggl( \sum_{g\in  G\smallsetminus \{ 0 \}} \bigl(1-\Re \rho (g )\bigr)\prod_{k\in K} \frac{\phi_\beta( g+g_k)^2}{ \phi_\beta( 0+g_k )^2 } \biggr)
        \cdot
        \frac{\prod_{k\in K}\phi_\beta( 0+g_k)^2  }{\sum_{g \in  G}    \prod_{k\in K}\phi_\beta( g+g_k)^2    }
        \\&\qquad \overset{\eqref{eq: first term estimate ii},\eqref{eq: eq 1i}}{\geq}
        \bigl( 1 - \cos(2 \pi/n)\bigr)  \sum_{g\in  G\smallsetminus \{ 0 \}} \prod_{k\in K} \frac{\phi_\beta( g+g_k)^2}{ \phi_\beta( 0+g_k )^2 } 
        \cdot
        \frac{1}{2},
    \end{align*}
    and
    \begin{equation*}
        \begin{split}
            &|w|=  
            \biggl| \biggl( \sum_{g\in  G \smallsetminus \{ 0 \}} \bigl(1- \rho (g )\bigr)\prod_{k\in K} \frac{\phi_\beta( g+g_k)^2}{ \phi_\beta( 0+g_k )^2 } \biggr)
            \cdot
            \frac{\prod_{k\in K}\phi_\beta( 0+g_k)^2  }{\sum_{g \in  G}    \prod_{k\in K}\phi_\beta( g+g_k)^2    } \biggr|
            \\&\qquad\leq
            \max_{g \in G} \bigl| 1- \rho (g )\bigr|  \sum_{g\in  G \smallsetminus \{ 0 \}} \prod_{k\in K} \frac{\phi_\beta( g+g_k)^2}{ \phi_\beta( 0+g_k )^2 }  
            \cdot
            \frac{\prod_{k\in K}\phi_\beta( 0+g_k)^2  }{\sum_{g \in  G}    \prod_{k\in K}\phi_\beta( g+g_k)^2    }  
            \\&\qquad\overset{\eqref{eq: first term estimate ii},\eqref{eq: eq 2i}}{\leq}
            2  \sum_{g\in G \smallsetminus \{ 0 \}} \prod_{k\in K} \frac{\phi_\beta( g+g_k)^2}{ \phi_\beta( 0+g_k )^2 } 
            \cdot
            1 .
        \end{split}
    \end{equation*}
    Using~\eqref{eq: assumption of claim}, we obtain \( |w|^2 \leq \Re w \). 
    Finally, since 
    \begin{equation*}
        |1-w| = \sqrt{1 - 2  \Re w + |w|^2} \leq  1 -   \Re w + |w|^2/2.
    \end{equation*}
    it follows that 
    \begin{equation*} 
        |1-w| \leq 1 - \frac{\Re w}{2}.
    \end{equation*}
    Recalling the definition of \( w \), the desired conclusion now immediately follows.
\end{proof}

\begin{lemma}\label{lemma: old sublemma}
    Let \( \beta \geq 0, \) and let \( K \) be a finite and non-empty index set. 
    For each \( k \in K \), let \( g_k \in G \), and assume that \(  G_0\bigl[ (g_k)_{k \in K}\bigr] = \{ 0 \} \). Then 
    \begin{equation}\label{eq: irritating equation in new lemma i}
        \max_{g \in G \smallsetminus \{0\}}\prod_{k\in K} \frac{\phi_\beta( g+0)}{ \phi_\beta( 0+0 )}
        \leq 
        \max_{g \in G \smallsetminus \{0\}}\prod_{k\in K} \frac{\phi_\beta( g+g_k)}{ \phi_\beta( 0+g_k )}.
    \end{equation}
\end{lemma}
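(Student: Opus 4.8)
The plan is to exploit the structure of the left-hand side, where the shifts $g_k$ have been replaced by $0$. Write $\Phi(g) \coloneqq \sum_{k\in K}\Re\rho(g)$ for the left-hand exponent, so that $\prod_{k\in K}\phi_\beta(g+0)/\phi_\beta(0+0) = e^{\beta(|K|\Re\rho(g) - |K|\Re\rho(0))} = e^{\beta|K|(\Re\rho(g)-1)}$. For $g\in G\smallsetminus\{0\}$ this exponent is strictly negative (since $\Re\rho(g)<1$), and it is maximized over $g\in G\smallsetminus\{0\}$ by the $g$ closest to $0$ in the sense of maximizing $\Re\rho(g)$; the corresponding value is $\lambda(\beta)^{|K|}$ by the definition \eqref{lambdadef} of $\lambda(\beta)$. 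So the left-hand side of \eqref{eq: irritating equation in new lemma i} equals $\lambda(\beta)^{|K|}$. It therefore suffices to show that the right-hand side is at least $\lambda(\beta)^{|K|}$, i.e.\ that there is \emph{some} $g^*\in G\smallsetminus\{0\}$ with $\prod_{k\in K}\phi_\beta(g^*+g_k)/\phi_\beta(0+g_k)\ge \lambda(\beta)^{|K|}$.

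The key point is that $G_0[(g_k)_{k\in K}] = \{0\}$ only pins down which single element of $G$ maximizes $\prod_k\phi_\beta(g+g_k)$; it does not force the second-largest value to be small. In fact I would argue the opposite direction: rather than trying to pick a good $g^*$, average over all of $G\smallsetminus\{0\}$ and use a pigeonhole/averaging bound combined with translation symmetry of the ambient group. Concretely, consider the sum $\sum_{g\in G\smallsetminus\{0\}}\prod_{k\in K}\phi_\beta(g+g_k)$. Using the substitution $g\mapsto g - g_{k_0}$ for a fixed $k_0\in K$ one sees each term of the form $\prod_k \phi_\beta(g+g_k)$ is, after relabeling, comparable to $\prod_k\phi_\beta(g + (g_k - g_{k_0}))$; since $\rho(-g_{k_0})$ has modulus one and $\phi_\beta(g) = \phi_\beta(-g)$ (as shown in the proof of Lemma~\ref{lemma: theta bounded from above}), this reflection/translation invariance lets me relate the ``shifted'' ratios on the right-hand side to the ``unshifted'' ratios on the left-hand side. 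The cleanest route is: let $g^*$ be the element of $G\smallsetminus\{0\}$ achieving the maximum of $\Re\rho(g)$, so that $\phi_\beta(g^*+g_k)/\phi_\beta(g_k) = e^{\beta(\Re\rho(g^*+g_k) - \Re\rho(g_k))}$, and show $\sum_{k\in K}\bigl(\Re\rho(g^*+g_k) - \Re\rho(g_k)\bigr) \ge |K|\bigl(\Re\rho(g^*) - 1\bigr)$, which rearranges to $\sum_{k\in K}\Re\rho(g^*+g_k) \ge \sum_{k\in K}\bigl(\Re\rho(g_k) + \Re\rho(g^*) - 1\bigr)$; this in turn follows from the elementary inequality $\Re\rho(a+b) \ge \Re\rho(a) + \Re\rho(b) - 1$ valid for all $n$-th roots of unity (which is just $\cos(\alpha+\beta')\ge\cos\alpha+\cos\beta'-1$, a consequence of $1-\cos$ being subadditive on angles, or more simply of $|\rho(a)-1|+|\rho(b)-1|\ge|\rho(a+b)-1|$ combined with $\Re\rho(c) = 1 - \tfrac12|\rho(c)-1|^2$ after a short manipulation).

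Putting this together: the right-hand side of \eqref{eq: irritating equation in new lemma i} is $\ge \prod_{k\in K}\phi_\beta(g^*+g_k)/\phi_\beta(g_k) = \exp\bigl(\beta\sum_{k\in K}(\Re\rho(g^*+g_k)-\Re\rho(g_k))\bigr) \ge \exp\bigl(\beta|K|(\Re\rho(g^*)-1)\bigr) = \lambda(\beta)^{|K|}$, which equals the left-hand side. Note that the hypothesis $G_0[(g_k)_{k\in K}] = \{0\}$ is not actually needed for the inequality as I have set it up — it only guarantees that $0$ is indeed the right normalization point so that the displayed ratios are $\le 1$ and the statement is non-vacuous — so I should double-check whether the intended proof uses it more essentially; if it does, the alternative is to invoke $G_0 = \{0\}$ to say that for every $g\ne 0$ the product $\prod_k\phi_\beta(g+g_k) < \prod_k\phi_\beta(g_k)$, pick the maximizing such $g$, and run the same subadditivity estimate in reverse. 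The main obstacle is pinning down the correct elementary trigonometric inequality and verifying it holds for all $n$-th roots of unity (not just the primitive ones), i.e.\ making the subadditivity-of-$1-\cos$ step airtight for arbitrary elements of $G$; everything else is bookkeeping with the definitions of $\phi_\beta$ and $\lambda(\beta)$.
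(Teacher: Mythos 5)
There is a genuine gap, and it is exactly at the step you flagged as the "main obstacle": the elementary inequality $\Re\rho(a+b)\geq\Re\rho(a)+\Re\rho(b)-1$, i.e.\ subadditivity of $1-\cos$ on angles, is false. Since $1-\cos\theta\sim\theta^2/2$ for small $\theta$ and squares are superadditive, it fails whenever both angles are small and nonzero: for $n=6$ and $a=b=1$ one gets $\Re\rho(2)=\cos(2\pi/3)=-1/2$ while $\Re\rho(1)+\Re\rho(1)-1=0$. Worse, the aggregated inequality you actually need, $\sum_{k}\bigl(\Re\rho(g^*+g_k)-\Re\rho(g_k)\bigr)\geq|K|\bigl(\Re\rho(g^*)-1\bigr)$, is also false for your choice of $g^*$ (an arbitrary maximizer of $\Re\rho$ over $G\smallsetminus\{0\}$). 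Concretely, take $n=12$, $|K|=3$, $g_1=g_2=0$, $g_3=1$, so that $\sum_k\rho(-g_k)=2+e^{-i\pi/6}$ has argument $\approx-9.9^\circ$ and $G_0=\{0\}$; choosing $g^*$ with $\arg\rho(g^*)=+30^\circ$ gives left side $\approx-0.63$ versus right side $\approx-0.40$. Only the choice $\arg\rho(g^*)=-30^\circ$ works. So no argument that treats the two maximizers of $\Re\rho$ symmetrically, or that proceeds term-by-term in $k$, can close this step. Your remark that the hypothesis $G_0[(g_k)_{k\in K}]=\{0\}$ "is not actually needed" is a symptom of the same problem: that hypothesis is used essentially.

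The paper's proof instead takes $\hat g$ to be a maximizer of the \emph{right-hand side} over $G\smallsetminus\{0\}$ (it then checks $\rho(\hat g)\in\{e^{\pm2\pi i/n}\}$, so the target value is still $\lambda(\beta)^{|K|}$). Writing $\Sigma=\sum_k\rho(-g_k)$, the hypothesis $G_0=\{0\}$ forces $\arg\Sigma$ to lie strictly within $\pi/n$ of $0$, whence $1$ and $\rho(\hat g)$ lie on opposite sides of $\Sigma$ and the angles satisfy $\angle(\Sigma,1)+\angle(\Sigma,\rho(\hat g))=\angle(1,\rho(\hat g))$. Monotonicity of $\cos$ on $(0,\pi)$ then gives $\cos\angle(\Sigma,\rho(\hat g))-\cos\angle(\Sigma,1)\geq\cos\angle(1,\rho(\hat g))-1$, and since the left side is negative (again by $G_0=\{0\}$) while $\bigl|\sum_k\rho(g_k)\bigr|\leq|K|$, multiplying through by $\bigl|\sum_k\rho(g_k)\bigr|$ rather than $|K|$ only helps. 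That is the correct replacement for the false subadditivity step; the rest of your bookkeeping (identifying the left-hand side of \eqref{eq: irritating equation in new lemma i} with $\lambda(\beta)^{|K|}$ and reducing to exhibiting one good $g^*$) is fine.
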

    
\begin{proof}
    By definition, for any \( g \in G \), we have
    \begin{equation}\label{eq: blaha 2}
        \prod_{k \in K}\phi_\beta(  g+g_k)
        =
        \prod_{k \in K} \exp \bigl( \beta \Re  \rho( g + g_k) \bigr)
        =
        \exp \Bigl( \beta \Re  \bigl(  \rho ( g)\sum_{k \in K}\rho(g_k)   \bigr) \Bigr).
    \end{equation}
    It follows that~\eqref{eq: irritating equation in new lemma i} is equivalent to 
    \begin{equation}\label{eq: new goal i}
        \begin{split}
            &\max_{g \in G \smallsetminus \{0\}}
            \Bigl[ \Re \bigl( \rho(g) \sum_{k\in K}\rho(g_k) \bigr)- \Re \bigl( \rho(0) \sum_{k\in K}\rho(g_k) \bigr) \Bigr] 
            \geq 
            \max_{g \in G \smallsetminus \{0\}} \Bigl[  \Re \bigl( \rho(g) \sum_{k\in K}\rho(0) \bigr)- \Re \bigl( \rho(0) \sum_{k\in K}\rho(0) \bigr) \Bigr].
        \end{split}
    \end{equation} 
    Let us prove \eqref{eq: new goal i}.
    Recall that \(  G_0\bigl[ (g_k)_{k \in K}\bigr] = \{ 0 \} \) by assumption, and note that this implies that
    \begin{equation*} 
        \argmax_{g \in G\smallsetminus \{ 0 \}}  \prod_{k \in K} \phi_\beta(g +  g_k) \subseteq \rho^{-1} \bigl( \{ e^{2 \pi i/n}, e^{-2 \pi i/n}\}\bigr) = \argmax_{g \in G\smallsetminus \{ 0 \}}  \prod_{k \in K} \phi_\beta(g +  0).
    \end{equation*} 
    For two non-zero complex numbers \( z_1 \) and \( z_2 \), let \( \angle (z_1 , z_2) \)  denote the absolute value of the smallest angle between  \( z_1 \) and \( z_2 \).
    Using this notation, for any \( g \in G \), we have
    \begin{equation}\label{eq: angle equation ii} 
        \begin{split}
            &
            \Re  \Bigl( \rho(g) \sum_{k\in K}\rho( g_k) \Bigr)
            =
            \Re  \Bigl( \sum_{k\in K} \overline{\rho(- g_k)}  \rho(g) \Bigr)
            =
            \Bigl| \sum_{k\in K} {\rho(g_k)}   \Bigr| \cos \biggl( \angle \Bigl(  \sum_{k\in K}\rho(- g_k) ,  \rho(g)  \Bigr) \biggr).
        \end{split}
    \end{equation}   
    Now fix some \( \hat g \in G \smallsetminus \{ 0 \} \). 
    Since \( \hat g \notin G_0[(g_k)_{k \in K}] = \{ 0 \} \), we have
    \begin{equation*}
        \prod_{k \in K} \phi_\beta(\hat g + g_k) < \prod_{k \in K} \phi_\beta(0 + g_k).
    \end{equation*}
    Using~\eqref{eq: blaha 2} and~\eqref{eq: angle equation ii}, we see that this is equivalent to
    \begin{equation*}\label{eq: bad 1}
        \angle \biggl( \sum_{k \in K} \rho(-g_k), \rho(0) \biggr) < \angle \biggl( \sum_{k \in K} \rho(-g_k), \rho(\hat g) \biggr),
    \end{equation*}
    and hence, since \( \rho(0)=1 \), we obtain
    \begin{equation}\label{eq: negative i}
        \cos \Bigl( \angle \bigl(  \sum_{k\in K}\rho(-g_k), \rho(\hat g) \bigr) \Bigr)  -\cos \Bigl( \angle \bigl( \sum_{k\in K}\rho(-g_k) , 1 \bigr) \Bigr) < 0.
    \end{equation} 
    On the other hand, using~\eqref{eq: angle equation ii}, the definition of \( G_0[( g_k )_{k \in K}] \), and the assumption that \( G_0[(g_k)_{k \in K}] = \{ 0 \} \), we have
    \begin{align*}
        &\{ 0 \} = G_0[( g_k )_{k \in K}] = \argmax_{g \in G}  \prod_{k \in K} \phi_\beta(g +  g_k) 
        =
        \argmax_{g \in G} \prod_{k \in K}e^{\beta \Re \rho(g+g_k)}
        \\&\qquad =
        \argmax_{g \in G}  e^{\beta \Re \rho(g) \sum_{k \in K} \rho(g_k)}
        \overset{\eqref{eq: angle equation ii}}{=} \argmax_{g \in G} \cos \biggl( \angle \Bigl( \, \sum_{k\in K}\rho(- g_k) ,  \rho(g)  \Bigr)
        \biggr)
        \\&\qquad=\argmin_{g \in G}   \angle \Bigl( \,  \sum_{k\in K}\rho(- g_k) ,  \rho(g)  \Bigr) .
    \end{align*}
    Since by assumption, we have \( G_0[( g_k )_{k \in K}] = \{ 0 \} \), we analogously have
    \begin{equation*}
        \hat g \in   \argmax_{g \in G \smallsetminus G_0[( g_k )_{k \in K}]}  \prod_{k \in K} \phi_\beta(g +  g_k) 
        =\argmin_{g \in G \smallsetminus \{ 0 \}}   \angle \Bigl(  \sum_{k\in K}\rho(- g_k) ,  \rho(g)  \Bigr) .
    \end{equation*}
    Consequently, we must be in one of the situations displayed in Figure~\ref{fig: angles}.
    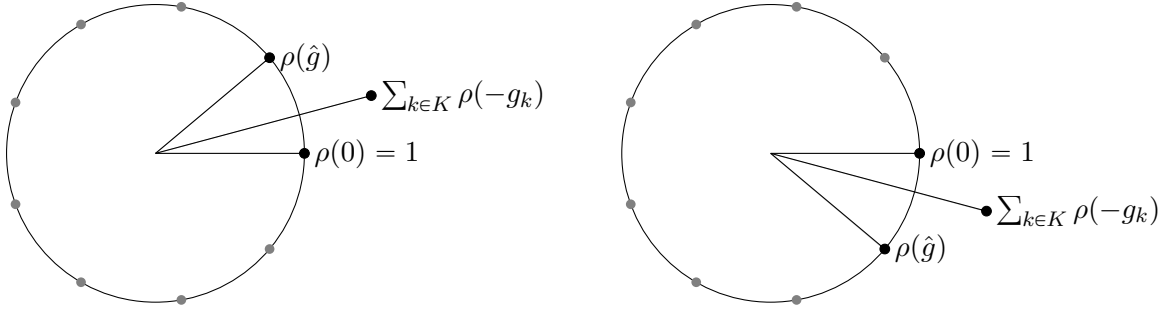
\begin{figure}[htp]
        \centering 
        \hspace{-2cm}
        \begin{subfigure}[b]{0.4\textwidth}
            \centering
            \begin{tikzpicture}[scale=0.99] 
                \draw[white] (-3,0) -- (4,0);
                  \draw[black] (0,0) circle (2); 
                
                \foreach \phi in {0,40,...,360}{
                    \fill[gray]  (\phi:2)  circle (0.065);
                }
                
                \filldraw[black] (0,0) -- (2,0) circle (0.065) node[anchor=west] {\( \rho(0)=1\)};
                \filldraw[black] (0,0) -- (40:2) circle (0.065) node[anchor=west] {\( \rho(\hat g)\)};
                \filldraw[black] (0,0) -- (15:3) circle (0.065) node[anchor=west] {\( \sum_{k \in K} \rho(-g_k)\)};
            \end{tikzpicture}
        \end{subfigure}
        \hspace{0.08\textwidth}
        \begin{subfigure}[b]{0.4\textwidth}
            \centering
            \begin{tikzpicture}[scale=0.99]
                \draw[white] (-3,0) -- (4,0);
                \draw[black] (0,0) circle (2); 
                
                \foreach \phi in {0,40,...,360}{
                    \fill[gray]  (\phi:2)  circle (0.065);
                }
                
                \filldraw[black] (0,0) -- (2,0) circle (0.065) node[anchor=west] {\( \rho(0)=1\)};
                \filldraw[black] (0,0) -- (-40:2) circle (0.065) node[anchor=west] {\( \rho(\hat g)\)};
                \filldraw[black] (0,0) -- (-15:3) circle (0.065) node[anchor=west] {\( \sum_{k \in K} \rho(-g_k)\)};
            \end{tikzpicture}
        \end{subfigure}
        \caption{The two figures show the points in \( \rho(G) = \{ e^{2 \pi i j/n} \} _{j \in \{ 1,2, \ldots, n \}} \) together with the two possible options for the point \( \sum_{k \in K} \rho(-g_k) \) under the assumption that \( G_0[(g_k)_{k \in K}] = \{ 0 \} \).}
        \label{fig: angles}
    \end{figure}%
    This implies in particular that 
    \begin{equation*}
        \angle \bigl(  \sum_{k\in K}\rho(-g_k), 1  \bigr) +  \angle \bigl(  \sum_{k\in K}\rho(-g_k), \rho(\hat g)  \bigr)  =  \angle \bigl( 1, \rho(\hat g)  \bigr).
    \end{equation*}
    Since \( \cos(x) \) is decreasing in \( x \) for \( x \in (0,\pi) \), it follows that
    \begin{align*}
        & 
        \cos\Bigl( \angle \bigl( 1, 1 \bigr) \Bigr)
        +
        \cos\Bigl( \angle \bigl(  \sum_{k\in K}\rho(-g_k), \rho(\hat g) \bigr) \Bigr)
        \geq 
        \cos\Bigl( \angle \bigl(  \sum_{k\in K}\rho(-g_k)  ,1\bigr) \Bigr)
        +
        \cos \Bigl( \angle \bigl( 1, \rho(\hat g) \bigr) \Bigr),
    \end{align*} 
    or, equivalently, that
    \begin{equation}\label{eq: the previous equation i}
        \begin{split} 
            &\cos \Bigl( \angle \bigl(  \sum_{k\in K}\rho(-g_k), \rho(\hat g) \bigr) \Bigr)  -\cos \Bigl( \angle \bigl( \sum_{k\in K}\rho(-g_k) , 1 \bigr) \Bigr) 
            \geq 
            \cos \Bigl( \angle \bigl( 1, \rho(\hat g) \bigr) \Bigr)  -\cos \Bigl( \angle \bigl( 1, 1 \bigr) \Bigr).
        \end{split} 
    \end{equation}
    Next, since \( \rho \) is unitary, we have 
    \begin{equation}\label{eq: unitary consequnce}
        \bigl| \sum_{k\in K}\rho(g_k) \bigr| \leq |K|  .
    \end{equation}
    Combining the above equations, we finally obtain
    \begin{equation*} 
        \begin{split}
            & \Re \bigl( \rho(\hat g) \sum_{k\in K}\rho(g_k) \bigr)- \Re  \bigl( \rho(0) \sum_{k\in K} \rho (g_k)  \bigr)
            \\&\qquad \overset{\eqref{eq: angle equation ii}}{=}
            \Bigl| \sum_{k\in K}\rho(g_k) \Bigr|  \biggl( \cos \Bigl( \angle \bigl( \sum_{k\in K}\rho(-g_k), \rho(\hat g) \bigr) \Bigr)  -\cos \Bigl( \angle \bigl( \sum_{k\in K}\rho(-g_k) , 1 \bigr) \Bigr)   \biggr) 
            \\&\qquad\overset{\eqref{eq: negative i},\eqref{eq: unitary consequnce}}{\geq}
            |K| \biggl( \cos \Bigl( \angle \bigl( \sum_{k\in K}\rho(-g_k), \rho(\hat g) \bigr) \Bigr)  -\cos \Bigl( \angle \bigl( \sum_{k\in K}\rho(-g_k) , 1 \bigr) \Bigr)   \biggr) 
            \\&\qquad \overset{\eqref{eq: the previous equation i}}{\geq}
            |K|  \biggl( \cos \Bigl( \angle \bigl( 1, \rho(\hat g) \bigr) \Bigr)  -\cos \Bigl( \angle \bigl( 1, 1 \bigr) \Bigr)   \biggr)
            \\&\qquad=
            \Bigl| \sum_{k\in K}\rho(0) \Bigr| \biggl( \cos \Bigl( \angle \bigl(  \sum_{k\in K}\rho(0), \rho(\hat g) \bigr) \Bigr)  -\cos \Bigl( \angle \bigl( \sum_{k\in K}\rho(0) , \rho(0) \bigr) \Bigr)   \biggr)
            \\&\qquad\overset{\eqref{eq: angle equation ii}}{=}
            \Re \bigl( \rho(\hat g) \sum_{k\in K}\rho(0) \bigr)- \Re \bigl( \rho(0) \sum_{k\in K} \rho(0) \bigr).
        \end{split}
    \end{equation*}   
    Since \( \hat g \in G \smallsetminus \{ 0 \}\) was arbitrary, this implies \eqref{eq: new goal i} and the desired conclusion follows.
\end{proof}

\begin{lemma}\label{lemma: the last but crucial inequality}
    Let \( K \) be a finite and non-empty index set.
        Assume that \( \beta_0 = \beta_0(|K|) > 0\)  satisfies~\ref{item: key assumption} applied with \( K \), and that
    \begin{equation}\label{eq: second assumption of lemma}
         2\lambda(\beta_0)^{2|K|} \leq 1.
    \end{equation}
    Then, for all choices of \( g_k \in G \) for \( k \in K \), in the setting of Proposition~\ref{proposition: 7.2}, we have  
    \begin{equation} \label{eq: the last but crucial inequality}
        \Bigl| S_\beta\bigl(( g_k)_{k \in K} \bigr)\Bigr| 
        \leq  
        1-K_*   \lambda(\beta)^{2|K|},
    \end{equation} 
    where $K_*$ is the constant defined in (\ref{mubetaCstar}).
\end{lemma}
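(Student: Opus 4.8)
The plan is to reduce, using the translation invariance of \( \bigl|S_\beta(\cdot)\bigr| \) from Lemma~\ref{lemma: Sbeta symmetry}, to the situation \( 0 \in G_0\bigl[(g_k)_{k \in K}\bigr] \), and then to argue by cases according to the size of \( G_0\bigl[(g_k)_{k \in K}\bigr] \). Throughout I set \( w_0 \coloneqq \sum_{k \in K}\rho(g_k) \) and use that \( \prod_{k \in K}\phi_\beta(g+g_k)^2 = e^{2\beta\Re(\rho(g)w_0)} \) for every \( g \in G \), so the relative sizes of the terms in \eqref{eq: Sbeta def} are governed by \( w_0 \). Note that \( K_* \leq \tfrac12 \) and, by \eqref{eq: second assumption of lemma}, \( \lambda(\beta)^{2|K|} \leq \lambda(\beta_0)^{2|K|} \leq \tfrac12 \), so the right-hand side of \eqref{eq: the last but crucial inequality} is \( \geq \tfrac34 \); in particular the inequality is automatic once \( \bigl|S_\beta(\cdot)\bigr| \) is small. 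If \( w_0 = 0 \) then every summand in \eqref{eq: Sbeta def} equals \(1\), hence \( S_\beta\bigl((g_k)_{k\in K}\bigr) = |G|^{-1}\sum_{g \in G}\rho(g) = 0 \) because \( \rho(G) \) is the full set of \( n \)-th roots of unity, and we are done. So assume \( w_0 \neq 0 \). Then \( g \mapsto \Re(\rho(g)w_0) \) is non-constant and, the points \( \rho(g) \) being equally spaced on the unit circle, its maximizer set \( G_0 \coloneqq G_0\bigl[(g_k)_{k\in K}\bigr] \) is either a singleton or a pair whose images under \( \rho \) are adjacent \( n \)-th roots of unity. Choosing \( g' \in G_0 \) and replacing each \( g_k \) by \( g'+g_k \) leaves \( \bigl|S_\beta(\cdot)\bigr| \) unchanged (Lemma~\ref{lemma: Sbeta symmetry}), preserves assumption~\ref{item: key assumption}, and makes \( 0 \in G_0 \). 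I then write \( M \coloneqq \prod_{k \in K}\phi_\beta(0+g_k)^2 = \max_{g \in G}\prod_{k \in K}\phi_\beta(g+g_k)^2 \) and \( R \coloneqq \sum_{g \notin G_0}\prod_{k \in K}\phi_\beta(g+g_k)^2 \), so that~\ref{item: key assumption} gives \( R \leq \tfrac{1-\cos(2\pi/n)}{8}M \leq \tfrac14 M \).

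Suppose first \( |G_0| = 2 \), say \( G_0 = \{ 0,h \} \). Then necessarily \( n \geq 3 \) (for \( n = 2 \) a non-zero \( w_0 \) has a unique maximizer) and \( \rho(h) = e^{\pm 2\pi i/n} \), so \( \bigl|1+\rho(h)\bigr| = 2\cos(\pi/n) \) and both \( g = 0 \) and \( g = h \) contribute \( M \). Bounding the remaining terms by \( |\rho(g)| = 1 \),
\begin{equation*}
    \bigl|S_\beta\bigl((g_k)_{k\in K}\bigr)\bigr| \leq \frac{M\bigl|1+\rho(h)\bigr| + R}{2M+R} = \frac{2M\cos(\pi/n)+R}{2M+R}.
\end{equation*}
After cross-multiplying, \eqref{eq: the last but crucial inequality} reduces to \( (2M+R)K_*\lambda(\beta)^{2|K|} \leq 2M\bigl(1-\cos(\pi/n)\bigr) \); using \( 2M+R \leq \tfrac94 M \), the identity \( K_* = \cos^2(\pi/2n)\bigl(1-\cos(\pi/n)\bigr) \leq 1-\cos(\pi/n) \), and \( \lambda(\beta)^{2|K|} \leq \tfrac12 \), the left side is at most \( \tfrac98 M\bigl(1-\cos(\pi/n)\bigr) \leq 2M\bigl(1-\cos(\pi/n)\bigr) \), which closes the case.

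Now suppose \( |G_0| = 1 \), so \( G_0 = \{ 0 \} \). Here Lemma~\ref{lemma: assumption holds if} applies and gives \( \bigl|S_\beta(\cdot)\bigr| \leq \tfrac12\bigl(1+\Re S_\beta(\cdot)\bigr) \), so it suffices to prove \( \Re\bigl(1 - S_\beta(\cdot)\bigr) \geq 2K_*\lambda(\beta)^{2|K|} \). Writing
\begin{equation*}
    \Re\bigl(1 - S_\beta(\cdot)\bigr) = \biggl(\sum_{g \in G\smallsetminus\{ 0 \}}\bigl(1-\Re\rho(g)\bigr)\prod_{k \in K}\frac{\phi_\beta(g+g_k)^2}{\phi_\beta(0+g_k)^2}\biggr)\cdot\frac{M}{\sum_{g \in G}\prod_{k \in K}\phi_\beta(g+g_k)^2},
\end{equation*}
I would bound the last factor below by \( \tfrac12 \) by Lemma~\ref{lemma: very crude estimate}, keep only the largest term of the (nonnegative) sum while using \( 1-\Re\rho(g) \geq 1-\cos(2\pi/n) \) for \( g \neq 0 \), and then invoke Lemma~\ref{lemma: old sublemma} together with \( \max_{g \in G\smallsetminus\{ 0 \}}\prod_{k \in K}\tfrac{\phi_\beta(g)}{\phi_\beta(0)} = e^{|K|\beta(\cos(2\pi/n)-1)} = \lambda(\beta)^{|K|} \) to conclude \( \max_{g \neq 0}\prod_{k}\tfrac{\phi_\beta(g+g_k)^2}{\phi_\beta(0+g_k)^2} \geq \lambda(\beta)^{2|K|} \). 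Multiplying these estimates gives \( \Re(1 - S_\beta(\cdot)) \geq \tfrac12\bigl(1-\cos(2\pi/n)\bigr)\lambda(\beta)^{2|K|} = 2K_*\lambda(\beta)^{2|K|} \), as needed.

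The hard part will be the case \( |G_0| = 2 \): unlike the case \( |G_0| = 1 \), where a single dominant term equal to \( \rho(0) = 1 \) lets Lemma~\ref{lemma: assumption holds if} extract a genuine improvement over the trivial bound \( \bigl|S_\beta(\cdot)\bigr| \leq 1 \), in the tie case one has no such handle and must instead exploit that two adjacent \( n \)-th roots of unity sum to modulus \( 2\cos(\pi/n) < 2 \), then check that this deficit survives against the bound on \( R \) coming from~\ref{item: key assumption}. This is the one point where both the precise value of \( K_* \) in \eqref{mubetaCstar} (through \( K_* \leq 1-\cos(\pi/n) \)) and the hypothesis \( 2\lambda(\beta_0)^{2|K|} \leq 1 \) are genuinely used.
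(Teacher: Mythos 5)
Your proof is correct and follows essentially the same route as the paper's: the same trichotomy on $\bigl|G_0[(g_k)_{k\in K}]\bigr|$ (full group, a pair of adjacent roots of unity, or a singleton reduced to $\{0\}$ via Lemma~\ref{lemma: Sbeta symmetry}), with the singleton case handled by Lemmas~\ref{lemma: very crude estimate}, \ref{lemma: assumption holds if}, and~\ref{lemma: old sublemma} exactly as in the paper. The only differences are cosmetic: you justify the degenerate case via $\sum_k\rho(g_k)=0\Rightarrow S_\beta=0$, and in the two-maximizer case you keep the off-maximizer mass in the denominator and use the identity $K_*=\cos^2(\pi/2n)\bigl(1-\cos(\pi/n)\bigr)$ where the paper uses $\sqrt{1-x}\leq 1-x/2$, but both yield the same bound from the same underlying estimate $|1+e^{2\pi i/n}|=2\cos(\pi/n)$.
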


\begin{proof}  
    Since \( \rho(G) = \{ e^{2 \pi ij/n} \}_{j =1}^n \) and 
    \begin{equation}\label{eq: blaha}
        \prod_{k \in K} \phi_\beta(g +  g_k) 
        = 
        \exp \Bigl( \beta \sum_{k \in K} \Re \bigl(\rho(g+ g_k)\bigr) \Bigr)
        = 
        \exp \Bigl( \beta  \Re \bigl(\rho(g)  \sum_{k \in K}\rho(g_k)\bigr) \Bigr), 
    \end{equation}
    we have either (1) \(  G_0[(  g_k )_{k \in K}] = G, \)
    (2) \( \bigl|G_0[(  g_k )_{k \in K}] \bigr| = 1 ,\) or (3) \( \bigl|G_0[(  g_k )_{k \in K}] \bigr| = 2. \) We now divide into three cases accordingly.

    \vfil\noindent \textbf{Case 1.} 
    Assume that \(  G_0[(  g_k )_{k \in K}] = G. \) Then
    \begin{equation*}
        \sum_{g\in  G \smallsetminus G_0[( \hat g_k )_{k \in K}]} \prod_{k\in K} \frac{\phi_\beta( g+\hat g_k)^2}{ \phi_\beta( 0+\hat g_k )^2 } = 0,
    \end{equation*}
        and hence~\eqref{eq: the last but crucial inequality} trivially holds in this case.

    \vfil\noindent \textbf{Case 2.} 
    Assume that \( \bigl|G_0[ ( g_k )_{k \in K}] \bigr| = 2 \). Let \( g' \in G_0[ ( g_k )_{k \in K}]  \). Since \( |\rho(g)|\leq 1 \) for all \( g \in G \), we have
    \begin{align*} 
        &\bigl| S_\beta\bigl( ( g_k )_{k \in K} \bigr) \bigr| 
        = 
        \biggl| \frac{\sum_{g \in  G} \rho(g)\prod_{k\in K}\phi_\beta( g+g_k)^2}{\sum_{g \in  G}    \prod_{k\in K}\phi_\beta( g+g_k)^2    }
        \biggr|
        \leq
        \biggl| \frac{\sum_{g \in  G} \rho(g)\prod_{k\in K}\phi_\beta( g+g_k)^2}{\sum_{g \in  G_0[(g_k)_{k\in K}]}    \prod_{k\in K}\phi_\beta( g+g_k)^2    }
        \biggr|
        \\&\qquad\leq 
        \biggl| \frac{\sum_{g \in  G_0[(g_k)_{k\in K}]} \rho(g)\prod_{k\in K}\phi_\beta( g+g_k)^2}{\sum_{g \in G_0[ (  g_k )_{k \in K}]}    \prod_{k\in K}\phi_\beta( g+g_k)^2    }
        \biggr|
        +
        \biggl| \frac{\sum_{g \in  G\smallsetminus G_0[(g_k)_{k\in K}]} \rho(g)\prod_{k\in K}\phi_\beta( g+g_k)^2}{\sum_{g \in G_0[ (  g_k )_{k \in K}]}    \prod_{k\in K}\phi_\beta( g+g_k)^2    }
        \biggr|
        \\&\qquad\leq 
        \frac{\bigl| \sum_{g \in G_0[ (  g_k )_{k \in K}]} \rho(g) \bigr|}{|G_0[ ( g_k )_{k \in K}]|}+ \sum_{g\in  G \smallsetminus G_0[( g_k )_{k \in K}]} \prod_{k\in K} \frac{\phi_\beta( g+g_k)^2}{ \phi_\beta( g'+g_k )^2 }.
    \end{align*} 
    Since \( \rho(G) = \{e^{2 \pi ij/n} \}_{j =1}^n \)  and \( \bigl|G_0[ (  g_k )_{k \in K}] \bigr| = 2 \), there must be some \( j \in \mathbb{Z}_n \) such that \( G_0[(g_k)_{k \in K}] = \{ j,j+1\} \). From this it follows that
    \begin{equation*} 
        \begin{split}
            &\frac{\bigl| \sum_{g \in G_0[ ( g_k )_{k \in K}]} \rho(g) \bigr|}{|G_0[ ( g_k )_{k \in K}]|} \leq \frac{\sqrt{(1 + \cos(2 \pi /n))^2 + \sin^2(2 \pi /n)}}{2}
            =
            \sqrt{\frac{1 + \cos(2 \pi /n)}{2}}
            \\&\qquad =
            \sqrt{1 - \frac{1 - \cos(2 \pi /n)}{2}}
            \leq
            1 -  \frac{1 - \cos(2 \pi /n)}{4}.
        \end{split}
    \end{equation*}
    Combining the previous equations and using~\ref{item: key assumption}, we get 
    \begin{equation*} 
        \bigl| S_\beta\bigl( ( g_k )_{k \in K} \bigr) \bigr| \leq 
        1 -  \frac{1 - \cos(2 \pi /n)}{8}.
    \end{equation*} 
    Using~\eqref{eq: second assumption of lemma}, and recalling the definition of \( K_* \), we obtain~\eqref{eq: the last but crucial inequality} in the case \( |G_0[ (  g_k )_{k \in K}] \bigr| = 2 \). 
    
    \vfil\noindent \textbf{Case 3.} 
    Now assume that \( \bigl|G_0[ ( g_k )_{k \in K}]\bigr| =1  \).
    In this case, both sides of~\eqref{eq: the last but crucial inequality} tend to \( 1 \) as \( \beta \to \infty \). Since \( \rho(G) = \{e^{2 \pi ij/n} \}_{j =1}^n \), for any fixed \( \beta > 0 \) both sides of~\eqref{eq: the last but crucial inequality} are strictly smaller than \( 1 \), and hence the desired conclusion will follow if we can show that the  convergence of the right-hand side is faster than that of the left-hand side.
    To this end, note first that, for any \( g' \in G \), by Lemma~\ref{lemma: Sbeta symmetry}, we have
    \begin{equation*}
        \bigl| S_\beta \bigl( (g_k + g')_{k \in K}\bigr)\bigr|
        =
        \bigl|S_\beta \bigl(( g_k )_{k \in K} \bigr) \bigr|
        .
    \end{equation*}
    This implies in particular that we can assume that \( 0 \in G_0[( g_k )_{k \in K}] \). Since \( \bigl| G_0[( g_k )_{k \in K}]  \bigr| = 1 \) by assumption, this implies that \( G_0[( g_k)_{k \in K}] = \{ 0 \} \). 
    By Lemma~\ref{lemma: assumption holds if}, we have 
      \begin{equation*} 
        \bigl|S_\beta \bigl(( g_k)_{k \in K} \bigr)\bigr| \leq \frac{1 + \Re S_\beta \bigl(( g_k)_{k \in K} \bigr)}{2},
    \end{equation*}
     or, equivalently, 
    \begin{align*} 
        &\bigl| S_\beta \bigl(  (g_k)_{k \in K} \bigr) \bigr|
        \leq 
        1 - \frac{1}{2} \biggl( \sum_{g\in  G\smallsetminus \{ 0 \}} \Re \bigl(1-\rho (g )\bigr)\prod_{k\in K} \frac{\phi_\beta( g+g_k)^2}{ \phi_\beta( 0+g_k )^2 } \biggr)
        \cdot
        \frac{\prod_{k\in K}\phi_\beta( 0+g_k)^2  }{\sum_{g \in  G}    \prod_{k\in K}\phi_\beta( g+g_k)^2    }.
    \end{align*} 
    Noting that \( \Re \bigl(1 - \rho(g) \bigr) \geq 1 - \cos(2 \pi/n) \) for all \( g \in G \smallsetminus \{ 0 \} \) and using~Lemma~\ref{lemma: very crude estimate}, we obtain
    \begin{equation*} 
        \bigl| S_\beta \bigl(  (g_k)_{k \in K} \bigr) \bigr|
        \leq 
        1 - \frac{1}{2} \bigl(1 - \cos(2 \pi/n) \bigr)  \max_{g \in G \smallsetminus \{0\}}
        \prod_{k\in K} \frac{\phi_\beta(  g+g_k)^2}{ \phi_\beta( 0+g_k )^2 } \cdot \frac{1}{2}. 
    \end{equation*} 
    Applying Lemma~\ref{lemma: old sublemma}, we thus find that
    \begin{equation*} 
        \begin{split}
            &\bigl| S_\beta \bigl(  (g_k)_{k \in K} \bigr) \bigr|
            \leq 
            1 - \frac{1}{2} \bigl(1 - \cos(2 \pi/n) \bigr)  \max_{g \in G \smallsetminus \{0\}}
            \prod_{k\in K} \frac{\phi_\beta(  g+0)^2}{ \phi_\beta( 0+0 )^2 } \cdot \frac{1}{2} 
            \\&\qquad=
            1 - \frac{1}{2} \bigl(1 - \cos(2 \pi/n) \bigr)  \lambda(\beta)^{2|K|}\cdot \frac{1}{2}.
        \end{split}
    \end{equation*} 
    This concludes the proof in the case \( \bigl|G_0[ ( g_k )_{k \in K}]\bigr| =1. \)
\end{proof}

We are now ready to give a proof of Proposition~\ref{proposition: 7.2}. The main difference between this proof and the proof of the corresponding result for $G = \mathbb{Z}_2$ in~\cite{c2019} is the use of Lemma~\ref{lemma: the last but crucial inequality}, which extends what in the case of \( G = \mathbb{Z}_2 \) was a very simple inequality.
 
\begin{proof}[Proof of Proposition~\ref{proposition: 7.2}]
    As in the proof of Proposition~\ref{proposition: 7.1}, let \( \gamma_1 \coloneqq \gamma-\gamma_c \), and let \( \mu_{N,\beta}' \) and \( \mathbb{E}_{N,\beta}'  \) denote conditional probability and conditional expectation given \( \bigl(\sigma(e)\bigr)_{e \not \in \pm \support \gamma_1 } \) respectively. As observed earlier, the spins \( \bigl(\sigma(e)\bigr)_{e \in  \gamma_1} \) are independent  under this conditioning.
 
    Take any \( e \in  \gamma_1 \). For \( p \in  \hat \partial e  \), let
    \begin{equation*}
        \sigma_p^e \coloneqq   \sum_{e' \in  \partial p\smallsetminus \{ e \}}  \sigma(e')
    \end{equation*}
    Then, for any \( g \in  G \), 
    \begin{equation*}
        \mu_{N,\beta}'\bigl( \sigma(e) = g\bigr) =  \frac{\prod_{p \in  \hat \partial e }\phi_\beta( \sigma_p^e + g)^2}{\sum_{g' \in  G}   \prod_{p \in \hat \partial e }\phi_\beta( \sigma_p^e + g')^2   }.
    \end{equation*} 
    It follows that the expected value of \( \rho\bigl(\sigma(e)\bigr) \) under \( \mu_{N,\beta}' \) is given by
    \begin{equation*}
        \begin{split}
            &\mathbb{E}_{N,\beta}'\bigl[\rho\bigl(\gamma_1[e]\sigma(e) \bigr) \bigr] =    \frac{\sum_{g \in  G} \rho(g )\prod_{p \in  \hat \partial e }\phi_\beta( \sigma_p^e + g)^2}{\sum_{g \in  G}    \prod_{p \in  \hat \partial e }\phi_\beta( \sigma_p^e + g)^2    },\quad e\in  \gamma_1.
        \end{split}
    \end{equation*}
    Since the edge spins \( \bigl(\sigma(e)\bigr)_{ e \in  \gamma_1} \) are independent given this conditioning (see the proof of Proposition~\ref{proposition: 7.1}), we have
    \begin{equation*}
        \mathbb{E}_{N,\beta}' \Bigl[\rho\bigl( \sigma(\gamma_1) \bigr)\Bigr] 
        =  
        \mathbb{E}_{N,\beta}' \Bigl[\rho\bigl(\, \sum_{e \in  \gamma_1} \sigma(e) \bigr)\Bigr] 
        =  
        \prod_{e \in  \gamma_1} \mathbb{E}_{N,\beta}' \bigl[ \rho \bigl( \sigma(e) \bigr)\bigr].
    \end{equation*}
    This implies in particular that
    \begin{equation}\label{eq: many products}
        \begin{split}
            &\bigl|\mathbb{E}_{N,\beta} [W_\gamma]\bigr| 
            = \biggl| \mathbb{E}_{N,\beta}\Bigl[ \rho \bigl(\sigma(\gamma)\bigr)\Bigr]\biggr|
            = \biggl| \mathbb{E}_{N,\beta}\Bigl[ \rho \bigl(\sigma(\gamma-\gamma_1+\gamma_1)\bigr)\Bigr]\biggr|
            = \biggl| \mathbb{E}_{N,\beta}\Bigl[ \rho \bigl(\sigma(\gamma-\gamma_1)\bigr) \rho \bigl(\sigma(\gamma_1)\bigr)\Bigr]\biggr|
            \\&\qquad 
            = \biggl|\mathbb{E}_{N,\beta}\Bigl[  \rho \bigl(\sigma ( \gamma- \gamma_1)\bigr)\,  \mathbb{E}_{N,\beta}' \bigl[  \rho \bigl( \sigma(\gamma_1)\bigr)\bigr]\Bigr]\biggr|
            \leq 
            \mathbb{E}_{N,\beta} \biggl[ \Bigl|    \rho \bigl( \sigma(\gamma-\gamma_1)\bigr)\,  \mathbb{E}_{N,\beta}' \bigl[  \rho \bigl( \sigma(\gamma_1)\bigr)\bigr]\Bigr|\biggr]
            \\&\qquad=
            \mathbb{E}_{N,\beta} \biggl[ \Bigl| \rho\bigl(  \sigma (\gamma- \gamma_1) \bigr) \Bigr| \cdot \Bigl| \mathbb{E}_{N,\beta}' \bigl[  \rho\bigl(  \sigma(\gamma_1) \bigr)\bigr]\Bigr| \biggr]
            = 
            \mathbb{E}_{N,\beta} \biggl[  \Bigl|  \mathbb{E}_{N,\beta}' \bigl[  \rho\bigl(\sigma(\gamma_1)\bigr)\bigr]\Bigr| \biggr] 
            \\&\qquad
            =
            \mathbb{E}_{N,\beta} \biggl[   \prod_{e \in \gamma_1}  \Bigl|\mathbb{E}_{N,\beta}' \bigl[  \rho\bigl(\sigma(e)\bigr)\bigr]\Bigr| \biggr].
        \end{split}
    \end{equation}
    For \( e \in  \gamma_1\), let \( K =  \hat \partial e  \), and for \( p \in  \hat \partial e \) define \( g_p \coloneqq \sigma_p^e \).
	Applying Lemma~\ref{lemma: the last but crucial inequality}, with \( K  \) and \( ( g_p )_{p \in K}  \), we obtain
    \begin{equation*}
        \begin{split} &\bigl|\mathbb{E}_{N,\beta}'[\rho\bigl(\sigma(e) \bigr) ] \bigr| 
            \leq  1 - K_* \lambda(\beta)^{12} .
        \end{split}
    \end{equation*}
    This implies in particular that 
    \begin{equation*}
        \begin{split}
            &\Bigl|\mathbb{E}_{N,\beta}' \bigl[ \rho\bigl(\sigma(e)\bigr) \bigr] \Bigr|  = 1 - K_* \lambda(\beta)^{12} \leq e^{-K_*\lambda(\beta)^{12}}.
        \end{split}
    \end{equation*}
    Inserting this into~\eqref{eq: many products}, we finally obtain
    \begin{equation*}
        \begin{split}
            &\bigl|\mathbb{E}_{N,\beta}  [W_\gamma]\bigr| 
            \leq  e^{-K_*(\ell-\ell_c) \lambda(\beta)^{12}}.
        \end{split}
    \end{equation*}
    This concludes the proof.
\end{proof}

 \subsection{Proof of the main result}
 We are now ready to give a proof of Theorem~\ref{theorem: Chatterjee's main theorem}.  
 
\begin{proof}[Proof of Theorem~\ref{theorem: Chatterjee's main theorem}]
    Define $K'$ and $K''$ by
    \begin{equation}\label{CpCppexpressions}
        K' \coloneqq \sqrt{2}\bigl( C_A  2^{4K^*/K_*}\bigr)^{1/(1 + 4K^*/K_*  )}, \qquad
        K'' \coloneqq 1/(1 + 4K^*/K_*).
    \end{equation}

    Assume first that \( \ell_c \geq \ell/2 \). 
    Since \( C_A \geq 9/2 \), $K_* >0$, and $K^*>0$, we have \( K' \geq 2\sqrt{2} \) and \(K'' \in ( 0,1) \). From this it follows that
    $$K' \Bigl[            \sqrt\frac{\ell_c}{\ell}
         +  \lambda(\beta)^2 \Bigr]^{K''} \geq 2\sqrt{2} \Bigl[  \frac{1}{\sqrt{2}} + 0
         \Bigr]^{K''} \geq 2^{3/2 - \frac{K''}{2}} \geq 2.$$
    Since \( |\rho(g)|=1 \) for each $g \in G$, we always have 
 	\begin{equation}
             \bigl|\mathbb{E}_{N,\beta} [W_\gamma]-  e^{-\ell (1 - \theta)}\bigr| 
          \leq 
          2,
     \end{equation}
     and hence the desired conclusion follows automatically in this case.
Consequently, we only need to prove the theorem for \( \ell_c < \ell/2 \).
 
 Suppose $\ell_c < \ell/2$. From Proposition~\ref{proposition: 7.1} we are given the upper bound
 	\begin{equation} \label{eq: ineq 1} 
             \bigl|\mathbb{E}_{N,\beta} [W_\gamma]-  e^{-\ell (1 - \theta)}\bigr| 
          \leq 
         C_A   e^{2K^* \ell \lambda(\beta)^{12}} 
             \biggl[ \sqrt\frac{\ell_c}{\ell} + \lambda(\beta)^2 \biggr].
     \end{equation}
At the same time,  Proposition~\ref{proposition: 7.2} implies that
 \begin{equation*}
     \bigl|\mathbb{E}_{N,\beta} [ W_\gamma ]\bigr| \leq   e^{-K_*(\ell-\ell_c) \lambda(\beta)^{12}}.
 \end{equation*}
 From the second of the above inequalities, using the triangle inequality, it follows that
 \begin{equation}\label{eq: first bound}
 \begin{split}
     &\bigl|\mathbb{E}_{N,\beta} [W_\gamma] -  e^{-\ell (1 - \theta)}\bigr| 
     \leq  
      e^{-\ell(1 - \theta )} + e^{-K_*(\ell-\ell_c) \lambda(\beta)^{12}}.
 \end{split}
 \end{equation} 
    Since $\ell_c < \ell/2$ by assumption, we have 
    \begin{equation}\label{eq: first ineq of two}
        \begin{split} 
            e^{-K_*(\ell-\ell_c) \lambda(\beta)^{12}} 
            \leq  
            e^{-\frac{1}{2} K_* \ell  \lambda(\beta)^{12}}.
        \end{split}
    \end{equation} 
For the other term on the right-hand side of~\eqref{eq: first bound}, note that
\begin{align*}
    &1 - \theta(\beta) 
    =
    1 - \frac{\sum_{g \in G} \rho(g) e^{12 \beta \Re \rho(g)}}{\sum_{g \in G} e^{12 \beta \Re \rho(g)}} 
    = 
    \frac{\sum_{g \in G} (1-\rho(g)) e^{12 \beta \Re \rho(g)}}{\sum_{g \in G} e^{12 \beta \Re \rho(g)}}
    = 
    \frac{\sum_{g \in G} (1-\rho(g)) e^{12 \beta (\Re \rho(g) - 1)}}{\sum_{g \in G} e^{12 \beta (\Re \rho(g) - 1)}}
    \\&\qquad\geq
     \frac{ (1 - \cos(2 \pi/n)) \lambda(\beta)^{12}}{1 + (|G|-1)  \lambda(\beta)^{12}}
    \geq
      \bigl(1 - \cos(2 \pi/n) \bigr) \lambda(\beta)^{12} \Bigl(1 - \bigl(|G|-1 \bigr)  \lambda(\beta)^{12} \Bigr)
    \\&\qquad=
      4K_* \lambda(\beta)^{12} \Bigl(1 - \bigl(|G|-1 \bigr)  \lambda(\beta)^{12} \Bigr).
\end{align*}
Since \( 5 (|G|-1) \lambda(\beta)^2 < 1 \) by assumption, we have \( 1 - \bigl(|G|-1 \bigr)  \lambda(\beta)^{12} \geq 1/2  \), and thus we obtain
\begin{align*}
    &1 - \theta(\beta) 
    \geq
      2K_* \lambda(\beta)^{12},
\end{align*}
and hence
\begin{equation}\label{eq: second ineq of two}
    e^{-\ell(1 - \theta )} \leq e^{ -    2K_*  \ell  \lambda(\beta)^{12} }.
\end{equation}
Combining~\eqref{eq: first bound}, \eqref{eq: first ineq of two} and~\eqref{eq: second ineq of two}, we obtain
 \begin{equation*}
 \begin{split}
     &\bigl|\mathbb{E}_{N,\beta} [W_\gamma] -  e^{-\ell (1 - \theta)}\bigr| 
     \leq   
       e^{ - 2 K_*  \ell  \lambda(\beta)^{12} } + e^{-\frac{1}{2} K_* \ell  \lambda(\beta)^{12}}  
     \leq  2 e^{ -  \frac{1}{2}K_* \ell  \lambda(\beta)^{12} }  .
 \end{split}
 \end{equation*}
 Combining this with~\eqref{eq: ineq 1}, we get
 \begin{equation*}
     \begin{split}
         &\bigl|\mathbb{E}_{N,\beta}[ W_\gamma  ] -   e^{-\ell (1 - \theta)} \bigr|^{1 + 4K^*/K_* } 
         \leq
     C_A e^{2K^* \ell \lambda(\beta)^{12}} \Bigl[  
          \sqrt\frac{\ell_c}{\ell}  
         +  \lambda(\beta)^2 \Bigr] \Bigl[ 2 e^{-\frac{1}{2}K_*  \ell  \lambda(\beta)^{12}} \Bigr]^{4K^*/K_* }
         \\&\qquad =
     C_A  2^{4K^*/K_*}  \Bigl[  
          \sqrt\frac{\ell_c}{\ell}  
         +  \lambda(\beta)^2 \Bigr].
     \end{split}
 \end{equation*}
 Rearranging, we obtain
 \begin{equation}\label{finalestimate}
     \begin{split}
         &\bigl|\mathbb{E}_{N,\beta} [W_\gamma]-  e^{-\ell (1 - \theta)} \bigr|
         \leq
     K' \Bigl[            \sqrt\frac{\ell_c}{\ell}  
         +  \lambda(\beta)^2 \Bigr]^{K''}.
     \end{split}
 \end{equation}
  Since neither \( e^{-\ell(1-\theta)} \) nor the right-hand side of \eqref{finalestimate} depends on \( N \), 
   and, by Theorem~\ref{theorem: limit exists}, the limit $\langle W_\gamma \rangle_\beta = \lim_{N \to \infty} \mathbb{E}_{N,\beta}[W_\gamma]$ exists (see~\eqref{wilsonlimit}), the desired conclusion follows.
 \end{proof}

\end{document}